\documentclass[12pt]{amsart}
\usepackage{amssymb,amsfonts,amsmath,amsopn,amstext,
amsthm,amscd,color,enumerate,graphicx,latexsym,mathabx,
mathrsfs,mathtools,skak,stackrel,todonotes,xy,url,
verbatim,tikz-cd}
\usepackage{hyperref}
\usepackage{cleveref}

\tikzcdset{scale cd/.style={every label/.append style={scale=#1},
    cells={nodes={scale=#1}}}}

\input xy
\xyoption{all}

\theoremstyle{remark}

\newtheorem{para}{\bf}[subsection]
\newtheorem{assumption}[para]{\bf Assumption}

\newtheorem{example}[para]{\bf Example}

\newtheorem{rem}[para]{\bf Remark}
\theoremstyle{definition}

\newtheorem{dfn}[para]{Definition}

\theoremstyle{plain}

\newtheorem{thm}[para]{Theorem}

\newtheorem{lemma}[para]{Lemma}
\newtheorem{conj}[para]{Conjecture}
\newtheorem{cor}[para]{Corollary}
\newtheorem{prop}[para]{Proposition}

\crefname{prop}{prop.}{props.}

\newenvironment{numequation}{\addtocounter{para}{1}
\begin{equation}}{\end{equation}}

\DeclareMathSymbol{\sqbullet}{2}{mathb}{"0D}

\newcommand{\bbA}{{\mathbb A}}
\newcommand{\bbB}{{\mathbb B}}
\newcommand{\bbC}{{\mathbb C}}

\newcommand{\bbF}{{\mathbb F}}
\newcommand{\bbG}{{\mathbb G}}

\newcommand{\bbN}{{\mathbb N}}

\newcommand{\bbQ}{{\mathbb Q}}

\newcommand{\bbZ}{{\mathbb Z}}

\newcommand{\bA}{{\bf A}}
\newcommand{\bB}{{\bf B}}

\newcommand{\bG}{{\bf G}}

\newcommand{\bN}{{\bf N}}

\newcommand{\bT}{{\bf T}}

\newcommand{\bZ}{{\bf Z}}

\newcommand{\bh}{{\bf h}}

\newcommand{\frb}{{\mathfrak b}}
\newcommand{\frc}{{\mathfrak c}}

\newcommand{\frg}{{\mathfrak g}}

\newcommand{\frl}{{\mathfrak l}}
\renewcommand{\frm}{{\mathfrak m}}
\newcommand{\frn}{{\mathfrak n}}

\newcommand{\frt}{{\mathfrak t}}

\newcommand{\frz}{{\mathfrak z}}

\newcommand{\frA}{{\mathfrak A}}

\newcommand{\frX}{{\mathfrak X}}
\newcommand{\frY}{{\mathfrak Y}}

\newcommand{\cA}{{\mathcal A}}

\newcommand{\cC}{{\mathcal C}}

\newcommand{\cF}{{\mathcal F}}

\newcommand{\cL}{{\mathcal L}}

\newcommand{\cO}{{\mathcal O}}

\newcommand{\cR}{{\mathcal R}}

\newcommand{\cT}{{\mathcal T}}
\newcommand{\cU}{{\mathcal U}}

\newcommand{\cW}{{\mathcal W}}
\newcommand{\cX}{{\mathcal X}}

\newcommand{\cZ}{{\mathcal Z}}

\newcommand{\ove}{{\overline e}}

\newcommand{\ovk}{{\overline k}}

\newcommand{\sA}{{\mathscr A}}

\newcommand{\sL}{{\mathscr L}}

\newcommand{\wtc}{{\widetilde c}}

\newcommand{\wtO}{\widetilde{O}}

\newcommand{\ua}{\underline{a}}

\newcommand{\uk}{\underline{k}}

\makeatletter
\newcommand*\bdot{{\mathpalette\bdot@{.9}}}
\newcommand*\bdot@[2]{\mathbin{\vcenter{\hbox{\scalebox{#2}{$\m@th#1\bullet$}}}}}
\makeatother

\newcommand{\foo}{\mapstochar\mathrel{\mspace{1.2mu}}\leadsto}

\newcommand{\ab}{{\rm ab}}
\newcommand{\Ad}{{\rm Ad}}

\newcommand{\Aff}{{\rm Aff}}
\newcommand{\alg}{{\rm alg}}
\newcommand{\an}{{\rm an}}

\newcommand{\cl}{{\rm cl}}

\newcommand{\co}{{\rm co}}

\newcommand{\coeq}{{\rm coeq}}

\newcommand{\Comod}{{\rm Comod}}

\newcommand{\diag}{{\rm diag}}

\newcommand{\dR}{{\rm dR}}

\newcommand{\End}{{\rm End}}

\newcommand{\eq}{{\rm eq}}

\newcommand{\Ext}{{\rm Ext}}
\newcommand{\Fil}{{\rm Fil}}
\newcommand{\Fp}{{\bbF_p}}
\newcommand{\Fpbar}{{\overline{\bbF}_p}}

\newcommand{\Frob}{{\rm Frob}}

\newcommand{\FS}{{\rm FS}}
\newcommand{\ft}{{\rm ft}}

\newcommand{\Gal}{{\rm Gal}}
\newcommand{\GL}{{\rm GL}}

\newcommand{\Hom}{{\rm Hom}}

\newcommand{\id}{{\rm id}}
\newcommand{\im}{{\rm im}}
\newcommand{\Ind}{{\rm Ind}}
\newcommand{\cInd}{{\rm c\text{-}Ind}}

\newcommand{\isom}{\stackrel{\simeq}{\lra}}

\newcommand{\la}{{\rm la}}

\newcommand{\lra}{\longrightarrow}

\newcommand{\Mod}{{\rm Mod}}

\newcommand{\op}{{\rm op}}
\newcommand{\ot}{\otimes}

\newcommand{\pr}{{\rm pr}}

\newcommand{\Psh}{{\rm Psh}}

\newcommand{\Qp}{{\bbQ_p}}
\newcommand{\ra}{\rightarrow}

\newcommand{\red}{{\rm red}}
\newcommand{\Res}{{\rm Res}}
\newcommand{\Rep}{{\rm Rep}}

\newcommand{\rig}{{\rm rig}}
\newcommand{\Rig}{{\rm Rig}}

\newcommand{\s}{\sqbullet}
\newcommand{\set}[2]{\left\{#1 \middle|  #2\right\}}

\newcommand{\Shv}{{\rm Shv}}

\newcommand{\sm}{{\rm sm}}

\newcommand{\Sob}{{\rm Sob}}

\newcommand{\Sp}{{\rm Sp}}
\newcommand{\Spa}{{\rm Spa}}
\newcommand{\Spec}{{\rm Spec}}
\newcommand{\Spf}{{\rm Spf}}

\newcommand{\sss}{{\rm ss}}

\newcommand{\st}{{\rm st}}
\newcommand{\Stk}{{\rm Stk}}

\newcommand{\sub}{\subset}

\newcommand{\Sym}{{\rm Sym}}

\newcommand{\Top}{{\rm Top}}

\newcommand{\unr}{{\rm unr}}

\newcommand{\vep}{{\varepsilon}}
\newcommand{\vphi}{{\varphi}}

\newcommand{\wot}{\widehat{\ot}}

\newcommand{\wt}{{\rm wt}}
\newcommand{\WT}{{\rm WT}}

\newcommand{\x}{{\times}}
\newcommand{\xra}{\xrightarrow}

\newcommand{\Z}{{\mathbb Z}}

\newcommand{\Zp}{{\mathbb Z_p}}

\begin{document}

\title{Block decompositions in the p-adic Langlands correspondence}
\author{Daniel Le}
\address{Department of Mathematics, Purdue University, 150 N. University Street, West Lafayette, IN 47907-2067}
\email{ledt@purdue.edu}
\author{Matthias Strauch}
\address{Department of Mathematics, Indiana University, Rawles Hall, Bloomington, IN 47405, U.S.A.}
\email{mstrauch@iu.edu}
\author{Zichuan Wang}
\address{Department of Mathematics, Indiana University, Rawles Hall, Bloomington, IN 47405, U.S.A.}
\email{wangzich@iu.edu}

\begin{abstract} Let $K/\Qp$ be a finite extension, $\bG$ a connected reductive group over $K$, and set $G = \bG(K)$, considered as a $p$-adic Lie group. We show first that the Bernstein center $\cC_G$ of the category of solid locally $\Qp$-analytic representations of $G$ is isomorphic to the center of the locally analytic distribution algebra $D^\la(G)$. We then consider the Emerton-Gee stack $\frX_{n,K}$ of rank-$n$ $(\vphi,\Gamma)$-modules over the Robba ring for $K$ and determine its connected components. The latter are in canonical bijection with the primitive idempotents of the ring $\cC_{\GL_n(K)}$. Moreover, under the assumption that the ring of global functions on $\frX_{n,K}$ has no non-zero locally nilpotent elements, we show that this ring is isomorphic to a Fr\'echet completion of $\cC_{\GL_n(K)}$. 
\end{abstract}

\maketitle

\tableofcontents

\section{Introduction}
\label{section1_intro}

In this paper we consider geometric structures in the $p$-adic Langlands program which are related by a localization functor whose existence has been conjectured by Emerton, Gee, and Hellmann in \cite{EmertonGeeHellmann}. Those results have consequences for the organization of the categories of representations under consideration -- on the `automorphic side' as well as on the `Galois side' -- into blocks. 

\vskip8pt

\subsection{Smooth representations, the Bernstein decomposition, and spaces of \texorpdfstring{$L$}{}-parameters}
\label{1.1_smooth}
Let $K$ be a local non-archimedean field of residue characteristic $p$, $\bG$ a connected reductive group over $K$, and set $G = \bG(K)$. To put our results into context we change the setting and begin by recalling some results in the local Langlands program for smooth representations on vector spaces over an algebraically closed field $E$ of characteristic zero.\footnote{More generally, one can consider representations on modules over a local ring $\Lambda$ with residue field of characteristic $\ell \neq p$. To do so is necessary if one wants to study deformations of such representations. This leads to the concept of a local Langlands correspondence in families. See \cite{Emerton-Helm_families} and \cite{Dat_Moduli-L-parameters} for more information and background.} Denote by $\Rep^\sm_E(G)$ the category of smooth representations of $G$ on $E$-vector spaces. When $E = \bbC$, Joseph Bernstein gave a description of the decomposition of this category into blocks: $\Rep^\sm_E(G) = \prod_\Omega \Rep_E(G)_\Omega$. Here $\Omega$ ranges over the inertial equivalence classes $[L,\rho]$ of pairs $(L,\rho)$, where $L$ is a Levi subgroup of a $K$-parabolic subgroup of $G$ and $\rho$ is a supercuspidal representation of $L$. Two such pairs $(L_1,\rho_1)$ and $(L_2,\rho_2)$ are called inertially equivalent if there is $g \in G$ and an unramified character $\psi$ of $L_2$ such that $L_2 = gL_1g^{-1}$ and $\rho_2 = \psi \ot \rho_1^g$ with $\rho_1^g(h) = \rho_1(g^{-1}hg)$ for $h \in L_2$. Any such inertial equivalence class $\Omega$ has the structure of a connected affine algebraic variety and the disjoint union of all these equivalence classes, $\Omega(G) = \coprod_\Omega \Omega$, is called the {\it Bernstein variety} of $G$.  

\vskip8pt

This decomposition is achieved by studying the center $\cC^\sm_G$ of the category $\Rep^\sm_E(G)$, also called the {\it Bernstein center}, which is defined as the ring of natural transformations of the identity functor of this category. In this way the Bernstein variety $\Omega(G)$ appears as a (coarse) parameter space for the irreducible representations of $G$, the ring of global functions on $\Omega(G)$ is the Bernstein center $\cC^\sm_G$, and the blocks of $\Rep^\sm_E(G)$ are in bijection with the connected components of $\Omega(G)$, and hence with the primitive\footnote{A nonzero idempotent $r$ in a nonzero ring $R$ is called primitive if it is impossible to write $r=r_1+r_2$ for nonzero idempotents $r_1,r_2\in R$ such that $r_1r_2=0$. If $R=\Gamma(X,\cO_X)$ for a qcqs scheme $X$, then primitive idempotents in $R$ correspond bijectively to connected components of $X$ because a connected component of $X$ is the same as a clopen subset that is connected. Although the Bernstein variety is not quasi-compact, its connected components are open, so we still have the bijective correspondence.} idempotents of $\cC^\sm_G$, cf. \cite[2.13]{Bernstein_Centre}, \cite[ch. VI]{Renard_book}. We note that $\Omega(G)$ has countably infinite connected components for any non-trivial $\bG$.

\vskip8pt

We turn now to the `Galois side', which is rather about representations of the Weil group $W_K$ of $K$.
Let $\widehat{G}=\widehat{\bG}(E)$ denote the Langlands dual group of $\bG$, and let ${}^{L}G:=\widehat{G}\rtimes W_K$ denote the Langlands $L$-group. Modifying \cite[\S7]{Vogan_TheLLC}, Haines in \cite[\S5]{Haines_stable_center} endowed the set of $\widehat{G}$-conjugacy classes of semisimple $L$-parameters $\varphi:W_K\to {}^LG$ of $G$ with the structure of an infinite disjoint union of connected affine algebraic varieties over $E$, which we denote by $\Omega^{\mathrm{st}}(G)$, adopting the notation in \cite[\S6]{Scholze-Shin_unitary}.
The ring of global functions on $\Omega^{\mathrm{st}}(G)$ is called the {\it stable Bernstein center} and is denoted by $\cC^{\sm,\st}_G$.
By \cite[Proposition 4.23]{Haines_stable_center}, an enhancement (LLC+) of the local Langlands correspondence for $G$ induces a quasi-finite map $f:\Omega(G)\to \Omega^{\st}(G)$ of algebraic varieties, which gives rise to a homomorphism $f^*: \cC_G^{\sm,\st}\to \cC^\sm_G$ that is injective when $\bG/K$ is quasi-split.
For $\bG = \GL_n$, it is known that $f$, and hence $f^*$, are isomorphisms \cite[Remark 4.25]{Haines_stable_center}, so we have a bijection between connected components of $\Omega(G)$ and $\Omega^\st(G)$.

\vskip8pt

More recently, following the work of Emerton-Helm, Hellmann \cite{Hellmann_iwahori}, Ben-Zvi, Chen, Helm, Nadler \cite{Ben-Zvi_Cat-Deligne-Langlands}, and Zhu \cite{Zhu_Stack-Langlands-parameters} considered categorical versions of the local Langlands correspondence. We recall here Hellmann's conjecture \cite[Conj. 1.1]{Hellmann_iwahori} where it is assumed that $\bG$ is split. Fix a Borel subgroup $\bB \sub \bG$, a maximal split torus $\bT$, as well as a generic character $\psi: N \ra E^\x$, where $N$ is the maximal unipotent subgroup of $B = \bB(K)$. As in \cite{Hellmann_iwahori}, we denote by $[X_{\widehat{\bG}}^{\rm WD}/\widehat{\bG}]$ the stack of Weil-Deligne representations valued in $\widehat{\bG}$. It is then conjectured that there is a fully faithful functor
\begin{numequation}\label{Hellmann-conj}
\cR^\psi_G: D^+\Big(\Rep^\sm_E(G)\Big) \lra D^+_{{\rm QCoh}}\Big([X_{\widehat{\bG}}^{\rm WD}/\widehat{\bG}]\Big) \;.    
\end{numequation}

Moreover, evidence for this conjecture is provided by giving a construction over the principal Bernstein block of $G=\GL_n(K)$, i.e., the block containing the trivial representation. 
This conjectural fully faithful functor induces a map from the ring of global functions of $[X_{\widehat{\bG}}^{\rm WD}/\widehat{\bG}]$ to the Bernstein center $\cC_G^{\sm}$.
In fact, $\Gamma([X_{\widehat{\bG}}^{\rm WD}/\widehat{\bG}],\cO_{[X_{\widehat{\bG}}^{\rm WD}/\widehat{\bG}]}) \cong \cC^{\sm,\st}_G$ by \cite[Cor. 1.8]{DHKM_L-parameters},\footnote{This result says that the GIT quotient $X_{\widehat{\bG}}^{\rm WD}/\hskip-3pt/\widehat{\bG}$ is isomorphic to $\Omega^\st(G)$.} and this conjectural map of rings should coincide with the map $f^*:\cC_G^{\sm,\st}\to \cC_G^\sm$ considered above, cf. \cite[Remark 3.7(b)]{Hellmann_iwahori}. If this is the case, then one obtains a canonical surjection from the set of connected components of $\Omega(G)$ to the set of connected components of $[X_{\widehat{\bG}}^{\rm WD}/\widehat{\bG}]$. 

\vskip8pt

\subsection{Locally analytic representations, the Bernstein center, and stacks of \texorpdfstring{$(\varphi,\Gamma)$}{}-modules} 
From now on $K$ is a finite extension of $\Qp$. This paper is about a situation similar to the one discussed in the preceding subsection, but now we consider $G = \bG(K)$ as a $p$-adic Lie group. Fix once and for all a finite extension $E/K$ and assume $\Sigma_K:=\Hom_{\Qp\mbox{-}{\rm algs}}(K,E)$ has cardinality $[K:\Qp]$. Denote by $\Rep_{E_\s}^{\la,\heartsuit}(G)$ the abelian category of solid locally analytic representations of $G$ as defined in \cite[3.3]{RR_2}.
We write $\cC_G$ for the Bernstein center of $\Rep_{E_\s}^{\la,\heartsuit}(G)$, which is our object of interest on the `automorphic side'. On the `Galois side' we consider the Emerton-Gee stack of $(\vphi,\Gamma)$-modules over the Robba ring for $K$. 

\vskip8pt

{\it Result on the Bernstein center for solid locally analytic representations.} Let $D^\la(G) = D^\la(G,E)$ be the locally $\Qp$-analytic distribution algebra of $G$, i.e., the strong dual of the topological $E$-vector space $C^\la(G,E)$ of locally $\Qp$-analytic $E$-valued functions on $G$.
In sec. \ref{section2_Bernstein}, we prove that the Bernstein center of $\Rep^{\la, \heartsuit}_{E_\s}(G)$ is canonically isomorphic to the center of $D^\la(G)$.

\vskip8pt

\begin{thm}[Theorem {\ref{thm:Bernstein_Dist}}] The canonical map 
\[Z(D^\la(G,E)) \lra \cC_G\;, \;\; z \mapsto (z\cdot\id_V:V\ra V)_V;,\]
is an isomorphism of $E$-algebras.
\end{thm}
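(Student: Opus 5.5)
The plan is to identify $\Rep_{E_\s}^{\la,\heartsuit}(G)$ with a full subcategory of solid $D^\la(G)$-modules that contains $D^\la(G)$ itself, and then run the standard argument that the center of a module category over a ring $A$ is $Z(A)$.

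The key input from \cite{RR_2} is the following. A solid locally analytic representation is a solid $E_\s$-vector space $V$ with a $G$-action such that $V \to V^{R\la}$ is an isomorphism. Such representations are equivalent to solid modules over the (solid) distribution algebra $D^\la(G,E)$ that are locally analytic. Moreover $D^\la(G,E)$, with its left regular action, is itself an object of $\Rep_{E_\s}^{\la,\heartsuit}(G)$. This should follow because $D^\la(G) = \varinjlim_h D^{h}(G)$ is a colimit of analytic-vector spaces, or equivalently because $D^\la(G)$ is the dual of $C^\la(G,E)$ and the left action on it is locally analytic; I will check this against \cite[3.3]{RR_2}.

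With this in hand, the argument has three steps.

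\emph{Well-definedness.} If $z\in Z(D^\la(G))$, then $z\cdot$ acts $G$-equivariantly on every $V$, because the $G$-action factors through $D^\la(G)$ via Dirac distributions $\delta_g$ and $z$ commutes with each $\delta_g$. It is also natural in $V$, because morphisms of representations are $D^\la(G)$-linear. This last point requires that the $D^\la(G)$-module structure is functorial, which is part of the equivalence above.

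\emph{Injectivity.} Evaluate on $V=D^\la(G)$: the endomorphism $z\cdot$ sends $\delta_1$ to $z$, so $z\cdot\id_{D^\la(G)}=0$ forces $z=0$.

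\emph{Surjectivity.} Let $\eta\in\cC_G$ and set $z:=\eta_{D^\la(G)}(\delta_1)$. More precisely, $\eta_{D^\la(G)}$ is a map of solid $E$-vector spaces, and I evaluate it on the point $\delta_1\colon E\to D^\la(G)$.

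First I show $z$ is central. For any $a\in D^\la(G)$, right multiplication $r_a\colon x\mapsto xa$ is an endomorphism of the left regular representation, since it commutes with the left $G$-action. Naturality of $\eta$ gives $\eta(a)=\eta(\delta_1)a=za$. Since $\eta_{D^\la(G)}$ is $G$-equivariant, it commutes with $\delta_g$, and by continuity/solid linearity with all of $D^\la(G)$ acting on the left, which gives $\eta(a)=a\,\eta(\delta_1)=az$. Hence $za=az$.

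Next I extend to arbitrary $V$. It suffices to find enough maps from free modules to $V$, i.e. to show that every $V$ is a quotient of a direct sum of copies of $D^\la(G)\otimes_{E_\s} W$ for solid $W$. For this I use the counit $D^\la(G)\otimes_{E_\s} V\to V$, where $D^\la(G)\otimes_{E_\s} V$ carries the action on the first factor and the counit is a $G$-map. This induction object should lie in the locally analytic category, again by \cite{RR_2}.

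On $D^\la(G)\otimes W$, naturality applied to endomorphisms $\id\otimes f$, together with functoriality in $W$ using $W=\varinjlim$ of compact projectives $E_\s[S]$, shows that $\eta$ acts as $z\otimes\id$. The cleanest way to see this is to note that $\Hom_G(D^\la(G)\otimes W,V)=\Hom_E(W,V)$ and that $\eta$ commutes with all these maps. Since the counit is an epimorphism and $\eta$ is natural, $\eta_V=z\cdot\id_V$.

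The main obstacle is the foundational input: that $D^\la(G)$ and $D^\la(G)\otimes_{E_\s}W$ are genuinely locally analytic solid representations, and that the category embeds fully faithfully into $D^\la(G)$-modules. I expect this to follow from \cite{RR_2}'s description of locally analytic vectors as $D^\la(G)$-modules. Everything else is formal.
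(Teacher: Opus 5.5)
\textbf{There is a genuine gap, and it is the point you flagged as the main obstacle.} The left regular representation $D^\la(G,E)$ is \emph{not} an object of $\Rep^{\la,\heartsuit}_{E_\s}(G)$. Neither is $D^\la(G,E)\otimes_{E_\s}W$.

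Note first that $D^\la(G_0,E)=\varprojlim_h D^h(G_0,E)$ is an inverse limit, not a colimit; it is $C^\la=\varinjlim_h C^h$ that is a colimit. The failure is already visible for $G=\Zp$:
\begin{itemize}
\item Under the Amice transform, $\delta_a\mapsto(1+T)^a$, and $D^\la(\Zp,E)$ becomes the ring of functions on the open unit disc. The group acts by multiplication by $(1+T)^a=\exp(a\log(1+T))$.
\item Since $|\log(1+T)|_r\to\infty$ as $r\to 1$, the orbit map $a\mapsto(1+T)^a f$ of any nonzero $f$ has radius of analyticity shrinking to $0$ near the boundary.
\item Equivalently, $\underline{\Hom}_{D^\la(G_0)}(D^h(G_0),D^\la(G_0))=0$ for all $h$. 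So $D^\la(\Zp)$ has no nonzero locally analytic vectors; in particular $\delta_1$ is not one.
\end{itemize}
Consequently you cannot evaluate $\eta$ on $D^\la(G)$. Your injectivity step, the definition $z:=\eta_{D^\la(G)}(\delta_1)$, the centrality argument via right multiplications, and the generation of the category by $D^\la(G)\otimes W$ all fail.

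\textbf{What replaces $D^\la(G)$.} Use the projective system of locally analytic objects $D_h=D^\la(G)\otimes_{D^\la(G_0)}D^h(G_0)$. One has $D^\la(G)\cong\varprojlim_h D_h$ with injective projections $\pr_h$, and the category is generated by $D_{h,S}=D_h\otimes_{E_\s}E_\s[S]$.
\begin{itemize}
\item \emph{Construction of $z_\eta$.} Set $z_\eta:=\varprojlim_h\eta_{D_h}(1\otimes 1)$. The components are compatible by naturality along the $D^\la(G)$-linear maps $D_{h'}\hookrightarrow D_h$.
\item \emph{Replacing your adjunction.} Instead of $\Hom_G(D^\la(G)\otimes W,V)=\Hom(W,V)$, use that for locally analytic $V$ one has $V(*)=\varinjlim_h\Hom_{D^\la(G)}(D_h,V)$. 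So every point $v$ equals $f_v(1\otimes 1)$ for some $f_v\colon D_h\to V$, and naturality gives $\eta_V(v)=z_\eta v$.
\item \emph{From points to condensed maps.} You still need to upgrade this equality on underlying points to an equality of condensed maps. Writing $W$ as a colimit of $E_\s[S]$ does not do this, because you would still need $\eta_{D_h\otimes W}=\eta_{D_h}\otimes\id_W$. The paper handles it by showing $D_{h,S}$ is quasi-separated, so that maps into it are detected on $*$-points.
\item \emph{Centrality.} Apply the $D^\la(G)$-linearity of $\eta_{D_h}$ to $a\cdot(1\otimes 1)$. This gives $\pr_h(z_\eta a)=\pr_h(a z_\eta)$, and injectivity of $\pr_h$ yields $z_\eta a=az_\eta$.
\item \emph{Injectivity.} If $z$ acts by zero on every $V$, then on $V=D_h$ one gets $\pr_h(z)=z\cdot(1\otimes 1)=0$, so $z=0$. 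This replaces evaluation on $D^\la(G)$.
\end{itemize}
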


\vskip8pt

If $\bG$ is {\it sufficiently isotropic} over $K$ (e.g. $\bG$ is split over $K$), cf. \cite[p. 55]{Kohlhaase_Invariant_dist}, then $\Res_{K/\Qp}(\bG)$ is sufficiently isotropic over $\Qp$, and Kohlhaase's results \cite[2.2.1, 2.2.4]{Kohlhaase_Invariant_dist} imply that the set of primitive idempotents in $Z(D^\la(G))$ is naturally identified with the set of primitive idempotents in $D^\la(Z(G))$.
Hence, the blocks in $\Rep^{\la, \heartsuit}_{E_\s}(G)$ are in bijection with connected components of $\Spec (D^\la(Z(G)))$.

\vskip8pt

{\it Result on connected components.} For the remainder of this introduction we have $G = \GL_n(K)$. The Amice transform defines an isomorphism 
$$
D^\la(Z(G),E) = D^\la(K^\times,E) \isom \Gamma(\cW_K)\otimes_{E} \Gamma(\bbG_m^{\alg})
$$ 
of $E$-algebras, where $\cW_K$ is the rigid space representing the functor $\Hom^{\rm cont}(\cO_K^\times, \bbG_m^{\an}(-)): \Rig_E^\op\to {\rm Set}$, and $\bbG_m^\alg = \Spec(E[T,T^{-1}])$.
In particular, by local class field theory, connected components of $\Spec(D^\la(Z(G)))$, and hence the blocks in $\Rep^{\la, \heartsuit}_{E_\s}(G)$, are indexed by characters of the roots of unity $(\cO_K^\times)_{\rm tors} = \mu(K)$ in $K$, which is finite.

\vskip8pt 

On the other hand, $\Gamma(\cW_K\times \bbG_m^\rig)$ is isomorphic to the ring $\Gamma(\frX_{1,K})$ of global functions on the rigid analytic moduli stack $\frX_{1,K}$ of $(\varphi,\Gamma_K)$-modules of rank $1$.
Therefore, Bernstein blocks in $\Rep^{\la, \heartsuit}_{E_\s}(\GL_n(K))$ are in bijection with connected components of $\frX_{1,K}$.
We study the connected components of various Emerton-Gee stacks.
Let $\Rig_E$ denote the category of rigid analytic spaces over $\Sp(E)$, which can be endowed with the Tate-\'{e}tale topology or the Tate-fpqc topology \cite[2.1]{ConradTemkinAnalytification}.
Let $\cX_n$ be the formal algebraic Emerton-Gee stack \cite{EmertonGee_modulicrystalline}, and let $\cX^\rig_n$ be its rigid analytic fiber as in sec. \ref{subsection3.2_genericfiber}, which is a Tate-fpqc Artin stack on $\Rig_E$ by \cite{Oh-Shimizu_moduli} and Theorem \ref{thm:Artin_etale=fpqc}.
Let $\frX_n$ be the rigid analytic Emerton-Gee stack \cite[5.1]{EmertonGeeHellmann} on the Tate-fpqc site $\Rig_E$.

\vskip8pt 

In sec. \ref{section3_Prelim}, we begin with preliminaries on stacks. Our first goal is to introduce and study the {\it rigid generic fiber} $\cX^\rig$ for any formal algebraic stack $\cX$ locally formally of finite type over $\cO_E$.\footnote{As above, and throughout this paper, we assume that $E$ is a finite extension of $K$ with $|\Hom_{\Qp\mbox{-}{\rm algs}}(K,E)|=[K:\Qp]$. The Emerton-Gee stack is a priori a stack over $\Spf(\Zp)$. The connected components of this stack over $\Spf(\Zp)$ (and its variants) are, however, in general not geometrically connected, as our discussion implicitly shows.} 
Such a notion has been introduced in \cite[(5.1.12)]{EmertonGeeHellmann}, where Proposition 5.1.13 in {\it loc. cit.} shows that $\cX^\rig$ is a Tate-fpqc rigid analytic Artin stack under the assumption that $\cX$ is of topologically finite type.
However, the Emerton-Gee stack $\cX_{n,K}$ is not topologically of finite type (\cite[6.5.2]{EmertonGee_modulicrystalline}), but locally formally of finite type over $\cO_E$.
We want to know that $\cX_{n,K}^\rig$ is a Tate-fpqc Artin stack on $\Rig_E$.
Therefore following \cite{Oh-Shimizu_moduli}, in sec. \ref{subsection3.2_genericfiber} we define the {\it rigid generic fiber} $\cX^\rig$ for any formal algebraic stack $\cX$ locally formally of finite type over $\cO_E$, which, by the arguments of \cite{Oh-Shimizu_moduli}, is an Artin stack on $\Rig_E$ in the {\it Tate-\'{e}tale} topology.
It is straightforward to see that the Tate-\'{e}tale topology on $\Rig_E$ is generated by \'{e}tale strongly surjective morphisms of rigid spaces \cite[2.1]{Hu96}, which is coarser than the Tate-fpqc topology on $\Rig_E$.
Using adic spaces (\cite{Scholze_diamond}, \cite{Bertapelle_Weil}, \cite{Wahle_Weil}) and Berkovich spaces (\cite{Ducros_families}, \cite{Daylies_descente}), we prove that any Tate-\'{e}tale quasi-analytic space (resp. Artin stack) on $\Rig_E$ is already a Tate-fpqc quasi-analytic space (resp. Artin stack) on $\Rig_E$ in Theorem \ref{thm:qan_etale=fpqc} (resp. Theorem \ref{thm:Artin_etale=fpqc}).
Therefore, the {\it rigid generic fiber} $\cX^\rig$ of a formal algebraic stack $\cX$ locally formally of finite type over $\cO_E$ is a Tate-fpqc Artin stack on $\Rig_E$.
\vskip8pt

In sec. \ref{subsection4.1_formalcomp}, we determine the set $\pi_0(\cX_{n,K})$ of connected components of the formal algebraic Emerton-Gee stack $\cX_{n,K}$ over $\Spf(\cO_E)$ whose groupoid of $A$-valued points is the groupoid of projective \'{e}tale $(\varphi,\Gamma_K)$-modules of rank $n$ with $A$-coefficients, for any $p$-adically complete $\cO_E$-algebra $A$, cf. \cite[\S1.3, \S2.2]{EmertonGee_modulicrystalline}. 

\vskip8pt

\begin{thm}[{Theorem \ref{thm:reduced-EG}}]\label{thm1}
The set of connected components of $\cX_{n,K}$ is in bijection with the set of connected components of $\cX_{1,K}$ via the determinant map, which is in bijection with the Pontryagin dual of the prime-to-$p$ roots of unity in $K$ via local class field theory.
\end{thm}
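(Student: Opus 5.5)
Connected components of $\cX_{n,K}$ are determined by its underlying reduced substack $\cX_{n,K,\red}$, which is an algebraic stack of finite type over $\bbF$ (the residue field of $E$). By Emerton--Gee, its irreducible components are labelled by Serre weights. So we will compute $\pi_0(\cX_{n,K,\red})$ and compare it with $\pi_0(\cX_{1,K,\red})$ via the determinant $\det\colon \cX_{n,K}\ra\cX_{1,K}$.

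\emph{Rank one.} The $\Fpbar$-points of $\cX_{1,K,\red}$ are characters $\chi\colon G_K\ra \Fpbar^\times$, equivalently characters of $K^\times$ by local class field theory. We claim that $\cX_{1,K,\red}$ is a disjoint union of pieces of the form $[\bbG_m/\bbG_m]$ (trivial action), indexed by the restriction $\chi|_{I_K}$. Here the $\bbG_m$-coordinate records the value on a uniformizer, i.e.\ the unramified twist. The tame inertia characters $\chi|_{I_K}$ factor through $\cO_K^\times\ra k^\times$, and $k^\times\cong \mu_{p'}(K)$ is the group of prime-to-$p$ roots of unity. Each such piece is connected, and distinct restrictions to $I_K$ give disjoint closed substacks, since inertia restriction is locally constant in families. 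Hence
\[\pi_0(\cX_{1,K})\cong \Hom(\mu_{p'}(K),\Fpbar^\times),\]
the Pontryagin dual of $\mu_{p'}(K)$.

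\emph{Surjectivity of $\det$ on $\pi_0$.} Fix any character $\chi$ of $I_K$. The point $\chi\oplus 1^{\oplus(n-1)}$ of $\cX_{n,K}$ has determinant in the component of $\chi$. So $\det$ induces a surjection on $\pi_0$.

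\emph{Injectivity.} We must show that any two $\Fpbar$-points $\bar\rho,\bar\rho'$ with $\det\bar\rho|_{I_K}=\det\bar\rho'|_{I_K}$ lie in the same connected component. We proceed in three steps.
\begin{enumerate}[(i)]
\item Every $\bar\rho$ lies in the same component as its semisimplification. Indeed, a $\bbG_m$-action scaling the extension classes degenerates an extension to the direct sum, so the two points lie in the closure of a common connected family.
\item Every irreducible $\bar\rho$ is induced from a character of an unramified extension. Using the Emerton--Gee description of generic points of irreducible components, $\bar\rho$ lies on a component whose generic point is a successive extension of characters. So after step (i), it suffices to treat $\bar\rho=\chi_1\oplus\dots\oplus\chi_n$.
\item A direct sum of characters can be connected to $(\chi_1\cdots\chi_n)\oplus 1^{\oplus(n-1)}$ by changing the inertial types of two summands at a time, keeping their product fixed.
\end{enumerate}

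For step (iii), in rank $2$ the goal is to connect $\chi_1\oplus\chi_2$ to $\chi_1\chi_2\omega^{-a}\oplus\omega^{a}$, where $\omega$ is a fundamental character. The mechanism is an irreducible component of $\cX_{2,K,\red}$ containing two maximally nonsplit types. Concretely, when $\mathrm{Ext}^1(\chi_2,\chi_1)$ is large (dimension at least $[K:\Qp]$), the family of extensions specializes so that the nonsplit extension, and its reduction to a different ordering, meet the same component. Equivalently, we use that each irreducible component labelled by a Serre weight $\sigma$ contains the points with ``ordinary'' reductions of several inertial shapes. Changing $\sigma$ within one central character class links all shapes.

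\emph{Main obstacle.} The hardest step is the component-chasing in step (iii): proving that the graph on Serre weights with a fixed central character, with edges given by nonempty intersections of irreducible components, is connected. I would attempt this via the explicit generic points from Emerton--Gee (extensions of characters with prescribed weights) and the Ext-dimension computation. Nongeneric weights may require a special reduction, for instance adjusting by the cyclotomic character to reach the trivial case.
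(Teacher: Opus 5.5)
Your skeleton matches the paper's. Rank one comes from Emerton--Gee's description of $\cX_{1,\red}$. You then reduce to direct sums of characters; the relevant input there is Emerton--Gee's theorem that every irreducible component, being connected, contains a direct sum of characters, and your remark that irreducible $\overline{\rho}$ are induced plays no role. Finally you reduce rank $n$ to rank $2$ by moving two inertial characters at a time with fixed product; the paper does this via simple coroots and the map $M\mapsto M\oplus M_0$ from $\cX_2$ to $\cX_n$.

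The gap is that all the content then sits in the rank-$2$ claim, which you do not prove: $\omega^a\oplus\omega^b$ and $\omega^{a'}\oplus\omega^{b'}$ should lie in one connected component of $\cX_2$ whenever $a+b\equiv a'+b' \pmod{q-1}$. Your mechanism (``ordinary reductions of several inertial shapes'' on one component, large $\Ext^1$) is not precise enough to check. You yourself leave connectedness of the intersection graph of the components $\cX_2^\sigma$ as the ``main obstacle.'' The suggested twist by the cyclotomic character cannot handle the Steinberg weights, since twisting sends Steinberg components to Steinberg components.

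The missing ideas, which the paper supplies, are these.
\begin{enumerate}[(1)]
\item For a fixed central character, the non-Steinberg Serre weights form a single block of $\Fpbar[\GL_2(k)]$-modules. Hence any two are joined by a chain $\sigma_0,\dots,\sigma_m$ with non-split $\GL_2(k)$-extensions between consecutive members.
\item Kansal's theorem says a non-split extension between $\sigma$ and $\sigma'$ forces $\cX_2^{\sigma}\cap\cX_2^{\sigma'}\neq\emptyset$. So all non-Steinberg components with a given central character lie in one connected component.
\item A Steinberg weight is alone in its block, so no such chain reaches it, and it needs a separate argument. Every connected component contains some split point $\overline{\rho}_\lambda$. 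This point is peu ramifi\'ee, so by the results of Caraiani--Emerton--Gee--Savitt it lies on a non-Steinberg component.
\end{enumerate}
Together these give $\#\pi_0(\cX_2)\le q-1$. Your surjectivity of $\det$ on $\pi_0$ then yields the bijection. Without (1)--(3), or some substitute intersection result for the components of $\cX_2$, step (iii) and hence injectivity remain unproved.
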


\vskip8pt

More precisely, using the local geometry of $\cX_{n,K}$ (\cite{CEGS_ANT, CEGS_LocalGeometry, Kansal_Codim-1-intersections}) we show that $\det:\cX_{n,K}\to \cX_{1,K}$ induces a bijection on connected components.\footnote{For the results \ref{thm1}, \ref{thm2}, \ref{thm3} we suppose that $p$ is odd, as we make use of other papers which make this assumption.}  
Hence, by \cite[7.2.17]{EmertonGee_modulicrystalline}, for a connected formal scheme $\Spf(A)$, two $(\varphi,\Gamma_K)$-modules $D,D' \in \cX_{n,K}(\Spf(A))$ lie in the same connected component if and only if 
$$
\overline{\det(D)}|_{\mu(K)^p} = \overline{\det(D')}|_{\mu(K)^p},
$$ 
where $\overline{\det(D)}$ and $\overline{\det(D')} \in \cX_{1,K}(\Spec(A/\varpi_EA))$ are the mod-$\varpi_E$ reductions of the determinants, and we use the local class field theory to identify the prime-to-$p$ torsions in $I_{K}^{\ab}\cong \cO_K^\times$ with the group of prime-to-$p$ roots of unity $\mu(K)^p$ in $K$.

\vskip8pt

In sec. \ref{subsection4.2_rigidcomp}, we determine the set $\pi_0(\cX_{n,K}^\rig)$ of connected components of the rigid generic fiber $\cX_{n,K}^\rig$ as Tate-fpqc stack on $\Rig_{E}$.

\vskip8pt

\begin{thm}[{Theorem \ref{thm:rigid-EG-general}}]\label{thm2}
The set of connected components of $\cX_{n,K}^\rig$ is in bijection with the set of connected components of $\cX_{1,K}^\rig$ via the determinant map, which is in bijection with the Pontryagin dual of the roots of unity in $K$ via local class field theory.
\end{thm}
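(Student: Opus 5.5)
The plan has two halves. First I treat rank one explicitly; then I reduce rank $n$ to rank one along the determinant, fibre by fibre over the connected components of $\cX_{n,K}$ from Theorem \ref{thm1}.

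\textbf{Rank one.} Rigid analytic $(\vphi,\Gamma_K)$-modules of rank one, for étale ones, correspond to characters of $K^\times$ via local class field theory. This should identify $\cX_{1,K}^\rig$ with $[(\cW_K^{\rm bdd}\times \bbG_m^{\rm bdd})/\bbG_m^\rig]$, where $\bbG_m^\rig$ acts trivially and the superscript means the unit-norm loci: the open unit disc around $1$ translated by torsion, times the unit circle $\{|T|=1\}$. The unit circle is connected. Writing $\cO_K^\times \cong \mu(K)\times \Zp^{[K:\Qp]}$, the character space $\Hom^{\rm cont}(\cO_K^\times,\cO^{\times}(-))$ decomposes as a disjoint union, indexed by $\widehat{\mu(K)}$, of copies of a product of open unit polydiscs, and each copy is connected. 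Hence $\pi_0(\cX_{1,K}^\rig)\cong \widehat{\mu(K)}$.

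For the comparison with the formal stack, the reduction map sends a component indexed by $\chi\in\widehat{\mu(K)}$ to the component of $\cX_{1,K}$ indexed by $\chi|_{\mu(K)^p}$. So the $p$-power part of $\chi$ is exactly what becomes visible only on the generic fibre. This matches the statement.

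\textbf{Reduction to rank one.} The determinant $\det:\cX^\rig_{n,K}\to\cX^\rig_{1,K}$ is surjective on components, since one can twist by $\mathrm{diag}(\delta,1,\dots,1)$. It therefore suffices to show that two points with determinants in the same component of $\cX_{1,K}^\rig$ lie in the same component of $\cX_{n,K}^\rig$. Since $\cX^\rig_{n,K}$ is the rigid generic fibre of a formal stack, every rigid point specializes into some component $\cX_{n,K}^{\chi_0}$ of $\cX_{n,K}$, with $\chi_0\in\widehat{\mu(K)^p}$ by Theorem \ref{thm1}. Within a fixed formal component I want the generic fibre to split only according to the $p$-power part of $\det|_{\mu(K)}$.

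I plan to use the local geometry already exploited for Theorem \ref{thm1}. The irreducible components of $\cX_{n,K,\red}$ are indexed by Serre weights, and each one has a dense open set of successive extensions of characters. Each point in the generic fibre should be connected, via a family of potentially crystalline or trianguline lifts, to a point whose residual representation is generic on such a component. Near such a point, the formal stack should be formally smooth over $\cX_{1,K}$ after fixing the character data. Concretely, over the open locus of extensions of characters $\delta_1,\dots,\delta_n$, the rigid fibre is a vector-bundle-like stack over $(\cX^\rig_{1,K})^n$ with connected fibres. Its components are then indexed by tuples of components of $\cX_{1,K}^\rig$ lying over the given reductions. A tuple can be moved inside a single rigid component while keeping $\prod\delta_i$ in a fixed component, by transferring $p$-power-order characters of $\mu(K)$ between the $\delta_i$. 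So only $\det|_{\mu(K)}$ survives.

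\textbf{Main obstacle.} The hard step is showing that every point of $\cX_{n,K}^\rig$ lies in the same rigid component as a point of this good locus. Connectedness of the reduced special fibre does not pass directly to connectedness of the generic fibre. To handle this, I would first try to use the density of crystalline or trianguline points in the generic fibre, together with the fact that the formal completions of $\cX_{n,K}$ at finite-type points are the framed universal deformation rings $R^\square_{\bar\rho}$. Components of $\Spec R^\square_{\bar\rho}[1/p]$ are known to be indexed by $\det|_{\mu(K)_{p^\infty}}$, by Böckle–Iyengar–Paškūnas. This local statement gives the comparison at each residual point. Gluing it along the connected special fibre, using that the local labels are compatible with $\det$, should yield the global bijection.
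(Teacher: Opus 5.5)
\textbf{Verdict.} Your proposal has a genuine gap, and it sits exactly at the step you call the main obstacle. Your rank-one analysis is fine, and Böckle--Iyengar--Pa\v{s}k\={u}nas is the right local input. But the way you propose to use BIP cannot close the gap.

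\textbf{Where the argument fails.} The ``good locus'' paragraph is heuristic. Density of crystalline or trianguline points produces no connecting families in $\cX_{n,K}^\rig$, and formal smoothness over $\cX_{1,K}$ is only asserted.

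The final ``gluing'' step fails for a structural reason. Up to smooth equivalence, the rigid fibre of a versal ring at a closed point $x$ is the residue tube of $x$ in a smooth chart. Tubes of distinct closed points are disjoint and do not form an admissible covering. So suppose an idempotent $e$ on $\cX_{n,K}^\rig$ is constant on each tube, with its value on each piece labelled by $\det|_{\mu_{p^\infty}(K)}$. Nothing then relates its values on tubes over different points, and connectedness of the special fibre never enters. Irreducibility of $\Spec R^{\square,\chi}_{\bar\rho}[1/p]$ does not even directly control the rigid fibre $\Spf(R^{\square,\chi}_{\bar\rho})^\rig$. What is missing is a mechanism turning idempotents on the generic fibre into idempotents on the formal stack. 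Knowing the irreducible components of $\Spec R^\square_{\bar\rho}[1/p]$ does not supply one.

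\textbf{How the paper closes the gap.} Let $\psi$ be a character of $\mu(K)^p$ and $\chi$ a character of $\mu_{p^\infty}(K)$.

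First, consider the closed substack $\cX_n^{\psi,\chi}$ of the connected component $\cX_n^\psi$ cut out by $\det|_{\mu_{p^\infty}(K)}=\chi$. It has the same underlying reduced stack as $\cX_n^\psi$, because $\Spf(\cO_E[\mu_{p^\infty}(K)])$ has a single point. Hence it is connected by Theorem \ref{thm:reduced-EG}. As $\chi$ varies, its rigid fibres are clopen and partition $\cX_n^{\psi,\rig}$, because $\Spf(\cO_E[\mu_{p^\infty}(K)])^\rig$ is a finite set of $E$-points.

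Second, the paper uses from \cite{BIP_def} that $R^{\square,\chi}_{\bar\rho}$ is $\cO_E$-flat and normal, not merely the indexing of components. Take a smooth presentation $\coprod_i\Spf(A_i)\to\cX_n^{\psi,\chi}$ whose self-fibre-product over $\cX_n^{\psi,\chi}$ is $\coprod_j\Spf(B_j)$.
\begin{itemize}
\item Completions of $A_i$ at closed points are versal rings, so they agree with $R^{\square,\chi}_{\bar\rho}$ up to adjoining power series variables.
\item Hence these completions are flat and normal.
\item Excellence of $A_i$ then makes $A_i$ itself flat and normal, and the same argument applies to the $B_j$.
\end{itemize}

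Third, de Jong's theorem \cite[7.4.1]{deJongCrystalline} identifies such $A_i$ with the ring of power-bounded functions on $\Spf(A_i)^\rig$. Idempotents are power-bounded. So an idempotent on $\cX_n^{\psi,\chi,\rig}$ is a family of idempotents of the $A_i$, compatible on the $B_j$. That is an idempotent on the connected formal stack $\cX_n^{\psi,\chi}$, hence $0$ or $1$.

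With this argument, no triangulations, density statements or separate reduction to rank one are needed. Rank one is just the case $n=1$.
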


\vskip8pt

More precisely, $\det^{\rig}:(\cX_{n,K})^\rig\to (\cX_{1,K})^\rig$ induces a bijection on the connected components.
Hence, for a connected affinoid $E$-algebra $A$, two \'{e}tale $(\varphi,\Gamma_K)$-modules $D,D'\in \cX_{n,K}^\rig(\Sp(A))$ lie in the same connected component if and only if $$\det(D)|_{(I_K^\ab)_{\rm tors}} = \det(D')|_{(I_K^\ab)_{\rm tors}}.$$
Since $(I_K^\ab)_{\rm tors}\cong \mu(K)$ for the group $\mu(K)$ of roots of unity in $K$, $\pi_0(\cX_{n,K}^\rig)$ is in bijection with the characters of $\mu(K)$.
Roughly, our proof relies on \cite{BIP_def}'s result on $\cO_E$-flatness and normality of certain rings $R$ closely related to local Galois deformation rings.
These rings $R$ are versal rings to certain connected substacks $\cX$ of $\cX_{n,K}$ at finite type points such that $\cX^\rig$ are connected components of $\cX_{n,K}^\rig$.
On the other hand, if locally the formal algebraic stack $\cX$ is smoothly presented as the coequalizer $\Spf(B)\rightrightarrows \Spf(A)$ of two affine formal schemes formally of finite type, then locally $\cX^\rig \simeq \coeq(\Spf(B)^\rig\rightrightarrows\Spf(A)^\rig)$ and the completed local rings of $\Spf(B),\Spf(A)$ at closed points are also versal rings to $\cX$ at the corresponding finite type points. As two versal rings at the same point are smoothly equivalent, $\Spf(B)$ and $\Spf(A)$ are $\cO_E$-flat and normal.
By \cite[7.4.1]{deJongCrystalline}, the ring of formal functions on $\cX$ is isomorphic with the ring of analytic functions on $\cX^\rig$ whose spectral norms are bounded by $1$.
In particular, $\cX^\rig$ is connected if and only if $\cX$ is connected.

\vskip8pt

Finally, since affinoid algebras are Jacobson, by passing to the associated graded of Kedlaya's slope filtration for $(\varphi,\Gamma_K)$-modules with coefficients in finite extensions of $\bbQ_p$ and by twisting, we show that the morphism $\pi_n:\cX_{n,K}^\rig \to \frX_{n,K}$ of ``forgetting the \'{e}tale lattice" induces a bijection on the connected components.

\vskip8pt

\begin{thm}[{Corollary \ref{cor:comp_Robba}}]\label{thm3}
The set of connected components of $\frX_{n,K}$ is in bijection with the set of connected components of $\cX_{n,K}^\rig$ via the morphism $\pi_n$, and hence with the Bernstein blocks in $\Rep_{E_\s}^{\la,\heartsuit}(\GL_n(K))$ via $\det$ and a canonical isomorphism which identifies $\Gamma(\frX_{1,K})$ with the Fr\'echet completion of $\cC_{\GL_1(K)}$.
\end{thm}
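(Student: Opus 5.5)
The plan is to show that $\pi_n:\cX_{n,K}^\rig\to\frX_{n,K}$ induces a bijection on $\pi_0$, and then combine this with Theorem \ref{thm2} and the identifications for $n=1$ from the introduction. The bijection splits into surjectivity and injectivity, and the key tool for both is the commutative square formed by $\det$ and $\pi$:
\[\pi_1\circ\det{}^{\rig}=\det\circ\pi_n:\cX_{n,K}^\rig\to\frX_{1,K}.\]

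\emph{Step 1 (injectivity).} Consider the composite
\[\pi_0(\cX_{n,K}^\rig)\to\pi_0(\frX_{n,K})\to\pi_0(\frX_{1,K}).\]
By the square above, this equals $\pi_0(\pi_1)\circ\pi_0(\det^{\rig})$. By Theorem \ref{thm2}, $\pi_0(\det^\rig)$ is a bijection onto $\pi_0(\cX_{1,K}^\rig)$, which is identified with characters of $\mu(K)$. For $n=1$, every rank-one $(\vphi,\Gamma_K)$-module over the Robba ring is a character twisted by a line bundle, and $\frX_{1,K}\simeq(\cW_K\times\bbG_m^\rig)/\bbG_m$. Here $\pi_1$ corresponds to the inclusion $\cW_K\times\widehat{\bbG}_m\subset\cW_K\times\bbG_m^\rig$, where the open unit-norm locus sits inside $\bbG_m^\rig$. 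Since $\bbG_m^\rig$ and the unit circle are both connected, and $\pi_0(\cW_K)$ is the set of characters of $\mu(K)$, the map $\pi_0(\pi_1)$ is a bijection. So the composite is a bijection, and in particular $\pi_0(\pi_n)$ is injective.

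\emph{Step 2 (surjectivity).} This is the expected main obstacle. We must show that every point $x\in\frX_{n,K}(\Sp(L))$, with $L/E$ finite, lies in the same connected component as a point in the image of $\pi_n$. It suffices to treat such points, because every connected component of a rigid stack contains a point with values in a finite extension, affinoid algebras being Jacobson.

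Let $D$ be the $(\vphi,\Gamma_K)$-module corresponding to $x$. By Kedlaya's slope filtration, $D$ has a filtration whose graded pieces $D_i$ are isoclinic of slopes $s_i$. We connect $D$ to $\bigoplus D_i$ by the standard family over $\bbA^1$: scaling the extension classes gives a $(\vphi,\Gamma_K)$-module over the Robba ring with coefficients in $\cO(\bbA^1_L)$, whose fibre at $t\neq 0$ is $D$ and at $t=0$ is $\bigoplus_i D_i$. Since $\bbA^{1}$ is connected, $D$ and $\bigoplus D_i$ lie in the same component.

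Next we twist each $D_i$ to slope $0$. Write $s_i=a_i/b_i$. After passing to a finite extension of $L$ if needed, we twist by rank-one objects $\cR(\delta_{i,u})$ with $\delta_{i,u}(\varpi_K)=u$ and $\delta_{i,u}|_{\cO_K^\times}=1$, where $u$ varies in the connected space $\bbG_m^\rig$. Then $D_i\otimes\cR(\delta_{i,u})$ moves continuously and becomes étale for a suitable $u$. Isoclinic modules of non-integral slope need care here: one can use the Dieudonné–Manin type decomposition to write $D_i$ as $D_i'\otimes$ a pure-slope module built from induction, or alternatively twist a rank-one Robba character into the family before restricting. Moving along these connected families keeps us in one component, and ends at an étale object, i.e.\ a point of the image of $\pi_n$. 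This proves surjectivity. The family over $\bbG_m^\rig$ should be defined via the universal character on $\frX_{1,K}$.

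\emph{Step 3 (conclusion).} With $\pi_0(\pi_n)$ bijective, the Bernstein blocks are matched through the $n=1$ case. The blocks of $\Rep^{\la,\heartsuit}_{E_\s}(\GL_n(K))$ correspond to the primitive idempotents of $D^\la(K^\times)$, by the theorem on the Bernstein center and Kohlhaase. Via the Amice transform, these correspond to $\pi_0(\Spec\,\Gamma(\cW_K)\otimes\Gamma(\bbG_m^\alg))$, which is the set of characters of $\mu(K)$. This is also $\pi_0(\frX_{1,K})$, via the canonical identification of $\Gamma(\frX_{1,K})=\Gamma(\cW_K\times\bbG_m^\rig)$ with the Fréchet completion of $\cC_{\GL_1(K)}$ obtained by completing $E[T^{\pm1}]$ to $\Gamma(\bbG_m^\rig)$. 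This completion does not change the idempotents. Composing with $\pi_0(\det)$ gives the stated bijections.
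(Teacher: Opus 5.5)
Your proposal is correct and is essentially the paper's argument. The paper also uses $\det$ with Theorem~\ref{thm:rigid-EG-general} to split $\frX_{n,K}$ into clopen pieces $\frX_{n,K}^\chi$, which is your injectivity step. It then shows each piece is connected by linking every $L$-point to an \'etale one via Kedlaya's slope filtration, the $\bbA^1$-degeneration to the graded pieces, and unramified twists over a connected $\bbG_m^r$ after enlarging the coefficient field. The only difference is that it phrases the conclusion through idempotents and a Jacobson/nilpotence argument on affinoid points rather than through topological components.

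Your hedge about non-integral slopes is unnecessary. Once $L$ is enlarged so that some $\alpha_i$ has $v_p(\alpha_i)/f_K=-s_i$, the twist $D_i(\unr(\alpha_i))$ is already semistable of slope $0$, hence \'etale, with no Dieudonn\'e--Manin input needed.
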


\vskip8pt

{\it Result on global functions.} We continue with $G = \GL_n(K)$. In sec. \ref{section5_globalfunc}, we determine (up to locally nilpotent functions) the ring of global functions $\Gamma(\frX_{n,K})$ in four steps, following \cite[Conjecture 5.1.36]{EmertonGeeHellmann}.

\vskip8pt

\begin{thm}[{Theorem \ref{thm:EG_fcns}}]
(a) There is a canonical morphism of $E$-algebras $\gamma:\cC_{G}=Z(D^\la(G,E))\to \Gamma(\frX_n,\cO_{\frX_n})$.

\vskip5pt

(b) Assume the Emerton-Gee stack $\frX_{n,K}$ has no nonzero locally nilpotent global functions (Assumption \ref{hypo:reduced}).
Then $\gamma$ induces an isomorphism between a Fr\'echet completion of $\cC_G$ and $\Gamma(\frX_n,\cO_{\frX_n})$.
\end{thm}

\vskip8pt

Firstly, let us consider the image of $\Gamma(\frX_{n,K})$ in $\Gamma(\frX_B)$ under the pullback map along the map $\beta_B:\frX_B\to \frX_{n,K}$ forgetting the choice of a triangulation on a trianguline $(\varphi,\Gamma_K)$-module.
By Corollary \ref{cor:comp_Robba} and \cite[Remark 5.3.15]{EmertonGeeHellmann}, $\beta_B(\frX_B)$ is Zariski-dense in $\frX_{n,K}$.
Hence, the image of $\Gamma(\frX_{n,K})$ in $\Gamma(\frX_B)$ is isomorphic to $\Gamma(\frX_{n,K})$ modulo locally nilpotent functions.
Passing from a triangulated $(\varphi,\Gamma_K)$-module to the associated graded $(\varphi,\Gamma_K)$-module, we get an isomorphism $\Gamma(\frX_B)\cong \Gamma(\frX_T)$, where $\frX_T \cong (\frX_{1,K})^n$.
Functions on $\frX_T$ can be made explicit: since
$
\Gamma(\frX_T) = \Gamma((\frX_{1,K})^n) = \Gamma(\cW_K^n\times \bbG_m^{\an,n}) = \Gamma((\bbG_m^{\an,n})_{\cW_K^n})\,,
$
any function $f \in \Gamma(\frX_T)$ can be uniquely expanded into a Laurent series 
$$
f = \sum_{\underline{m}\in \bbZ^n} c_{\underline{m}}\cdot \underline{\alpha}^{\underline{m}}\,,
$$
where $\underline{m}=(m_1,\dots,m_n)\in \bbZ^n$, $c_{\underline{m}}\in \Gamma(\cW_K^n)$, and we write $\alpha_1,\dots,\alpha_n$ for coordinates on $\bbG_m^{\an,n}$ so that $\underline{\alpha}^{\underline{m}}:= \alpha_1^{m_1}\cdots \alpha_n^{m_n}$ is the usual multi-index notation.
Secondly, we apply results obtained in \cite{BHS3, EmertonGeeHellmann} on local companion points and compactifications of $\frX_B$ to deduce {\it symmetries} satisfied by the coefficients $c_{\underline{m}}$ when $f$ comes from a function in $\Gamma(\frX_{n,K})$, e.g., $c_{\underline{m}}=0$ if $\underline{m}\in \bbZ^n \setminus \bbZ(1,\dots,1)$ by Lemma \ref{lem:diagonal}, and for $c_{(m,\dots,m)}$ with $m\in \bbZ$, it satisfies symmetries in Sen weights \eqref{eqn:sym_Sen} and symmetries in smooth parts \eqref{eqn:sym_smooth}.
Thirdly, we observe that the derivative map $\wt: \cW_K\to (\bbA_E^{1,\an})^{\Sigma_K}, \delta \mapsto (\wt_\sigma(\delta))_{\sigma\in \Sigma_K}$, is locally surjective after passing to a finite \'{e}tale cover, which quickly implies that the map 
$$
\Gamma(\WT_{n,E}\times_{\WT_{1,E}} \frX_{1,K}) \xra{(\omega_n,\det)^*} \Gamma(\frX_{n,K})
$$ is injective, where $\omega_n:\frX_{n,K} \ra \WT_{n,E}:=\Res_{K/\bbQ_p}(\mathfrak{gl}_n//\GL_n)_E \cong \prod_{\sigma:K\hookrightarrow E} \bbA^{n,\an}_E$ maps a $(\varphi,\Gamma)$-module $D$ to the characteristic polynomial of its Sen operator $\Theta_D$ \cite[(5.1.35)]{EmertonGeeHellmann}.
Finally, we deduce from the \'{e}tale-local surjectivity of $\wt$ and \eqref{eqn:sym_smooth} that the coefficient $c_m:=c_{(m,\dots,m)}\in \Gamma(\cW_K^n)$ descends along the map 
$$
(\wt_1,\dots,\wt_n,\det): \cW^n_K\to (\bbA^{n,\an}_E)^{\Sigma_K}\times_{(\bbA^{1,\an}_E)^{\Sigma_K}} \cW_K.
$$
Then, \eqref{eqn:sym_Sen} implies that $c_m\in \Gamma(\cW^n_K)$ further descends along the quotient map 
$$
(\bbA^{n,\an}_E)^{\Sigma_K}\times_{(\bbA^{1,\an}_E)^{\Sigma_K}} \cW_K \twoheadrightarrow ((\bbA^{n,\an}_E)^{\Sigma_K}\times_{(\bbA^{1,\an}_E)^{\Sigma_K}} \cW_K)//S_n^{\Sigma_K} \cong \WT_{n,E}\times_{\WT_{1,E}} \cW_K
$$
to a function $\frc_{m}\in \Gamma(\WT_{n,E}\times_{\WT_{1,E}} \cW_K)$. 
Consequently, the injective map 
$$
\Gamma((\WT_{n,E}\times_{\WT_{1,E}} \cW_K) \times \bbG_m^\an) \cong \Gamma(\WT_{n,E}\times_{\WT_{1,E}} \frX_{1,K}) \xra{(\omega_n,\det)^*} \Gamma(\frX_{n,K})
$$ 
is surjective (and hence bijective) modulo locally nilpotent functions on $\frX_{n,K}$.
Since the Fr\'echet completion of $\cC_G$ is $\Gamma(\WT_{n,E}\times_{\WT_{1,E}} \frX_{1,K})$ by \cite[2.2.1]{Kohlhaase_Invariant_dist}, we define 
$$
\gamma: \cC_G \hookrightarrow \Gamma(\WT_{n,E}\times_{\WT_{1,E}} \frX_{1,K}) \xrightarrow{(\omega_n,\det)^*}\Gamma(\frX_{n,K})
$$
to be the composite, referring the reader to sec. \ref{section6_comparison} for details.

\vskip8pt

{\it Connection via EGH functor.}
The conjectural fully faithful embedding \eqref{Hellmann-conj} has analogues in the Banach case \cite[6.1.15]{EmertonGeeHellmann} and the locally analytic case \cite[6.2.4]{EmertonGeeHellmann}, where on the `automorphic side' we consider finitely presented smooth $\cO_E$-linear representations of $G$ \cite[6.1.1]{EmertonGeeHellmann} and finitely presented solid locally analytic $E$-linear representations of $G$, respectively, while on the `Galois side' we consider bounded complexes of coherent sheaves on the moduli stacks $\cX_{n,K}$ and $\frX_{n,K}$ of $(\varphi,\Gamma_K)$-modules, respectively.
In the locally analytic case, we expect the conjectural functor described in \cite[6.2.4]{EmertonGeeHellmann} 
$$
\frA^\rig_{\GL_n(K)}: D^b_{\rm f.p.}\Big(\Rep^\la_{E_\s}(\GL_n(K))\Big) \lra D^b_{\rm coh}(\frX_{n,K})
$$
to be linear with respect to the action of the Bernstein center $\cC_G$ on the domain and the action of $\Gamma(\frX_{n,K})$ on the target via the morphism $\gamma:\cC_G \ra \Gamma(\frX_{n,K})$ defined above. 
This functor is, unlike the situation in the smooth case, faithful but not full in general, as one can see already in the case of $\GL_1(K)$ \cite[Remark 7.1.11]{EmertonGeeHellmann}. One can speculate that it is fully faithful on a subcategory of `tempered representations', whose Bernstein center is a Fr\'echet completion of $\cC_G$.

\section{The Bernstein center of \texorpdfstring{$\operatorname{Rep}^{\la, \heartsuit}_{E_\s}(G)$}{}}
\label{section2_Bernstein}

\subsection{Solid locally analytic representations}

\begin{para}\label{analytic-groups} {\it Analytic groups.} In this section, let $G$ be any locally $\Qp$-analytic group, by which we mean a group object in the category of $\Qp$-varieties of class $C^\omega$ in the sense of \cite[\S5.1.5]{Bourbaki_Varietes_1-7}. By \cite[III, p. 5]{Bourbaki-TopGen}, such a group is Hausdorff, and by \cite[8.7, 18.8]{SchneiderBookLie} it is strictly paracompact, which means that every open covering has a refinement by disjoint open subsets.
\end{para}

\vskip8pt

\begin{para}\label{loc-an-fcs-and-distributions} {\it Locally analytic functions and distributions.} Following \cite{RR_2}, we write $\Mod(E_\s)$ for the stable $\infty$-category of solid $E$-vector spaces, and write $\Mod^\heartsuit(E_\s)$ for the heart, which is the abelian category of solid $E$-vector spaces.
If $T$ is a topological $E$-vector space, we associate to it a condensed $E$-vector space $\underline{T}$ by $\underline{T}(S)=C^0(S,T)$ for any profinite set $S$, where $C^0(S,T)$ is the $E$-vector space of continuous functions from $S$ to $T$. The restriction of the functor $T \mapsto \underline{T}$ is fully faithful on the subcategory of Fr\'echet spaces.\footnote{Fr\'echet spaces are metrizable, and hence compactly generated. By \cite[Prop. 1.7]{Scholze_condensed} the functor $T \mapsto \underline{T}$ is fully faithful on the category of compactly generated topological spaces.} To simplify, we generally suppress the underline from the notation.

\vskip8pt

Let $C^\la(G,E)$ be the topological algebra of locally $\bbQ_p$-analytic $E$-valued functions on $G$.
Let $D^\la(G,E):=C^\la(G,E)'_b$ be the locally analytic distribution algebra of $G$, which is the strong dual of $C^\la(G,E)$.
Choose a compact open subgroup $G_0\subset G$.
We have 

\[C^{\la}(G,E)=\prod_{g\in G/G_0} C^\la(gG_0,E) \quad\text{ and }\quad  D^\la(G,E)=\bigoplus_{g\in G/G_0} \delta_gD^\la(G_0,E),\]

cf. \cite[\S2]{SchneiderTeitelbaum_JAMS_locandist} and \cite[\S5, E1]{NFA}.
Then, $\underline{C^\la(G,E)} = \prod_{g\in G/G_0} \underline{C^\la(gG_0,E)}$ because $\underline{(-)}$ is a right adjoint, and $\underline{D^\la(G,E)}=\bigoplus_{g\in G/G_0} \underline{\delta_gD^\la(G_0,E)}$ as the image of any continuous map from a profinite set $S$ to the locally convex direct sum is bounded, and thus lies in a finite direct sum by the proof of \cite[Proposition 7.12(iii)]{NFA}.
Both $C^\la(G,E)$ and $D^\la(G,E)$ are solid $E$-vector spaces, since $C^\la(G_0,E)$ and $D^\la(G_0,E)$ are solid by \cite[Proposition 3.7]{RR_1} and \cite[Proposition 16.10]{NFA}.
Moreover, $D^\la(G,E) \cong C^\la(G,E)^\vee$ and $D^\la(G,E)^\vee \cong C^\la(G,E)$ by \cite[2.1.10(2), 2.1.11]{RR_2}, where $V^\vee := \underline{\Hom}_E(V,E)$ for any $V\in \Mod^\heartsuit(E_\s)$.

\vskip8pt

Let $\frg$ denote the Lie algebra of $G_0$. 
Let $\cL_0$ be a $\bbZ_p$-lattice of $\frg$ such that $[\cL_0,\cL_0]\subset p\cL_0$.
For any $h\in \bbQ_{\geq 0}$, the lattice $\cL_h:=p^h\cL_0 \subset \frg_{\overline{\bbQ}_p}$ also satisfies $[\cL_h,\cL_h]\subset p\cL_h$, and we can integrate it to an analytic group $\bbG_{h} = \bbG_{\cL_h}$ over $\bbQ_p$ whose underlying space is the polydisc of radius $p^{-h}$, cf. \cite[2.1.1]{RR_2}.
Let $\stackrel{\circ}{\bbG}_h := \bigcup_{r\in \bbQ_{>0}} \bbG_{p^r \cL_h} = \bigcup_{r\in \bbQ_{>0}} \bbG_{h+r}$ be the wide open polydisc of radius $p^{-h}$, and define $\bbG^{(h^+)}:=\coprod_{g\in G_0/\stackrel{\circ}{\bbG}_h(\bbQ_p)}g \stackrel{\circ}{\bbG}_h$.
Let $C^h(G_0,\bbQ_p):=\cO(\bbG^{(h^+)})$
be the space of analytic functions with values in $\bbQ_p$ on the Stein $\bbQ_p$-analytic group $\bbG^{(h^+)}$, and let $D^h(G_0,E)$ denote the strong dual of $C^h(G_0,E):=C^h(G_0,\bbQ_p)\otimes_{\bbQ_p} E$, which is also the underlying topological space of the condensed $E$-linear dual of $\underline{C^h(G_0,E)}$ by \cite[2.1.2(1), 2.1.6, 3.1.6]{RR_2}.
By \cite[2.1.5 (3),(4)]{RR_2}, we have 
$$
D^\la(G_0,E)=\varprojlim_{h\in \bbQ_{\geq 0}} D^h(G_0,E)
$$
with injective transition maps $D^{h'}(G_0,E) \hookrightarrow D^{h}(G_0,E)$ for any rational $h'>h\geq 0$.
By the injectivity of transition maps, we deduce that the canonical injective map
\begin{align}
\label{eqn:Dlalim}
\begin{split}
D^\la(G,E) 
&= 
\bigoplus_{g\in G/G_0} \delta_g D^\la(G_0,E)     
=
\bigoplus_{g\in G/G_0}  \delta_g \varprojlim_{h} D^h(G_0,E) 
\\&\hookrightarrow
\varprojlim_h \left(\bigoplus_{g\in G/G_0}  \delta_g D^h(G_0,E) \right)
=
\varprojlim_h \left( D^\la(G,E)\otimes_{D^\la(G_0,E)} D^h(G_0,E) \right)
\end{split}
\end{align}
is actually an isomorphism $D^\la(G,E) \cong \varprojlim_h \left( D^\la(G,E)\otimes_{D^\la(G_0,E)} D^h(G_0,E) \right)$.
Let $D_h:=D^\la(G,E)\otimes_{D^\la(G_0,E)} D^h(G_0,E)$ for $h\geq 0$.
Then, $D^\la(G,E)\cong \varprojlim_h D_h$.

\end{para}

\vskip8pt

\begin{para}{\it Locally analytic representations.} Let $\Mod_{E_\s}(D^\la(G,E))$ be the stable $\infty$-category of solid $D^\la(G,E)$-modules, and 
$$
\Rep^{\la}_{E_\s}(G) \subset \Mod_{E_\s}(D^\la(G,E))
$$ 
be the stable $\infty$-category of solid locally analytic representations of $G$, i.e., the full subcategory consisting of those $V$ for which the map $V^{\mathrm{R}\la}\ra V$ is an equivalence \cite[3.2.3]{RR_2}. 
By \cite[Theorem A]{RR_2}, $\Rep^{\la}_{E_\s}(G)$ carries the induced $t$-structure from $\Mod_{E_\s}(D^\la(G,E))$ and it is equivalent to the derived category of its heart.
We call this heart the abelian category of solid locally analytic representations of $G$, and denote it by $\Rep^{\la,\heartsuit}_{E_\s}(G)$.

\begin{example}
Given a profinite set $S$, 
written as the inverse limit $S=\varprojlim_i S_i$ of finite sets $S_i$ over an index set $I$, the associated ``free solid $E$-vector space over $S$'' is defined by 
$$
E_\s[S] := (\varprojlim_i \cO_{E}[S_i])[1/p]\,.
$$
If $S$ is extremally disconnected, then $E_\s[S]$ is known to be a projective object in the category of solid $E$-vector spaces, and those form a collection of compact projective generators. 
By \cite[3.2.6(1), 3.3.4]{RR_2}, $\Rep^{\la,\heartsuit}_{E_\s}(G)$ admits a collection of generators  
$$
D_{h,S} := D_h \otimes_{E_\s} E_\s[S] = D^\la(G,E)\otimes_{D^\la(G_0,E)} D^h(G_0,E)\otimes_{E_\s} E_\s[S],
$$
with $h\in \bbQ_{\geq 0}$ and $S$ extremally disconnected sets.
\end{example}

\end{para}

\vskip8pt

\subsection{The Bernstein center and the center of the distribution algebra}

\begin{para}\label{central-distributions} {\it From the center of the distribution algebra to the Bernstein center.} 
For any central distribution $z \in Z(D^\la(G,E))$ and $V\in \Rep_{E_\s}^{\la,\heartsuit}(G)\subset \Mod_{E_\s}(D^\la(G,E))$, the action map
$$
\rho_V^{(z)}: V\xra{z\cdot\id_V}V
$$
is a $D^\la(G,E)$-linear endomorphism of $V$.
Let 
$$
\cC_G := \End(\id_{\Rep_{E_\s}^{\la,\heartsuit}(G)})
$$ 
be the Bernstein center of the abelian category of solid locally analytic $G$-representations on $E$-vector spaces.
Since the construction of $\rho_V^{(z)}$ is natural in $V$, we have an $E$-algebra homomorphism
$$
\rho^{(-)}: Z(D^\la(G,E))\longrightarrow \cC_G, \quad z\mapsto (\rho^{(z)}_V:V \to V)_{V\in \Rep_{E_\s}^{\la,\heartsuit}(G)}\,.
$$
\end{para}

\begin{thm}
\label{thm:Bernstein_Dist}
Consider the map
$$
z_{(-)}: \cC_G\ra D^\la(G,E)  \underset{\eqref{eqn:Dlalim}}{\cong} \varprojlim_h D_h
, \quad \eta\mapsto z_\eta:=\varprojlim_h (\eta_{D_h}(1\otimes 1))
$$
that sends a natural transformation $\eta:\id\Rightarrow\id$ to the element $\varprojlim_h (\eta_{D_h}(1\otimes 1))\in \varprojlim_h D_h$.

\vskip8pt

(i) The map $z_{(-)}$ factors through $Z(D^\la(G,E))$ to become an $E$-algebra homomorphism
$$
z_{(-)}: \cC_G\xrightarrow{\simeq} Z(D^\la(G,E))\,,
$$
such that one has $\eta_V = \rho_V^{(z_\eta)}$ for all $V\in \Rep_{E_\s}^{\la,\heartsuit}(G)$.

\vskip8pt

(ii) The latter map $z_{(-)}$ is the inverse to the $E$-algebra map $\rho^{(-)}: Z(D^\la(G,E)) \rightarrow \cC_G$ introduced above.
\end{thm}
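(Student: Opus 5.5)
We follow the standard argument identifying the center of a module category with the center of the ring, adapted to the pro-system of generators $D_h$. The key input is that each $D_h$ (and each $D_{h,S}$) is an object of $\Rep^{\la,\heartsuit}_{E_\s}(G)$, by the example above, cf. \cite[3.2.6(1), 3.3.4]{RR_2}. Given $\eta\in\cC_G$, the first task is to check that the elements $\eta_{D_h}(1\otimes 1)\in D_h$ are compatible under the transition maps $D_{h'}\to D_h$ for $h'>h$. These maps are $D^\la(G,E)$-linear morphisms in the heart and send $1\otimes1$ to $1\otimes 1$, so naturality of $\eta$ gives compatibility. Hence $z_\eta:=\varprojlim_h\eta_{D_h}(1\otimes1)$ is a well-defined element of $\varprojlim_h D_h\cong D^\la(G,E)$ by \eqref{eqn:Dlalim}.

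Next we show that $\eta_{D_h}$ is right multiplication by the image $z_{\eta,h}$ of $z_\eta$ in $D_h$. For $\mu\in D^\la(G,E)$, right multiplication $r_\mu\colon D_h\to D_h$, $x\mapsto x\mu$, is not obviously well defined, since $D_h$ is a $(D^\la(G),D^h(G_0))$-bimodule. Instead we use the left-module maps $D_h\to D_h$ given by $1\otimes1\mapsto x$, for $x$ in the image of $D^\la(G,E)\to D_h$. Such a map is well defined precisely when $x$ lies in the appropriate centralizer, so we need a better argument.

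The cleaner route uses the left $D^\la(G,E)$-linear maps $\phi_\mu\colon D^\la(G,E)\to D_h$, $\lambda\mapsto\lambda\mu$. Since $D^\la(G,E)=\varprojlim_h D_h$ is a limit of objects of the heart, we can work with the compatible family. More directly, for each $g\in G$ right translation by $\delta_g$ induces an isomorphism $D_h\otimes_{D^h(G_0)}\cdots$. Rather than handle this, we argue as follows. Every $V$ in the heart admits a surjection from a direct sum of generators $D_{h,S}$, and for $v\colon E_\s[S]\to V$ extending to $D_{h,S}\to V$ we get $\eta_V(\lambda\otimes s)=\lambda\,\eta_{D_h}(1\otimes1)\cdot$(the image of $s$). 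It therefore suffices to know that $\eta_{D_{h,S}}=\eta_{D_h}\otimes\id_{E_\s[S]}$, and that $\eta_{D_h}(1\otimes1)$ acts on $V$ through $z_\eta$.

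For the first point we use naturality with respect to the maps $D_h\to D_{h,S}$ induced by points $s\colon *\to S$ and extremal disconnectedness (or solid $E_\s[S]$-linearity). The endomorphisms of $D_{h,S}$ commuting with all such maps are determined by the value on $1\otimes1\otimes[s]$; here we argue via the $E_\s[S]$-module structure being functorial. For the second point we use \eqref{eqn:Dlalim} together with the fact that $V$ is locally analytic: $V=V^{\mathrm{R}\la}$, so the action of $D^\la(G,E)$ on $V$ factors in the required sense through the $D_h$, compatibly in $h$. This yields $\eta_V=\rho^{(z_\eta)}_V$.

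Centrality of $z_\eta$ follows from this formula. Applying $\eta_{D_h}$ to left multiplication by $\delta_g$ and to the right $D^\la(G_0)$-action by elements of $D^\la(G_0)$ (these are endomorphisms of $D_h$ as a left module after base change), we get $\mu z_{\eta}=z_\eta\mu$ in every $D_h$, hence in the limit. Using $\eta_V=\rho_V^{(z_\eta)}$, the map $z_{(-)}$ is an $E$-algebra homomorphism and satisfies $\rho^{(z_\eta)}=\eta$. Conversely, $z_{\rho^{(z)}}=\varprojlim_h z\cdot(1\otimes1)=z$. This proves (i) and (ii).

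The main obstacle is the second step: showing that $\eta$ on an arbitrary object, and in particular on $D_h$ itself, is given by multiplication by the single limit element $z_\eta$. This requires handling the bimodule structure of $D_h$ and the fact that $D^\la(G)$ itself is only a limit of objects in the heart, not a generator. We would first try to use the right $D^h(G_0,E)$-module structure of $D_h$ together with the right $\delta_g$-translations, and verify that these commute with the left action.
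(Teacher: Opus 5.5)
There is a genuine gap, and it is exactly at the step you flag as the main obstacle. You never establish that $\eta_V$ is left multiplication by $z_\eta$, not even for $V=D_h$. Everything afterwards (centrality, the algebra-homomorphism property, $\rho^{(z_\eta)}=\eta$) depends on this.

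The mechanisms you try do not close the gap:
\begin{itemize}
\item Right multiplication on the bimodule $D_h$ and right translation by $\delta_g$ are abandoned.
\item The maps $\phi_\mu$ out of $D^\la(G,E)$ are unavailable, since $D^\la(G,E)$ is not an object of $\Rep^{\la,\heartsuit}_{E_\s}(G)$.
\item The claim that an endomorphism of $D_{h,S}$ is determined by its composites with the maps $D_h\to D_{h,S}$ attached to points $s\in S$ is unjustified. The $[s]$ only span a dense part of $E_\s[S](*)$. Also, $D_{h,S}$ is not generated as a $D^\la(G,E)$-module by $1\otimes1\otimes E_\s[S]$, because $D^\la(G_0,E)\to D^h(G_0,E)$ only has dense image.
\end{itemize}
The phrase ``the action of $D^\la(G,E)$ on $V$ factors in the required sense through the $D_h$'' is not an argument, and your formula for $\eta_V(\lambda\otimes s)$ does not typecheck. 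Likewise, left multiplication by $\delta_g$ is not an endomorphism of $D_h$ in the category. Only right multiplication by $\nu\in D^\la(G_0,E)$ is, and that yields $z_\eta\nu=\nu z_\eta$ only for such $\nu$.

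The paper supplies two ideas that you are missing.

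\emph{A Yoneda argument using only left linearity.} By \cite[3.2.8]{RR_2}, $V=V^{\la}=\varinjlim_h\underline{\Hom}_{D^\la(G_0,E)}(D^h(G_0,E),V)$. Since $E$ is compact, this gives $V(*)=\varinjlim_h\Hom_{D^\la(G,E)}(D_h,V)$. So every vector is $v=f_v(1\otimes1)$ for some morphism $f_v\colon D_h\to V$ in the category. Naturality then gives
$$\eta_V(v)=f_v(\eta_{D_h}(1\otimes1))=f_v(z_\eta\cdot(1\otimes1))=z_\eta v.$$
No right action on $D_h$ ever enters, so the bimodule difficulty disappears.

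\emph{Passing from vectors to morphisms of condensed modules.} The paper checks that $D_{h,S}$ is a filtered union of Smith spaces, hence quasiseparated. By \eqref{eqn:qs_inj}, an endomorphism of $D_{h,S}$ is therefore determined by its effect on $D_{h,S}(*)$. This gives $\eta_{D_{h,S}}=z_\eta\cdot{}$ on the generators, and hence on every $V$ via an epimorphism from a direct sum of generators.

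For centrality, take $S=*$. Left linearity gives $\eta_{D_h}(\mu\otimes1)=\mu z_\eta\otimes1$, while $\eta_{D_h}=z_\eta\cdot{}$ gives $z_\eta\mu\otimes1$. Injectivity of $\pr_h$ then shows $\mu z_\eta=z_\eta\mu$. With these two ingredients in place, your last paragraph correctly finishes (i) and (ii).
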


\begin{proof}
(i) 
Given any natural transformation $\eta: \id \Rightarrow \id$ in the Bernstein center $\cC_G$, and for any $h\in \bbQ_{\geq 0}$, consider $z_h := \eta_{D_h}(1\otimes 1) \in D_h$.
Since $D^\la(G,E)$ is free as $D^\la(G_0)$-module, the transition map $D_{h'} \hookrightarrow D_{h}$ is a $D^\la(G,E)$-linear injection for any $h'>h$, and hence by the naturality of $\eta$, we have an element $z_\eta:= \varprojlim_h (z_h) \in D^\la(G,E)$ via the bijection \eqref{eqn:Dlalim}, and the projection map $\pr_h:D^\la(G,E)\to D_h$ is injective for any $h\geq 0$.

\vskip8pt

Consider $E_\s[S]$ for an extremally disconnected set $S=\varprojlim_i S_i$.
By definition, $E_\s[S]$ contains the $\cO_{E,\s}$-lattice $\varprojlim_i \cO_E[S_i]$, so $E_\s[S]$ is Smith \cite[Definition 3.2(2)]{RR_1}.
In particular, $E_\s[S]$ is quasiseparated, hence flat over $E_\s$ \cite[3.20, 3.21]{RR_1}.
By \cite[Remark 4.9]{RR_1}, $D^h(G_0,E)$ is a filtered union of Smith subspaces.
By \cite[Lemma 3.8(2), Proposition 3.12]{RR_1}, the solid $E$-tensor product of two Smith spaces is Smith.
Thus by flatness, $D^h(G_0,E)\otimes_{E_\s} E_\s[S]$ is a filtered union of Smith subspaces.
Finally, 
\begin{align*}
D_{h,S}
= D_h\otimes_{E_\s} E_\s[S] &= D^\la(G,E)\otimes_{D^\la(G_0,E)} \left(D^h(G_0,E)\otimes_{E_\s} E_\s[S]\right)  
\\&=
\bigoplus_{g\in G/G_0}\delta_g \left(D^h(G_0,E)\otimes_{E_\s} E_\s[S]\right)  
\end{align*}
is a filtered union of Smith subspaces, and hence quasiseparated \cite[3.20]{RR_1}.

\vskip8pt

Since any quasiseparated condensed set $X$ is a filtered colimit of compact Hausdorff spaces with closed embeddings as transition maps by \cite[Proposition 1.2(4)]{Scholze_analytic}, the unit map of the adjunction $(*)_{\rm top}: \mathrm{Cond}({\rm Set})\rightleftarrows \Top:\underline{(-)}$ induces a monomorphism $X\hookrightarrow \underline{X(*)_{\rm top}}$ \cite[p. 9]{Scholze_analytic}.
Hence, for any condensed set $Y$ and quasiseparated $X$, the map
\begin{equation}
\label{eqn:qs_inj}
\Hom_{\mathrm{Cond}({\rm Set})}(Y,X) \hookrightarrow \Hom_{\mathrm{Cond}({\rm Set})}(Y, \underline{X(*)_{\rm top}}) \cong  \Hom_{\Top}(Y(*)_{\rm top},X(*)_{\rm top})    
\end{equation}
sending a map $f:Y\to X$ of condensed sets to the map $f(*):Y(*)\to X(*)$ is injective.

\vskip8pt

For the quasiseparated solid locally analytic representation $V:=D_{h,S}$, we claim that the endomorphism $\eta_V:V\to V$ is induced by the action of $z_\eta \in D^\la(G,E)$. 
In particular, when $S=\{*\}$ so that $V=D_h$, from the injectivity of $\pr_h:D^\la(G,E)\to D_h$ we can deduce that $z_\eta \in Z(D^\la(G,E))$ is central.
By \eqref{eqn:qs_inj}, it suffices to show that $\eta_V(*):V(*)\to V(*)$ is given by the action of $z_\eta \in D^\la(G,E)$, from which we can check on each vector in $V(*)$.
Let $v\in V(*)$ be a nonzero vector.
By \cite[3.2.8]{RR_2}, $V^{\rm Rla} = \varinjlim_h \mathrm{R}\underline{\Hom}_{D^\la(G_0,E)}(D^h(G_0,E),V)$ as solid $G_0$-representations.
Taking the $0$-th cohomology sheaf, which commutes with the filtered colimit as the filtered colimit is exact, we get that
$$
V= V^\la =  \varinjlim_h \underline{\Hom}_{D^\la(G_0,E)}(D^h(G_0,E), V).
$$
Then, as $E$ is a compact object in $\Mod_{E_\s}$, we have
\begin{align*}
V(*) &= \Hom_{\Mod_{E_\s}^\heartsuit} (E,V)
= \Hom_{\Mod_{E_\s}^\heartsuit} (E,\varinjlim_h \underline{\Hom}_{D^\la(G_0,E)}(D^h(G_0,E), V))
\\&= \varinjlim_h \Hom_{\Mod_{E_\s}^\heartsuit} (E,\underline{\Hom}_{D^\la(G_0,E)}(D^h(G_0,E), V))
\\&= \varinjlim_h \Hom_{D^\la(G_0,E)}(D^h(G_0,E), V) 
\\&= \varinjlim_h \Hom_{D^\la(G,E)}(D^\la(G,E)\otimes_{D^\la(G_0,E)}D^h(G_0,E), V) = \varinjlim_h \Hom_{D^\la(G,E)}(D_h, V),
\end{align*}
under which $v\in V(*)$ corresponds to a map $f_v:D_h\to V$ of solid $D^\la(G,E)$-modules such that $f(*)(1\otimes 1) = v\in V(*)$.
The naturality of $\eta$ applied to $f_v$ shows that $\eta_V(*)(v)=z_\eta v$.

\vskip8pt

Since we have shown that $\eta_V$ is given by the action of $z_\eta\in Z(D^\la(G,E))$ on the generators $V=D_{h,S}$, by naturality, $\eta_V$ is given by the action of $z_\eta\in Z(D^\la(G,E))$ for every $V\in \Rep^{\la,\heartsuit}_{E_\s}(G)$.
By injectivity of $\pr_h:D^\la(G,E)\to D_h$, the map $z_{(-)}:\cC_G\to Z(D^\la(G,E))$ is an injective $E$-algebra homomorphism. 
\vskip8pt

(ii) The map $z_{(-)}:\cC_G\to Z(D^\la(G,E))$ is also surjective, and hence bijective, since every $z\in Z(D^\la(G,E))$ acts on $D^\la(G,E)$-modules by $\rho^{(z)}$, which is inverse to $z_{(-)}$.
\end{proof}

\section{Preliminaries on stacks}
\label{section3_Prelim}

\subsection{Some definitions}

\begin{para}{\it Groupoids and sheaves \cite[3.4.4]{Olsson_Alg-spaces-stacks}, \cite[2.4.3]{Moret-Bailly_Laumon}.} Let $\mathcal{C}$ be a site. A \textbf{groupoid in sheaves on $\mathcal{C}$} is a quintuple $(U,R,s,t,c)$, where $U,R\in \Shv(\cC)$ are sheaves on $\cC$ and $s,t:R\to U$, $c: R\times_UR \to R$ are morphisms of sheaves such that for any object $T\in \cC$, the evaluation $(U(T),R(T),s,t,c)$ on $T$-valued points defines a groupoid, cf. \cite[\href{https://stacks.math.columbia.edu/tag/0230}{Tag 0230}]{stacks-project}, that we denote by $[U(T)/R(T)]$. 
From a groupoid $(U,R,s,t,c)$ in sheaves on $\cC$, we define a category fibered in groupoids $[ U/_p R ] \to \mathcal{C}$ whose objects are pairs $(T,x)$ with $T\in \cC$ and $x\in U(T)$, and whose morphisms from $(T,x)$ to $(T',x')$ are pairs $(f,\phi)$ with $f\in \Hom_{\cC}(T,T')$ and $\phi \in \Hom_{[U(T)/R(T)]}(x,f^*(x'))$.
We denote by $[ U / R ]\to \mathcal{C}$ the stackification of $[U/_p R]$, and call it the \textbf{quotient stack}. 
\end{para}
\begin{para}{\it Sheaf attached to a stack \cite[3.19]{Moret-Bailly_Laumon}.}
\label{para:coarse_sheaf}
Given a stack $\frX$ on a site $\cC$, we can take the \textbf{coarse sheaf $F_{\frX}$ associated to $\frX$}, i.e., the sheafification of the presheaf $F_\frX^{\rm pre}$ 
$$
T\mapsto F_\frX^{\rm pre}(T):= \mathrm{Ob}(\frX(T))/\simeq \,,
$$
of isomorphism classes, where two objects $x,y\in \frX(T)$ are identified if there is a morphism $f:x\to y$ in $\frX(T)$.
Then, $F_{(-)}:\Stk(\cC)\to \Shv(\cC)$ is the left adjoint to the ``inclusion'' functor $i: \Shv(\cC)\to \Stk(\cC)$ that naturally attaches a stack to a sheaf, \cite[(3.4.1)]{Moret-Bailly_Laumon}. 
Indeed, for any $\frX\in \Stk(\cC)$ and $G\in \Shv(\cC)$, the projection $\frX\to F_{\frX}$ induces a natural isomorphism
$
\Hom_{\Shv(\cC)}(F_{\frX}, G)  \cong \Hom_{\Stk(\cC)}(\frX, i(G)),
$
because the fiber categories of the stack attached to a sheaf have only identity morphisms.
\end{para}

\begin{para}{\it Surjections \cite[3.6]{Moret-Bailly_Laumon}.}
Let $\cC$ be a site. 
A morphism $f:\frX\to \frY$ between stacks $\frX$ and $\frY$ on $\cC$ is called \textbf{surjective} if for any $U\in \cC$ and any object $y\in \frY(U)$, there exist a covering family $\{U_i\to U\}_{i\in I}$ and objects $x_i \in \frX(U_i)$, such that $f(x_i) \cong y|_{U_i}$ in $\frY(U_i)$.
Equivalently, a map $f:\frX\to \frY$ is surjective if and only if the induced map $F_f: F_{\frX}\to F_{\frY}$ on the coarse sheaves is an epimorphism in the category $\Shv(\cC)$ of sheaves on the site $\cC$.
\end{para}

\begin{para}{\it Slice categories.} 
Let $\cC$ be a site.
We view any object $U\in \cC$ as a category fibered in sets over $\cC$ by firstly viewing $U$ as the representable presheaf $\Hom_{\cC}(-,U): \cC^\op\to {\rm Set}$, and then using the equivalence \cite[\href{https://stacks.math.columbia.edu/tag/02Y2}{Tag 02Y2}]{stacks-project} between the category of presheaves of sets over $\cC$ and the category of categories fibered in sets over $\cC$. 
If $\cC$ is subcanonical, i.e., representable presheaves on $\cC$ are sheaves, then $U$ is a stack in sets over $\cC$, \cite[\href{https://stacks.math.columbia.edu/tag/0430}{Tag 0430}]{stacks-project}.
Explicitly, as a category fibered over $\cC$, the object $U$ is the slice category over $\cC$ whose objects are pairs $(V,x)$ with $V\in \cC$ and $x\in \Hom_\cC(V,U)$, and whose morphisms from $(V_1,x_1)$ to $(V_2,x_2)$ are morphisms $f\in \Hom_\cC(V_1,V_2)$ such that $x_2\circ f = x_1$.

\end{para}

\subsection{Generic fiber of formal algebraic stacks}
\label{subsection3.2_genericfiber}

\begin{para}
\label{para:FSO}
Consider the category ${\rm FS}_{\cO_E}$ of locally noetherian adic formal schemes $\cX$ over $\Spf(\cO_E)$ whose reduction $\cX_{\red}$ is a scheme locally of finite type over $k := \cO_E/\varpi \cO_E$, as in \cite[7.0.1]{deJongCrystalline}.
We call ${\rm FS}_{\cO_E}$ the category of formal schemes $\cX$ {\bf locally formally of finite type} over $\Spf(\cO_E)$.
If $\cX=\Spf(A)$ is affine over $\Spf(\cO_E)$, so that $A$ is noetherian and adic with the biggest ideal of definition $I$ of $A$, then $\cX\in {\rm FS}_{\cO_E}$ precisely when $A/I$ is a finitely generated $k$-algebra, which is equivalent to $A$ being a quotient of the ring $A_{n,m} := \cO_E\langle x_1,\dots, x_n \rangle [[y_1,\dots,y_m]]$ for some $n,m \geq 0$, cf. \cite[\S7.1]{deJongCrystalline}.
A formal algebraic space $F$ over $\Spf(\cO_E)$ is {\bf locally formally of finite type} if it admits a morphism $U\to F$ from some formal scheme $U\in \FS_{\cO_E}$, which is representable by schemes, \'{e}tale, and surjective.
A formal algebraic stack $\cX$ over $\Spf(\cO_E)$ is {\bf locally formally of finite type} if it admits a morphism $U\to \cX$ from a formal algebraic space $U$ locally formally of finite type, which is representable by algebraic spaces, smooth, and surjective.
\end{para}

\begin{para}
\label{para:formal-to-rigid}
Following \cite[(5.1.12)]{EmertonGeeHellmann}, \cite[\S7.1.7]{deJongCrystalline}, and \cite[Definition 4.31]{Oh-Shimizu_moduli}, to any formal algebraic stack $\cX$ locally formally of finite type over $\Spf(\cO_E)$, we associate its {\bf rigid analytic generic fiber} $\cX^\rig$ as follows.
We describe it on the dense subsite $\Aff_E \subset \Rig_E$ of affinoid rigid $E$-spaces with respect to Tate-fpqc and Tate-\'{e}tale topologies.
Let 
$$
\cX^\rig_{\rm pre}(\Sp(A)) := \varinjlim_{\frY} \cX(\frY), \quad \text{ for all } Y=\Sp(A)\in \Aff_E
$$
be the prestack on $\Rig_E$ defined by the filtered colimit of groupoids $\cX(\frY)$ as $\frY$ ranges over all models for $Y=\Sp(A)$ in the sense of \cite[\S7.1.6]{deJongCrystalline}.
We write $\cX_{\eta,et}$ for the Tate-\'{e}tale stackification of $\cX^{\rig}_{\rm pre}$ and write $\cX^\rig$ for the Tate-fpqc stackification of $\cX^{\rig}_{\rm pre}$, which is also the Tate-fpqc stackification of $\cX_{\eta,et}$.
\end{para}

\begin{para}
\label{para:presentation}
Let $\cX$ be a locally noetherian formal algebraic stack over $\Spec(\cO_E)$ that admits a map to $\Spf(\cO_E)$.
By \cite[Definition A.6]{EmertonGee_modulicrystalline}, there exists a smooth surjective map of $\cO_E$-stacks
$
\pi: \cU = \coprod_{i\in I} \Spf(A_i) \twoheadrightarrow \cX
$
representable by formal algebraic spaces, with $A_i$ being noetherian and adic $\cO_E$-algebras.
In particular, the fiber product $\cR:=\cU\times_\cX \cU$ is a locally noetherian formal algebraic space.
Denote the projection maps by $s,t:\cR \rightrightarrows \cU$, which are smooth.
We have a coequalizer diagram
\[
\begin{tikzcd}
    \cR \arrow[r, shift left=1ex, "s"] \arrow[r, shift right=1ex, "t"] & \cU \arrow[r, "\pi"] & \cX \,.
\end{tikzcd}
\]
Passing to the underlying reduced substacks as in \cite[Definition 3.27]{Emerton_formalstacks} and \cite[A.10]{EmertonGee_modulicrystalline}, we get by \cite[Remark 5.27]{Emerton_formalstacks} a coequalizer diagram of algebraic stacks over $\Spec(k)$
\[
\begin{tikzcd}
    \cR_\red \arrow[r, shift left=1ex, "s_\red"] \arrow[r, shift right=1ex, "t_\red"] & \cU_\red \arrow[r, "\pi_\red"] & \cX_\red \,,
\end{tikzcd}
\]
where $s_\red,t_\red,\pi_\red$ are smooth since they are obtained by base change from smooth maps by \cite[Lemma 3.37]{Emerton_formalstacks}.
\end{para}

We observe the following situation in which a formal algebraic stack over $\Spf(\cO_E)$ is locally formally of finite type.

\begin{lemma}
\label{lem:finite_type_red}
Suppose that $\cX$ is a locally noetherian formal algebraic stack over $\Spec(\cO_E)$ that admits a map to $\Spf(\cO_E)$, such that $\cX_\red$ is an algebraic stack locally of finite type over $k$.
Then, $\cX$ is locally formally of finite type.
\end{lemma}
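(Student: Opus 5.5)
We work with the presentation of \ref{para:presentation}: a smooth surjection $\pi:\cU=\coprod_{i\in I}\Spf(A_i)\twoheadrightarrow\cX$, representable by formal algebraic spaces, with each $A_i$ a noetherian adic $\cO_E$-algebra. By the definition in \ref{para:FSO}, it is enough to find a formal algebraic space $U'$ that is locally formally of finite type, together with a smooth surjection $U'\to\cX$ representable by algebraic spaces. We will show that each $\Spf(A_i)$ already lies in $\FS_{\cO_E}$. Then $U'=\cU$ works, since a disjoint union of objects of $\FS_{\cO_E}$ is again in $\FS_{\cO_E}$ and a formal scheme is trivially a formal algebraic space admitting an étale surjective cover from itself.

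The key step is to show that $(A_i)_\red=A_i/I_i$ is a finitely generated $k$-algebra, where $I_i$ is the largest ideal of definition of $A_i$. First note that $p$ (or $\varpi$) is topologically nilpotent on $A_i$: the structure map $\Spf(A_i)\to\cX\to\Spf(\cO_E)$ is adic, so $\varpi\in I_i$, and $A_i/I_i$ is a reduced $k$-algebra. By \ref{para:presentation}, i.e.\ \cite[Remark 5.27, Lemma 3.37]{Emerton_formalstacks}, the map $\pi_\red:\cU_\red=\coprod_i\Spec(A_i/I_i)\to\cX_\red$ is smooth, being a base change of $\pi$ along $\cX_\red\hookrightarrow\cX$ followed by passage to reduced substacks. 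A smooth morphism is locally of finite presentation, and $\cX_\red$ is locally of finite type over $k$ by hypothesis. Hence each $\Spec(A_i/I_i)$ is locally of finite type over $k$, and being affine, $A_i/I_i$ is a finitely generated $k$-algebra. There is one subtlety to check: the reduced substack of $\cU\times_\cX\cX_\red$ should agree with $\cU_\red$. This holds because the base change of $\pi$ to $\cX_\red$ is smooth over a reduced stack, hence reduced, and it has the same underlying topological space as $\cU$.

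It then follows from \cite[\S7.1]{deJongCrystalline}, as recalled in \ref{para:FSO}, that $A_i$ is a quotient of some $A_{n,m}=\cO_E\langle x_1,\dots,x_n\rangle[[y_1,\dots,y_m]]$, that is, $\Spf(A_i)\in\FS_{\cO_E}$. Concretely, lift generators of $A_i/I_i$ to $x_j$'s and generators of the finitely generated ideal $I_i$ to $y_l$'s. The resulting map from $A_{n,m}$ is surjective by completeness of $A_i$ and a graded Nakayama argument.

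We expect the main obstacle to be the bookkeeping in the key step: making sure that the reduction of the pullback of a smooth map of formal algebraic stacks along $\cX_\red$ really is $\coprod_i\Spec(A_i/I_i)$, and that smoothness transfers as claimed. For this we will rely on the cited results of \cite{Emerton_formalstacks} rather than reprove them.
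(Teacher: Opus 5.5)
Your proposal is correct and follows the same route as the paper. Both use the smooth presentation $\coprod_i\Spf(A_i)\to\cX$, note that $\pi_\red$ is smooth and hence locally of finite type over $\cX_\red$, and conclude that each $A_i/I_i$ is a finitely generated $k$-algebra, so $\Spf(A_i)\in\FS_{\cO_E}$. Your remarks on $\cU\times_\cX\cX_\red$ and the explicit surjection from $A_{n,m}$ just fill in what the paper delegates to its citations.

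One small correction: the map $\Spf(A_i)\to\Spf(\cO_E)$ need not be adic; versal rings such as $\cO_E[[y]]$ are not. You still get $\varpi\in I_i$, because continuity makes $\varpi$ topologically nilpotent in $A_i$, and $I_i$ is the radical of any ideal of definition.
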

\begin{proof} 
Since $\cX$ is locally noetherian, there is a smooth surjection 
$
\pi: \cU = \coprod_{i\in I} \Spf(A_i) \twoheadrightarrow \cX
$
representable by formal algebraic spaces, with $A_i$ being noetherian adic $\cO_E$-algebras.
Let $I_i\subset A_i$ be the largest ideal of definition. 
The reduction $\pi_\red: \cU_\red = \coprod_i \Spec(A_i/I_i)\twoheadrightarrow \cX_\red$ is smooth, hence locally of finite type.
Since $\cX_\red$ is locally of finite type over $k$, each $\Spec(A_i/I_i)$ is of finite type over $\Spec(k)$.
Thus each $\Spf(A_i)$ belongs to $\FS_{\cO_E}$, and $\cX$ is locally formally of finite type.
\end{proof}

\begin{para}
Motivated by the theory of \'{e}tale spaces and adic stacks developed in the \'{e}tale topology in \cite{Oh-Shimizu_moduli}, it is natural to compare the Tate-\'{e}tale and Tate-fpqc versions of quasi-analytic spaces and rigid analytic Artin stacks.
We show below in \ref{thm:qan_etale=fpqc} (resp. \ref{thm:Artin_etale=fpqc}) that quasi-analytic spaces (resp. Artin stacks) on $\Rig_{E,\acute{e}t}$ are quasi-analytic spaces (resp. Artin stacks) on $\Rig_{E,fpqc}$.\footnote{This is the rigid analogue of \cite[\href{https://stacks.math.columbia.edu/tag/076L}{Tag 076L}]{stacks-project} (resp. \cite[\href{https://stacks.math.columbia.edu/tag/076U}{Tag 076U}]{stacks-project}).}
By the arguments of \cite{Oh-Shimizu_moduli}, the Tate-\'{e}tale rigid generic fiber $\cX_{\eta,et}$ of a formal algebraic stack $\cX$ locally formally of finite type over $\Spf(\cO_E)$ is a rigid analytic Artin stack on $\Rig_{E,et}$, and hence a rigid analytic Artin stack on $\Rig_{E,fpqc}$.
Moreover, if $\cX=[\cU/\cR]$ for a smooth groupoid $(\cU,\cR,s,t,c)$ of formal algebraic spaces (necessarily) locally formally of finite type over $\Spf(\cO_E)$, then the Tate-fpqc rigid generic fiber $\cX^{\rig}=[\cU^\rig/\cR^\rig]$ is the Tate-fpqc quotient stack of the smooth groupoid $(\cU^\rig,\cR^\rig,s^\rig,t^\rig,c^\rig)$ in quasi-analytic spaces.

\end{para}

\begin{lemma}
\label{lem:etale_equivalence}
Let $(U,R,s,t,c)$ be an \'{e}tale equivalence relation in rigid spaces. 
Then, the natural map $p:U\to X:=U/R$ to the Tate-fpqc quotient sheaf is representable in rigid spaces, \'{e}tale, and surjective.
Moreover, $X$ is a quasi-analytic space over $\Rig_E$.
\end{lemma}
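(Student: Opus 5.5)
The strategy is to construct an \'etale surjective morphism from a rigid space onto each fiber of $p$, and then to descend. Fix an affinoid $T=\Sp(A)$ and a map $x:T\to X$, i.e.\ a section of the Tate-fpqc sheaf $U/R$. We have to show that $U\times_X T$ is representable by a rigid space and that $U\times_X T\to T$ is \'etale and surjective.

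By definition of the quotient sheaf, there is a Tate-fpqc covering $\{T_i\to T\}$ such that $x|_{T_i}$ lifts to some $u_i\in U(T_i)$. Over $T_i$ we then have an identification
\[
U\times_X T_i\cong U\times_{u_i,U,s}R\,,
\]
obtained by base change of $s$ along $u_i$. This uses only that $R\to U\times U$ is a monomorphism, which makes $R= U\times_X U$. Since $s$ is \'etale, $U\times_X T_i\to T_i$ is a rigid space, \'etale over $T_i$, and surjective, because the identity section of $R$ gives a section.

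The fibered product $U\times_X T$ is a Tate-fpqc sheaf on $\Rig_E$. On the covering $\{T_i\}$ it becomes a rigid space \'etale over the base, and it comes with canonical descent data, since it is pulled back from $T$. The central step is therefore an effectivity statement. We need that Tate-fpqc descent is effective for rigid spaces that are \'etale (more precisely, \'etale and separated over the base, or at least locally quasi-finite) over affinoids.

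I expect this to be the main obstacle, because \'etale morphisms of rigid spaces need not be quasi-compact or separated, and fpqc descent of non-affinoid objects is delicate. My plan is to pass to adic spaces (or Berkovich spaces) and invoke the known descent results cited in the introduction: Ducros \cite{Ducros_families}, and Conrad--Temkin \cite[2.1]{ConradTemkinAnalytification}. Concretely, I would cover $U$ by affinoids $U_\alpha$ and reduce, locally on the source, to the quasi-compact quasi-separated \'etale pieces $U_\alpha\times_X T$. Such a piece is finite \'etale over a quasi-compact open of the base, locally after further refinement, using the local structure of \'etale maps. For finite \'etale affinoid objects, effective descent follows from descent of finite modules (Tate-fpqc descent for coherent sheaves on affinoids). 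I would then glue the pieces along opens, which descend as well.

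Once $U\times_X T$ is known to be a rigid space, the three properties follow quickly. It is \'etale over $T$, since \'etaleness is fpqc-local on the base. It is surjective, since it becomes surjective after passing to the cover. Hence $p$ is representable in rigid spaces, \'etale, and surjective. Finally, $X$ is a Tate-fpqc sheaf whose diagonal is representable, because $X\times_X X$-type fibered products reduce to $R$ via the same covering argument. Together with the \'etale surjection $p:U\to X$, this is precisely the definition of a quasi-analytic space over $\Rig_E$, which gives the last assertion.
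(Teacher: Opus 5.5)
Your outline matches the paper's: lift $x$ along a Tate-fpqc cover $\{T_i\to T\}$, identify $U\times_X T_i$ with a base change of $R$, reduce to affinoid opens $U_\alpha\subset U$, then descend. The gap is the step you flag as central, effective descent of $U_\alpha\times_X T$. You do not establish it, and the mechanism you sketch fails for two reasons.

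First, $U_\alpha\times_X T$ need not be quasi-compact over $T$. Take $U=\Sp(E\langle z\rangle)$ and $R=\coprod_{n\in\bbZ}U$, with $s=\id$ and $t\colon z\mapsto z+n$ on the $n$-th copy. This is the action groupoid of a free $\bbZ$-action; the graphs are pairwise disjoint closed subspaces of $U\times U$, so it is an \'{e}tale equivalence relation. Here $U$ is affinoid, yet $U\times_X U=R$ is an infinite disjoint union of discs over $U$.

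Second, even for quasi-compact pieces, the structure theorem for \'{e}tale maps (an open immersion into a finite \'{e}tale cover of an affinoid of the base) holds only locally on the \emph{source}. It cannot be made local on the base alone: $\{|z|\le|p|\}\hookrightarrow\{|z|\le 1\}$ is not finite \'{e}tale over any Tate cover of the disc. Source-local refinements are not preserved by the descent isomorphisms over $T_i\times_T T_j$. So you cannot descend them one at a time via descent of finite modules and then glue. Neither \cite{Ducros_families} nor \cite[2.1]{ConradTemkinAnalytification}, which only defines the topology, gives descent for separated, possibly non-quasi-compact, \'{e}tale maps.

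The paper closes this gap in two moves. It first checks that $U_0\times_X T'\to T'$ is separated, which you mention as needed but never verify. This holds because $(s,t)\colon R\to U\times U$ is a monomorphism and $U_0$ is affinoid. The map is \'{e}tale as a base change of $t$. The paper then applies Scholze's diamond functor, which identifies \'{e}tale sites, and uses effective $v$-descent of separated \'{e}tale maps \cite[Proposition 9.7]{Scholze_diamond}; this requires no quasi-compactness. The descended object is a separated \'{e}tale adic space over $T$, so it comes from a rigid space by \cite[(1.1.11)]{Hu96}.

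Your final step has a similar problem. Saying that the diagonal's fibered products reduce to $R$ "via the same covering argument" again needs a descent statement, now for monomorphisms that are not \'{e}tale. The paper handles the diagonal in Lemma \ref{lem:aux_1}. There the fiber of the diagonal is a monomorphism into $T$ that is \'{e}tale-locally representable. Over affinoid pieces of $T$ it is therefore a separated quasi-analytic space, hence representable by \cite[Remark 5.1.9(i)]{EmertonGeeHellmann}.
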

\begin{proof}
By Lemma \ref{lem:aux_1}, it suffices to show that the surjective map $p:U\to X$ is representable in rigid spaces and \'{e}tale.
Let $s:T\to X$ be a map from a rigid space $T\in \Rig_E$.
By the definition of sheafification, there is a Tate-fpqc cover $T'\to T$ in $\Rig_E$ such that the section $s|_{T'}$ factors through the map $p:U\to X$ by $s':T'\to U$.
We need to show that $U\times_{p,X,s} T$ is a rigid space.
Since the representability is Tate-local on $U$, it suffices to show that $U_0\times_{X}T$ is representable by a rigid space for all affinoid admissible open $U_0$ in $U$.
We claim that $U_0\times_X T' \to T'$ is a separated \'{e}tale morphism of rigid spaces.
Over $T'$, we have 
$
(U_0\times_X T)\times_T T' \cong X\times_{X\times X}(U_0\times T') \cong R\times_{(s,t),U\times U}(U_0\times T')
$, cf. \cite[\href{https://stacks.math.columbia.edu/tag/044V}{Tag 044V}]{stacks-project}, which is a rigid space.
Since $R\rightrightarrows U$ is an equivalence relation, $(s,t):R\to U\times U$ is a monomorphism, hence separated. 
Since $U_0$ is affinoid and thus separated, the projection $U_0\times T' \to T'$ is separated.
So, $U_0\times_X T' \to T'$ is separated.
As for \'{e}taleness, $U_0\times_{X,p\circ s'} T' \cong (U_0\times_X U) \times_{U,s'} T' \cong U_0\times_{U,s} (R\times_{t,U,s'}T') \to T'$ is the base change of the map $R\times_{t,U,s'}T'\to T'$, which is the base change of the \'{e}tale map $t:R\to U$.
Thus, $U_0\times_X T' \to T'$ is \'{e}tale.
We claim that the separated \'{e}tale map $U_0\times_X T' \to T'$, equipped with the natural descent datum relative to the Tate-fpqc cover $T'\to T$, descends to an \'{e}tale map of rigid spaces to $T$.
Applying Scholze's functor $(-)^{\diamond}$, which induces an equivalence of \'{e}tale sites \cite[Theorem 1.5]{Scholze_diamond}, and using the effective $v$-descent of separated \'{e}tale maps \cite[Proposition 9.7]{Scholze_diamond}\footnote{\cite[Proposition 9.7]{Scholze_diamond} is about separated \'{e}tale maps of perfectoid spaces, but we need the descent for separated \'{e}tale maps of locally spatial diamonds. If $C$ is a site in which finite limits are representable, the topos of sheaves on $C$ always satisfies effective descent \cite[4.2.11]{Olsson_Alg-spaces-stacks}. Thus we have by \cite[Proposition 10.11(iv)]{Scholze_diamond} effective $v$-descent of separated \'{e}tale maps of $v$-sheaves as a corollary to the $v$-descent for separated \'{e}tale maps of perfectoids \cite[Proposition 9.7]{Scholze_diamond}. Since a pro-\'{e}tale sheaf that is \'{e}tale over a locally spatial diamond is a locally spatial diamond \cite[Corollary 11.28]{Scholze_diamond}, the descended $v$-sheaf is a locally spatial diamond.}, we deduce that $U_0\times_X T\to T$ is a separated \'{e}tale map of rigid spaces\footnote{A priori, we only know that $U_0\times_X T$ is an analytic adic space over $T$. But since it is separated and \'{e}tale over $T$, it follows that Tate-locally on $T$, $U_0\times_X T$ is a quasi-separated adic space locally of finite type over $E$, hence comes from a rigid space, cf. \cite[1.1.11]{Hu96}. By gluing along admissible opens, we conclude that $U_0\times_X T$ is a rigid space.}, so $p:U\to X$ is representable and \'{etale}.
\end{proof}

{
}

\begin{thm}
\label{thm:qan_etale=fpqc}
Let $X_{et}$ be a Tate-\'{e}tale quasi-analytic space over $E$.
Let $p_{et}:U\to X_{et}$ be a map representable in rigid spaces, \'{e}tale, and Tate-\'{e}tale surjective, from a rigid space $U\in \Rig_E$.
Let $X_{fpqc}$ be the Tate-fpqc sheafification of $X_{et}$. 
Then, the induced map $p_{fpqc}:U\to X_{fpqc}$ is representable in rigid spaces, \'{e}tale, and Tate-fpqc surjective.
So, $X_{fpqc}$ is a Tate-fpqc quasi-analytic space over $E$.
In fact, $X_{et}\cong X_{fpqc}$.
\end{thm}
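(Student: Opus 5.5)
The plan is to reduce everything to Lemma \ref{lem:etale_equivalence}. Set $R := U\times_{X_{et}} U$. Since $p_{et}$ is representable in rigid spaces and \'etale, $R$ is a rigid space and both projections $s,t:R\rightrightarrows U$ are \'etale. Because $X_{et}$ is a sheaf (not merely a stack), the map $(s,t):R\to U\times U$ is a monomorphism, so $(U,R,s,t,c)$ is an \'etale equivalence relation in rigid spaces. Since $p_{et}$ is a Tate-\'etale epimorphism, $X_{et}$ is the Tate-\'etale quotient sheaf $U/_{et}R$, i.e.\ the Tate-\'etale sheafification of the presheaf $T\mapsto U(T)/R(T)$. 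As sheafification for a finer topology factors through sheafification for a coarser one, the Tate-fpqc sheafification $X_{fpqc}$ of $X_{et}$ equals the Tate-fpqc quotient sheaf $U/R$.

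Lemma \ref{lem:etale_equivalence} then applies directly. It shows that $p_{fpqc}:U\to U/R = X_{fpqc}$ is representable in rigid spaces, \'etale, and surjective, and that $X_{fpqc}$ is a quasi-analytic space. This settles the first two assertions, since Tate-fpqc surjectivity of $p_{fpqc}$ is automatic for a quotient map.

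The real content is the last claim, $X_{et}\cong X_{fpqc}$: the natural map $X_{et}\to X_{fpqc}$ should be an isomorphism. Equivalently, $X_{et}$ must already be a Tate-fpqc sheaf. I would argue as follows.

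\emph{Injectivity.} Two sections $x,y\in X_{et}(T)$ that agree in $X_{fpqc}(T)$ agree after some fpqc cover $T'\to T$. Tate-\'etale locally they lift to $U$, so after refining we may assume $x,y$ come from $a,b\in U(T)$. Then $(a,b):T\to U\times U$ factors through $R$ after the pullback to $T'$. Since $R\to U\times U$ is a monomorphism representable by rigid spaces, the fiber product $T\times_{U\times U}R\to T$ is a monomorphism of rigid spaces which becomes an isomorphism after the fpqc cover $T'\to T$. By fpqc descent for morphisms of rigid spaces (representable functors are Tate-fpqc sheaves), it is an isomorphism. Hence $(a,b)$ factors through $R$, and $x=y$ in $X_{et}(T)$.

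\emph{Surjectivity.} Given $x\in X_{fpqc}(T)$, consider $V := U\times_{X_{fpqc}} T\to T$. By the first part this is \'etale and Tate-fpqc surjective. I expect the main obstacle here: showing that an \'etale morphism of rigid spaces which is fpqc-surjective is already a Tate-\'etale covering, i.e.\ strongly surjective in the sense of \cite[2.1]{Hu96}. I would prove it by passing to adic spaces. Surjectivity on classical points implies surjectivity on all points of the adic spaces, because fpqc covers are surjective on adic points, cf. \cite{Ducros_families}. An \'etale map that is surjective on adic points is an open map, and its images of quasi-compact opens cover quasi-compact opens after a finite refinement. This is exactly the strong surjectivity condition.

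Granting this, $x|_V$ comes from $U(V)$ and hence from $X_{et}(V)$. The two pullbacks of $x|_V$ to $V\times_T V$ agree in $X_{fpqc}$, hence in $X_{et}$ by injectivity. Since $V\to T$ is an \'etale cover and $X_{et}$ is an \'etale sheaf, these sections glue to $x\in X_{et}(T)$, which proves $X_{et}\cong X_{fpqc}$.
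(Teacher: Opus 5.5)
Your proof is correct and follows the paper's argument. Both of you set $R=U\times_{X_{et}}U$, apply Lemma \ref{lem:etale_equivalence} to the Tate-fpqc quotient $U/R$, and then show that $X_{et}\to X_{fpqc}$ is a monomorphism (your descent argument is precisely the content of the paper's identity $U\times_{X_{et}}U=R=U\times_{X_{fpqc}}U$) and an epimorphism of Tate-\'{e}tale sheaves, because $U\times_{X_{fpqc}}T\to T$ is a Tate-\'{e}tale cover; the paper only asserts that last fact, while you justify it via adic points and the openness argument behind \cite[Lemma 2.1.2]{Hu96}.

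One wording point: the surjectivity of $|r(V)|\to|r(T)|$ comes from the fpqc cover factoring through $V$, as your ``because'' clause says, and not from surjectivity on classical points. The latter alone would not suffice: the open unit disc together with the boundary annulus maps \'{e}tale onto the classical points of the closed disc without being a cover.
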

\begin{proof}
Let $R:=U\times_{X_{et}}U$.
Since $X_{et}$ is a Tate-\'{e}tale quasi-analytic space, $R$ is a rigid space equipped with natural projection maps $s,t:R\rightrightarrows U$ defining an \'{e}tale equivalence relation such that $(U/R)_{et}=X_{et}$.
Then, $X_{fpqc}$ is the Tate-fpqc sheafification of the Tate-\'{e}tale quotient sheaf $(U/R)_{et}$, which is the Tate-fpqc quotient sheaf $(U/R)_{fpqc}$.
By Lemma \ref{lem:etale_equivalence}, $X_{fpqc}$ is a Tate-fpqc quasi-analytic space that admits $p_{fpqc}$ as an \'{e}tale surjection over $\Rig_{E,fpqc}$.
Note that $p_{fpqc}$ is also surjective in the Tate-\'{e}tale topology, because for any $T\to X_{fpqc}$ with $T\in \Rig_E$, the base change $T\times_{X_{fpqc}} U\to T$ is a Tate-\'{e}tale cover of rigid spaces.
We claim that the natural map $\alpha: X_{et}\to X_{fpqc}$ is an isomorphism of presheaves on $\Rig_E$.
From $p_{fpqc}=\alpha \circ p_{et}$, it follows that $\alpha$ is an epimorphism of Tate-\'{e}tale sheaves.
The fact that $U\times_{(U/R)_{et}} U = R = U\times_{(U/R)_{fpqc}} U$ implies that $\alpha$ is a monomorphism of presheaves.
Hence, $\alpha:X_{et}\xrightarrow{\simeq} X_{fpqc}$ is an isomorphism.
\end{proof}

\begin{lemma}
\label{lem:Berkovich}
Let $f:S'\to S$ be a faithfully flat quasi-compact morphism of affinoid spaces over $E$.
Then, there exist finitely many affinoid spaces $\{V_j\}_{j=1}^r$, and an fpqc morphism $\psi:\coprod_{j=1}^r V_j \to S$ such that $\psi|_{V_j}$ factors as $V_j \xrightarrow{g_j} W_j \xrightarrow{h_j} S$ for some affinoid space $W_j$ with $g_j$ finite faithfully flat and $h_j$  \'{e}tale, for each $j$.
Moreover, there is an $S$-map $\psi': \coprod_j V_j\to S'$ such that $\psi = f \circ \psi'$ and the following diagram is commutative
$$
\begin{tikzcd}[column sep=25mm, row sep=20mm]
\coprod_{j=1}^r V_j \arrow[d] \ar[dr,"\psi"] \ar[d,"\psi'"] \arrow[r, "g", two heads] & \coprod_{j=1}^r W_j \arrow[d, "h", two heads] \\
S' \arrow[r, "f"] & S
\end{tikzcd}
$$
Finally, $g$ is a finite faithfully flat cover, and $h$ is a Tate-\'{e}tale cover.
\end{lemma}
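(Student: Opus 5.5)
The plan is to pass to Berkovich spaces and use Ducros' local structure theorem for flat morphisms. The structure theorem comes from \cite{Ducros_families}; the descent input is \cite{Daylies_descente}. Write $S=\mathcal{M}(A)$ and $S'=\mathcal{M}(A')$ for the associated strictly $E$-affinoid Berkovich spaces. Faithful flatness of $A\to A'$ makes $f$ surjective, so every point $s\in S$ has a preimage $s'\in S'$.

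For each $s$, I would invoke the Berkovich analogue of EGA IV 17.16.2 (quasi-finite flat multisections). This result is Ducros' theorem that a flat morphism of good analytic spaces admits, locally around any point of the target, a quasi-finite flat multisection. It gives an affinoid domain $V_s$, a finite flat morphism $g_s:V_s\to W_s$ onto an affinoid neighborhood $W_s$ that is \'etale over a neighborhood of $s$ in $S$, and a factorization of $V_s\to S$ through $f$ via some $\psi'_s:V_s\to S'$.

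Concretely, I expect to do the following.
\begin{enumerate}
\item Choose a quasi-finite flat subspace $Z\subset S'$ through $s'$ by cutting $S'$ with $d=\dim_{s'}f^{-1}(s)$ generic functions. These form a regular sequence on the fibre, so the result stays flat by the local flatness criterion.
\item Apply the Berkovich local structure of quasi-finite morphisms \cite[Th\'eor\`eme 3.2.1]{Ducros_families}-type. Around $s'$, the map $Z\to S$ then factors as a finite flat map onto an \'etale neighborhood of $s$.
\item Shrink everything to strictly affinoid domains. Strictness is preserved because all data come from rigid-analytic ones.
\end{enumerate}
The interiors, relative to the \'etale maps $h_s$, of the images of these neighborhoods cover the compact space $S$. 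I would therefore extract finitely many $s_1,\dots,s_r$ and set $V_j=V_{s_j}$, $W_j=W_{s_j}$. Each $g_j$ is then finite faithfully flat onto $W_j$; I would arrange surjectivity by replacing $W_j$ with the image of $V_j$, which is a union of connected components since finite flat maps are open and closed.

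The final assertions then follow. First, $h=\coprod h_j$ is \'etale and its images cover $S$, so it is a Tate-\'etale cover once we pass back to rigid spaces. This uses that Berkovich affinoid domains correspond to affinoid subdomains and that finite coverings by them are admissible. Second, $\psi=h\circ g$ is flat, surjective and quasi-compact, hence fpqc. Third, commutativity holds by construction of $\psi'=\coprod\psi'_s$.

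The main obstacle is step 2 together with the covering argument. The \'etale neighborhoods produced by Ducros are a priori only germs at $s$, or $G$-neighborhoods. Ensuring that finitely many of them form an admissible (Tate) covering of $S$ requires care with the difference between Berkovich neighborhoods and rigid admissible opens. I would handle this by taking each $W_j$ to be an affinoid domain containing $s_j$ in its topological interior in $S$, in the Berkovich sense. This is exactly the condition that makes the resulting finite family a Tate covering.
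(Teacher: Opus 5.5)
There is a genuine gap in the covering step, which is where surjectivity of $\psi$ has to come from, and your proposed remedy does not close it. Flat maps of Berkovich affinoids are not open, so the image $\psi(V_s)=h_s(g_s(V_s))$ of a multisection built around one chosen preimage $s'$ need not be a neighbourhood of $s$.

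Here is an example. Take $S$ the closed unit disc, $S_1=\{|x|\le |p|\}$, $S_2=\{|p|\le |x|\le 1\}$ and $S'=S_1\sqcup S_2$; then $f$ is faithfully flat and quasi-compact. Let $s$ be the Gauss point of radius $|p|$ and pick $s'\in S_1$. Everything you construct around $s'$ lies in $S_1$, so $\psi(V_s)\subset S_1$, and this set does not contain $s$ in its interior. Nor can $W_s$ be chosen to fix this. $W_s$ is \'etale over $S$, not a domain in $S$, and once $g_s$ is surjective only $h_s(g_s(V_s))=\psi(V_s)$ matters. So you only get a covering of the compact space $S$ by compact sets, which yields no finite subcover. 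A pointwise repair would need finitely many preimages per point, plus a proof that the images of the corresponding multisections fill a neighbourhood of $s$. That is the real content of Ducros' multisection theorem, and your sketch does not address it.

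Two smaller problems compound this. In Step 1, $d$ functions cutting the fibre down to dimension zero form a regular sequence only if $f^{-1}(s)$ is Cohen--Macaulay at $s'$. So you cannot insist on passing through an arbitrary preimage; as in EGA IV 17.16, you must move to such a point. In Step 2, the target of the finite flat map is in general only quasi-\'etale over $S$ (think of an affinoid domain embedding at a boundary point), not Berkovich-\'etale. You therefore need an explicit comparison with rigid \'etaleness.

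The paper avoids all of this by using the global form of the multisection theorem. By \cite[2.2.2]{Daylies_descente} there are a strictly affinoid $X$, a quasi-finite faithfully flat map $X\to S$, and an $S$-map $X\to S'$. Then \cite[2.2.3]{Daylies_descente} gives a finite $G$-covering $X=\bigcup_{j=1}^r V_j$ in which each $V_j\to S$ factors as a finite faithfully flat $V_j\to W_j$ followed by a quasi-\'etale $W_j\to S$. Surjectivity of $\psi$ is then inherited from $X\to S$, because the $V_j$ cover $X$. Finally, \cite[Lemma A.10]{ALY_crelle} turns the quasi-\'etale maps $h_j$ of strict affinoids into \'etale maps of rigid spaces, so $h$ is a Tate-\'etale cover. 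If you replace your pointwise construction and compactness argument by these inputs, your outline becomes a complete proof.
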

\begin{proof}
First, view $f$ as an fpqc map of strict $E$-affinoid Berkovich spaces. 
Then, by Ducros' multisection theorem, cf. \cite[2.2.2]{Daylies_descente}, there exist a strict $E$-affinoid space $X$, a quasi-finite faithfully flat map $X\to S$, and an $S$-map $X\to S'$.
By \cite[2.2.3]{Daylies_descente}, we can find a finite $G$-covering $X=\bigcup_{j=1}^r V_j$ such that the induced map $\psi: \coprod_{j} V_j \to X\to S$, when restricted to each $V_j$, factors as $V_j \xrightarrow{g_j} W_j \xrightarrow{h_j} S$
with $g_j$ finite faithfully flat and $h_j$ quasi-\'{e}tale in the sense of \cite[5.2.6, 5.4.10, 5.4.11]{Ducros_families}.
For a quasi-\'{e}tale map of strict $E$-affinoid Berkovich spaces, its restriction on rigid points is an \'{e}tale map of rigid spaces \cite[Lemma A.10]{ALY_crelle}.
Since $\psi$ is a faithfully flat map of affinoid spaces, we conclude that on  rigid analytic points, the map $h = \coprod_j h_j:\coprod_j W_j \to S$ is a Tate-\'{e}tale cover of rigid spaces, i.e., \'{e}tale and surjective on the associated adic spaces.
\end{proof}

\begin{prop}
\label{prop:fppf=et+fpf}
Any Tate-fpqc covering family $\{X_i\to X\}_i$ of $X\in\Rig_E$ can be refined by a finite faithfully flat cover of a Tate-\'{e}tale cover of $X$.
\end{prop}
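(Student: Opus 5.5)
The plan is to reduce to the affinoid case and then apply Lemma \ref{lem:Berkovich}. Recall that a Tate-fpqc covering family $\{X_i\to X\}_i$ of $X\in\Rig_E$ consists of flat morphisms with the following property: every affinoid open $S\subset X$ (or at least those of an admissible affinoid covering of $X$) is the image of a quasi-compact open of $\coprod_i X_i$, and hence of a finite disjoint union of affinoids. So the first step is to choose an admissible affinoid covering $X=\bigcup_k S_k$. This covering is itself a Tate-\'etale cover of $X$, namely $\coprod_k S_k\to X$. For each $k$ we then choose finitely many affinoid opens $S'_{k,1},\dots,S'_{k,m_k}$, each lying in some $X_i$, whose images cover $S_k$. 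Put $S'_k:=\coprod_l S'_{k,l}$. It is affinoid, and the map $f_k:S'_k\to S_k$ is flat, quasi-compact and surjective, i.e. faithfully flat.

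Next apply Lemma \ref{lem:Berkovich} to each $f_k$. This produces affinoids $V_{k,j}$ and $W_{k,j}$ with the following data: finite faithfully flat maps $g_k:\coprod_j V_{k,j}\to\coprod_j W_{k,j}$, a Tate-\'etale cover $h_k:\coprod_j W_{k,j}\to S_k$, and an $S_k$-map $\psi'_k:\coprod_j V_{k,j}\to S'_k$ with $h_k\circ g_k=f_k\circ\psi'_k$. Now form the composite
$$
\coprod_{k,j} W_{k,j}\xrightarrow{\ \coprod_k h_k\ }\coprod_k S_k\lra X .
$$
It is a composite of Tate-\'etale covers, hence a Tate-\'etale cover of $X$. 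Over it, $g=\coprod_k g_k$ is a finite faithfully flat cover. The maps $\psi'_k$, followed by the inclusions $S'_{k,l}\hookrightarrow X_i$, show that each $V_{k,j}$ maps to $X$ through some $X_i$. To make this precise, decompose $V_{k,j}$ into the clopen pieces $\psi_k'^{-1}(S'_{k,l})$, which are again affinoid. Finiteness and flatness of $g$ are preserved if we regard $g$ as a map from the disjoint union of these pieces. Therefore $g$ followed by the \'etale cover is a refinement of $\{X_i\to X\}$ of the required form.

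The main obstacle is bookkeeping the definition of Tate-fpqc coverings in \cite[2.1]{ConradTemkinAnalytification}. One must check that the quasi-compactness condition really gives, over each affinoid of an admissible covering, finitely many affinoids with jointly surjective image. One must also check that surjectivity in Lemma \ref{lem:Berkovich} holds on adic points rather than only on rigid points, so that $h$ is a genuine Tate-\'etale cover. For the first point I would use the convention that Tate-fpqc covers are defined affinoid-locally by such quasi-compactness. For the second, flatness plus quasi-compactness plus surjectivity on rigid points implies surjectivity on adic points, because the image is a quasi-compact generizing subset containing all classical points. The lemma's final assertion already records this.
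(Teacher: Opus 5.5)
Your proposal is correct and follows essentially the same route as the paper. Both arguments cover $X$ by an admissible family of affinoids, extract over each one an fpqc map of affinoids that factors through finitely many $X_i$, apply Lemma \ref{lem:Berkovich}, and assemble the results. Your splitting of $V_{k,j}$ into clopen pieces, so that each member factors through a single $X_i$, and your justification of surjectivity of $h$ on adic points make explicit two points the paper's write-up leaves implicit.
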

\begin{proof}
By the definition of the Tate-fpqc topology, there exists an admissible cover $X=\bigcup_{j} U_j$ with $U_j$ affinoid opens such that over each $U_j$, there exist an fpqc map $Z_j\to U_j$ and a $U_j$-map $Z_j\to \coprod_i (X_i \times_X U_j)$.
Since $U_j$ is quasi-compact, so is $Z_j$.
Therefore, the image of $Z_j\to \coprod_i (X_i \times_X U_j)$ meets only finitely many $X_i$, say $i\in I(j)$ for a finite subset $I(j)\subset I$.
Replacing $Z_j$ by the disjoint union of a finite admissible affinoid cover, we may assume that $Z_j$ is affinoid.
Applying Lemma \ref{lem:Berkovich} to the fpqc map $Z_j\to U_j$ of affinoid rigid spaces, we have affinoid rigid spaces $V_j, W_j$ with maps $f_j: V_j\to Z_j$ such that each composition $V_j\to Z_j \to U_j$ factors as the composition of a finite faithfully flat map $g_j:V_j\to W_j$ and a Tate-\'{e}tale surjection $h_j: W_j \to U_j$.
Therefore, for $f := \coprod_j f_j : \coprod_j V_j \to \coprod_j Z_j$, $g:= \coprod_j g_j$ and $h:=\coprod_j h_j$, we have a commutative diagram
$$
\begin{tikzcd}
\coprod_j V_j \arrow[d, "f"] \arrow[r, "g"] & \coprod_j W_j \arrow[r, "h"]             & X                                  \\
\coprod_j Z_j \arrow[r]                           & \coprod_j \coprod_{i\in I(j)} (X_{i}\times_X U_j) \arrow[r] & \coprod_i X_i \arrow[u, two heads]
\end{tikzcd}
$$
with $g$ being a finite faithfully flat map and $h$ being an \'{e}tale and strongly surjective map, such that $\{V_j \xrightarrow{h_j\circ g_j} X\}_{j\in J}$ refines the Tate-fpqc cover $\{X_i\to X\}_i$.
\end{proof}

The theory of {\it Weil restriction of scalars} for rigid spaces and adic spaces has been developed in \cite{Bertapelle_Weil} and \cite{Wahle_Weil}. 
For any finite faithfully flat morphism $S'\to S$ of rigid spaces, and any quasi-analytic space $X$ over $S'$, we can define $\Res_{S'/S}(X)$ as a presheaf on $\Rig_E$, in the same way as  \cite[\href{https://stacks.math.columbia.edu/tag/05Y8}{Tag 05Y8}]{stacks-project}, which is actually a Tate-fpqc sheaf since $X$ is.

\begin{lemma}
\label{lem:Weil_QAn}
Let $S'\to S$ be a finite faithfully flat map of rigid spaces, and let $Z'$ be a quasi-analytic space over $S'$.
Then, the sheaf $\Res_{S'/S}(Z')$ is also a quasi-analytic space.
\end{lemma}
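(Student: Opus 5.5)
We reduce to the Weil restriction of rigid spaces and then propagate along an étale atlas. Choose an étale surjection $p:U\to Z'$ from a rigid space $U$ over $S'$, representable in rigid spaces. By Lemma \ref{lem:etale_equivalence} we may write $Z'=U/R$ with $R=U\times_{Z'}U$ an étale equivalence relation in rigid spaces. Weil restriction is a right adjoint on sheaves, so it commutes with fiber products. Hence $\Res_{S'/S}(R)\rightrightarrows \Res_{S'/S}(U)$ is again an equivalence relation, and its source and target maps are $\Res_{S'/S}$ of étale maps.

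The argument then runs in four steps, in this order.

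\textbf{Step 1: representability for rigid spaces.} We show that for a rigid space $Y$ over $S'$, the restriction $\Res_{S'/S}(Y)$ is representable by a rigid space, at least when $Y$ is a union of affinoids each contained in an affinoid over which $S'\to S$ is trivialized. The tool here is the results of \cite{Bertapelle_Weil} and \cite{Wahle_Weil}. In general $\Res$ of a non-separated or non-quasi-compact space need not be a rigid space. So instead we cover $Y$ by admissible affinoid opens $Y_k$ and use that $\Res_{S'/S}(Y_k)$ is representable (Bertapelle). The subfunctors obtained as $\Res$ of finite unions of the $Y_k$ then form an open covering of $\Res_{S'/S}(Y)$, possibly only a quasi-analytic one.

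\textbf{Step 2: étaleness is preserved.} If $V\to Y$ is étale over $S'$, we check that $\Res_{S'/S}(V)\to\Res_{S'/S}(Y)$ is representable and étale. The strategy is to check this on the infinitesimal lifting criterion, or on points after a finite étale base change trivializing $S'$ locally on $S$.

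\textbf{Step 3: surjectivity issue.} $\Res_{S'/S}(U)\to\Res_{S'/S}(Z')$ is not surjective in general, since a $T$-point of $Z'$ over $T\times_S S'$ lifts to $U$ only Tate-étale locally on $T\times_SS'$, not on $T$. To handle this we replace $U$ by the family of all finite disjoint unions of étale pieces, i.e.\ by $\coprod_{\underline k}$ of the $\Res$ of the opens $Z'_{\underline k}\subset Z'$, the images of finite subfamilies $U_{k_1}\sqcup\dots\sqcup U_{k_m}$. Then a point of $Z'$ over the finite cover $T'=T\times_SS'\to T$ lifts after an étale cover of $T'$. Here we use that a Tate-étale cover of $T'$ finite flat over $T$ is refined by the pullback of a Tate-étale cover of $T$, via the structure of étale neighborhoods of finite fibers (Henselian argument on rigid points). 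This gives an étale surjection from a rigid space onto $\Res_{S'/S}(Z')$, with representable diagonal by Step 1 applied to $R$.

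\textbf{Step 4: conclusion.} With Steps 1–3 in hand, $\Res_{S'/S}(Z')$ is an étale quotient of a rigid space by an étale equivalence relation, so it is quasi-analytic by Lemma \ref{lem:etale_equivalence}.

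We expect Step 3 to be the main obstacle, namely lifting étale covers of $T\times_SS'$ to étale covers of $T$. We would first try to do this on adic spaces via Huber's étale site, using that finite morphisms have proper fibers, and otherwise fall back on Berkovich-space arguments as in Lemma \ref{lem:Berkovich}.
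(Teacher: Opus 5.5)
Your outline (étale atlas $U'\to Z'$, Weil-restrict it, show $\Res_{S'/S}(U')\to\Res_{S'/S}(Z')$ is a representable \'{e}tale surjection, conclude) is the paper's, but Step~3 rests on a false claim and leaves the real content unproved.

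Take $U'$ to be a disjoint union of affinoids, hence separated over $S'$. A map $T\to\Res_{S'/S}(Z')$ is given by some $b\colon T':=T\times_S S'\to Z'$. For it one has $T\times_{\Res_{S'/S}(Z')}\Res_{S'/S}(U')\cong\Res_{T'/T}(W)$, where $W:=T'\times_{Z'}U'$ is a separated \'{e}tale surjection onto $T'$. This reduces everything to two facts:
\begin{itemize}
\item \emph{Representability} is Weil restriction of separated rigid spaces, Lemma~\ref{lem:Weil_repr}(i). That lemma already covers non-quasi-compact spaces such as $U'$, so your Step~1 hedging and the finite-subfamily detour in Step~3 are unnecessary. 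The representable diagonal then comes for free from Lemma~\ref{lem:aux_1}.
\item \emph{Surjectivity} of $\Res_{S'/S}(U')\to\Res_{S'/S}(Z')$ is exactly the statement that Weil restriction along a finite locally free map carries Tate-\'{e}tale covers to Tate-\'{e}tale covers.
\end{itemize}
That map \emph{is} surjective. The fact you invoke to repair Step~3 (\'{e}tale covers of $T'$ are refined by pullbacks of \'{e}tale covers of $T$) is the same statement. Your Step~4 also needs $\Res_{S'/S}(Z')=\Res_{S'/S}(U)/\Res_{S'/S}(R)$, which is precisely the surjectivity you declared false.

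The genuine gap is that this key fact is never proved, and the route you suggest, a Henselian argument at rigid points, does not suffice as stated. A family of \'{e}tale maps whose images contain every rigid point of $T$ need not be a Tate-\'{e}tale covering. For example, the open immersions of $\{|x|<1\}$ and $\{|x|=1\}$ into the closed unit disc miss a rank-two point of $\Spa(E\langle x\rangle)$.

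The paper (Lemma~\ref{lem:Weil_repr}(ii)) instead checks surjectivity at every $s\in|r(S)|$. Over $\Spa(\ovk(s),\ovk(s)^+)$, the fiber of $r(S')$ is a finite disjoint union of $\Spa(B_i)$ with $B_i^\rhd$ local Artinian with algebraically closed residue field. By Huber's criterion, every \'{e}tale surjection onto such a fiber has a section, and by adjunction this gives a point of the Weil restriction over $s$. \'{E}taleness is handled by the infinitesimal lifting criterion, as in your Step~2.

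To salvage a rigid-point argument you would need extra input. One way: reduce to $W$ quasi-compact, show $\Res_{T'/T}(W)$ is quasi-compact, and use that a quasi-compact open of an affinoid adic space containing all classical points is the whole space.
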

\begin{proof}
The question is Tate-local over $S$, hence we may assume that $S$ is an affinoid rigid space.
Since $Z'$ is a quasi-analytic space, we can choose an \'{e}tale surjection $p':U'\to Z'$ from a rigid space $U'$. 
Replacing $U'$ by the disjoint union $\coprod_{i\in I} U'_i$ for an admissible cover $U'=\bigcup_{i\in I} U_i'$ by affinoid opens $U_i'$, we assume that $U'$ is a disjoint union of affinoid rigid spaces, and hence separated over $E$.
We claim that 
$$
p:=\Res_{S'/S}(p'): \Res_{S'/S}(U') \to \Res_{S'/S}(Z')
$$
is representable by rigid spaces, \'{e}tale, and surjective.
Note that $U'\to Z'\to S'$ is separated, hence $\Res_{S'/S}(U')$ is represented by a rigid space by Lemma \ref{lem:Weil_repr}(i).
Let $T\to \Res_{S'/S}(Z')$ be a morphism of sheaves, which amounts to a pair $(a,b)$ of morphisms, where $a:T\to S$ and $b:T\times_{a,S}S'\to Z'$ \cite[\href{https://stacks.math.columbia.edu/tag/05Y8}{Tag 05Y8}]{stacks-project}.
A morphism from a rigid space $T'$ to the fiber product
$
T\times_{\Res_{S'/S}(Z')} \Res_{S'/S}(U)
$
is the data of a pair $(f,g)$ of morphisms, where $f:T'\to T$ and $g: T'\times_{S}S'\to U'$ such that $p'\circ g = b\circ (f\times_{S} S')$, which is equivalent to a pair $(c,d)$ of morphisms, where $c=f:T'\to T$ and $d=(f,g):T'\times_{c,T}(T\times_S S') \to W$ for the fiber product $W:=(T\times_{S}S')\times_{Z'}U'$, and $W$ is a rigid space as $Z'$ is a quasi-analytic space.
Such a pair is exactly a morphism of sheaves $T'\to \Res_{T\times_{S}S'/T}(W)$, thus we have shown that
$
T\times_{\Res_{S'/S}(Z')} \Res_{S'/S}(U) \cong \Res_{T\times_S S'/T}(W).
$
Since the map $W\to T\times_S S'$ is the base change of the separated \'{e}tale surjection $U'\to Z'$, by Lemma \ref{lem:Weil_repr}(i), $T\times_{\Res_{S'/S}(Z')} \Res_{S'/S}(U) \cong \Res_{T\times_S S'/T}(W)$ is represented by a rigid space, and $W\to T\times_S S'$ is a Tate-\'{e}tale cover of rigid spaces.
By Lemma \ref{lem:Weil_repr}(ii), the structure map $\Res_{T\times_S S'/T}(W) \to T$ is a Tate-\'{e}tale cover.
Therefore, $p=\Res_{S'/S}(p')$ is representable, \'{e}tale, and surjective, which, by Lemma \ref{lem:aux_1}, allows us to conclude that $\Res_{S'/S}(Z')$ is a quasi-analytic space.
\end{proof}

\begin{lemma}
\label{lem:fpf-desc}
Let $q:S'\to S$ be a finite faithfully flat map of rigid spaces over $E$, and let $Z'$ be a quasi-analytic space over $S'$ equipped with a descent datum relative to $q:S'\to S$.
Then, $Z'$ descends to a quasi-analytic space over $S$.
\end{lemma}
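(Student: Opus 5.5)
The plan is to mimic the classical proof that fppf descent along finite locally free maps is effective for algebraic spaces, using Weil restriction; this is presumably why Lemma \ref{lem:Weil_QAn} was just established. Write $S'' := S'\times_S S'$ with projections $p_1,p_2$, and let the descent datum be $\varphi: p_1^*Z' \xrightarrow{\simeq} p_2^*Z'$, satisfying the cocycle condition over $S'\times_S S'\times_S S'$.

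First, I define the descended object as a sheaf. Let $Z$ be the Tate-fpqc sheaf on $\Rig_{E}/S$ sending $T\to S$ to the set of pairs $(T',z')$, where $T'=T\times_S S'$ and $z'\in Z'(T')$ is compatible with $\varphi$ on $T'\times_T T'$. Since $S'\to S$ is a Tate-fpqc cover and $Z'$ is a sheaf, $Z\times_S S'\cong Z'$ compatibly with the descent data. Everything then reduces to showing that $Z$ is a quasi-analytic space.

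Second, I realize $Z$ inside a Weil restriction. There is a natural map $Z\to \Res_{S'/S}(Z')$, sending a section over $T$ to the underlying $T\times_S S'$-point of $Z'$. It identifies $Z$ with the equalizer of the two maps $\Res_{S'/S}(Z') \rightrightarrows \Res_{S''/S}(p_2^*Z')$. The first is induced by $p_2$; the second is induced by $p_1$ followed by $\varphi$. By Lemma \ref{lem:Weil_QAn}, both $\Res_{S'/S}(Z')$ and $\Res_{S''/S}(p_2^*Z')$ are quasi-analytic spaces, since $S''\to S$ is again finite faithfully flat. An equalizer of two maps into a quasi-analytic space $Y$ is the fiber product $X\times_{Y\times Y}Y$ along the diagonal. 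So $Z\to \Res_{S'/S}(Z')$ is a base change of $\Delta_Y$, which is representable in rigid spaces because $Y$ is quasi-analytic; this is the usual property that diagonals are representable, as in the proof of Lemma \ref{lem:etale_equivalence}.

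Third, and this is the main obstacle, I must produce an étale surjection onto $Z$ itself. Representability of $Z\to\Res_{S'/S}(Z')$ is not enough, since that map is only a monomorphism, possibly a closed or non-open immersion. I would argue locally instead. Work Tate-locally on $S$, so that $S$ is affinoid. Choose an étale surjection $U'\to Z'$ from a disjoint union of affinoids. Apply Proposition \ref{prop:fppf=et+fpf}-style refinements and Lemma \ref{lem:Berkovich}, or more directly the \'etale-local structure of finite flat maps, to reduce to the case where $S'\to S$ is split after an étale cover of $S$, that is $S'\times_S W\cong \coprod W$. Such reductions are allowed because being a quasi-analytic space can be checked after a Tate-étale cover; here I use the étale-descent statement implicit in Theorem \ref{thm:qan_etale=fpqc}, together with the diamond-theoretic descent of separated étale maps from Lemma \ref{lem:etale_equivalence}. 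In the split case the descent datum is effective trivially: $Z$ is just the restriction of $Z'$ to one copy. If full splitting is unavailable, the fallback is this: $Z'\times_{Z}Z'\cong p_1^*Z'$ is quasi-analytic, and $Z'\to Z$ is a finite flat surjection. I would take $U'\to Z'\to Z$ and deduce from étale-local sections of finite flat covers, Lemma \ref{lem:Berkovich}, that $Z$ admits an étale atlas after refinement. Finally I conclude via Lemma \ref{lem:aux_1}.
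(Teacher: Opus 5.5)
Your first two steps are the paper's proof. The paper descends $Z'$ to a Tate-fpqc sheaf $Z$ on $S$ and identifies it with $\Res_{S'/S}(Z')\times_{\Res_{S''/S}(Z'')\times\Res_{S''/S}(Z''),\Delta}\Res_{S''/S}(Z'')$. It then applies Lemma~\ref{lem:Weil_QAn} to $S'\to S$ and $S''\to S$ and stops.

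Your third step, however, rests on a misdiagnosis. Representability in rigid spaces of the monomorphism $j\colon Z\to X:=\Res_{S'/S}(Z')$ is exactly what is needed. Choose an \'etale surjection $V\to X$ with $V\in\Rig_E$. Then $V\times_X Z$ is a rigid space because $j$ is representable. The projection $V\times_X Z\to Z$ is \'etale and surjective, being a base change of $V\to X$. Since $j$ is a monomorphism, $Z\cong Z\times_X Z$. Hence $\Delta_Z$ is the base change of $\Delta_X$ along $Z\times Z\to X\times X$, so it is representable in rigid spaces, and $Z$ is quasi-analytic (cf.\ Lemma~\ref{lem:aux_1}). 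The atlas of $Z$ is obtained by pulling back an atlas of $X$, not by pushing one forward along $j$. So it is irrelevant that $j$ may be a closed rather than an open immersion, just as closed subspaces of algebraic spaces are algebraic spaces.

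The substitutes you propose in the third step would not work:
\begin{enumerate}[(a)]
\item A finite faithfully flat map does not become split after a Tate-\'etale cover of $S$ unless it is itself \'etale. Take $S'=\Sp E\langle z\rangle\to S=\Sp E\langle w\rangle$ with $w=z^2$. The fibre over $w=0$ is $E[z]/(z^2)$, and it stays non-reduced after any \'etale base change. So no \'etale neighbourhood of $0$ splits $S'$.
\item Finite flat covers do not have \'etale-local sections, and Lemma~\ref{lem:Berkovich} does not supply them. It refines an fpqc cover by a finite faithfully flat map followed by an \'etale map, and the finite flat part remains.
\item Your fallback atlas $U'\to Z'\to Z$ is only flat and surjective. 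Extracting an \'etale atlas from a flat one is an Artin-type theorem (the analogue of ``sheaves with an fppf atlas are algebraic spaces''), and nothing in the paper provides it. The appeal to diamond-theoretic descent of separated \'etale maps does not address this.
\end{enumerate}
As written, the third step is therefore a genuine gap. Delete it and replace it with the short pullback argument above, which is what the paper's final sentence implicitly uses.
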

\begin{proof}
Let $S'':=S'\times_S S'$, which is finite faithfully flat over $S'$ and $S$, equipped with two projections $p_1,p_2: S''\rightrightarrows S$.
We write $\varphi:p_1^*Z' \xrightarrow{\simeq} p_2^*Z' =: Z''$ for the descent datum.
By \cite[4.2.11, Exercise 2.D(b)]{Olsson_Alg-spaces-stacks} we descend $Z'$ to a Tate-fpqc sheaf $Z$ over $S$.
Explicitly, cf. \cite[\href{https://stacks.math.columbia.edu/tag/04TR}{Tag 04TR}]{stacks-project}, we have that for any $S$-rigid space $T$,
\begin{align*}
Z(T) 
&= 
\set{f\in \Hom_{S'}(T\times_S S',Z')}{\varphi(p_1^*f)=p_2^*f}    
\\&= \eq(\Res_{S'/S}(Z')\rightrightarrows \Res_{S''/S}(Z'')) (T)
\\&\cong \left(\Res_{S'/S}(Z')\times_{(\varphi\circ p_1^*,p_2^*), \Res_{S''/S}(Z'') \times \Res_{S''/S}(Z''), \Delta} \Res_{S''/S}(Z'')\right)(T).
\end{align*}
By Lemma \ref{lem:Weil_QAn}, $Z\cong \eq(\Res_{S'/S}(Z')\rightrightarrows \Res_{S''/S}(Z''))$ is a quasi-analytic space.
\end{proof}

\begin{lemma}
\label{lem:Weil_repr}
Let $q:S'\to S$ be a finite locally free morphism of rigid spaces over $E$, and $f:X'\to S'$ be a separated morphism of rigid spaces over $E$.
\begin{enumerate}[(i)]
\item 
Then, $\Res_{S'/S}(X')$ is represented by a separated $S$-rigid space, and $\Res_{r(S')/r(S)}(r(X'))$ is represented by a locally of finite type separated $r(S)$-adic space such that 
$$
r(\Res_{S'/S}(X'))\cong \Res_{r(S')/r(S)}(r(X'))
$$
as adic spaces over $r(S)$.
\item 
If $f:X'\to S'$ is furthermore a Tate-\'{e}tale cover, so is $\Res_{S'/S}(f):\Res_{S'/S}(X') \to S$.
\end{enumerate}
\end{lemma}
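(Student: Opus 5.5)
The plan is to pass to adic spaces and reduce to the classical scheme-theoretic Weil restriction. Representability of $\Res_{S'/S}(X')$ is Tate-local on $S$, because the functor is a Tate-fpqc sheaf and compatible with base change $T\to S$. So we may assume $S=\Sp(A)$ is affinoid. Then $S'=\Sp(A')$ with $A'$ a finite projective $A$-module, and after shrinking $S$ further we may assume $A'$ is free of rank $d$ with basis $e_1,\dots,e_d$.

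\textbf{Affinoid case.} First treat $X'=\Sp(B')$ affinoid, with $B'=A'\langle x_1,\dots,x_m\rangle/I$. A $T$-point of $\Res_{S'/S}(X')$, for $T=\Sp(C)$ over $S$, is an $A'$-algebra map $B'\to C\otimes_A A'$. Each such map is determined by writing $x_k=\sum_j y_{kj}e_j$ with $y_{kj}\in C$. The power-boundedness condition on the images of the $x_k$ is not polynomial in the $y_{kj}$, so the functor is not literally a closed subspace of a polydisc. Instead it is an open subspace, cut out by the spectral-norm conditions $|x_k|_{\sup}\le 1$ on the fibres, of the closed subspace of affine space $\bbA^{dm,\an}_A$ defined by the expansions of the relations in $I$ along the basis $e_j$. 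I would realize it as follows. Take the algebraic Weil restriction $\Res_{\Spec A'/\Spec A}(\Spec A'[x]/I)$, which is representable by BLR since it is affine. Analytify it, and then take the subfunctor of points whose images in $C\otimes_AA'$ are power-bounded. This subfunctor is an admissible open: it is a union of affinoids, obtained by bounding the norms of the $y_{kj}$ after choosing a norm on $A'$ equivalent to the sup-norm on the coordinates. It is separated because it sits inside an analytified affine scheme.

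\textbf{Gluing to separated $X'$.} For general separated $X'$, I would use that $S'\to S$ is finite. For any $T\to \Res(X')$ with $T$ affinoid, the map $T\times_S S'\to X'$ has quasi-compact image, so it lands in a finite union of affinoids. Separatedness makes intersections of affinoids affinoid. The standard BLR argument (\S7.6) then shows that the subfunctors $\Res(U')$, for finite unions $U'$ of affinoid opens of $X'$, are admissible opens covering $\Res(X')$, and they glue.

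\textbf{Main obstacle.} The main difficulty is admissibility of this cover. I would handle it by working with adic spaces, as in the statement. Wahle/Bertapelle show that $\Res_{r(S')/r(S)}(r(X'))$ exists as an adic space locally of finite type, separated over $r(S)$. Since $r$ is fully faithful, and the adic Weil restriction restricted to rigid points represents the rigid functor, we get $r(\Res(X'))\cong\Res(r(X'))$. Quasi-separatedness of the adic space then gives, by Huber 1.1.11, a rigid space.

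\textbf{Part (ii).} \'Etaleness can be checked by the formal criterion on the functor of points. A square-zero extension $C_0\to C$ over $A$ base changes along the flat map $A\to A'$ to a square-zero extension $C_0\otimes_A A'\to C\otimes_AA'$. So lifting for $X'\to S'$ gives unique lifting for $\Res(X')\to S$. For surjectivity, take a point $s\in S$ with residue field $k(s)$. The fibre $S'_s$ is a finite set of points, and $X'$ surjects onto each of them. After a finite separable extension of $k(s)$ (\'etale locally) we get sections over each point of $S'_s$. Their product gives a point of $\Res(X')$ over $s$, which yields strong surjectivity after passing to adic spaces.
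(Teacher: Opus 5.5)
For (i) you end up where the paper does. Once you invoke the Wahle/Bertapelle existence of $\Res_{r(S')/r(S)}(r(X'))$, Huber's equivalence gives a separated rigid space, and full faithfulness of $r$ plus the Weil-restriction adjunction shows it represents $\Res_{S'/S}(X')$. (The paper also verifies Wahle's hypotheses, via Zavyalov.) Your \'{e}taleness argument, via the infinitesimal lifting criterion and flatness of $A\to A'$, is also the paper's.

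The explicit affinoid construction and BLR-type gluing you sketch first are therefore unnecessary, and as written they would not work:
\begin{enumerate}[(a)]
\item The relations in $I\subset A'\langle x_1,\dots,x_m\rangle$ are power series. So neither ``$\Spec A'[x]/I$'' nor a closed subspace of $\bbA^{dm,\an}_A$ cut out by their expansions makes sense. You must first pass to the admissible open where the $x_k$ are power-bounded and impose the relations there.
\item That open is unbounded in the $y_{kj}$ when $A'$ is non-reduced, e.g.\ $A'=A[\epsilon]/(\epsilon^2)$.
\item Finite unions of affinoids are not affinoid, so your affinoid case does not cover the $\Res(U')$ you want to glue.
\end{enumerate}

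The genuine gap is surjectivity in (ii). A Tate-\'{e}tale cover must be surjective on all of $|r(S)|$, not just on classical points. Your argument only produces a preimage of each classical point $s$ (with $k(s)$ finite over $E$), and it uses only that $f$ is \'{e}tale and surjective on rigid points. With that input alone the conclusion is false already when $S'=S$: the map $\{|x|<1\}\sqcup\{|x|=1\}\to\bbB^1$ is \'{e}tale and bijective on rigid points, yet it misses the rank-two point $|x|=1^-$ of $r(\bbB^1)$. So your closing phrase ``which yields strong surjectivity after passing to adic spaces'' is exactly where the content lies, and you must use surjectivity of $r(f)$ at non-classical points.

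The paper handles an arbitrary $s\in|r(S)|$ as follows:
\begin{enumerate}[(1)]
\item Base-change to $\Spa(\ovk(s),\ovk(s)^+)$, with $\ovk(s)$ a completed algebraic closure and $\ovk(s)^+$ a valuation ring extending $k(s)^+$.
\item Write the finite fibre of $r(S')$ as $\coprod_i\Spa(B_i)$, with each $B_i^\rhd$ local Artinian with algebraically closed residue field.
\item Use \cite[2.5.6, 2.5.7]{Hu96} to get sections over each $\Spa(B_i)$ of the base change of the \'{e}tale surjection $r(f)$.
\item By adjunction these sections give a point of $\Res_{r(S')/r(S)}(r(X'))$ over $s$.
\end{enumerate}
Even at classical points, sections over the reduced points of $S'_s$ are not enough when $S'_s$ is non-reduced. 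You need lifts over the local Artinian components, which \'{e}taleness provides.
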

\begin{proof}
(i)
Let $\{S_i\to S\}_i$ be an admissible cover of $S$ by affinoid opens such that the base change $S'_i:=S'\times_S S_i$ is finite free over $S_i$ (in particular, affinoid), and we write $X'_i:=X'\times_{S'}S'_i.$
The map
$r(f_i):r(X'_i)\to r(S_i')$ is a separated morphism of adic spaces locally of finite type over $\Spa(E)$.
By \cite[4.3.6]{Zavyalov_quotients}, we can apply \cite[2.3.3]{Wahle_Weil} to obtain that $\Res_{r(S'_i)/r(S_i)}(r(X_i'))$ is represented by an adic space locally of finite type over $r(S_i)$.
Then, $\Res_{r(S'_i)/r(S_i)}(r(X_i'))$ is separated over $r(S_i)$ because $r(X_i')$ is separated over $r(S_i)$, and the functor $\Res_{r(S'_i)/r(S_i)}$  commutes with fiber products \cite[(W3)]{Bertapelle_Weil} and also preserves closed immersions \cite[Lemma 2.2.8 and discussion afterward]{Wahle_Weil}.
The universal property of Weil restriction implies that these $\Res_{r(S'_i)/r(S_i)}$ glue over the affinoid cover $\{r(S_i)\to r(S)\}_i$ to an adic space over $r(S)$, which represents $\Res_{r(S')/r(S)}(r(X'))$, and it is locally of finite type and separated over $r(S)$.
Under Huber's equivalence of categories \cite[(1.1.11)]{Hu96}, there exists a rigid space $R$ separated over $S$ such that $r(R)\cong \Res_{r(S')/r(S)}(r(X'))$ as locally of finite type separated $r(S)$-adic spaces.
We claim that $R$ represents the functor $\Res_{S'/S}(X')$ on $S$-rigid spaces.
Indeed, for any rigid $S$-space $T\to S$, we have
\begin{align*}
\Hom_{\Rig/S}(T, R)
&=
\Hom_{\Ad/r(S)}(r(T), r(R))
\\&=
\Hom_{\Ad/r(S)}(r(T), \Res_{r(S')/r(S)}(r(X')))
\\&=
\Hom_{\Ad/r(S')}(r(T)\times_{r(S)}r(S'), r(X'))
\\&=
\Hom_{\Rig/S'}(T\times_S S', X'),
\end{align*}
which shows that $R\cong \Res_{S'/S}(X')$.
Consequently, $r(\Res_{S'/S}(X'))\cong \Res_{r(S')/r(S)}(r(X'))$.
\vskip8pt

(ii) By \cite[2.1.2]{Hu96}, $r(f):r(X')\to r(S')$ is an \'{e}tale surjective morphism of adic spaces, and by (i), it suffices to show that the induced map $\Res(r(f)):\Res_{r(S')/r(S)}(r(X'))\to r(S)$ is \'{e}tale and surjective.
This is locally of finite type since it comes from a map of rigid spaces.
For any affinoid space $\Spa A = \Spa(A^\rhd,A^+)$, any ideal $I$ of $A^\rhd$ with $I^2=(0)$, and any morphism $\Spa A\to r(S)$, the square-zero closed immersion $\Spa A/I \hookrightarrow \Spa A$ induces 
$$
\Hom_{r(S)}(\Spa A, \Res_{r(S')/r(S)}(r(X'))) \to \Hom_{r(S)}(\Spa A/I, \Res_{r(S')/r(S)}(r(X'))),
$$
which is bijective.
Indeed, by the adjunction defining Weil restriction, it corresponds to 
$$
\Hom_{r(S')}(\Spa A \times_{r(S)} r(S'), r(X')) \to \Hom_{r(S')}(\Spa A/I \times_{r(S)} r(S'), r(X'))
$$
induced by the square-zero closed immersion $\Spa A/I \times_{r(S)} r(S') \hookrightarrow \Spa A \times_{r(S)} r(S')$ of affinoid spaces\footnote{Recall that $r(q):r(S')\to r(S)$ is a finite morphism.}.
Since $r(X')\to r(S')$ is \'{e}tale, it follows that the displayed maps above are bijective, and hence $\Res_{r(S')/r(S)}(r(f))$ is \'{e}tale \cite[Definition 1.6.5]{Hu96}.
\vskip8pt

To show that 
$$
|\Res_{r(S')/r(S)}(r(f))|:
|\Res_{r(S')/r(S)}r(X')| \twoheadrightarrow |r(S)|
$$
is surjective, for any $s\in |r(S)|$, fix a {\it completed} algebraic closure $\ovk(s)$ of the residue field $k(s)$ and let $\ovk(s)^+$ be a valuation ring of $\ovk(s)$ that extends $k(s)^+$, cf. \cite[(2.5.2)]{Hu96}. 
The inclusion $k(s)\hookrightarrow\ovk(s)$ induces a map 
$$
\overline{s}: \Spa  \overline{\kappa}(s):=\Spa(\overline{k}(s),\overline{k}(s)^+) \to r(S)
$$ 
that sends the unique closed point of $\Spa \overline{\kappa}(s)$ to $s\in r(S)$. 
Since $r(S')\to r(S)$ is finite locally free, the fiber $r(S')_{\overline{s}}:=r(S')\times_{r(S)}\Spa \overline{\kappa}(s)$ is finite free over $\Spa(\overline{\kappa}(s))$, and hence of the form $\Spa B = \Spa(B^\rhd, B^+)$ for a finite dimensional $\overline{k}(s)$-algebra $B^\rhd$, and $B^+$ is the integral closure of $\overline{k}(s)^+$ in $B^\rhd$ \cite[(1.4.2)]{Hu96}.
The Artinian $\ovk(s)$-algebra $B^\rhd=\prod_{i=1}^r B_i^\rhd$ decomposes into the product of its localizations $B_i^\rhd$ at maximal ideals $\frm_i^\rhd$.
With $B_i^+$ being the integral closure of $\ovk(s)^+$ in $B_i^\rhd$ and with $B_i:=(B_i^\rhd, B_i^+)$, we have
$
\Spa B = \coprod_{i=1}^r \Spa B_i.
$
Since $\ovk(s)$ is algebraically closed, each residue field $B_i^\rhd/\frm_i^\rhd$ is isomorphic to $\ovk(s)$, hence $B_i^\rhd$ is a strictly henselian local ring as it is local Artinian with algebraically closed residue field.
Note that $B_i^+=\set{b\in B_i^\rhd}{b \pmod{\frm_i^\rhd} \in \ovk(s)^+}$ because $B_i^+$ is the integral closure of $\ovk(s)^+$ in $B_i^\rhd$ and $\frm_i^\rhd$ is nilpotent. 
Thus by \cite[2.5.7]{Hu96}, each $\Spa(B_i)$ has a unique closed point $x_i$, which is given by the maximal ideal $\frm_i^\rhd$ and the valuation on $\overline{k}(s)^+$.
Thus by \cite[2.5.6]{Hu96}, since condition (iii) in {\it loc. cit.} is just shown to be satisfied by $\Spa(B_i)$, the equivalent condition (i) in {\it loc. cit.} holds true for $\Spa(B_i)$, which implies that for any \'{e}tale morphism $g:Y\to \Spa(B_i)$ of adic spaces and any point $y\in Y$ such that $g(y)=x_i$, there exists a section $h_i:\Spa(B_i)\to Y$ of $g$ with $h_i(x_i)=y$.
Consider the morphism
$$
g:
r(X')\times_{r(S)} \Spa \overline{\kappa}(s) \to r(S')\times_{r(S)} \Spa \overline{\kappa}(s)= \coprod_i \Spa(B_i),
$$
which is an \'{e}tale surjection, as it is the base change of the \'{e}tale surjection $r(X')\to r(S')$ along the adic morphism $\overline{s}: \Spa \overline{\kappa}(s) \to r(S)$.
Hence, choosing preimages $y_i$  of $x_i$ under $g$ for each $i$, we have a section $h: r(S')\times_{r(S)} \Spa \overline{\kappa}(s)\to r(X')\times_{r(S)} \Spa \overline{\kappa}(s)$ of $g$.
Further composing $h$ with the projection $r(X')\times_{r(S)} \Spa \overline{\kappa}(s)\to r(X')$ gives a morphism
$$
r(S')\times_{r(S)} \Spa \overline{\kappa}(s) \to r(X')
$$
that is compatible with the maps to $r(S')$ because $h$ is a section of $g$.
By the adjunction defining Weil restriction, we get an $r(S)$-morphism
$
\Spa(\overline{\kappa}(s))\to \Res_{r(S')/r(S)}(r(X')).
$
Since the closed point of $\Spa \overline{\kappa}(s)$ lies over $s$, the image of this point in $\Res_{r(S')/r(S)}(r(X'))$ lies over $s\in r(S)$.
Thus, 
$
\Res_{r(S')/r(S)}(r(f)):\Res_{r(S')/r(S)}(r(X'))\to r(S)
$ 
is surjective.
\end{proof}

\begin{cor}
\label{cor:QAn_desc}
Let $q:S'\to S$ be a Tate-fpqc cover in $\Rig_E$.
Let $X'$ be a quasi-analytic space over $S'$ equipped with a descent datum relative to $q:S'\to S$.
Then, $X'$ descends to a quasi-analytic space $X$ over $S$.
\end{cor}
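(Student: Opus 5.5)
We reduce a general Tate-fpqc cover to the two special kinds of cover already handled, using Proposition \ref{prop:fppf=et+fpf}. That proposition lets us refine $q$ by a composite $V\xrightarrow{g} W\xrightarrow{h} S$, where $g$ is finite faithfully flat and $h$ is a Tate-\'etale cover, together with an $S$-map $V\to S'$. Pulling back $X'$ along $V\to S'$ gives a quasi-analytic space $X'_V$ over $V$, and the descent datum pulls back to a descent datum relative to $V\to S$. The key point is that the descent data relative to $V\to S$ restrict to descent data relative to $g$ itself. Indeed, $V\times_W V\subset V\times_S V$, and the cocycle condition is preserved under this restriction.

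The first step applies Lemma \ref{lem:fpf-desc} to $g$. This descends $X'_V$ to a quasi-analytic space $X_W$ over $W$.

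The second step transfers the remaining descent datum to $W$. We must check that the datum relative to $V\to S$ induces a datum relative to $h:W\to S$ on $X_W$. The projections $V\times_S V\to W\times_S W$ are finite faithfully flat, being base changes of $g\times g$. The isomorphism $\mathrm{pr}_1^*X'_V\simeq \mathrm{pr}_2^*X'_V$ over $V\times_S V$ is compatible with the $g$-descent data on both factors. Hence, by full faithfulness of fpqc descent for Tate-fpqc sheaves (\cite[4.2.11]{Olsson_Alg-spaces-stacks}), it descends to an isomorphism over $W\times_S W$, and the cocycle condition descends as well. This compatibility check is the main bookkeeping obstacle. Conceptually it is cleanest to argue at the level of sheaves: the Tate-fpqc sheaf on $\Rig_E/S$ obtained from the original descent datum restricts over $W$ to the sheaf underlying $X_W$.

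The third step is Tate-\'etale descent along $h$, and here the real content lies in checking quasi-analyticity. First descend $X_W$ as a Tate-fpqc sheaf $X$ over $S$, which is possible since the topos of Tate-fpqc sheaves has effective descent. Then choose an \'etale surjection $U_W\to X_W$ from a rigid space. The composite $U_W\to X_W\to X$ is representable and \'etale: for $T\to X$, the fibre product $T\times_X U_W$ is computed after the \'etale cover $T\times_S W\to T$, and there it is a rigid space \'etale over $T\times_S W$, hence \'etale over $T$. The composite is also surjective, since $X_W\to X$ is an epimorphism. Lemma \ref{lem:aux_1} then shows that $X$ is a quasi-analytic space.

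Finally, since the whole construction was carried out at the level of Tate-fpqc sheaves, the sheaf $X$ is independent of the chosen refinement. By the sheaf property it pulls back to $X'$ over $S'$ compatibly with the given descent datum.
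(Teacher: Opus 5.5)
Your proposal is correct and follows the paper's route. You refine $q$ via Proposition~\ref{prop:fppf=et+fpf} and handle the finite faithfully flat part with Lemma~\ref{lem:fpf-desc}. For the Tate-\'etale part you descend as a Tate-fpqc sheaf and use Lemma~\ref{lem:aux_1} to see that an \'etale chart of the pullback gives an \'etale chart of $X$. The one difference is that you carry out the formal reduction to the Tate-\'etale case by hand, composing the two descents and returning from the refinement to $q$ via the fact that Tate-fpqc sheaves already form a stack, whereas the paper gets it by citing \cite[Lemma 2.5]{Daylies_descente} and arguing as in \cite[2.2.1]{Daylies_descente}.
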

\begin{proof}
Consider the category of quasi-analytic spaces fibered over $\Rig_E$, which we view as a pseudofunctor on $\Rig_E$.
Given \cite[Lemma 2.5]{Daylies_descente}, we could argue as in \cite[2.2.1]{Daylies_descente} to conclude that, by Proposition \ref{prop:fppf=et+fpf} and Lemma \ref{lem:fpf-desc}, it suffices to prove the desired effective descent statement assuming $q:S'\to S$ is a Tate-\'{e}tale cover.

\vskip8pt
Assuming $q:S'\to S$ is a Tate-\'{e}tale cover, by \cite[4.2.11]{Olsson_Alg-spaces-stacks} we can descend $X'$ to a Tate-\'{e}tale sheaf $X$ over $S$.
The projection $X'\to X$ is representable in rigid spaces, \'{e}tale, and surjective because it is the base change of the Tate-\'{e}tale cover $q:S'\to S$ of rigid spaces.
Let $p:U\to X'$ be an \'{e}tale chart of $X'$.
By Lemma \ref{lem:aux_1}, the composite $U\to X'\to X$ is an \'{e}tale chart of $X$, making $X$ a quasi-analytic space over $S$.
\end{proof}

\begin{lemma}
\label{lem:smooth-grpd}
Let $(U,R,s,t,c)$ be a smooth groupoid in quasi-analytic spaces. 
Then, the natural map $p:U\to [U/R]$ to the Tate-fpqc quotient stack is representable in quasi-analytic spaces, smooth, and surjective.
Moreover, $[U/R]$ is a rigid analytic Artin stack.
\end{lemma}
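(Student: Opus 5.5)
The plan follows the proof of Lemma \ref{lem:etale_equivalence}: show that $p$ is representable, smooth and surjective, and then read off that $[U/R]$ is a rigid analytic Artin stack, namely a Tate-fpqc stack with representable diagonal admitting a smooth atlas.

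First, surjectivity. By construction of the quotient stack as the stackification of $[U/_pR]$, every object of $[U/R](T)$ lies, Tate-fpqc locally on $T$, in the essential image of $U(T')$. So $p$ is surjective in the sense of the surjections paragraph.

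Next, representability. Take $x:T\to[U/R]$ with $T\in\Rig_E$, and choose a Tate-fpqc cover $T'\to T$ over which $x$ lifts to $x':T'\to U$. Then, as in Lemma \ref{lem:etale_equivalence} (Stacks Tag 044V),
$$
U\times_{[U/R]}T' \;\cong\; U\times_{t,U,x'}R\times_{s}\ldots \;\cong\; R\times_{s,U,x'}T',
$$
which is a quasi-analytic space. Its projection to $T'$ is smooth, being a base change of the smooth map $s$. The fiber product $U\times_{[U/R]}T$ is a Tate-fpqc sheaf over $T$. Its pullback to $T'$ is this quasi-analytic space, and it carries the canonical descent datum relative to $T'\to T$. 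Corollary \ref{cor:QAn_desc} then shows that $U\times_{[U/R]}T$ is itself a quasi-analytic space over $T$.

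The remaining point about the fiber product is smoothness over $T$. Pick an étale chart $V\to U\times_{[U/R]}T$. Smoothness of $V\to T$ can be checked after the faithfully flat base change to $T'$, where it is smooth. For this I would invoke fpqc-locality of smoothness for rigid spaces, either via flatness plus smooth fibers, or by reducing through Proposition \ref{prop:fppf=et+fpf} to finite flat and étale covers. I expect this descent of the smoothness property, together with the descent of quasi-analyticity, to be the main technical step. The descent of quasi-analyticity is, however, already handled by Corollary \ref{cor:QAn_desc}.

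Finally, the Artin property. The diagonal $[U/R]\to[U/R]\times[U/R]$ is representable in quasi-analytic spaces: for $T\to[U/R]\times[U/R]$ the fiber product of the diagonal is, fpqc locally on $T$, a base change of $(s,t):R\to U\times U$. The same descent argument via Corollary \ref{cor:QAn_desc} then gives representability. This, together with the smooth surjection $p$ from the quasi-analytic space $U$ (which admits an étale cover by a rigid space, by Lemma \ref{lem:aux_1}), shows that $[U/R]$ is a rigid analytic Artin stack.
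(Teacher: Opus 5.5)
Your proposal is correct and follows the paper's proof in its essentials. You lift $x$ over a Tate-fpqc cover $T'\to T$, identify $U\times_{[U/R]}T'$ with a base change of $R$ along $x'$ (smooth over $T'$ since $s$ and $t$ are both smooth), descend quasi-analyticity with Corollary \ref{cor:QAn_desc}, and descend smoothness of an \'{e}tale chart $V\to T$ by fpqc-locality of smoothness (Lemma \ref{lem:local_property_P}).

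The only divergence is the Artin-stack step. The paper delegates it to Lemma \ref{lem:aux_2}, which uses \'{e}tale quasi-sections of $p$ to lift $x,y$ \'{e}tale-locally and then checks by hand, via Lemma \ref{lem:aux_1}, that the Isom sheaf is quasi-analytic. You instead apply Corollary \ref{cor:QAn_desc} again to the fpqc-local description $R\times_{(s,t),U\times U}T'$ of the diagonal, which works equally well. One small correction: the atlas from a rigid space is an \'{e}tale chart of $U$ composed with $p$. That chart exists by the definition of a quasi-analytic space, not by Lemma \ref{lem:aux_1}.
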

\begin{proof}
By Lemma \ref{lem:aux_2}, it suffices to show that $p:U\to X:=[U/R]$ is representable by quasi-analytic spaces and smooth.
Let $s:T\to X$ be a morphism with $T\in \Rig_E$.
Since $p$ is by definition Tate-fpqc surjective, there exists a Tate-fpqc cover $T'\to T$ such that the section $s|_{T'}$ factors through the map $p:U\to X$ by $s':T'\to U$.
To show that $U\times_X T$ is a quasi-analytic space, observe that over $T'$,
$$
(U\times_X T)\times_T T' \cong X\times_{X\times X}(U\times T') \cong R\times_{U\times U}(U\times T'),
$$ 
cf. \cite[\href{https://stacks.math.columbia.edu/tag/044V}{Tag 044V}]{stacks-project}, which is a quasi-analytic space over $T'$.
By Corollary \ref{cor:QAn_desc}, $U\times_X T$ is a quasi-analytic space over $T$.
Fix an \'{e}tale chart $V\to U\times_X T$ with $V\in \Rig_E$.
As for smoothness of $U\times_X T\to T$, observe that $U\times_X T'\to T'$ is isomorphic to 
$$
U\times_{X,p\circ s'} T' \cong (U\times_{X,p} U) \times_{U,s'} T' \cong R\times_{U,s'}T' \to T',
$$ 
which is smooth, being the base change of the smooth map $t:R\to U$.
Thus, the composite $V \to U\times_X T \to T$ is a map of rigid spaces such that $V\times_T T' \to T'$ is smooth over the Tate-fpqc cover $T'$ of $T$.
Since smoothness is Tate-fpqc local on the target (Lemma \ref{lem:local_property_P}), $V\to T$ is smooth, i.e., $U\times_X T\to T$ is a smooth map of quasi-analytic spaces.
Hence, $p:U\to[U/R]$ is smooth.
\end{proof}

\begin{thm}
\label{thm:Artin_etale=fpqc}
Let $X_{et}$ be a Tate-\'{e}tale rigid analytic Artin stack over $E$.
Let $p_{et}:U\to X_{et}$ be a map representable in quasi-analytic spaces, smooth, and Tate-\'{e}tale surjective, from a rigid space $U\in \Rig_E$.
Let $X_{fpqc}$ be the Tate-fpqc stackification of $X_{et}$.
Then, the induced map $p_{fpqc}:U\to X_{fpqc}$ is representable in quasi-analytic spaces, smooth, and Tate-fpqc surjective.
So, $X_{fpqc}$ is a Tate-fpqc Artin stack over $E$.
In fact, $X_{et}\cong X_{fpqc}$.
\end{thm}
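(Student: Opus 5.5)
The plan mirrors the proof of Theorem \ref{thm:qan_etale=fpqc}, with Lemma \ref{lem:smooth-grpd} playing the role of Lemma \ref{lem:etale_equivalence}.

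First, set $R:=U\times_{X_{et}}U$. Since $p_{et}$ is representable in quasi-analytic spaces and smooth, $R$ is a quasi-analytic space, and the two projections $s,t:R\rightrightarrows U$ are smooth. Together with the composition $c:R\times_U R\to R$ induced by the stack structure, this gives a smooth groupoid $(U,R,s,t,c)$ in quasi-analytic spaces. Because $p_{et}$ is Tate-\'etale surjective, the standard argument (cf. \cite[\href{https://stacks.math.columbia.edu/tag/04T5}{Tag 04T5}]{stacks-project}) identifies $X_{et}$ with the Tate-\'etale quotient stack $[U/R]_{et}$.

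Next, stackification commutes with stackification: the Tate-fpqc stackification of $[U/_pR]$ factors through its Tate-\'etale stackification. So $X_{fpqc}$ is the Tate-fpqc quotient stack $[U/R]_{fpqc}$. Lemma \ref{lem:smooth-grpd} then applies directly and shows that $p_{fpqc}:U\to X_{fpqc}$ is representable in quasi-analytic spaces, smooth, and Tate-fpqc surjective. In particular $X_{fpqc}$ is a Tate-fpqc Artin stack. This step carries the substantive input, namely fpqc descent of quasi-analytic spaces (Corollary \ref{cor:QAn_desc}) through Lemma \ref{lem:smooth-grpd}.

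Finally, I would show that the natural map $\alpha:X_{et}\to X_{fpqc}$ is an equivalence. For any $T\in\Rig_E$ and $T\to X_{fpqc}$, the base change $U\times_{X_{fpqc}}T\to T$ is a smooth surjective map of quasi-analytic spaces. Composing with an \'etale chart, it admits Tate-\'etale-local sections, since smooth surjections of rigid spaces have sections \'etale locally. Hence $p_{fpqc}$ is also Tate-\'etale surjective. Because $p_{fpqc}=\alpha\circ p_{et}$, it follows that $\alpha$ is essentially surjective locally in the \'etale topology. For full faithfulness, compare isomorphism sheaves: for objects over $T$ lifting \'etale locally to $U$, the Isom sheaves on both sides are computed by $R=U\times_{X_{et}}U=U\times_{X_{fpqc}}U$. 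The last equality holds because Lemma \ref{lem:smooth-grpd} together with the quotient description gives $U\times_{[U/R]}U\cong R$. So $\alpha$ is fully faithful on objects that lift, and by \'etale gluing it is fully faithful everywhere. An \'etale-locally essentially surjective, fully faithful morphism between Tate-\'etale stacks is an equivalence. Therefore $X_{et}\cong X_{fpqc}$; in particular $X_{et}$ is already a Tate-fpqc stack.

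I expect the main obstacle to be this last step. One must check carefully that $U\times_{X_{fpqc}}U$ really equals $R$, that is, that fpqc stackification does not add new isomorphisms. One must also justify that smooth covers admit \'etale-local sections in the rigid setting. For the latter I would reduce to the rigid-space chart of $U\times_{X_{fpqc}}T$ and use the local structure of smooth morphisms as \'etale over relative polydiscs.
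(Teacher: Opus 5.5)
Your proposal is correct and follows the paper's proof essentially step for step. You identify $X_{et}$ with $[U/R]_{et}$, pass to $[U/R]_{fpqc}$, and apply Lemma \ref{lem:smooth-grpd}; you then use \'etale quasi-sections of smooth surjections together with $U\times_{X_{et}}U=R=U\times_{X_{fpqc}}U$ to get that $\alpha$ is essentially surjective and fully faithful. The one input you leave implicit, which the paper also records only in a footnote, is Theorem \ref{thm:qan_etale=fpqc}. It shows that the Tate-\'etale quasi-analytic space $R$ is already a Tate-fpqc quasi-analytic space. That is exactly what lets Lemma \ref{lem:smooth-grpd} apply, and it settles your worry that fpqc stackification might change $U\times_{X_{fpqc}}U$.
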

\begin{proof}
Let $R:=U\times_{X_{et}}U$.
Since $X_{et}$ is a Tate-\'{e}tale Artin stack, $R$ is a quasi-analytic space equipped with natural projection maps $s,t:R\rightrightarrows U$  such that $[U/R]_{et}=X_{et}$.
Then, $X_{fpqc}$ is the Tate-fpqc stackification of the Tate-\'{e}tale quotient stack $[U/R]_{et}$, which is the Tate-fpqc\footnote{Here we implicitly use Theorem \ref{thm:qan_etale=fpqc} to guarantee that $U$ and $R$ are Tate-fpqc sheaves.} quotient stack $[U/R]_{fpqc}$.
By Lemma \ref{lem:smooth-grpd}, $X_{fpqc}$ is a Tate-fpqc Artin stack that admits $p_{fpqc}$ as a smooth Tate-fpqc surjection over $\Rig_{E}$.
By the existence of \'{e}tale quasi-sections to smooth surjections (\cite[Remark 3.29]{Oh-Shimizu_moduli}), $p_{fpqc}$ is essentially surjective in the Tate-\'{e}tale topology.
We claim that the natural map $\alpha: X_{et}\to X_{fpqc}$ is an equivalence.
From $p_{fpqc}=\alpha \circ p_{et}$, it follows that $\alpha$ is essentially surjective.
That $U\times_{[U/R]_{et}} U = R = U\times_{[U/R]_{fpqc}} U$ implies that $\alpha$ is fully faithful.
Thus, $\alpha:X_{et}\xrightarrow{\simeq} X_{fpqc}$ is an equivalence.
\end{proof}

{
}

\vskip8pt

\textbf{Auxiliary lemmas.}

\begin{lemma}
\label{lem:aux_1}
Let $U$ be a rigid space and $X$ be a sheaf of sets on $\Rig_E$. 
Let $p: U\to X$ be a map of sheaves that is representable in rigid spaces, \'{e}tale, and surjective.
Then, $R:=U\times_X U$ is a rigid space, where the two  projections $s,t:R\rightrightarrows U$ are representable in rigid spaces, \'{e}tale, and surjective.
The maps $s,t:R\rightrightarrows U$ define an \'{e}tale equivalence relation, $p$ induces an isomorphism $U/R\cong X$, and $X$ is a quasi-analytic space.
\end{lemma}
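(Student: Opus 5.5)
The plan has four steps: show $R$ is a rigid space, show $s,t$ are étale surjective, check the groupoid axioms, and compare $U/R$ with $X$.

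\emph{$R$ is a rigid space.} Apply the representability hypothesis on $p$ to the test object $U\to X$ given by $p$ itself. This says precisely that $R=U\times_{X,p}U$ is a rigid space. The projection $t:R\to U$ is the base change of $p$ along $p:U\to X$, viewed as a map from the rigid space $U$ to $X$. Since $p$ is representable, étale and surjective, $t$ is an étale surjective morphism of rigid spaces. By symmetry (swap the factors) the same holds for $s$. Both projections are morphisms between rigid spaces, so they are trivially representable in rigid spaces.

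\emph{Equivalence relation.} Because $X$ is a sheaf of sets, the map $(s,t):R\to U\times U$ is the equalizer of the two composites $U\times U\rightrightarrows X$. It is therefore a monomorphism of sheaves. Pointwise, $R(T)\subset U(T)\times U(T)$ is the relation ``$p(a)=p(b)$ in $X(T)$'', which is visibly reflexive, symmetric and transitive. So $(U,R,s,t)$ is an étale equivalence relation in rigid spaces.

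\emph{Comparison $U/R\cong X$.} Since $p\circ s=p\circ t$, the map $p$ factors through the presheaf quotient, and hence through its Tate-fpqc sheafification $U/R\to X$. This map is injective: on $T$-points, two sections of $U$ with the same image in $X(T)$ define a section of $R$, and injectivity survives sheafification because $X$ is a sheaf. It is surjective as a map of sheaves because $p$ is an epimorphism by assumption. An injective epimorphism of sheaves is an isomorphism, so $U/R\cong X$. Finally, $X$ is a quasi-analytic space because it admits the étale surjective chart $p$ that is representable in rigid spaces.

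The main obstacle is a circularity: Lemma~\ref{lem:etale_equivalence} invokes this lemma, so I must not use it here. Instead I check directly that the definition of a quasi-analytic space used in the paper (a sheaf with a representable étale surjection from a rigid space) is met. If the definition also requires $R$ to be quasi-separated or the diagonal to be representable, I would verify this as follows: for rigid $T_1,T_2\to X$, the fiber product $T_1\times_X T_2$ becomes, after a Tate-fpqc cover on which the $T_i$ lift to $U$, a fiber product with $R$. That is a rigid space mapping monomorphically to the product of the $T_i$.
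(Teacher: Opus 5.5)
Your treatment of $R$, of $s,t$, of the equivalence relation and of $U/R\cong X$ is correct, but the final step has a genuine gap. In this paper a quasi-analytic space is \emph{by definition} (Appendix, following \cite[5.1.9]{EmertonGeeHellmann}) a sheaf whose diagonal $X\to X\times X$ is representable by rigid spaces \emph{and} which admits an \'etale surjection from a rigid space. So the chart $p$ alone does not suffice. Representability of the diagonal is the actual content of the lemma; the other assertions are routine, and the paper disposes of them in two sentences.

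Your fallback sketch stops exactly where the difficulty begins. Knowing that $P:=T_1\times_X T_2$ becomes a rigid space after base change to a cover of $T_1\times T_2$ does not show that $P$ itself is a rigid space. Descent of rigid spaces along such covers is not effective in general, and this is the very reason quasi-analytic spaces are needed.

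Two ingredients are missing.

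\begin{itemize}
\item The cover must be \'etale, not merely Tate-fpqc; a sheaf that is only fpqc-locally rigid need not even be quasi-analytic. The hypothesis on $p$ supplies the \'etale surjection $T':=(T_1\times_X U)\times(T_2\times_X U)\to T_1\times T_2$, over which both points lift tautologically. Hence $P':=P\times_{T_1\times T_2}T'\cong R\times_{U\times U}T'$ is rigid, and $P'\to P$ is a representable \'etale surjection.
\item You must actually use the monomorphism $P\hookrightarrow T:=T_1\times T_2$. It makes the diagonal of $P$ a base change of the diagonal of $T$, hence representable, so $P$ is a quasi-analytic space. Over each affinoid $T_\alpha$ of an admissible cover of $T$, $P_\alpha:=P\times_T T_\alpha$ is moreover separated. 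Separated quasi-analytic spaces are rigid spaces by \cite[Remark 5.1.9(i)]{EmertonGeeHellmann}, and gluing the $P_\alpha$ over the admissible cover gives $P\in\Rig_E$.
\end{itemize}

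This is the route the paper takes. Without representability of separated quasi-analytic spaces, or some substitute descent result for monomorphisms into rigid spaces, your argument only shows that the diagonal of $X$ is rigid locally on the target. That by itself falls short of what the definition requires.
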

\begin{proof}
Since $p:U\to X$ is a surjection of sheaves, it induces $U/R\simeq X$ for $R:=U\times_X U$.
Since $p$ is representable in rigid spaces, \'{e}tale, and surjective, it remains to show that $\Delta: X\to  X\times X$ is representable in rigid spaces.
\vskip8pt

Let $T\in \Rig_E$ and $x,y:T\to X$ be any two maps of sheaves.
We need to show that the fiber product $P:=X\times_{\Delta,X\times X, (x,y)}T\to T$ is a rigid space over $T$.
Firstly, \'{e}tale-locally on $T$, $P$ is representable. 
This is because, as in \cite[Remark 3.29]{Oh-Shimizu_moduli}, the (Tate-fpqc) smooth surjection $p:U\to U/R=X$ is Tate-\'{e}tale surjective by the existence of \'{e}tale quasi-sections to smooth surjections.
So, there is a Tate-\'{e}tale cover $\{T_i\to T\}_{i\in I}$ such that $x|_{T_i},y|_{T_i}:T_i \to X=U/R$ admit local sections $x_i,y_i:T_i\to U$.
Thus, over $T_i$ we have, cf. \cite[\href{https://stacks.math.columbia.edu/tag/044V}{Tag 044V}]{stacks-project},
$$
P_i:=
X\times_{\Delta,X\times X, (x|_{T_i},y|_{T_i})}T_i \cong (U\times_X U)\times_{U\times U,(x_i,y_i)} T_i = R \times_{(s,t),U\times U,(x_i,y_i)} T_i\,,
$$
which is a rigid space.
Let $T':=\coprod_{i\in I} T_i$ and $P':=\coprod_{i\in I}P_i$.
Then, $P'=T'\times_T P$, and $P' \to P$ is a map that is representable in rigid spaces, \'{e}tale, and surjective from a rigid space, since it is the base change of the Tate-\'{e}tale cover $T'\to T$ of rigid spaces.
Since $X$ is a sheaf, its diagonal is a monomorphism.
Hence, the base change $P\to T$ is a monomorphism, and $P$ has a representable diagonal since $T$ does.
It follows that $P$ is a quasi-analytic subsheaf of $T\in \Rig_E$.
For any affinoid admissible open cover $\{T_\alpha \to T\}_{\alpha}$, the restriction $P_\alpha:= P\times_T T_\alpha =  X\times_{X\times X}T_\alpha\to T_\alpha$ is a separated quasi-analytic subspace of $T_\alpha$ since $T_\alpha$ is separated.
By \cite[Remark 5.1.9(i)]{EmertonGeeHellmann}, every separated quasi-analytic space is representable by a rigid space, so $P_\alpha \in \Rig_E$. 
Gluing over the admissible cover $\{T_\alpha\to T\}_\alpha$, we conclude that $P\in \Rig_E$.
\end{proof}

\begin{lemma}
\label{lem:aux_2}
Let $U$ be a quasi-analytic space and $X$ be a stack on $\Rig_E$.
Let $p: U\to X$ be a map of stacks that is representable in quasi-analytic spaces, smooth, and surjective.
Then, $R:=U\times_X U$ is a quasi-analytic space, where the natural projections $s,t:R\rightrightarrows U$ are representable in quasi-analytic spaces, smooth, and surjective.
Let $c: R\times_{s,U,t}R =  (U\times_X U)\times_{U}(U\times_X U)\to R$ be the projection map $(u_1,u_2,u_2,u_3)\mapsto (u_1,u_3)$.
Then $(U,R,s,t,c)$ is a smooth groupoid in quasi-analytic spaces, $p$ induces $[U/R]\simeq X$, and $X$ is a rigid analytic Artin stack.
\end{lemma}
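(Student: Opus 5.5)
The argument runs parallel to Lemma \ref{lem:aux_1}, with Corollary \ref{cor:QAn_desc} taking the place of the effective descent used there. First, since $p$ is representable in quasi-analytic spaces and $U$ is a quasi-analytic space, the fiber product $R=U\times_X U = U\times_{X}U\to U$ (via either projection) is a quasi-analytic space over $U$. Here I use that a sheaf representable over a quasi-analytic base by a morphism representable in quasi-analytic spaces is again quasi-analytic: compose an \'etale chart of $R\times_U U_0$ over an \'etale chart $U_0\to U$. The projections $s,t$ are base changes of $p$, so they are representable, smooth and surjective. Composition $c$, identities and inverses come from the universal property of fiber products, so $(U,R,s,t,c)$ is a groupoid in quasi-analytic spaces and hence a smooth groupoid.

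Next I show $[U/R]\simeq X$. There is a canonical map $[U/_pR]\to X$. It is fully faithful on prestacks: for $u_1,u_2\in U(T)$, the set of isomorphisms $p(u_1)\cong p(u_2)$ in $X(T)$ is exactly $(U\times_XU)(T)$ over $(u_1,u_2)$, i.e.\ $R(T)$. Stackifying, $[U/R]\to X$ stays fully faithful because $X$ is a stack. It is essentially surjective because $p$ is surjective: every object of $X(T)$ lifts Tate-fpqc locally to $U$, and descent in $X$ then glues.

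Finally, $X\simeq[U/R]$ is a rigid analytic Artin stack. The atlas property of $p$ is given, but the definition presumably also requires the diagonal $\Delta:X\to X\times X$ to be representable in quasi-analytic spaces, and this is the step I expect to be the main obstacle. Take $x,y:T\to X$ with $T\in\Rig_E$, and set $P:=X\times_{X\times X}T$. Choose a Tate-fpqc cover $T'\to T$ on which $x,y$ lift to $x',y':T'\to U$. Then $P\times_TT'\cong R\times_{U\times U}T'$ is a quasi-analytic space, and it carries the canonical descent datum relative to $T'\to T$. By Corollary \ref{cor:QAn_desc}, $P$ is a quasi-analytic space over $T$. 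Alternatively, one could use \'etale quasi-sections of the smooth map $p$ (\cite[Remark 3.29]{Oh-Shimizu_moduli}) to take $T'\to T$ Tate-\'etale and avoid the fpqc descent. Either route shows $\Delta$ is representable, so $X$ is a rigid analytic Artin stack with smooth atlas $p$.
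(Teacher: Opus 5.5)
Your proposal is correct, but for the key step, the representability of the diagonal, you take a different route from the paper.

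\textbf{Your route.} You lift $x,y$ along a Tate-fpqc cover $T'\to T$ and note that $P\times_T T'\cong R\times_{U\times U}T'$ is quasi-analytic. You then invoke the effective descent result Corollary~\ref{cor:QAn_desc}. This is exactly the pattern the paper itself uses in Lemma~\ref{lem:smooth-grpd}. It is not circular, since Corollary~\ref{cor:QAn_desc} rests on Proposition~\ref{prop:fppf=et+fpf}, Lemma~\ref{lem:fpf-desc} and Lemma~\ref{lem:aux_1}, none of which use Lemma~\ref{lem:aux_2}. One point is implicit: the quasi-analytic space produced by descent is $P$ itself. This holds because $P=\underline{\mathrm{Isom}}_T(x,y)$ is already a Tate-fpqc sheaf, and descended sheaves are unique.

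\textbf{The paper's route.} It uses the \'etale quasi-sections of smooth surjections \cite[Remark 3.29]{Oh-Shimizu_moduli} to get a Tate-\'etale cover, so that $P'=P\times_T T'\to P$ is an \'etale surjection representable in rigid spaces. It then checks by hand that the diagonal of $P$ is representable in rigid spaces. That check uses Lemma~\ref{lem:aux_1} and the fact that separated quasi-analytic spaces are rigid spaces \cite[Remark 5.1.9(i)]{EmertonGeeHellmann}.

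\textbf{Comparison.} Your argument is shorter. In its fpqc form, however, it draws on the heaviest machinery of the section: Ducros' multisection theorem, Weil restriction, and finite flat descent. The paper's argument is more elementary and self-contained at this point. Your ``alternative'' via \'etale quasi-sections is essentially the paper's route. For an \'etale cover, composing $P'\to P$ with an \'etale chart of $P'$ and applying Lemma~\ref{lem:aux_1} already shows that $P$ is quasi-analytic.

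\textbf{Minor points.} You spell out $[U/R]\simeq X$ and the groupoid structure, which the paper only asserts. Conversely, your justification that $R$ is quasi-analytic only produces an \'etale atlas. It omits the easy check that the diagonal of $R$ is representable in rigid spaces, using that $R\to U$ is representable in quasi-analytic spaces and that $U$ has representable diagonal. The paper likewise takes this for granted.
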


\begin{proof}
Since $p:U\to X$ is a surjection of stacks, it induces $[U/R]\simeq X$ for $R:=U\times_X U$.
Since $p$ is representable in quasi-analytic spaces, smooth, and surjective, it remains to show that $\Delta: X\to  X\times X$ is representable in quasi-analytic spaces.

\vskip8pt

Let $T\in \Rig_E$ and $x,y:T\to X$ be any two maps of stacks.
We need to show that the fiber product $P:=X\times_{\Delta,X\times X, (x,y)}T\to T$ is a quasi-analytic space over $T$.
Firstly, \'{e}tale-locally on $T$, $P$ is representable. 
This is because, as in \cite[Remark 3.29]{Oh-Shimizu_moduli}, the (Tate-fpqc) smooth surjection $p:U\to [U/R]=X$ is Tate-\'{e}tale surjective by the existence of \'{e}tale quasi-sections to smooth surjections.
So, there is a Tate-\'{e}tale cover $\{T_i\to T\}_{i\in I}$ such that $x|_{T_i},y|_{T_i}:T_i \to X=[U/R]$ admit local sections $x_i,y_i:T_i\to U$ for the map $p:U\to X$.
Thus, over $T_i$ we have, cf. \cite[\href{https://stacks.math.columbia.edu/tag/044V}{Tag 044V}]{stacks-project},
$$
P_i:=
X\times_{\Delta,X\times X, (x|_{T_i},y|_{T_i})}T_i \cong (U\times_X U)\times_{U\times U,(x_i,y_i)} T_i = R \times_{(s,t),U\times U,(x_i,y_i)} T_i\,,
$$
which is a quasi-analytic space.
Let $T':=\coprod_{i\in I} T_i$ and $P':=\coprod_{i\in I}P_i$.
Then, $P'=T'\times_T P$, and $P' \to P$ is a map that is representable in rigid spaces, \'{e}tale, and surjective from a quasi-analytic space.
To conclude that $P$ is a quasi-analytic space, it suffices to show that its diagonal is representable in rigid spaces.
For $S\in \Rig_E$ and any two maps $a,b:S\to P$, the base change $S\times_{P\times P} P \to S$ is a monomorphism because $P\to P\times P$ is a monomorphism.
Thus $S\times_{P\times P} P$ has representable diagonal because $S\in \Rig_E$.
Denote by $a',b':S\to P\to T$.
Then, $S\times_{P\times P} P \cong (S\times_{(a',b'),T\times T}T) \times_{P\times_T P}P$, where $S\times_{(a',b'),T\times T}T\in \Rig_E$ admits two maps to $P$ that agree when further composed with $P\to T$.
The base change along the cover $T'\to T$ is $(S\times_{(a',b'),T\times T}T') \times_{P'\times_{T'} P'}P' = (S\times_{(a',b'),T\times T}T') \times_{P'\times P'}P'\in \Rig_E$, since $P'$ is a quasi-analytic space.
Thus, $(S\times_{P\times P} P)\times_T T'\to S\times_{P\times P} P$ is a representable \'{e}tale surjection from a rigid space.
We conclude that $S\times_{P\times P} P$ is a quasi-analytic space by Lemma \ref{lem:aux_1}.
For the monomorphism $S\times_{P\times P} P \hookrightarrow S$, take an affinoid admissible open cover $\{S_\alpha\to S\}_\alpha$.
Then, $S_\alpha \times_{P\times P} P \hookrightarrow S_\alpha$ is separated since $S_{\alpha}$ is separated.
By \cite[Remark 5.1.9(i)]{EmertonGeeHellmann}, every separated quasi-analytic space is representable by a rigid space, so $S_\alpha \times_{P\times P} P\in \Rig_E$. 
Thus, by gluing over the admissible cover $\{S_\alpha\to S\}_\alpha$, we conclude that $S\times_{P\times P} P\in \Rig_E$.
\end{proof}

\subsection{Structure sheaf and global functions on a stack}
\label{structure-sheaf}
Let $p:\frX\to \Rig_E$ be a stack in groupoids over $\Rig_E$.
Then, $\frX$ inherits a canonical $G$-topology from the Tate-fpqc topology on $\Rig_E$, cf. \cite[\href{https://stacks.math.columbia.edu/tag/06NT}{Tag 06NT}]{stacks-project}. 
Namely, a collection of morphisms $\{x_i\to x\}$ in $\frX$ is a cover if and only if $\{p(x_i)\to p(x)\}$ is a Tate-fpqc cover in $\Rig_E$.
Since rigid spaces are Tate-fpqc sheaves, the functor $\cO: T\mapsto \Gamma(T,\cO_T)\cong \Hom_{\Rig_E}(T,\bbA^{1,\rig}_E)$ defines a sheaf on the site $\Rig_{E,fpqc}$.
The map $p:\frX \to \Rig_E$ induces a map of topoi $p:\Shv(\frX)\to \Shv(\Rig_E)$ such that for $\cO_{\frX}:=p^{-1}\cO$, if $x\in \frX$ lies over $T\in \Rig_E$, then we have $\cO_{\frX}(x)=\Gamma(T,\cO_T)$, cf. \cite[\href{https://stacks.math.columbia.edu/tag/06NW}{Tag 06NW}]{stacks-project}.
We refer to $\cO_{\frX}$ as the {\bf structure sheaf} of $\frX$.
The {\bf global sections} of $\cO_{\frX}$ are given by
$
\Gamma(\frX,\cO_{\frX}):=\Hom_{\Shv(\frX)}(*, \cO_{\frX}),
$
where $*$ is the final object in $\Shv(\frX)$ which associates to every object a singleton \cite[\href{https://stacks.math.columbia.edu/tag/06UN}{Tag 06UN}]{stacks-project}.
Therefore, we have
$$
\Gamma(\frX,\cO_{\frX}) = \Hom_{\Psh(\frX)}(*, \cO_{\frX}) \cong \varprojlim_{x\in \frX} \cO_{\frX}(x) = \varprojlim_{\stackrel{x:T\to\frX}{T\in \Rig_E}} \Gamma(T,\cO_T),
$$
where the last equality follows from the 2-Yoneda lemma \cite[\href{https://stacks.math.columbia.edu/tag/04SS}{Tag 04SS}]{stacks-project}.
Observe that
\begin{align*}
\varprojlim_{\stackrel{x:T\to\frX}{T\in \Rig_E}} \Gamma(T,\cO_T) 
&\cong \varprojlim_{\stackrel{x:T\to\frX}{T\in \Rig_E}} \Hom_{\Rig_E}(T,\bbA^{1,\rig}_E)
\\&= \Hom_{\Psh(\Rig_E)}(F_{\frX}^{pre}, \bbA^{1,\rig}_E)
\\&= \Hom_{\Shv(\Rig_E)}(F_{\frX}, \bbA^{1,\rig}_E)
= \Hom_{\Stk(\Rig_E)}(\frX, \bbA^{1,\rig}_E)
\end{align*}
by the 2-Yoneda lemma and the universal property of the coarse sheaf $F_{\frX}$ associated to $\frX$ \eqref{para:coarse_sheaf}.
We deduce by applying $\Hom_{\Stk(\Rig_E)}(-, \bbA^{1,\rig}_E)$ that if $\frX \simeq \coeq(R\rightrightarrows U)$ is presented as the coequalizer of two stacks, then $\Gamma(\frX,\cO_{\frX})=\eq(\Gamma(U,\cO_U)\rightrightarrows \Gamma(R,\cO_R))$ is an equalizer.
Replacing $\Rig_{E,fpqc}$ by $\Spec(\cO_E)_{fppf}$ and replacing $\bbA^{1,\rig}_E$ by $\bbA^1_{\cO_E}$ in the above argument, we get the same conclusion for (formal) stacks $\cX=[U/R]$ over $\cO_E$.

\vskip8pt

The clopen substacks of $\frX$ are then in bijection with the idempotent elements in $\Gamma(\frX,\cO_\frX)$. Namely, given a clopen substack $\frY \sub \frX$ we have $\Gamma(\frX,\cO_\frX) = \Gamma(\frY,\cO_\frY) \oplus \Gamma(\frY',\cO_{\frY'})$ with $\frY' = \frX \setminus \frY$, and one associates to $\frY$ the identity element $1_{\Gamma(\frY,\cO_\frY)} \in \Gamma(\frX,\cO_\frX)$. Conversely, given an idempotent $e \in \Gamma(\frX,\cO_\frX)$, the substack $\frY$ where $e = 1$ is a clopen substack.

\section{On connected components}
\label{section4_conncomp}

\subsection{Connected components of \texorpdfstring{$\cX_{n,\red}$}{}}

\label{subsection4.1_formalcomp}

We denote by $\cX_n = \cX_{n,K}$ the Emerton-Gee stack of projective \'etale $(\varphi,\Gamma)$-modules over $\bA_K$ of rank $n$, as introduced and studied in \cite{EmertonGee_modulicrystalline}. Let $k$ be the residue field of $K$ of size $q = p^f$. 
Let $T \subset \GL_{n/k}$ denote the diagonal subgroup. 
We will sometimes write $T_1$ for the subgroup $T$ in the case $n=1$. 
Let $Y(T)$ be the cocharacter lattice $\Hom(\bbG_m,T)$ of $T$. 
Let $\Phi^\vee \subset Y(T)$ denote the set of coroots. 
Let $\omega: G_K \ra k^\times = \bbG_m(k)$ denote the mod $p$ reduction of the Lubin--Tate character. 
Fix an embedding $\iota: k \hookrightarrow \overline{\bbF}_p$. 
For $\lambda \in Y(T)$, let $\overline{\rho}_\lambda: G_K \ra T(k) \subset \GL_n(k) \subset \GL_n(\overline{\bbF}_p)$ denote the composition $\lambda(k) \circ \omega$. 

\vskip8pt

Recall that the set $\mathcal{X}_n(\overline{\bbF}_p)$ is in bijection with conjugacy classes of continuous representations $\overline{\rho}: G_K \ra \GL_n(\overline{\bbF}_p)$. 
By an abuse of notation, we will also write $\overline{\rho}$ for the corresponding element of $|\mathcal{X}_n|$. 

\begin{thm}
\label{thm:reduced-EG}
    There is a unique bijection $\pi_0(\mathcal{X}_n)\rightarrow Y(T)/((q-1)Y(T)+\bbZ\Phi^\vee)$ that sends the connected component containing $\overline{\rho}_\lambda$ to the coset of $\lambda \in Y(T)$. Moreover, these bijections are compatible with determinants, i.e., these bijections fit in the commutative diagram 
    \begin{center}
    \begin{tikzcd}
\pi_0(\mathcal{X}_n) \arrow[r, "\det"] \arrow[d] & \pi_0(\mathcal{X}_1) \arrow[d] \\
Y(T)/((q-1)Y(T)+\bbZ\Phi^\vee) \arrow[r, "\det"] & Y(T_1)/(q-1)Y(T_1),                      
\end{tikzcd}
    \end{center}
    of bijections, where the horizontal maps are induced by the determinant map. 
\end{thm}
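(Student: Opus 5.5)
Since $\cX_n$ is a formal algebraic stack, its topological space coincides with that of $\cX_{n,\red}$, which is an algebraic stack locally of finite type over $k$ (Emerton--Gee). Hence $\pi_0(\cX_n)=\pi_0(\cX_{n,\red})$, and it suffices to work with the reduced stack and its $\overline{\bbF}_p$-points. The plan has three steps. First, we connect every $\overline{\rho}$ to some $\overline{\rho}_\lambda$. Second, we show that $\overline{\rho}_\lambda$ and $\overline{\rho}_{\lambda'}$ lie in the same component when $\lambda-\lambda'\in (q-1)Y(T)+\bbZ\Phi^\vee$. Third, we show the determinant separates the resulting classes.

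\textbf{Step 1 (every point is connected to some $\overline{\rho}_\lambda$).} Let $\overline{\rho}$ be a point. We first connect it to its semisimplification, using the family of extensions $\overline{\rho}_t$ obtained by scaling the extension classes by $t\in\bbA^1$; this is a map $\bbA^1\to\cX_{n,\red}$, or more precisely a family over the stack of extensions. Next, an irreducible $\overline{\rho}$ is induced from a character of $G_{K_m}$, where $K_m$ is unramified of degree $m$. We deform the unramified part of that character continuously through $\bbG_m$ to reach a character of the form $\omega_m^{a}\cdot$(unramified); the irreducible components of $\cX_{n,\red}$ described in \cite{CEGS_LocalGeometry} are indexed by Serre weights and contain such families. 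The generic point of a suitable component is maximally nonsplit, so this lets us pass from an irreducible point to a reducible one. Concretely, I would use that every irreducible component of $\cX_{n,\red}$ contains a point that is a successive extension of characters; this is \cite{CEGS_LocalGeometry}, or Emerton--Gee, on the structure of components via families of extensions of characters. After semisimplifying again and deforming the unramified parts of the characters to be trivial (the Weil group quotient is connected through $\bbG_m$), we land on a direct sum of characters $\omega^{a_i}$. Such a point is $\overline{\rho}_\lambda$ with $\lambda=(a_i)$.

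\textbf{Step 2 (identifying $\overline{\rho}_\lambda$ and $\overline{\rho}_{\lambda'}$).} The $(q-1)Y(T)$ ambiguity is immediate, since $\omega^{q-1}=1$. For the coroot relation, it suffices to treat $n=2$, inside a Levi block. We need to connect $\omega^{a}\oplus\omega^{b}$ to $\omega^{a+1}\oplus\omega^{b-1}$. This is where I would use that components of $\cX_{2,\red}$ are labeled by Serre weights. The same irreducible component contains both a nonsplit extension of $\omega^{b}$ by $\omega^{a}$ (up to unramified twist) and, via the ``irreducible induced'' locus or the codimension-one intersections studied in \cite{Kansal_Codim-1-intersections}, points whose semisimplification is $\omega^{a+1}\oplus\omega^{b-1}$ up to unramified twist. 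Alternatively, the route could go through $\Ind\,\omega_2^{c}$ with $c$ chosen so that the induced representation degenerates appropriately. Combining this with Step 1's moves gives the connection. \textbf{I expect this step to be the main obstacle.} It requires precise bookkeeping with the explicit description of component intersections in \cite{CEGS_ANT, Kansal_Codim-1-intersections}, including the exceptional weights, and it is where $p$ odd is used.

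\textbf{Step 3 (the determinant separates classes).} The map $\pi_0(\cX_n)\to Y(T)/((q-1)Y(T)+\bbZ\Phi^\vee)$ is well defined and surjective by Steps 1 and 2, and uniqueness is forced by Step 1. For injectivity, compose with $\det$: the sum map $Y(T)/((q-1)Y(T)+\bbZ\Phi^\vee)\to \bbZ/(q-1)$ is a bijection, since $Y(T)/\bbZ\Phi^\vee\cong\bbZ$ via the sum. Moreover, $\pi_0(\cX_1)\cong\bbZ/(q-1)$: the stack $\cX_{1,\red}$ is a union of copies of $[\bbG_m/\bbG_m]$ indexed by the restriction of the character to inertia, which takes values among characters of $k^\times$, giving $\omega^a$ with $a\in\bbZ/(q-1)$. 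Since $\det:\cX_n\to\cX_1$ is continuous, it maps components into components, so the diagram commutes. Because the bottom map and the right vertical map are bijections, the left vertical map is injective and hence bijective, and the top map $\det$ is therefore a bijection as well.
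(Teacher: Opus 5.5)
Your Step 1 and your reduction of the coroot relation to $n=2$ through a Levi block are essentially what the paper does. In the paper, Emerton--Gee's theorem that every irreducible component contains a completely reducible point, together with the $n=1$ case, shows that every irreducible component contains some $\overline{\rho}_\lambda$.

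The genuine gap is Step 2 for $n=2$. This is the real content of the theorem, and you only list candidate tools without carrying out an argument. A single component cannot do the job. Already for $K=\Qp$, Serre's weight recipe shows that all reducible points of a fixed component $\cX_2^\sigma$ have the same semisimplification on inertia. So an explicit path from $\omega^a\oplus\omega^b$ to $\omega^{a+1}\oplus\omega^{b-1}$ must run through chains of components meeting at other (e.g.\ irreducible) points. Doing this for general $K$ would need the full weight sets $W(\overline{\rho})$, the precise description of which $\overline{\rho}$ lie on which $\cX_2^\sigma$, and separate treatment of Steinberg and other exceptional weights. The proposal supplies none of this.

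Your Step 3 is also not valid as written. If the bottom and right maps of the square are bijections, the left map is injective if and only if $\det$ on $\pi_0$ is, and that is not known. Read in the other direction the argument works: $[\lambda]\mapsto$ the component of $\overline{\rho}_\lambda$ is well defined by Step 2, surjective by Step 1, and injective via $\det$ and $\pi_0(\cX_1)\cong\bbZ/(q-1)$. So again everything rests on Step 2.

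The missing idea is to avoid constructing paths altogether and instead prove $\#\pi_0(\cX_2)\le q-1$, using three inputs.
\begin{enumerate}
\item By your Step 1, every connected component contains some $\overline{\rho}_\lambda$. Since $\overline{\rho}_\lambda$ is peu ramifi\'ee, it lies on a non-Steinberg irreducible component by \cite{CEGS_LocalGeometry, CEGS_ANT}. Hence every connected component contains a non-Steinberg component.
\item By the block theory of $\Fpbar[\GL_2(k)]$, the non-Steinberg Serre weights with a fixed central character all lie in a single block, and there are $q-1$ central characters. So any two such weights are joined by a chain $\sigma_0,\dots,\sigma_m$ with a nonsplit $\GL_2(k)$-extension between consecutive terms.
\item By \cite[Theorem 1.2]{Kansal_Codim-1-intersections}, each such nonsplit extension forces $\cX_2^{\sigma_{i-1}}\cap\cX_2^{\sigma_i}\neq\emptyset$.
\end{enumerate}
Together these give at most $q-1$ connected components. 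The map $\pi_0(\cX_2)\to Y(T)/((q-1)Y(T)+\bbZ\Phi^\vee)\cong\bbZ/(q-1)$ defined through $\det$ is surjective, so it is a bijection. This yields the coroot relation for $n=2$ without ever exhibiting an explicit path, and your Levi argument then handles $n>2$.
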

\begin{proof}
    The case $n=1$ follows from \cite[Proposition 7.2.17]{EmertonGee_modulicrystalline}. 
    In fact, in this case, each connected component is irreducible. 
    Now let $n$ be arbitrary. 
    There are determinant maps $\mathcal{X}_n \rightarrow \mathcal{X}_1$ and $Y(T) \rightarrow Y(T_1)$. 
    Moreover, the induced map $Y(T)/((q-1)Y(T)+\bbZ\Phi^\vee) \rightarrow Y(T_1)/(q-1)Y(T_1)$ is an isomorphism. 
    Thus there is a unique commutative diagram 
    \begin{center}
    \begin{tikzcd}
\mathcal{X}_n \arrow[r, "\det"] \arrow[d] & \mathcal{X}_1 \arrow[d] \\
Y(T)/((q-1)Y(T)+\bbZ\Phi^\vee) \arrow[r, "\det"] & Y(T_1)/(q-1)Y(T_1),                      
\end{tikzcd}
    \end{center}
    where the horizontal maps are given by the determinant and the right vertical map is induced from the bijection for $n=1$. 
    Moreover, all maps are continuous, where the objects in the bottom row are given the discrete topologies. 
    Moreover, one easily verifies that $\overline{\rho}_\lambda$ maps to $\lambda+(q-1)Y(T)+\bbZ\Phi^\vee$. 
    It suffices to show that the map $\mathcal{X}_n \rightarrow Y(T)/((q-1)Y(T)+\bbZ\Phi^\vee)$ induces a bijection $\pi_0(\mathcal{X}_n) \rightarrow Y(T)/((q-1)Y(T)+\bbZ\Phi^\vee)$. 
    Note that this map is surjective since the determinant map $\mathcal{X}_n\rightarrow \mathcal{X}_1$ is surjective on points. 

\vskip8pt

    Before continuing with the proof, we need to recall some results on the geometry of $\mathcal{X}_n$. 
    Recall that by \cite[Theorem 1.2.1]{EmertonGee_modulicrystalline}, the irreducible components of $\mathcal{X}_n$ are in bijection with irreducible representations of $\GL_n(k)$ over $\Fpbar$ (also known as Serre weights). 
    For a Serre weight $\sigma$, write $\mathcal{X}_n^\sigma \subset \mathcal{X}_n$ for the corresponding irreducible component. 
    By \cite[Theorem 6.5.1]{EmertonGee_modulicrystalline}, every irreducible component of $\mathcal{X}_n$ contains a completely reducible $\overline{\rho}$. 
    Let $D: \mathcal{X}_1^n \rightarrow \mathcal{X}_n$ be the map given by direct sum. 
    By the $n=1$ case, we see that the image of an irreducible/connected component of $\mathcal{X}_1^n$ under $D$ contains both $\overline{\rho}$ and $\overline{\rho}_\lambda$ for some $\lambda\in Y(T)$. 
    Thus every irreducible component of $\mathcal{X}_n$ contains $\overline{\rho}_\lambda$ for some $\lambda\in Y(T)$. 

\vskip8pt

    We now consider the case $n=2$. 
    In this case, the one-dimensional Serre weights are the $q-1$ distinct powers of $\iota\circ \det$ and an arbitrary Serre weight is a twist by a character of $\otimes_{j=0}^{f-1} \Sym^{s_j} k^2 \otimes_{k,\iota \circ \varphi^j} \Fpbar$ where $\varphi$ denotes the arithmetic Frobenius and $0 \leq s_j \leq p-1$ for all $j$. 
    We say that a Serre weight is Steinberg if it is a twist of $\otimes_{j=0}^{f-1} \Sym^{p-1} k^2 \otimes_{k,\iota \circ \varphi^j} \Fpbar$ and is non-Steinberg otherwise. 
    We similarly call irreducible components of $\mathcal{X}_2$ Steinberg or non-Steinberg. 
    Now every connected component (even every irreducible component) of $\mathcal{X}_2$ contains $\overline{\rho}_\lambda$ for some $\lambda\in Y(T)$. 
    Since $\overline{\rho}_\lambda$ is peu ramifie\'e (i.e.~not tr\`es ramifie\'e), $\overline{\rho}_\lambda$ is contained in a non-Steinberg irreducible component by \cite[Lemma A.4]{CEGS_LocalGeometry} and \cite[Theorem 1.2]{CEGS_ANT}. 
    We conclude that every connected component contains a non-Steinberg irreducible component. 

\vskip8pt

    We will now show that $\# \pi_0(\mathcal{X}_2) \leq q-1$ from which the result for $n=2$ follows. 
    The $q-1$ possible central characters give a product decomposition of the category of $\Fpbar[\GL_2(k)]$-modules. 
    Each subcategory with a fixed central character further decomposes into two blocks, a so-called generic block whose simples are all non-Steinberg Serre weights and a non-generic block containing a single indecomposable isomorphism class which is a Steinberg Serre weight. 
    To show the desired inequality, it suffices to show that if $\sigma$ and $\sigma'$ are non-Steinberg Serre weights belonging to a single (generic) block, then $\mathcal{X}_2^\sigma$ and $\mathcal{X}_2^{\sigma'}$ are contained in the same connected component of $\mathcal{X}_2$. 
    Since $\sigma$ and $\sigma'$ are in the same block, there is an $m\in \mathbb{N}$ and a sequence $\sigma_0 = \sigma,\sigma_1,\ldots \sigma_{m-1},\sigma_m = \sigma'$ such that there is a non-trivial $\GL_2(k)$-extension between $\sigma_{i-1}$ and $\sigma_i$ for all $1\leq i \leq m$. 
    By \cite[Theorem 1.2]{Kansal_Codim-1-intersections}, $\mathcal{X}_2^{\sigma_{i-1}} \cap \mathcal{X}_2^{\sigma_i}$ is nonempty. 
    We conclude that for all $1\leq i \leq m$, $\mathcal{X}_2^{\sigma_{i-1}}$ and $\mathcal{X}_2^{\sigma_i}$, and thus $\mathcal{X}_2^\sigma$ and $\mathcal{X}_2^{\sigma'}$, are contained in the same connected component. 

\vskip8pt

Finally, we consider the case $n>2$. 
    We will show that any fiber of the map $\mathcal{X}_n \rightarrow Y(T)/((q-1)Y(T)+\bbZ\Phi^\vee)$ is connected. 
    It suffices to show that $\overline{\rho}_\lambda$ and $\overline{\rho}_\mu$ are in the same connected component if $\lambda - \mu \in (q-1)Y(T)+\bbZ\Phi^\vee$. 
    We will break this into two cases. 
    First suppose that $\lambda - \mu \in (q-1)Y(T)$. 
    Then it is easy to see that $\overline{\rho}_\lambda$ and $\overline{\rho}_\mu$ are equal using that $\omega^{q-1}$ is the trivial character. 
    Second, suppose that $\lambda-\mu$ is a simple coroot $\alpha^\vee$. 
    Consider the standard Levi subgroup $\bG_m^{n-2} \times \GL_2 \subset \GL_n$ generated by $T$ and the root subgroups for $\pm \alpha$. 
    Then the representations $\overline{\rho}_\lambda$ and $\overline{\rho}_\mu$ factor through the $\Fpbar$-points of this Levi subgroup. 
    Let $\overline{\rho}_{\lambda,\alpha},\overline{\rho}_{\mu,\alpha}: G_K \rightarrow \GL_2(\Fpbar)$ be the corresponding projections to the $\GL_2$-factor. 
    Then $\overline{\rho}_\lambda/\overline{\rho}_{\lambda,\alpha}$ and $\overline{\rho}_\mu/\overline{\rho}_{\mu,\alpha}$ are isomorphic and $\overline{\rho}_{\lambda,\alpha}$ and $\overline{\rho}_{\mu,\alpha}$ lie in the same connected component of $\mathcal{X}_2$ by the $n=2$ case. 
    Consider the map $\mathcal{X}_{2,\Fpbar} \rightarrow \mathcal{X}_{n,\Fpbar}$ defined by $M \mapsto M \oplus M_0$ where $M_0$ is the $(\varphi,\Gamma)$-module corresponding to $\overline{\rho}_\mu/\overline{\rho}_{\mu,\alpha} \cong \overline{\rho}_\lambda/\overline{\rho}_{\lambda,\alpha}$. 
    The image of the connected component of $\mathcal{X}_{2,\Fpbar}$ containing $\overline{\rho}_{\lambda,\alpha}$ and $\overline{\rho}_{\mu,\alpha}$ contains (the conjugacy classes of) $\overline{\rho}_\lambda$ and $\overline{\rho}_\mu$. 
    We conclude that $\overline{\rho}_\lambda$ and $\overline{\rho}_\mu$ lie in the same connected component of $\mathcal{X}_n$. 
    Finally, if $\lambda-\mu \in (q-1)Y(T)+\bbZ\Phi^\vee$, then an easy induction argument using the two cases above shows that $\overline{\rho}_\lambda$ and $\overline{\rho}_\mu$ are in the same connected component of $\mathcal{X}_n$.
\end{proof}

\subsection{Connected components of \texorpdfstring{$\mathcal{X}_{n}^{\rig}$}{}}
\label{subsection4.2_rigidcomp}

\begin{thm}
\label{thm:rigid-EG-general}
Assume $p>2$. 
For any finite extension $K/\bbQ_p$, by restricting the determinant of an \'{e}tale $(\varphi,\Gamma_K)$-module of rank $n$ to the torsion subgroup of $K^\times$ via local class field theory, the connected components of $\mathcal{X}_n^{\rig}$ correspond bijectively to characters $\mu(K)\rightarrow \mathcal{O}_E^\times$, where $\mu(K)$ is the set of roots of unity in $K$.
\end{thm}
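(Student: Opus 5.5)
The plan is to refine the mod-$p$ decomposition of Theorem \ref{thm:reduced-EG} by the $p$-power torsion of the determinant, and then to pass from formal to rigid connected components via normality and flatness of versal rings, as outlined in the introduction.

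\emph{Step 1: a candidate decomposition and a lower bound.} For $D\in\cX_n^\rig(\Sp(A))$ with $A$ a connected affinoid, $\det D$ corresponds to a continuous character $\delta:K^\times\to A^\times$. Its restriction to the finite group $\mu(K)$ is locally constant on $\Sp(A)$: the characters of a finite group with values in $A^\times$ form a finite étale scheme over $A$. Hence restricting $\det$ to $\mu(K)$ gives a map $\cX_n^\rig\to\underline{\Hom(\mu(K),\cO_E^\times)}$. Each fiber is clopen, so $\#\pi_0(\cX_n^\rig)$ is at least the number of characters hit. Every character $\chi$ of $\mu(K)$ is hit: extend $\chi$ to a character of $K^\times$ with values in $\cO_E^\times$, which is possible after enlarging $E$ as fixed by the paper, and add trivial summands. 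This proves surjectivity.

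\emph{Step 2: decompose the formal stack.} Write $\mu(K)=\mu(K)^p\times\mu_{p^\infty}(K)$. Theorem \ref{thm:reduced-EG} shows that the components of $\cX_n$ are classified by $\overline{\det}|_{\mu(K)^p}$. The restriction to $\mu_{p^\infty}(K)$ is not visible modulo $\varpi_E$, so on the generic fiber each formal component $\cX^{(\bar\chi)}$ splits further. The natural tool is the determinant character $\det|_{\mu_{p^\infty}(K)}$, which on $\cX_n$ itself is a map to the finite flat group scheme $\Hom(\mu_{p^\infty}(K),\bbG_m)$. This group scheme is connected over $\cO_E$ but splits over $E$. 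I would therefore define, for each lift $\chi$ of $\bar\chi$, the closed substack $\cX^{(\chi)}\subset\cX^{(\bar\chi)}$ where $\det|_{\mu(K)}=\chi$. This substack is cut out by the equations $\det(\zeta)=\chi(\zeta)$, the flat closure of the corresponding rigid fiber. One then has $(\cX^{(\chi)})^\rig$ equal to the fiber of Step 1, and it remains to show that each $(\cX^{(\chi)})^\rig$ is connected.

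\emph{Step 3: connectedness of each piece, the main obstacle.} At a finite type point $\bar\rho$, a versal ring to $\cX^{(\chi)}$ is a fixed-determinant-on-$\mu(K)$ version of the framed deformation ring $R^\square_{\bar\rho}$. By \cite{BIP_def}, for $p>2$ these rings are $\cO_E$-flat and normal; in fact the "fixed $\det|_{\mu(K)}$" rings are exactly the components of $R^\square_{\bar\rho}[1/p]$ they describe. Choose a smooth presentation $\Spf(B)\rightrightarrows\Spf(A)\to\cX^{(\chi)}$ by formal schemes locally formally of finite type. Completed local rings of $A$ and $B$ at closed points are smoothly equivalent to these versal rings, so $A$ and $B$ are $\cO_E$-flat and normal. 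By \cite[7.4.1]{deJongCrystalline}, applied to $A$ and $B$ and passed through the equalizer description of global functions (\S\ref{structure-sheaf}), the power-bounded functions on $(\cX^{(\chi)})^\rig$ identify with $\Gamma(\cX^{(\chi)},\cO)$. Idempotents are power-bounded, so idempotents on the rigid side come from idempotents on the formal side. The latter reduce to idempotents on $\cX^{(\chi)}_\red=\cX^{(\bar\chi)}_\red$, which is connected by Theorem \ref{thm:reduced-EG}. Hence there are no nontrivial idempotents and $(\cX^{(\chi)})^\rig$ is connected.

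The delicate points are checking that the \cite{BIP_def} rings really are versal to $\cX^{(\chi)}$, which requires care with the determinant condition on $\mu_{p^\infty}$ and with framings, and checking that the formal reduction of $\cX^{(\chi)}$ agrees with that of $\cX^{(\bar\chi)}$. With these in hand, Steps 1–3 give the bijection. Compatibility with local class field theory, via $(I_K^\ab)_{\rm tors}\cong\mu(K)$, is then formal.
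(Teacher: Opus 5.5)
Your proposal is correct and takes essentially the same route as the paper. You split each formal component $\cX_n^\psi$ from Theorem \ref{thm:reduced-EG} by the closed condition $\det|_{\mu_{p^\infty}(K)}=\chi$, which leaves the underlying reduced stack unchanged because $\Spf(\cO_E[\mu_{p^\infty}(K)])_\red=\Spec(k)$; you then get $\cO_E$-flat normal charts from \cite{BIP_def} via smooth equivalence of versal rings, and use \cite[7.4.1]{deJongCrystalline} to lift rigid idempotents to formal ones. The delicate points you flag are exactly what the paper checks, for instance that $R^{\square}_{\overline{\rho}_x}\otimes_{\cO_E[\mu_{p^\infty}(K)],\chi}\cO_E$ is versal to $\cX_n^{\psi,\chi}$, which holds because the condition on $\mu(K)^p$ is automatic over Artinian local rings of residue characteristic $p$.
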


In this subsection, we give a proof of Theorem \ref{thm:rigid-EG-general} in several steps.

\begin{para}{\it First reductions.}
Let $\mu_{p^\infty}(K)$ be the $p$-power roots of unity in $K$.
Then, $\mu(K) = \mu(K)^p \times \mu_{p^\infty}(K)$, and $\mu(K)^p$ is isomorphic to the units in the residue field of $K$.
For any character $\psi: \mu(K)^p \ra \cO_E^\times$, let $\cX_n^\psi$ be the corresponding connected component of $\cX_n$ given in Theorem \ref{thm:reduced-EG}.
For any character $\chi: \mu_{p^\infty}(K)\ra \cO_E^\times$, consider the base change
$$
\begin{tikzcd}
\cX_n^{\psi,\chi} \arrow[r, hook] \arrow[d] & \cX_n^\psi \arrow[d, "\det(\rho)|_{\mu_{p^\infty}(K)}"]\\
\Spf(\cO_E) \arrow[r, "\chi", hook] & \Spf(\cO_E[\mu_{p^\infty}(K)])
\end{tikzcd}
$$
where the right vertical map is induced by the determinant $\det: \cX_n \ra \cX_1$, followed by restriction of the character to the $p$-power torsion subgroup $\cX_1 \ra \Spf(\cO_E[\mu_{p^\infty}(K)])$, and the bottom map is induced by the character $\chi:\mu_{p^\infty}(K)\ra \cO_E^\times$.
The fiber product $\cX_n^{\psi,\chi}$ is a closed substack of $\cX_{n}^\psi$ whose reduction coincides with the reduction of $\cX_{n}^\psi$, because 
$$
\Spf(\cO_E)_\red = \Spec(k) \xrightarrow{\chi \pmod{\varpi}} \Spec(k) = \Spec(k[x]/(x^{p^r}-1))_{\red} = \Spf(\cO_E[\mu_{p^\infty}(K)])_\red
$$ 
is the identity map on $\Spec(k)$, where we write $\mu_{p^\infty}(K) \cong \Z/p^r \Z$ for some $r\geq 0$. 
Thus, $|\cX_n^{\psi,\chi}| = |\cX_n^\psi|$, and hence $\cX_n^{\psi,\chi}$ is also connected.
\vskip8pt 

Passing to rigid generic fibers\footnote{This functor commutes with fiber products, as $(-)^{\rig}_{\rm pre}$ commutes with presheaf fiber products by definition, and sheafification commutes with finite limits.}, we get a pullback diagram of Tate-fpqc stacks on $\Rig_E$,
$$
\begin{tikzcd}
\cX_n^{\psi,\chi,\rig} \arrow[r, hook] \arrow[d] & \cX_n^{\psi,\rig} \arrow[d, "\det(\rho)|_{\mu_{p^\infty}(K)}"]\\
\Spf(\cO_E)^\rig \arrow[r, "\chi", hook] & \Spf(\cO_E[\mu_{p^\infty}(K)])^\rig
\end{tikzcd}
$$
where the bottom map is 
$$
\Spf(\cO_E)^\rig = \Sp(E) \xrightarrow{\chi} \coprod_{\mu_{p^\infty}(K) \ra \cO_E^\times } \Sp(E) = \Sp(E[\mu_{p^\infty}(K)]) =  \Spf(\cO_E[\mu_{p^\infty}(K)])^\rig
$$
the inclusion of the closed point corresponding to the character $\chi$ to the maximal spectrum of the semisimple algebra $E[\mu_{p^\infty}(K)] = \prod_{\mu_{p^\infty}(K)\ra \cO_E^\times} E$.
Thus, $\cX_n^{\psi,\chi,\rig}$ is a clopen substack of $\cX_n^{\psi,\rig}$, and we can decompose $\cX_n^\rig$ into a disjoint union of  clopen substacks
$$
\cX_n^\rig = \coprod_{\mu(K)^p\xra{\psi} \cO_E^\times} \coprod_{\mu_{p^\infty}(K)\xra{\chi} \cO_E^\times} \cX_n^{\psi,\chi,\rig}
$$
indexed by characters $\mu(K)\to \cO_E^\times$.
It remains to show that $\cX_n^{\psi,\chi,\rig}$ is connected.

\end{para}

\vskip8pt

\begin{para}{\it Basic setup.}
Fixing characters $\psi,\chi$ as above, we write $\cX:=\cX_n^{\psi,\chi}$ for simplicity.
Since $\cX$ is a closed substack of the stack $\cX_n$, it is a noetherian formal algebraic stack with $\cX_\red$ being an algebraic stack of finite type over $k$ by \cite[Theorem 1.2.1]{EmertonGee_modulicrystalline}, hence it is locally formally of finite type over $\Spf(\cO_E)$ by Lemma \ref{lem:finite_type_red}.
Fix a smooth surjection
$$
\pi: \cU := \coprod_{i\in I} \Spf(A_i) \twoheadrightarrow \cX
$$
as in \S\ref{para:presentation}.
By \cite[Proposition 3.4.12]{EmertonGee_modulicrystalline} and base change, the diagonal of $\cX$ is affine of finite presentation.
Thus,
$$
\cR:=\cU\times_{\cX}\cU \cong \coprod_{j\in J}\Spf(B_j)
$$
is a disjoint union of formal schemes $\Spf(B_j)\in \FS_{\cO_E}$. 
We have a coequalizer diagram
\[
\begin{tikzcd}
    \cR \arrow[r, shift left=1ex, "s"] \arrow[r, shift right=1ex, "t"] & \cU \arrow[r, "\pi"] & \cX
\end{tikzcd}
\]
with $\pi$ smooth surjective and $s,t$ smooth.
So, we can take the rigid generic fiber $\cX^\rig$, which sits in the following coequalizer diagram
\[
\begin{tikzcd}
    \cR^\rig \arrow[r, shift left=1ex, "s^\rig"] \arrow[r, shift right=1ex, "t^\rig"] & \cU^\rig \arrow[r, "\pi^\rig"] & \cX^\rig
\end{tikzcd}.
\]

\end{para}

\vskip8pt

\begin{para}{\it Input from (Galois) deformation theory.}
We study geometric properties of the rings $A_i$ and $B_j$. 
The goal is to conclude that after possibly refining $I$ and $J$, we may assume both $A_i$ and $B_j$ are $\cO_E$-flat noetherian normal domains.
\vskip8pt

Let us first recall the definition of {\it versal rings} at finite type points of stacks in groupoids on the big fppf site of an affine noetherian base scheme $S=\Spec(\Lambda)$, which we will take to be $\Spec(\cO_E)$ in applications.
Let $p:\cX\ra S_{fppf}$ be a category fibered in groupoids, and let $x_0:\Spec(k)\ra \cX$ (which, by 2-Yoneda lemma, is just an object $x_0\in \cX(k)$) be a finite type point, i.e., there is a finite ring map $\Lambda \ra k$ where $k$ is a field.
Following \cite[\href{https://stacks.math.columbia.edu/tag/06GC}{Tag 06GC}]{stacks-project}, we define the base category $\cC_\Lambda$ to be the category whose objects are local Artin $\Lambda$-algebras $(A,\frm_A)$ with a $\Lambda$-algebra isomorphism $A/\frm_A\cong k$, and whose morphisms are local $\Lambda$-algebra maps compatible with the identifications.
This sits as a full subcategory in its ``completion" $\widehat{\cC}_\Lambda$  \cite[\href{https://stacks.math.columbia.edu/tag/06GW}{Tag 06GW}]{stacks-project} defined similarly with its objects being complete noetherian local $\Lambda$-algebras $(R,\frm_R)$ with $R/\frm_R\cong k$.
Following \cite[\href{https://stacks.math.columbia.edu/tag/07T2}{Tag 07T2}]{stacks-project}, let 
$$
p_{x_0}:\cF_{\cX,k,x_0} \ra \cC_\Lambda
$$
be the category cofibered in groupoids over $\cC_\Lambda$ whose objects are ``deformations of $x_0$ to local Artin $\Lambda$-algebras" and whose morphisms signify compatible deformations.
This is a {\it predeformation category} in the sense of \cite[\href{https://stacks.math.columbia.edu/tag/06GS}{Tag 06GS}]{stacks-project} by \cite[\href{https://stacks.math.columbia.edu/tag/07T5}{Tag 07T5}]{stacks-project}.
To the category $\cF_{\cX,k,x_0}$, or more generally, to any category $\cF\ra \cC_\Lambda$ cofibered in groupoids over $\cC_\Lambda$, we can associate the completion category $\widehat{\cF}\ra \widehat{\cC}_\Lambda$ cofibered over $\widehat{\cC}_\Lambda$, consisting of {\it formal objects} of $\cF$, \cite[\href{https://stacks.math.columbia.edu/tag/06H3}{Tag 06H3}]{stacks-project}: a formal object $\xi = (R,\xi_n,f_n)$ of $\cF$ consists of an object $R\in \widehat{\cC}_\Lambda$, a collection indexed by $n\in \bbN$ of objects $\xi_n \in \cF(R/\frm_R^n),$ and morphisms $f_n:\xi_{n+1}\ra \xi_n$ lying over $R/\frm_R^{n+1}\twoheadrightarrow R/\frm_R^n$.
For the category fibered in groupoids $p:\cX\ra S_{fppf}$ over $S_{fppf}$, we similarly define {\it formal objects} of $\cX$, \cite[\href{https://stacks.math.columbia.edu/tag/07X4}{Tag 07X4}]{stacks-project}, to be $\xi = (R,\xi_n,f_n)$, where $(R,\frm_R)$ is a noetherian complete local $S$-algebra such that $R/\frm_R$ is a field of finite type over $S$, $\xi_n\in \cX(R/\frm_R^n)$, and $f_{n}:\xi_{n}\ra\xi_{n+1}$ are morphisms of $\cX$ lying above $\Spec(R/\frm_R^n)\ra\Spec(R/\frm_R^{n+1})$.
Then, for the choice of $k:=R/\frm_R$ and $x_0:=\xi_1$, the formal object $(R,\xi_n,f_n)$ of $\cX$ gives a formal object $(R,\xi_n,f_n)$ of $\cF_{\cX,k,x_0}$.
We say that {\it $R$ is a versal ring to $\cX$ at $x_0$} (with the data $\xi=(R,\xi_n,f_n)$ implicitly understood) if the formal object $(R,\xi_n,f_n)$ of $\cF_{\cX,k,x_0}$ is a {\it versal formal object}, \cite[\href{https://stacks.math.columbia.edu/tag/06HR}{Tag 06HR}]{stacks-project}, which means that the induced map, whose precise description is explicated in \cite[\href{https://stacks.math.columbia.edu/tag/06HC}{Tag 06HC}]{stacks-project}, 
$$
\underline{\xi}: \underline{R}|_{\cC_\Lambda} \ra \cF_{\cX,k,x_0}
$$
is {\it smooth}, \cite[\href{https://stacks.math.columbia.edu/tag/06HG}{Tag 06HG}]{stacks-project}, as a morphism of categories cofibered in groupoids over $\cC_\Lambda$.
By \cite[\href{https://stacks.math.columbia.edu/tag/06T5}{Tag 06T5}]{stacks-project}, for any predeformation category $\cF$ that admits at least one versal formal object, it has a unique (up to isomorphism) minimal versal formal object such that any versal object is the pushforward of a minimal versal object along a power series ring extension. 
In particular, versal rings of $\cX$ at $x_0$, corresponding to versal formal objects of the predeformation category $\cF_{\cX,k,x_0}$, are unique up to smooth equivalence when they exist, in the sense that there is a ``minimal" versal ring $R_{\min}$ of $\cX$ at $x_0$ such that any other versal ring $R$ of $\cX$ at $x_0$ is a power series ring $R\cong R_{\min}[[t_1,\dots,t_r]]$ for some $r\geq 0$.
Consequently, if $R_1,R_2$ are two versal rings of $\cX$ at $x_0$, then 
\begin{numequation}
\label{eqn:smooth-equiv}
\text{either} \quad R_1=R_2[[t_1,\dots,t_r]] \quad \text{ or }\quad R_2=R_1[[t_1,\dots,t_r]] .
\end{numequation}
\vskip8pt

Back to our formal algebraic stack $\cX=\cX_{n}^{\psi,\chi}$.
For any closed point $u\in \Spf(A_i)$ corresponding to a maximal ideal $\frm_u$ of $A_i$, let $x$ denote the image of $u$ under the map 
$
\pi: \Spf(A_i) \ra \cX.
$
Then, 
$
x: \Spec (\kappa(u)) \xrightarrow{u} \Spf(A_i) \xrightarrow{\pi} \cX
$
is a finite type point of $\cX$, i.e., $\kappa(u):= A_i/\frm_u$ is a finite type $\cO_E$-algebra, because $A_i/I_i$ is a finite type $k$-algebra that surjects onto $\kappa(u)= A_i/\frm_u$. 
Since $\pi: \Spf(A_i) \ra \cX$ is smooth, the induced map between predeformation categories $\pi: \cF_{\Spf(A_i),\kappa(u),u} \ra \cF_{\cX,\kappa(u),x}$ is smooth, \cite[\href{https://stacks.math.columbia.edu/tag/07WK}{Tag 07WK}]{stacks-project}.
Since $\Spf(A_i) = \varinjlim_n \Spec(A_i/I_i^n)$ as a sheaf on $\Spec(\cO_E)_{fppf}$, it is easy to check that the complete noetherian local ring $(A_{i,\frm_u})^{\wedge}_{\frm_u}$ prorepresents $\cF_{\Spf(A_i),\kappa(u),u}$.
So, $(A_{i,\frm_u})^{\wedge}_{\frm_u}$ is a versal ring of $\cX$ at $x$.
Let $\overline{\rho}_x: G_K \ra \GL_n(\kappa(u))$ 
be the Galois representation attached to $x$.
Another versal ring of $\cX=\cX_n^{\psi,\chi}$ at $x$ is given by the ring $R_{\overline{\rho}_x}^{\square,\chi}:= R_{\overline{\rho}_x}^{\square} \otimes_{\cO_E[\mu_{p^\infty}(K)],\chi} \cO_E$, where $R_{\overline{\rho}_x}^\square$ is the framed Galois deformation ring that is versal to $\cX_n$ at $x$ \cite[3.6.3]{EmertonGee_modulicrystalline}.
Indeed, by definition $R_{\overline{\rho}_x}^{\square,\chi}$ prorepresents the framed deformations $\rho_A$ of $\overline\rho_x$ to local Artin $\cO_E$-algebras $A$ with residue field $\kappa(u)$ such that $\det(\rho_A)|_{\mu_{p^\infty}(K)} = \chi$. On the other hand,  since $A$ is local of residual characteristic $p$, the prime-to-$p$ integer $m:=|\mu(K)^p|$ is invertible in $A$.
For any $\zeta\in \mu(K)^p$, let $u:=\det(\rho_A)(\zeta)/\psi(\zeta)\in 1+\frm_A$. Then, since $u^m=1$, 
$$
0= u^{m}-1 = (u-1)(1+u+\dots +u^{m-1}),
$$
where $(1+u+\dots +u^{m-1})\equiv m \pmod{\frm_A}$ is a unit.
Thus, $u-1=0$, i.e., $\det(\rho_A)|_{\mu(K)^p}=\psi$.
Hence, we have a morphism $\Spf(R_{\overline{\rho}_x}^{\square,\chi})\cong \Spf(R_{\overline\rho_x}^\square)\times_{\cX_n} \cX_{n}^{\psi,\chi}\to\cX_{n}^{\psi,\chi}$ that is versal at $x$, which is the base change of the morphism $\Spf(R_{\overline{\rho}_x}^{\square})\to \cX_n$ in \cite[3.6.3]{EmertonGee_modulicrystalline}.

\vskip8pt

Since any two versal rings are smoothly equivalent in the sense of \eqref{eqn:smooth-equiv}, we have
$$
\text{either}\quad 
R_{\overline{\rho}_x}^{\square,\chi} = (A_{i,\frm_u})^\wedge_{\frm_u}[[t_1,\dots,t_r]]
\quad \text{ or }\quad (A_{i,\frm_u})^\wedge_{\frm_u} =  R_{\overline{\rho}_x}^{\square,\chi}[[t_1, \dots, t_r]]\,,
$$
as local $\cO_E$-algebras for some $r\geq 0$.
Under the assumption that $p>2$, $R_{\overline{\rho}_x}^{\square,\chi}$ is $\cO_E$-flat and normal by \cite[Corollaries 4.5, 4.19]{BIP_def}.
Thus by Lemma \ref{lem:versal_nrm}, in either case we have that $\widehat{\cO}_{\Spf(A_i),u} = (A_{i,\frm_u})^\wedge_{\frm_u}$ is $\cO_E$-flat and normal.

\begin{lemma}
\label{lem:versal_nrm}
Suppose $R$ is a noetherian ring.
Then for any $n\in \bbN$, $R$ is normal if and only if $R[[t_1,\dots,t_n]]$ is normal.
\end{lemma}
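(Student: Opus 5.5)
By induction on $n$ it suffices to treat $n=1$, i.e.\ to compare $R$ with $S:=R[[t]]$ for a noetherian ring $R$. We use the standard characterization: a noetherian ring is normal if and only if it is reduced and satisfies Serre's conditions $(R_1)$ and $(S_2)$. Equivalently, its localizations at all primes are normal domains. The plan is to relate $R$ and $S$ through the two natural maps $R\hookrightarrow S$ and $S\twoheadrightarrow S/(t)\cong R$.

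For ``$S$ normal $\Rightarrow$ $R$ normal'', the cleanest route is flat descent. The map $R\to S$ is flat, since $S$ is the $(t)$-adic completion of the noetherian ring $R[t]$, which is free over $R$. It is also faithfully flat onto its image on spectra: every prime $\frp$ of $R$ is the contraction of $\frp S+tS$. Normality descends along faithfully flat maps of noetherian rings (Matsumura, Thm.~23.9 / Cor.), so $R$ is normal. Alternatively, one can argue directly:
\begin{itemize}
\item reducedness of $R$ is clear since $R\subset S$;
\item if $x\in\mathrm{Frac}$ of a localization of $R$ is integral, then it lies in the corresponding localization of $S$, and comparing constant terms puts it back in $R$.
\end{itemize}

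For ``$R$ normal $\Rightarrow$ $S$ normal'', again use flatness of $R\to S$ together with the ascent criterion of Matsumura Thm.~23.9: if $A\to B$ is a flat local homomorphism of noetherian rings and $A$ and all fibers $B\otimes\kappa(\frp)$ are normal, then $B$ is normal. Here we must verify that the fibers of $R\to R[[t]]$ are normal, i.e.\ that $\kappa(\frp)\otimes_R R[[t]]$ is normal for every prime $\frp$. This fiber is a localization of $(R/\frp)[[t]]$ at $R\setminus\frp$, and it is contained in $\kappa(\frp)[[t]]$, a DVR. The fiber is not literally $\kappa(\frp)[[t]]$, which is the main subtlety of this direction.

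I expect this fiber computation to be the main obstacle. Rather than fight it, I would first try to avoid fibers altogether via Serre's criterion. To do so:
\begin{itemize}
\item reduce to the local case, since $S$ is normal iff $S_\frq$ is normal for each prime $\frq$;
\item note that the primes containing $t$ correspond to primes of $R$, and every maximal ideal of $S$ contains $t$ because $t$ lies in the Jacobson radical;
\item so it suffices to show $S_\frq$ is normal for $\frq\ni t$, since a localization of a normal ring is normal and every prime sits in some maximal ideal.
\end{itemize}
For $\frq\ni t$, the element $t$ is a nonzerodivisor in $S_\frq$ and $S_\frq/tS_\frq=R_\frp$ is normal. I then apply the standard lemma: if $A$ is noetherian local, $t\in\frm_A$ is a nonzerodivisor, $A/tA$ is normal, and $A$ is $t$-adically well behaved, then $A$ is normal. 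The general such lemma holds for $(S_2)$ and reducedness, but $(R_1)$ can fail in general. So I would instead use the explicit structure: $S_\frq$ is a localization of $R_\frp[[t]]$, which reduces the claim to the case $R$ local normal. In that case $R$ is a domain, and $R[[t]]$ is a normal domain by the classical argument (Bourbaki, Comm.\ Alg.\ V, §1.4), which compares leading coefficients of integral elements using complete integral closure. This requires extra care, since $R$ need not be completely integrally closed. Alternatively, since every complete noetherian local ring is excellent, one can pass to $\widehat{R}$, which is normal, apply ascent to $\widehat R[[t]]=\widehat{R[[t]]}$, and then descend.
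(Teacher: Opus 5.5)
\textbf{Verdict: the direction ``$R[[t]]$ normal $\Rightarrow$ $R$ normal'' is correct; the converse is not established.}

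Your descent argument is the paper's argument. The paper gets faithful flatness from the retraction $R[[t]]\to R$, you get it from surjectivity on spectra, and both then use descent of normality.

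For the converse, once you set aside the fibre criterion and the deformation lemma, each remaining step has a defect.
\begin{enumerate}
\item $S_{\mathfrak q}$ is \emph{not} a localization of $R_{\mathfrak p}[[t]]$. The natural map goes the other way, $R[[t]]_{\mathfrak q}\to R_{\mathfrak p}[[t]]$, and its target is in general strictly larger. For example, take $R=k[x]$ with $k$ infinite and $\mathfrak p=0$: the series $\sum_i (x-a_i)^{-1}t^i$ with distinct $a_i\in k$ lies in $k(x)[[t]]$ but not in $k[x][[t]]_{(t)}$. The reduction can be rescued: this map is flat and local, hence faithfully flat, so normality of $R_{\mathfrak p}[[t]]$ would descend to $S_{\mathfrak q}$ by the lemma you used in the other direction. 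You do not say this.
\item You leave the local case open because ``$R$ need not be completely integrally closed.'' This is false for noetherian domains. If $x\in\mathrm{Frac}(R)$ and some $d\neq 0$ satisfies $dx^m\in R$ for all $m$, then $R[x]\subseteq d^{-1}R$ is a finite $R$-module, so $x$ is integral over $R$. Hence a noetherian normal domain is completely integrally closed, and the Bourbaki argument applies with no extra care.
\item The completion route assumes $\widehat R$ is normal. This fails without an excellence hypothesis: Nagata constructed normal local domains with non-normal completion. That route therefore does not work as stated.
\end{enumerate}

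\textbf{How to fix it.} None of this machinery is needed, and the paper avoids it. A noetherian normal ring is a finite product of noetherian normal domains $R_i$, and $R[[t]]\cong\prod_i R_i[[t]]$. So it suffices to show $R[[t]]$ is normal when $R$ is a noetherian normal domain; the paper cites Stacks Tag 0BI0 for this. Two ways to prove it:
\begin{itemize}
\item Using the correction in item 2: $R$ is completely integrally closed, so $R[[t]]$ is too by Bourbaki, and complete integral closedness implies integral closedness.
\item Directly: let $K=\mathrm{Frac}(R)$. The equality $R=\bigcap_{\mathrm{ht}\,\mathfrak p=1}R_{\mathfrak p}$ gives $R[[t]]=\bigcap_{\mathrm{ht}\,\mathfrak p=1}R_{\mathfrak p}[[t]]$ inside $K[[t]]$, by comparing coefficients. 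Each $R_{\mathfrak p}[[t]]$ is a regular local ring, hence normal. So any element of $\mathrm{Frac}(R[[t]])\subseteq K((t))$ that is integral over $R[[t]]$ lies in every $R_{\mathfrak p}[[t]]$, and therefore in $R[[t]]$.
\end{itemize}
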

\begin{proof}
For noetherian $R$, the inclusion $f:R\ra R[[t_1,\dots,t_n]]$ is flat by \cite[\href{https://stacks.math.columbia.edu/tag/05CZ}{Tag 05CZ}]{stacks-project} with an $R$-linear retraction $r: R[[t_1,\dots,t_n]]\twoheadrightarrow R$ so that $r\circ f = \id_R$.
Then, $f$ is faithfully flat: the retraction $r$ splits $R[[t_1,\dots,t_n]] \cong R \oplus (R[[t_1,\dots,t_n]]/R)$ as $R$-module, hence for any $R$-module $M$, the base change 
$$
R[[t_1,\dots,t_n]] \otimes_R M \cong M \oplus ((R[[t_1,\dots,t_n]]/R) \otimes_R M)
$$
is zero if and only if $M=0$.
By \cite[\href{https://stacks.math.columbia.edu/tag/033G}{Tag 033G}]{stacks-project}, $R$ is normal if $R[[t_1,\dots,t_n]] $ is normal.
Conversely, if $R$ is normal, then it is a product of finitely many normal domains $R_1,\dots, R_s$ by \cite[\href{https://stacks.math.columbia.edu/tag/030C}{Tag 030C}]{stacks-project}.
Hence, $R[[t_1,\dots,t_n]]=\prod_{i=1}^s (R_i[[t_1,\dots,t_n]])$.
Since $R_i$ is a domain, $R_i[[t_1,\dots,t_n]]$ is normal by \cite[\href{https://stacks.math.columbia.edu/tag/0BI0}{Tag 0BI0}]{stacks-project}. 
Hence, $R[[t_1,\dots,t_n]]$ is normal.
\end{proof}

We claim that $A_i$ is $\cO_E$-flat. 
Since $\cO_E$ is a DVR, it suffices to show that $A_i$ is $\varpi$-torsionfree. 
Since the completion $A_{i,\frm_u} \ra (A_{i,\frm_u})^\wedge_{\frm_u}$ is faithfully flat, $A_{i,\frm_u}$ is $\varpi$-torsionfree. 
Indeed, consider the multiplication-by-$\varpi$ exact sequence
$$
0 \ra A_{i,\frm_u}[\varpi] \ra A_{i,\frm_u} \xrightarrow{\varpi} A_{i,\frm_u} \,.
$$
Base change along the faithfully flat map $A_{i,\frm_u} \ra (A_{i,\frm_u})^\wedge_{\frm_u}$ yields an exact sequence
$$
0 \ra A_{i,\frm_u}[\varpi] \ot_{A_{i,\frm_u}} (A_{i,\frm_u})^\wedge_{\frm_u}\ra (A_{i,\frm_u})^\wedge_{\frm_u} \xrightarrow{\varpi} (A_{i,\frm_u})^\wedge_{\frm_u} \,,
$$
which implies that $A_{i,\frm_u}[\varpi] \ot_{A_{i,\frm_u}} (A_{i,\frm_u})^\wedge_{\frm_u}=0$, since $(A_{i,\frm_u})^\wedge_{\frm_u}$ is $\varpi$-torsionfree.
By faithful flatness, we get $A_{i,\frm_u}[\varpi] = 0$.
By noetherianness of $A_i$, the $\varpi$-torsion elements $A_i[\varpi]$ in $A_i$ form a coherent sheaf on $\Spec(A_i)$ whose support is closed. 
Its stalks $A_i[\varpi]_{\frm} = (A_{i,\frm})[\varpi] = 0$ are zero at all closed points $\frm\in \Spec(A_i)$, hence the support is empty and $A_i[\varpi] = 0$. 
We conclude that $A_i$ is $\cO_E$-flat.
\vskip8pt

We claim that $A_i$ is normal.
Since $\Spf(A_i)\in \FS_{\cO_E}$, $A_i$ is a quotient of 
$$
A_{n,m} = \cO_E\langle x_1,\dots, x_n \rangle [[y_1,\dots,y_m]].
$$ 
By \cite[p. 476]{Conrad_irred}, $A_{n,m}$ is an excellent ring, so is the quotient $A_i$.
The normal locus in excellent schemes is Zariski-open, \cite[7.8.6]{EGA_IV_2}. 
At every closed point $\frm_u\in \Spec(A_i)$, since $(A_{i,\frm_u})^\wedge_{\frm_u}$ is normal and the completion map of the noetherian local ring $A_{i,\frm_u} \ra (A_{i,\frm_u})^\wedge_{\frm_u}$  is faithfully flat \cite[\href{https://stacks.math.columbia.edu/tag/00MC}{Tag 00MC}]{stacks-project}, $A_{i,\frm_u}$ is normal \cite[\href{https://stacks.math.columbia.edu/tag/033G}{Tag 033G}]{stacks-project}.
Thus, the normal locus is Zariski-open and contains all closed points of $\Spec(A_i)$. So, $A_i$ is normal.
\vskip8pt

Since any noetherian normal ring is a finite product of noetherian normal domains \cite[\href{https://stacks.math.columbia.edu/tag/030C}{Tag 030C}]{stacks-project}, we may and do assume that $A_i$ is an $\cO_E$-flat noetherian normal domain.
\vskip8pt

As for $B_j$, given any $j\in J$ and closed point $v\in \Spf(B_j)$ corresponding to a maximal ideal $\frm_v$ of $B_j$, let $y$ denote the image of $v$ under the smooth map 
$$
\Spf(B_j) \hookrightarrow \cR \xrightarrow{s} \cU \xrightarrow{\pi} \cX
$$
so that $y$ is a finite type point of $\cX$, corresponding to a Galois representation $\overline{\rho}_y: G_K \ra \GL_n(\overline{\bbF}_p)$.
The same argument as above shows that $B_j$ is an $\cO_E$-flat noetherian normal ring, and we may and do assume that $B_j$ is an $\cO_E$-flat noetherian normal domain.
\end{para}

\vskip8pt

\begin{para}{\it Lifting connectivity by de Jong's theorem.}
By the previous step, in the diagram
\[
\begin{tikzcd}
    \coprod_{j\in J} \Spf(B_j) \arrow[r, shift left=1ex, "s"] \arrow[r, shift right=1ex, "t"] & \coprod_{i\in I} \Spf(A_i) \arrow[r, "\pi"] & \cX \,,
\end{tikzcd}
\]
we may assume that $A_i$ and $B_j$ are $\cO_E$-flat normal domains.
So for $\Spf(A_i)$ and $\Spf(B_j)$, the natural map from their rings of formal functions to the rings of bounded analytic functions on their rigid analytic generic fibers, which exists by the construction of $(-)^\rig$ \cite[(7.1.8.2)]{deJongCrystalline}, is an isomorphism by \cite[7.4.1]{deJongCrystalline}.
In particular, as a rigid space $X$ is connected if and only if $\Gamma(X,\cO_X)$ has no nontrivial idempotents,  $\Spf(A_i)^\rig$ and $\Spf(B_j)^\rig$ are connected. 
By the coequalizer diagram
$$
\begin{tikzcd}
\coprod_{j\in J} \Spf(B_j)^\rig \arrow[r, shift left=1ex, "s^\rig"] 
\arrow[r, shift right=1ex, "t^\rig"] & \coprod_{i\in I} \Spf(A_i)^\rig \arrow[r, "\pi^\rig"] 
& {\cX^\rig} 
\end{tikzcd},
$$
an idempotent $e^\rig\in \Gamma(\cX^\rig,\cO_{\cX^\rig})$ is a collection of idempotents on the connected $\Spf(A_i)^\rig$ (hence either 0 or 1) that are compatible when pulled back to the connected $\Spf(B_j)^\rig$.
By \cite[7.4.1]{deJongCrystalline}, this idempotent $e^\rig$ comes from an idempotent $e\in \Gamma(\cX,\cO_{\cX})$.
Since $\cX$ is connected, $e=e^\rig$ is trivial, and hence $\cX^\rig=\cX^{\psi,\chi,\rig}_n$ is also connected. 
\hfill\qedsymbol

\begin{cor}
\label{cor:comp_Robba}
The morphism 
$$
\pi_n: \mathcal{X}_n^\rig \ra \frX_n
$$
defined in \cite[Corollary 5.1.30]{EmertonGeeHellmann} induces a bijection on the connected components of both sides.
In particular, the connected components of $\mathfrak{X}_n$ are in bijection (via the determinant) with the connected components of $\mathfrak{X}_1$ and so with the connected components of $\mathcal{W}$, where $\cW = (\Spf(\cO_E[[\cO_K^\times]]))^\rig$ denotes the rigid analytic space of continuous characters of $\cO_K^\times$.
\end{cor}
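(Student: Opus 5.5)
We reduce to Theorem \ref{thm:rigid-EG-general}. There is a commutative square
$$\det\circ\pi_n=\pi_1\circ\det^\rig:\ \cX_n^\rig\to\frX_1,$$
and $\pi_1:\cX_1^\rig\to\frX_1$ identifies $\cX_1^\rig\cong\cW\times\bbG_m^{\rig,\circ}$ (characters with unit value at a uniformizer, i.e.\ the étale locus) inside $\frX_1\cong\cW\times\bbG_m^\rig$. The factor $\bbG_m^{\rig,\circ}$ is connected, so $\pi_0(\cX_1^\rig)\to\pi_0(\frX_1)\to\pi_0(\cW)$ are bijections, both indexed by characters of $(\cO_K^\times)_{\rm tors}=\mu(K)$. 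By Theorem \ref{thm:rigid-EG-general}, $\pi_0(\cX_n^\rig)\to\pi_0(\cX_1^\rig)$ is a bijection. Consequently the composite $\pi_0(\cX_n^\rig)\to\pi_0(\frX_n)\to\pi_0(\frX_1)$ is a bijection, and $\pi_0(\pi_n)$ is injective. For the same reason, the clopen decomposition of $\frX_n$ pulled back from $\frX_1$ along $\det$ (indexed by characters $\mu(K)\to\cO_E^\times$) has every piece nonempty.

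It remains to show that each piece $\frX_n^{\chi}:=\det^{-1}(\frX_1^\chi)$ is connected. Via the injectivity of $\pi_0(\pi_n)$, it is enough to show that every connected component of $\frX_n$ meets the image of $\pi_n$. I would argue with points. A connected component $\frY$ of $\frX_n$ is a clopen substack, so it contains a point $D$ with values in a finite extension $E'/E$. Working over $E'$ is harmless, since we only test idempotents and can take invariants.

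By Kedlaya's slope filtration, $D$ has a filtration whose graded pieces $D_i$ are isoclinic of slopes $s_i$. Each $D_i$ is a twist of an étale module by a rank-one module $\cR(\delta_i)$ with $\delta_i$ unramified, taking $\varpi_K\mapsto$ an element of valuation $s_i$.

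Next, deform $D$ in connected families inside $\frY$. First, the extension classes form an affine space (an $H^1$ vector space). Scaling them by $t\in\bbA^1$ connects $D$ to $\oplus_i D_i$. Second, we vary the unramified twist $\delta_i$ along the connected space $\bbG_m^\rig$, joining each $\delta_i$ to the trivial character. This connects $\oplus_i D_i$ to a direct sum of étale modules, which lies in $\pi_n(\cX_n^\rig)$.

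Each step should be a morphism from a connected rigid space (an affine line or $\bbG_m^\rig$, over $E'$) to $\frX_n$. Its image therefore stays inside $\frY$, so $\frY$ meets the image of $\pi_n$.

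The main obstacle is making the family argument rigorous. This requires constructing honest families of $(\vphi,\Gamma)$-modules over $\bbA^1$ and $\bbG_m^\rig$, for example the universal extension $\cR(\text{extensions scaled by } t)$ over $\bbA^1$. It also requires knowing that finite-type points detect clopen substacks of $\frX_n$. The latter I would take from the Jacobson property of affinoid algebras applied to a smooth chart of $\frX_n$: a nonempty clopen subset of an affinoid contains a rigid point.
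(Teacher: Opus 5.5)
Your proposal is correct and follows essentially the same route as the paper. Both arguments split $\frX_n$ by $\det$ restricted to $\mu(K)$, use Theorem \ref{thm:rigid-EG-general} on the \'etale side, and connect any finite-extension point to an \'etale one using three moves: Kedlaya's slope filtration, degenerating extensions over $\bbA^1$, and unramified twists over $\bbG_m$ (after enlarging $E'$ so that the slopes are attained as valuations). The only difference is packaging: the paper shows an idempotent $e$ on $\frX_n^\chi$ takes the constant value $(\pi_n^\chi)^*(e)$ at every such point and then uses the Jacobson property together with ``a nilpotent idempotent is zero,'' whereas you use openness of components (from the appendix) and injectivity of $\pi_0(\pi_n)$, so that one point per component suffices.
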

\begin{proof}
Clopen substacks of a stack $\frX$ on $\Rig_E$ correspond to idempotents in 
$$
\Gamma(\frX,\cO_{\frX}) = \varprojlim_{\substack{\Sp(A)\ra \frX\\\Sp(A)\in \Rig_E}} \Gamma(\Sp(A),\cO_{\Sp(A)})\,,
$$
and connected components of $\frX$ correspond to primitive idempotents in $\Gamma(\frX,\cO_{\frX})$.
Using $\det:\frX_n\ra \frX_1$, we decompose $\frX_n =\coprod_{\chi\in \widehat{\mu(K)}} \frX_n^\chi$ into a disjoint union of clopen substacks $\frX_n^\chi$ labelled by characters $\chi:\mu(K)\ra E^\times$, where $\frX_n^\chi$ classifies $(\varphi,\Gamma)$-modules $D$ of rank $n$ over $\cR_{K,A}$ whose determinant character $\delta_A:\cO_K^\times \ra A^\times$ restricts to the character $\chi$ on the roots of unity $\mu(K) \subset \cO_K^\times$.
We claim that $\frX_n^\chi$ is connected.
Let $e\in \Gamma(\frX_n^\chi,\cO_{\frX_n^\chi})$ be any idempotent.
The map 
$$
\pi_n: \mathcal{X}_n^\rig \ra \frX_n
$$ 
is compatible with the determinant map $\det$, so it induces a map
$$
\pi_n^\chi: \mathcal{X}_n^{\chi,\rig} \ra \frX_n^\chi
$$
where $\mathcal{X}_n^{\chi,\rig}$ is connected by Theorem \ref{thm:rigid-EG-general}.
Hence, $(\pi_n^{\chi})^*(e)\in \{0,1\} \subset \Gamma(\mathcal{X}_n^{\chi,\rig},\cO_{\mathcal{X}_n^{\chi,\rig}})$ must be a trivial idempotent.
We show that $e\in \Gamma(\frX_n^\chi,\cO_{\frX_n^\chi})$ equals $(\pi_n^{\chi})^*(e) \in \{0,1\}$, which means that for any morphism $f:\Sp(A)\ra \frX_n^\chi$, the pullback $f^*e\in \Gamma(\Sp(A),\cO_{\Sp(A)}) = A$ is constantly $0$ or $1$.
\vskip8pt

Firstly we address the case when $A=L$ is a finite extension of $E$.
Recall Kedlaya's slope filtration theorem, which states that there exists a unique sequence of rational numbers
$$
\mu_1<\dots <\mu_r
$$
and a unique filtration on $D$ by saturated $(\varphi,\Gamma)$-submodules over $\cR_{K,L}$
$$
0 = D_0 \subset D_1\subset \dots \subset D_r = D
$$
whose successive quotients $D_i/D_{i-1}$ are semistable of slope $\mu_i$ for $1\leq i\leq r$; moreover, $D$ is \'{e}tale if and only if $D$ is semistable of slope $0$, which means that $\mu(D)=0$ and any saturated subobject $M\subset D$ has slope $\mu(M)\geq \mu(D)=0$, cf. \cite[\S4.3]{Wang_opw}.
In fact, $D$ and its ``semisimplification" $\bigoplus_{i=1}^r (D_i/D_{i-1})$ lie in the same connected component.
From 
$$
0 \ra D_{r-1} \ra D \ra D/D_{r-1} \ra 0
$$
we can view the isomorphism class of $D$ as a line in the affine space $\Ext_{\varphi,\Gamma}^1(D/D_{r-1},D_{r-1})$ over $L$, which gives a family of $(\varphi,\Gamma)$-modules $D_{t}$ parametrized by $t\in \bbA^{1,\rig}_L$.
When $t\neq 0$, $D_t\cong D$, while for $t=0$ it gives the split extension $D_{r-1}\oplus D/D_{r-1}$.
By induction, we see that $D$ and $\bigoplus_{i=1}^r (D_i/D_{i-1})$ lie in the same connected component of $\frX_n^\chi$.
Enlarging the coefficient field from $L$ to a finite extension $L'$, we can find $\alpha_i \in (L')^\times $ such that 
$$
\frac{1}{f_K} v_p(\alpha_i) = -\mu_i
$$
for $1\leq i\leq r$, where $f_K$ is the inertia degree of $K/\bbQ_p$.
Then, the unramified twist 
$$
\bigoplus_{i=1}^r (D_i/D_{i-1})(\unr(\alpha_i)): \Sp(L')\ra \frX_n^\chi
$$
is semistable of slope $0$ by \cite[Example 4.14, Remark 4.13(ii)]{Wang_opw}, and it belongs to the same connected component as $\bigoplus_{i=1}^r (D_i/D_{i-1})$ since $\bbG_m^r$ is connected.
Thus, we have that $f^*(e) = (\pi_n^\chi)^*(e) \in \{0,1\}$, when $f:\Sp(L)\ra \frX_n^\chi$ classifies a $(\varphi,\Gamma)$-module over $\cR_{K,L}$ for any finite extension $L$ of $E$.
\vskip8pt

Finally we consider the general affinoid case.
Let $f: \Sp(A)\ra \frX_n^\chi$ be any morphism.
By the above, for any $x\in \Sp(A)$, we have $f^*(e)|_x = (\pi_n^{\chi})^*(e) \in \{0,1\}$.
Since affinoid algebras are Jacobson, there exists a nilpotent element $\vep\in A$ such that $f^*(e)= (\pi_n^{\chi})^*(e)+\vep$. 
If $ (\pi_n^{\chi})^*(e)=0$, then $f^*(e)=\vep$ is both nilpotent and idempotent, hence equals $0=(\pi_n^\chi)^*(e)$.
If $ (\pi_n^{\chi})^*(e)=1$, then $f^*(e)=1+\vep$ is idempotent, which implies that $-\vep = 1-f^*(e)$ is both nilpotent and idempotent\footnote{We have $(1-f^*(e))^2=1-2f^*(e)+f^*(e)^2 = 1-2f^*(e)+f^*(e) = 1-f^*(e)$.} and hence $\vep=0$, so $f^*(e)= (\pi_n^{\chi})^*(e) \in \{0,1\}\subset A$.
\end{proof}

\end{para}

\section{Global functions on \texorpdfstring{$\frX_n$}{}}
\label{section5_globalfunc}
\addtocounter{subsection}{1}

\begin{assumption}
\label{hypo:reduced}
The ring of global functions on $\frX_n$ has no non-zero locally nilpotent elements, in the sense that if $f\in \Gamma(\frX_n,\cO_{\frX_n})$ is such that for any affinoid $E$-algebra $A$ and any map $h:\Sp(A)\ra \frX_n$, the pullback $h^*f\in A$ is nilpotent, then $f=0\in\Gamma(\frX_n,\cO_{\frX_n})$.
\end{assumption}

\begin{prop}
Assumption \ref{hypo:reduced} holds if the versal rings at all rigid analytic points $x\in\frX_n$ are reduced.
In particular, Assumption \ref{hypo:reduced} holds when $n=2$ by \cite[Proposition 5.1.27]{EmertonGeeHellmann}. 
The required reducedness is expected in general \cite[Remark 5.1.28]{EmertonGeeHellmann}.
\end{prop}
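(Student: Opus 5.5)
The plan is to check the nilpotence condition of Assumption \ref{hypo:reduced} one affinoid chart at a time, and to reduce it to reducedness of complete local rings at points. Let $f\in\Gamma(\frX_n,\cO_{\frX_n})$ be such that $h^*f\in A$ is nilpotent for every $h:\Sp(A)\ra\frX_n$. We must show $f=0$. By the description of global functions in \S\ref{structure-sheaf}, if $\frX_n$ has a smooth atlas $U\ra\frX_n$ by a rigid space, then $\Gamma(\frX_n,\cO)$ injects into $\Gamma(U,\cO_U)$. So it suffices to show that $\Gamma(U,\cO_U)$ is reduced, i.e.\ that $U$ is a reduced rigid space. For $\frX_n$ such an atlas is provided by \cite[5.1]{EmertonGeeHellmann}, since $\frX_n$ is a rigid analytic Artin stack there. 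Concretely, we cover $U$ by affinoids $\Sp(B)$ and show that each $B$ is reduced; then $f|_{\Sp(B)}$, being nilpotent, vanishes.

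\emph{Step 1: from versal rings to the atlas.} Fix a rigid point $u\in\Sp(B)$ with image $x\in\frX_n$. Since $\Sp(B)\ra\frX_n$ is smooth, the argument used for $\cX_n^{\psi,\chi}$ in \S\ref{subsection4.2_rigidcomp} shows that $\widehat{B}_{\frm_u}$ is a versal ring to $\frX_n$ at $x$. Here one uses smoothness of the induced map of predeformation categories and \cite[Tag 06T5]{stacks-project}. By \eqref{eqn:smooth-equiv}, $\widehat{B}_{\frm_u}$ and the given versal ring $R_x$ (assumed reduced) differ by adjoining power series variables in one direction or the other. A power series ring over a reduced noetherian ring is reduced. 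Conversely, $R$ embeds into $R[[t]]$, so if $R[[t]]$ is reduced then so is $R$. Hence $\widehat{B}_{\frm_u}$ is reduced in either case.

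\emph{Step 2: from complete local rings to $B$.} Since $B\ra\widehat{B}_{\frm_u}$ factors through the faithfully flat map $B_{\frm_u}\ra\widehat{B}_{\frm_u}$, the local ring $B_{\frm_u}$ is reduced. This holds at every maximal ideal, so the nilradical of $B$ has zero stalks at all maximal ideals and therefore vanishes. Thus $B$ is reduced, and $f=0$.

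\emph{The case $n=2$.} The second statement follows by feeding in \cite[Proposition 5.1.27]{EmertonGeeHellmann}, which gives reducedness of the versal rings for $n=2$. The third statement is only a citation of \cite[Remark 5.1.28]{EmertonGeeHellmann}.

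The main obstacle is Step 1. One must verify that the versal ring notion of \cite{EmertonGeeHellmann} at rigid points agrees with the Stacks-project formalism used earlier in the paper, i.e.\ that the completed local rings of a smooth chart really are versal in their sense. I would try to settle this by directly comparing the deformation functors over Artinian $L$-algebras.
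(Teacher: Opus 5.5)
Your argument is correct, but it takes a different route from the paper's.

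\textbf{The paper's route.} It never uses an atlas. It fixes an arbitrary test map $h:\Sp(A)\to\frX_n$ and uses $A\hookrightarrow\prod_x A_{\frm_x}$ together with the Krull intersection theorem to reduce to proving $f|_{A/\frm_x^i}=0$. It then takes a versal ring at $x$, which by \cite[Theorem 5.1.20(ii)]{EmertonGeeHellmann} and smooth equivalence may be chosen as $R=\widehat{B}_{\frm_y}$ for some smooth chart $\Sp(B)\to\frX_n$. The image $f_R$ is nilpotent, by the hypothesis applied to $\Sp(B)$, and hence zero since $R$ is reduced. Finally, versality lets it pull $D_{A/\frm_x^i}$ back from some $R/\frm_R^j$, which gives $f|_{A/\frm_x^i}=0$.

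\textbf{Your route.} You show that a smooth atlas $U$ is itself reduced. Completed local rings are versal, hence reduced up to adjoining power series variables, and reducedness descends along the faithfully flat maps to $B_{\frm_u}$ and then to $B$. You conclude from $\Gamma(\frX_n)\hookrightarrow\Gamma(U)$.

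\textbf{Comparison.} Your version proves the cleaner and stronger statement that $\frX_n$ is reduced smooth-locally. It also avoids the Krull-intersection and Artinian-lifting step for arbitrary test objects. The price is that you need $\widehat{B}_{\frm_u}$ to be versal at every point of a fixed atlas, i.e.\ the infinitesimal lifting property of the smooth atlas map for Artinian thickenings over $\kappa(u)$. The paper only needs, at each point, one chart whose completed local ring is versal.

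\textbf{On the step you flagged.} The obstacle you identify is the same kind of input the paper imports from \cite[Theorem 5.1.20(ii)]{EmertonGeeHellmann}. It plays the role that Stacks Tag 07WK plays in the formal setting of \S4.2. With that input your proof is complete.
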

\begin{proof}
Let $f\in \Gamma(\frX_n)$ be a global function on $\frX_n$ such that for any affinoid $E$-algebra $A$ and any map $h:\Sp(A)\ra \frX_n$, $h^*f\in A$ is nilpotent. 
\vskip8pt

We claim that $f|_A:=h^*f=0$ for all $\Sp(A)$, and therefore $f=0$.
Let $D_A \in \frX_n(A)$ be the rank $n$ $(\varphi,\Gamma)$-module classified by $h$.
Since localization gives an injection
$$
A \hookrightarrow \prod_{x\in \Sp(A)} A_{\frm_x}
$$
as $\frm_x$ varies over maximal ideals of $A$, it suffices to show that the image of $f|_A$ in $A_{\frm_x}$ is zero for all $x\in \Sp(A)$.
Since $A_{\frm_x}$ is local noetherian, by the Krull intersection theorem
$
\bigcap_{i\geq 1} \frm_x^iA_{\frm_x}=0,
$
so it suffices to show that $f|_A \in \frm_x^i$. 
That is to say, $f|_{A/\frm_x^i} = 0$ in $A_{}/\frm_x^i$ for all $x\in \Sp(A)$ and $i\geq 1$.
\vskip8pt 

Let $R$ be a versal ring at the rigid analytic point $x:\Sp(\kappa(x))\ra \frX_n$ with associated map $\Spf(R)\ra \frX_n$, in the sense of formal deformation theory \cite[Definition 3.19]{Wu_translation}.
That is, $(R,\frm_R)$ is a complete noetherian local $\kappa(x)$-algebra, and it comes with $(\varphi,\Gamma)$-modules $D_{R/\frm_R^i}\in \frX_n(R/\frm_R^i)$ for all $i\geq 1$ satisfying compatibility $D_{R/\frm_R^{i+1}} \otimes_{R/\frm_R^{i+1}} R/\frm_R^{i} \cong D_{R/\frm_R^{i}}$, such that they induce a formally smooth map $\Spf(R) \ra \cF_{\frX_n,x}$ to the deformation functor of $D_{x}:=D_{\kappa(x)}$ to local Artin $\kappa(x)$-algebras.
Compatibility implies that $f|_{R/\frm_R^{i+1}} \equiv f|_{R/\frm_R^i} \pmod{\frm_{R}^{i}}$.
So, $f$ defines an element $f_R \in \varprojlim_i R/\frm_R^i = R$.
By \cite[Theorem 5.1.20(ii)]{EmertonGeeHellmann} and smooth equivalence of versal rings \cite[\href{https://stacks.math.columbia.edu/tag/0DQX}{Tag 0DQX}]{stacks-project}, we may and do assume that $R=\widehat{B}_{\frm_y}$ is the complete local ring for a formally smooth map $\Sp(B)\ra \frX_n$ at a point $y\in \Sp(B)$ such that $\Sp(\kappa(y))\ra \frX_n$ is isomorphic to $x$.
Then, $f_R \in {\rm Nil}(R)=\{0\}$ by reducedness of $R$.
But since $A/\frm_x^i$ is a local Artinian $\kappa(x)$-algebra and $D_{A/\frm_x^i}$ deforms $D_{x}$, the versality of $R$ (i.e., formal smoothness of the map $\Spf(R) \ra \cF_{\frX_n,x}$) implies that there exist $j\geq 1$ and a map $\psi: \Sp(A/\frm_x^i) \ra \Sp(R/\frm_R^j)$ that realizes $D_{A/\frm_x^i} \cong \psi^*(D_{R/\frm_R^j})$ as a pullback of the formal object.
Hence, $f|_{A/\frm_x^i}=\psi^*(f_R \pmod{ \frm_R^j}) = \psi^*(0) = 0$.
\end{proof}

\begin{thm}
\label{thm:EG_fcns}
Under {Assumption} \ref{hypo:reduced}, the morphism
$$
\Gamma(\frX_n) \xleftarrow{(\omega_n,\det)^*} \Gamma(\WT_{n,E}) \wot_{\Gamma(\WT_{1,E})} \Gamma(\frX_1)
$$
defined in \cite[Conjecture 5.1.36]{EmertonGeeHellmann} is an isomorphism, where $\omega_n:\frX_n \ra \WT_{n,E}$ sends a rank $n$ $(\varphi,\Gamma)$-module $D$ to the characteristic polynomial of its Sen operator $\Theta_D$.
\end{thm}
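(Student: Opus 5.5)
The plan follows the four-step strategy outlined in the introduction: show that $(\omega_n,\det)^*$ is injective, and then that it is surjective once locally nilpotent functions are discarded, which Assumption \ref{hypo:reduced} permits.

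\textbf{Step 1: reduction to the trianguline locus.} Let $\beta_B:\frX_B\to\frX_n$ be the map forgetting a triangulation. By Corollary \ref{cor:comp_Robba} and \cite[Remark 5.3.15]{EmertonGeeHellmann}, the image of $\beta_B$ is Zariski-dense in every connected component of $\frX_n$. Hence a global function whose pullback to $\frX_B$ vanishes is locally nilpotent, and so it is zero by Assumption \ref{hypo:reduced}. Therefore $\beta_B^*:\Gamma(\frX_n)\to\Gamma(\frX_B)$ is injective.

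Passing from a triangulated module to its graded pieces gives $\Gamma(\frX_B)\cong\Gamma(\frX_T)$, where $\frX_T=\frX_1^n=\cW_K^n\times\bbG_m^{\an,n}$. So every $f\in\Gamma(\frX_n)$ determines a Laurent expansion
\[
f=\sum_{\underline m\in\bbZ^n} c_{\underline m}\,\underline\alpha^{\underline m},\qquad c_{\underline m}\in\Gamma(\cW_K^n).
\]

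\textbf{Step 2: symmetries of the coefficients.} Next I would show that the coefficients $c_{\underline m}$ satisfy strong symmetry constraints.

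First, $c_{\underline m}=0$ unless $\underline m\in\bbZ(1,\dots,1)$. The idea is that the image of $\Gamma(\frX_n)$ is invariant under the $S_n$ action that reorders the parameters. This invariance comes from local companion points and the compactification of $\frX_B$ \cite{BHS3, EmertonGeeHellmann}: a generic trianguline module admits triangulations with parameters permuted up to algebraic twists. Comparing the resulting expansions coefficient by coefficient, and using that the Frobenius parameters $\alpha_i$ can be moved independently while the weights stay fixed, should kill all off-diagonal monomials.

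Second, the diagonal coefficients $c_m:=c_{(m,\dots,m)}$ should satisfy two further symmetries:
\begin{itemize}
\item[(a)] a symmetry in the smooth part of the characters: $c_m$ depends only on the Sen weights $\wt_\sigma(\delta_i)$ and on the product $\prod_i\delta_i$;
\item[(b)] a symmetry in the Sen weights: the weights $\wt_\sigma(\delta_i)$ may be permuted independently for each $\sigma\in\Sigma_K$.
\end{itemize}
Symmetry (a) comes from twisting $\delta_i$ by finite-order characters of $\cO_K^\times$ with trivial product, together with connectivity of the weight fibres. Symmetry (b) comes from the companion points.

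\textbf{Step 3: descent.} The derivative map $\wt:\cW_K\to(\bbA^{1,\an}_E)^{\Sigma_K}$ is étale-locally surjective after a finite étale cover, since it is a local isomorphism near generic points. Using this together with symmetry (a), $c_m$ descends to $(\bbA^{n,\an}_E)^{\Sigma_K}\times_{(\bbA^{1,\an}_E)^{\Sigma_K}}\cW_K$. Symmetry (b) then descends it further to the $S_n^{\Sigma_K}$-quotient $\WT_{n,E}\times_{\WT_{1,E}}\cW_K$, giving $\frc_m$.

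The series $\sum_m\frc_m\alpha^m$ converges, because convergence can be checked after pulling back along the finite map from $\cW_K^n$. It therefore defines an element of
\[
\Gamma\bigl((\WT_{n,E}\times_{\WT_{1,E}}\cW_K)\times\bbG_m^\an\bigr)=\Gamma(\WT_{n,E})\wot_{\Gamma(\WT_{1,E})}\Gamma(\frX_1),
\]
and its image under $(\omega_n,\det)^*$ agrees with $f$ on $\frX_B$. By the injectivity of Step 1 it equals $f$, which gives surjectivity.

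\textbf{Step 4: injectivity of $(\omega_n,\det)^*$.} Pulling back further to $\frX_T$, a function in the source becomes a function of $(\wt(\delta_i)),\prod\delta_i,\prod\alpha_i$. The map $\frX_T\to\WT_{n,E}\times_{\WT_{1,E}}\frX_1$ is surjective, by the local surjectivity of $\wt$ and the fact that the quotient map to the $S_n^{\Sigma_K}$-quotient is surjective. The target of this map is reduced, so the pullback is injective.

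\textbf{Main obstacle.} I expect Step 2 to be the hardest part: extracting the exact symmetries, especially the vanishing of the off-diagonal coefficients and the Sen-weight permutation symmetry, from the companion-point and compactification results. I would first try to derive both from the generic fibres of $\beta_B$ over the crystabelline locus, where all $n!$ refinements are explicitly known.
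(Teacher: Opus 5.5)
Your architecture matches the paper's. It has the same pieces: injectivity of $\beta_B^*$ from Zariski density and Assumption \ref{hypo:reduced}, $\Gamma(\frX_B)\cong\Gamma(\frX_T)$, symmetries of the Laurent coefficients, descent along $q=(\wt_1,\dots,\wt_n,\det)$ and then to the $S_n^{\Sigma_K}$-quotient, and injectivity via surjectivity onto a reduced target. However, Step 2, which carries the real content, has two genuine gaps.

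\textbf{Off-diagonal vanishing.} Invariance of $f_T$ under reordering the parameters only relates $c_{\underline m}$ to the coefficient with permuted index at permuted arguments. That is compatible with functions like $\alpha_1+\alpha_2$, so it cannot kill off-diagonal terms. What companion points actually give can be seen from a crystalline $D$ whose refinement $\underline\phi=(\unr(\varphi_i))_i$ has $w_{\cF,\sigma}=\id$: the value $f_T(\delta_{\underline\phi}z^{w\underline\lambda})$ is independent of $w\in W$. Changing $w$ changes the weights \emph{and} rescales $\alpha_i$ by $\prod_\sigma\sigma(\varpi_K)^{\lambda_{w_\sigma^{-1}(i),\sigma}-\lambda_{i,\sigma}}$. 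Hence comparing coefficients of $\underline\varphi^{\underline m}$ yields only the functional equation
\[
c_{\underline m}(\underline\lambda)=c_{\underline m}\bigl((j\ j{+}1)\underline\lambda\bigr)\cdot\prod_{\sigma\in\Sigma_K}\sigma(\varpi_K)^{(m_{j+1}-m_j)(\lambda_{j,\sigma}-\lambda_{j+1,\sigma})},
\]
which never forces vanishing at any single weight.

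The missing idea is a $p$-adic limiting argument: $\underline\lambda\mapsto\varpi_K^{\underline\lambda}$ is not $p$-adically continuous, whereas $c_{\underline m}$ is. Let $z^{\underline\nu}$ range over a Zariski-dense set of integral points of a component of $\cW^n$. Take dominant regular $\lambda^{(k)}_{i,\sigma}=\nu_{i,\sigma}+N(n-i)p^k$, where $N=|\mu(K)|$. Then $\underline\lambda^{(k)}\to\underline\nu$ $p$-adically while the gaps $\lambda^{(k)}_{j,\sigma}-\lambda^{(k)}_{j+1,\sigma}\to\infty$. Continuity gives $c_{\underline m}(\underline\nu)=0$ if $m_j<m_{j+1}$, and $c_{\underline m}((j\ j{+}1)\underline\nu)=0$ if $m_j>m_{j+1}$. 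Together these show $c_{\underline m}$ vanishes on every component.

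\textbf{Symmetry (a).} The fibres of $q$ are not connected. Two points in a fibre differ by smooth, hence finite-order, characters $\psi_i$ with $\prod_i\psi_i=1$, so each fibre is discrete. Moreover, no operation on $\frX_n$ twists graded pieces separately, so there is no a priori invariance under such twists. This symmetry must also come from companion points. Use a non-critical crystabelline $D$ all of whose $n!$ refinements satisfy $w_{\cF_\tau,\sigma}=w_0$; then $f_T(\delta_{\tau(\underline\phi)}z^{\underline\lambda})$ is independent of $\tau$. Equating $m$-th coefficients and using Zariski density gives
\[
c_m(\psi_1\kappa_1,\dots,\psi_n\kappa_n)=c_m(\psi_{\tau(1)}\kappa_1,\dots,\psi_{\tau(n)}\kappa_n)
\]
for all $\kappa$. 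Applied with arbitrary $\kappa$'s, a transposition moves a smooth factor between slots, for example $c_m(\psi_1\kappa_1,\psi_2\kappa_2,\dots)=c_m(\psi_1\psi_2\kappa_1,\kappa_2,\dots)$. Iterating this gives exactly the fibrewise constancy needed to descend along $q$.

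Relatedly, $q$ is not finite, since its fibres are infinite. So convergence of $\sum_m\frc_m\alpha^m$ cannot be checked along a finite map from $\cW_K^n$. Instead, descend $\sum_m c_m t^m$ as a single function using finite \'etale local sections of $q$, and then invoke uniqueness of Laurent expansions.
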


The rest of this section is devoted to the proof of Theorem \ref{thm:EG_fcns}.

\begin{para}{\it Embedding functions $\Gamma(\frX_n)$ into $\Gamma(\cT^n)$.}
Let $B$ be the upper triangular Borel subgroup of $\GL_n$ with $T\subset B$ the diagonal torus.
Recall we have the morphisms of stacks
$$
\begin{tikzcd}
& \arrow[dl, "\beta_B"'] \frX_B \arrow[dr, "\alpha_B"] & \\
\frX_n &  & \frX_T
\end{tikzcd}
$$
where $\beta_B$ is the map that forgets the chosen triangulation $\Fil^\bullet(D)$ on a trianguline $(\varphi,\Gamma)$-module $D$, and $\alpha_B$ is the map taking the associated graded of $\Fil^\bullet(D)$, cf. \cite[\S5.3]{EmertonGeeHellmann}.

\begin{lemma}
\label{lem:fcn-embed}
On global functions, pullbacks along the above two maps satisfy:
\begin{enumerate}[(i)]
\item 
Under Assumption \ref{hypo:reduced}, $\beta_B^*: \Gamma(\frX_n) \ra \Gamma(\frX_B)$
is injective.
\item 
$\alpha_B^*: \Gamma(\frX_T) \ra \Gamma(\frX_B)$ is an isomorphism.
\end{enumerate}
\end{lemma}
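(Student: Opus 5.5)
For (i), I would use the Zariski-density of $\beta_B(\frX_B)$ in $\frX_n$, combined with Assumption \ref{hypo:reduced}, to show that any $f\in\Gamma(\frX_n)$ with $\beta_B^*f=0$ is locally nilpotent. The density is the statement recalled in the introduction: \cite[Remark 5.3.15]{EmertonGeeHellmann} together with Corollary \ref{cor:comp_Robba}.

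Concretely, take $f$ with $\beta_B^*f=0$ and a map $h:\Sp(A)\ra\frX_n$. To see that $h^*f$ is nilpotent, it suffices to show it vanishes at every rigid point $x\in\Sp(A)$, since affinoid algebras are Jacobson. So I reduce to showing that $f$ vanishes at every point $\Sp(L)\ra\frX_n$ with $L/E$ finite. The closed locus $\{f=0\}$, pulled back to any smooth chart of $\frX_n$, is a Zariski-closed analytic subset containing the preimage of $\beta_B(\frX_B)$. By density in the precise form of \cite[5.3]{EmertonGeeHellmann}, the preimage of the trianguline locus is Zariski-dense in each such chart. Since a function vanishing on a Zariski-dense set of a rigid space is nilpotent on reduced charts, it vanishes at all points. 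Assumption \ref{hypo:reduced} then gives $f=0$.

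The main obstacle is making "Zariski-dense" precise enough to conclude pointwise vanishing on every chart, and not just on some components. Here I would lean on the connected-component result: each component of a smooth chart maps into a single $\frX_n^\chi$, and the trianguline locus meets every irreducible component there. This is the content of the cited remark, which rests on the density of trianguline points in versal rings (as in \cite{BHS3}).

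For (ii), I would use the section $s:\frX_T\ra\frX_B$ sending $(\delta_1,\dots,\delta_n)$ to the split triangulated module $\bigoplus_i\cR(\delta_i)$ with its obvious filtration. Since $\alpha_B\circ s=\id$, $\alpha_B^*$ is injective, and it remains to show $s^*$ is injective, i.e. that $g\in\Gamma(\frX_B)$ is determined by its restriction along $s$. Given any triangulated $D_A\in\frX_B(A)$, I would build the degeneration family over $\Sp(A)\times\bbA^{1,\rig}$ by scaling the extension classes through the torus action. Concretely, conjugate the filtration-adapted basis by $\mathrm{diag}(1,t,\dots,t^{n-1})$, which scales the off-diagonal extension data by powers of $t$. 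For $t\neq0$ this gives $D_A$ up to isomorphism, and at $t=0$ it gives $s(\alpha_B(D_A))$.

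Pulling $g$ back to this family gives a function on $\Sp(A)\times\bbA^{1,\rig}$. Its restriction to $t\in\bbG_m^{\rig}$ is constant in $t$, equal to $g(D_A)$, because of the isomorphisms $D_t\cong D_A$. By continuity at $t=0$, $g(D_A)=g(s\alpha_B D_A)$. Hence $g=\alpha_B^*s^*g$, and $\alpha_B^*$ is surjective. The only care needed is to construct the family globally rather than after an admissible cover; if necessary, I would work Tate-locally on $A$ where the filtration steps are free, using the description of $\frX_B$ as iterated extension stacks in \cite[\S5.3]{EmertonGeeHellmann}.
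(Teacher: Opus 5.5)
Your proposal is correct and follows the paper's proof in both parts. Part (i) deduces injectivity from the Zariski density of $\beta_B(\frX_B)$ (\cite[Remark 5.3.15]{EmertonGeeHellmann} together with Corollary \ref{cor:comp_Robba}) plus Assumption \ref{hypo:reduced}, and part (ii) uses the split section $\iota_B$ with $\alpha_B\circ\iota_B=\id$ and the torus degeneration of a Tate-locally free, upper-triangular triangulated module to its associated graded. The one thing to get right is the direction of the scaling. You must rescale the basis $e_i\mapsto t^{i-1}e_i$, so that the matrices become $P_tMP_t^{-1}$ with $P_t=\diag(t^n,\dots,t)$, whose entries $M_{ij}t^{j-i}$ extend over $t=0$; literally conjugating $M$ by $\diag(1,t,\dots,t^{n-1})$ would instead produce entries $M_{ij}t^{i-j}$.
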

\begin{proof}
For (i), by \cite[Remark 5.3.15]{EmertonGeeHellmann} (the conjectural description of connected components of $\frX_n$ given there is true by Corollary \ref{cor:comp_Robba}), the trianguline locus $\beta_B(\frX_B)$ inside $\frX_n$ is Zariski dense, which implies that the kernel of $\beta_B^* \subset \Gamma(\frX_n)$ consists of locally nilpotent functions.
Thus under Assumption \ref{hypo:reduced}, $\beta_B^*$ is injective.
\vskip8pt

As for (ii), consider the map $\iota_B: \frX_T \ra \frX_B$ taking an ordered $n$-tuple $(\cR(\delta_1),\dots,\cR(\delta_n))$ of  $(\varphi,\Gamma)$-modules of rank $1$ to the split $(\varphi,\Gamma)$-module $\bigoplus_{i=1}^n \cR(\delta_i)$.
We have $\alpha_B \circ \iota_B = \id_{\frX_T}$, so $\iota_B^* \circ \alpha_B^* = \id_{\Gamma(\frX_T)}$.
It suffices to show that $\alpha_B^* \circ \iota_B^* = \id_{\Gamma(\frX_B)}$.
This amounts to showing that for any affinoid algebra $A$, the value of any function $f\in \Gamma(\frX_B)$ on any $\Sp(A)$-valued point $(D_A,\Fil^\bullet(D_A))$ of $\frX_B$ depends only on the associated graded $\alpha_B((D_A,\Fil^\bullet(D_A))) \in \frX_T(A)$.
Since a function on $\Sp(A)$ is determined by its restriction to an admissible cover,  we may assume that $(D_A,\Fil^\bullet(D_A))$ is free of rank $n$ as an $\cR_{K,A}$-module, equipped with a basis $e_1,\dots,e_n$ with respect to which matrices of $\varphi$ and $\gamma\in \Gamma_K$ are upper triangular. 
With
$$
P_t := \diag(t^n,t^{n-1},\dots, t)
$$
for $t\in \bbG^{\rig}_{m,A}$, we can conjugate the matrices $M_\varphi$ to $M_{\varphi,t}:= P_t M_\varphi P_t^{-1} $ and $M_\gamma$ to $M_{\gamma,t}:= P_t M_\gamma P_t^{-1}$ to get a family of $(\varphi,\Gamma)$-modules $D_{A,t}$ on $\bbG^\rig_{m,A}$.
For any upper triangular matrix $M$, the conjugated matrix
$
M_t := P_t M P_t^{-1}
$
satisfies 
$$
(M_t)_{ij} = 
\begin{cases}
M_{ij} \cdot t^{j-i} & \text{ if } i\leq j \\
0 & \text{ if } i>j
\end{cases}\,,
$$
so we can set $t=0$ as well.
We obtain a triangulated family $(D_{A,t}, \Fil^\bullet(D_{A,t}))\in \frX_B(\bbA^{1,\rig}_{\Sp(A)})$ that specializes to  $(D_A,\Fil^\bullet(D_A))$ at $t=1$, and to the associated graded $\alpha_B((D_A,\Fil^\bullet(D_A)))$ at $t=0$.
For $t\neq 0$, $(D_{A,t},\Fil^\bullet(D_{A,t})) \cong (D_A,\Fil^\bullet(D_A))$ as triangulated $(\varphi,\Gamma)$-modules over $A$, so $f|_{\bbG_{m,\Sp(A)}^\rig}$ is constant.
Thus, $f|_{\bbA^{1,\rig}_{\Sp(A)}}$ is also constant.
We conclude that 
$
\alpha_B^* \circ \iota_B^* = \id_{\Gamma(\frX_B)}
$ 
and hence $\alpha_B^*: \Gamma(\frX_T) \ra \Gamma(\frX_B)$ is bijective.
\end{proof}

\end{para}

\begin{para}{\it (Local) companion symmetries of global functions.}
We use the geometric results on $\frX_B$ and its compactification in \cite[\S5.3]{EmertonGeeHellmann} to restrict the possible shapes that functions on $\frX_n$ can take when viewed as functions on $\frX_T$ through the embeddings in Lemma \ref{lem:fcn-embed}.
\vskip8pt 

We have natural isomorphisms
$$
\frX_T \cong \frX_1^n, \quad \frX_1 \cong \cT \times B\bbG_m, \quad \cT \cong \cW \times \bbG_m^\an\,.
$$
Hence, global functions on $\frX_T$ are functions $f$ on $\cW^n \times \bbG_m^{\an,n}$ uniquely of the form
$$
f = \sum_{\underline{m} \in \bbZ^n} c_{\underline{m}}(\kappa_1,\dots,\kappa_n) \underline{\alpha}^{\underline{m}}
$$
where $\underline{m} = (m_1,\dots, m_n)\in \bbZ^n$, $\kappa_i$ are points on $\cW$ for $1\leq i\leq n$, $\alpha_i$ are coordinates on $\bbG_m^\an$ for $1\leq i\leq n$, $\underline{\alpha}^{\underline{m}} = \alpha_1^{m_1}\cdots \alpha_n^{m_n}$, and $c_{\underline{m}}\in \Gamma(\cW^n)$ are analytic functions on the space of continuous characters of $(\cO_K^\times)^n$ such that the infinite series $f$ converges on $\cW^n \times \bbG_m^{\an, n}$.
\vskip8pt

Via the embedding $(\alpha_B^{*})^{-1} \circ \beta_B^* = \iota_B^* \circ \beta_B^*: \Gamma(\frX_n) \hookrightarrow \Gamma(\frX_T)$ under Assumption \ref{hypo:reduced}, we can view $\Gamma(\frX_n)$ as a subspace of $\Gamma(\cW^n \times \bbG_m^{\an, n})$.
In the following, we study the properties that $f\in \Gamma(\cW^n \times \bbG_m^{\an, n})$ enjoys when it actually comes from $\Gamma(\frX_n)$.
\vskip8pt

By \cite[Theorem 5.3.7]{EmertonGeeHellmann}, there is a compactification $j: \frX_B \hookrightarrow \overline{\frX}_B$ of $\frX_B$, where $\overline{\frX}_B$ is a rigid analytic Artin stack admitting maps $\overline{\beta}_B,\overline{\alpha}_B$ to $\frX_n$ and $\frX_T$ respectively that restrict along $j$ to $\beta_B,\alpha_B$ on $\frX_B$, so we have the following commutative diagram
$$
\begin{tikzcd}
               & \overline{\mathfrak{X}}_B \arrow[ld, "\overline{\beta}_B"'] \arrow[rd, "\overline{\alpha}_B"] &                \\
\mathfrak{X}_n & \mathfrak{X}_B \arrow[u, "j", hook] \arrow[l, "\beta_B"'] \arrow[r, "\alpha_B"]               & \mathfrak{X}_T
\end{tikzcd}
$$
Thus, for any $f\in \Gamma(\frX_n)$, if we write $f_T:=(\alpha_B^*)^{-1}\beta_B^{*}(f)$ for its embedded image in $\Gamma(\frX_T)$, then by the injectivity of $j^*:\Gamma(\overline{\frX}_B) \hookrightarrow \Gamma(\frX_B)$ due to the scheme-theoretic density of $\frX_B$ in $\overline{\frX}_B$ and by the identity $\beta_B^*(f) = \alpha_B^*(f_T)$, we deduce that 
\begin{numequation}
\label{eqn:cptif_eq}
\overline{\beta}_B^*(f) = \overline{\alpha}^*_B(f_T).
\end{numequation}

Moreover, the induced map 
$$
\pi:\overline{\frX}_B \xrightarrow{(\overline{\beta}_B,\overline{\alpha}_B)} \frX_n \times \frX_T
$$
is representable by rigid analytic spaces and proper by \cite[Theorem 5.3.12]{EmertonGeeHellmann}.
We write $\mathfrak{X}_{n, {\rm tri }} \subset \mathfrak{X}_n \times \mathfrak{X}_T$ for the scheme-theoretic image of $\overline{\mathfrak{X}}_B$ under the proper map $\pi$.

\begin{thm}
\label{thm:companion}
Let $D\in \frX_n(E)$ be a crystabelline $(\varphi,\Gamma)$-module of regular Hodge-Tate weight $\underline{\lambda}$ and such that the Weil-Deligne representation is of the form $\bigoplus_{i=1}^n \phi_i$ for smooth characters $\phi_i:K^\times \ra E^\times$ satisfying $\phi_i/\phi_j \notin \{1,|\cdot|_K^{\pm 1}\}$ for $i\neq j$.
Then, we have
$$
\begin{aligned} & \left\{\delta \in \mathfrak{X}_T \mid(D, \delta) \in \mathfrak{X}_{n, \operatorname{tri}}\right\} \\ & =\bigcup_{\mathcal{F}_{\bullet}}\left\{\delta_{\mathcal{F}_{\bullet}} z^{w \underline{\lambda}} \mid w=(w_\sigma)_\sigma \in W \text { such that } w_{\mathcal{F}, \sigma} \preceq w_\sigma w_0 \text { for all } \sigma\right\}
\end{aligned}
$$
where the union runs over all Frobenius stable flags $\mathcal{F}$ of $\mathrm{WD}(D)$, $w_0 \in S_n$ is the longest element, $z^{w\underline{\lambda}}$ is the algebraic character
$$
z^{w\underline{\lambda}}: (K^\times)^n \ra E^\times, \quad (x_1,\dots,x_n)\mapsto \prod_{i=1}^n \left(\prod_{\sigma\in \Sigma_K} \sigma(x_i)^{\lambda_{w_\sigma^{-1}(i),\sigma}} \right) \,,
$$
and $\delta_{\cF_\bullet}$ is the smooth character
$$
\delta_{\cF_\bullet}: (K^\times)^n \ra E^\times, \quad (x_1,\dots,x_n) \mapsto \prod_{i=1}^n \phi_{\tau_{\cF}(i)}(x_i)
$$
such that\footnote{Not to be confused with elements of the Weyl group $W=S_n^{\Sigma_K}$ of $\mathrm{Res}_\Qp^K(\GL_{n,K})_E \cong (\GL_{n,E})^{\Sigma_K}$.} $\tau_{\cF}\in S_n$ is the unique reordering of the characters $\phi_1,\dots,\phi_n$ that corresponds to the choice of Frobenius stable flag $\mathcal{F}$ of $\mathrm{WD}(D)$.
\end{thm}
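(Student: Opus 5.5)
This statement is the local companion-point theorem of Breuil--Hellmann--Schraen transported to the Emerton--Gee--Hellmann setting. The plan is to deduce it from \cite{BHS3} rather than reprove it, by identifying $\frX_{n,\rm tri}$ near $D$ with the trianguline variety used there. For the inclusion ``$\supseteq$'' we use \cite[Theorem 5.3.12]{EmertonGeeHellmann}: the map $\pi$ is proper with scheme-theoretic image $\frX_{n,\rm tri}$. By definition, $(D,\delta)\in \frX_{n,\rm tri}$ exactly when there is a triangulation of $D$ with graded pieces $\cR(\delta_i)$, or $(D,\delta)$ is a limit of such points in the compactification $\overline{\frX}_B$. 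The characters $\delta_{\cF_\bullet}z^{w_0\underline\lambda}$ (dominant ordering) come from the genuine triangulations attached to Frobenius-stable flags $\cF$ via Berger's dictionary. The general companion points $\delta_{\cF_\bullet}z^{w\underline\lambda}$ with $w_{\cF,\sigma}\preceq w_\sigma w_0$ will be obtained from the ``local model'' description in \cite{BHS3}.

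The key step is a local comparison. We restrict to the completed local ring of $\frX_n\times\frX_T$ at $(D,\delta)$ and compare it with the completed local ring at $(\rho,\delta)$ of the trianguline variety $X_{\rm tri}(\overline\rho)$ of \cite{BHS3}. First we twist by an unramified character so that $D$ becomes étale; this is allowed because twisting preserves $\frX_{n,\rm tri}$ and only changes $\delta$ by a corresponding unramified twist on each factor. Then $D=D_{\rm rig}(\rho)$ for a crystabelline $\rho$. Both spaces are defined as closures of the regular, generic trianguline locus, and on that locus they coincide. Indeed, the regular trianguline locus of $\frX_B$ maps isomorphically onto its image, with $\overline{\frX}_B\to\frX_n\times\frX_T$ a closed embedding there, which we get from the $\Ext$-vanishing coming from genericity. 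Together with the versal-ring comparison (using $\widehat{\cO}$ at $D$ against the framed deformation ring after smooth equivalence, as in \S\ref{subsection4.2_rigidcomp}), this should show that the formal germs of $\frX_{n,\rm tri}$ and of $X_{\rm tri}(\overline\rho)$ agree up to formally smooth factors. Hence the sets of $\delta$ with $(D,\delta)$ in the closure agree.

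With this comparison in hand, the statement follows from \cite[Theorem 1.3 / Prop.\ 4.2.x]{BHS3} (the companion constituents theorem at crystabelline generic points with the Bruhat condition $w_{\cF,\sigma}\preceq w_\sigma w_0$). The genericity hypothesis $\phi_i/\phi_j\notin\{1,|\cdot|_K^{\pm1}\}$ is exactly the one needed there, and regularity of $\underline\lambda$ ensures the $z^{w\underline\lambda}$ are pairwise distinct.

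The main obstacle is the comparison step. Closure in the rigid stack $\frX_n\times\frX_T$, taken via a proper image, has to match the Zariski closure of $U_{\rm tri}^{\rm reg}$ in BHS's space of framed representations. I would handle this by pulling back along the smooth chart given by the framed deformation space (formally smooth over $\frX_n$ at étale points). I would also use that scheme-theoretic images commute with flat base change, in order to transport the closure.
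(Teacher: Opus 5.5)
\textbf{Gap: the reduction to the \'etale case fails.} Your argument rests on the first step, twisting $D$ by an unramified character so that it becomes \'etale, and that step is false in general. By Berger's dictionary and Kedlaya's slope theory, a crystabelline $D$ is \'etale if and only if its filtered $(\varphi,G(L/K))$-module is weakly admissible. A twist by a rank-one $\cR(\delta)$ shifts all Frobenius slopes and all Hodge--Tate weights uniformly. It can arrange $t_H(D)=t_N(D)$, but it cannot fix the inequalities for $\varphi$-stable subobjects.

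Here is a concrete counterexample. Take $K=\Qp$, $n=2$, weights $\{0,k\}$, and a Hodge line distinct from both $\varphi$-eigenlines. Then \'etaleness forces $v_p(\varphi_1)+v_p(\varphi_2)=k$ and $v_p(\varphi_i)\ge 0$, i.e.\ $|v_p(\varphi_1/\varphi_2)|\le k$. So if $|v_p(\varphi_1/\varphi_2)|>k$, then $D$ is not pure, and no twist by any rank-one object makes it \'etale. Yet such a $D$ satisfies every hypothesis of the theorem.

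These non-\'etale $D$ are exactly what the paper uses. In Corollary \ref{cor:crystabel_sym}, $D$ comes from Lemma \ref{lem:crystabel_crit} with an arbitrary Hodge filtration and arbitrary generic $\underline\phi$, with no admissibility condition. For such $D$ there is no $\rho$ and no point of $X_{\rm tri}(\overline\rho)$, so your comparison with \cite{BHS3} has nothing to act on. What is needed is the companion-point description for $\frX_{n,\rm tri}\subset \frX_n\times\frX_T$ itself, i.e.\ the local-model argument carried out for $(\varphi,\Gamma)$-modules over $\cR_{K,E}$ without an \'etaleness assumption. The paper imports exactly this by citing \cite[Theorem 5.3.19 and Remark 5.3.20(ii)]{EmertonGeeHellmann}.

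\textbf{Two smaller issues, even for \'etale $D$.}
\begin{enumerate}
\item The companion-point theorem of \cite{BHS3} is stated for generic \emph{crystalline} points. The generic crystabelline case is the extension recorded in \cite[Remark 5.3.20(ii)]{EmertonGeeHellmann}, so your citation does not literally give the statement.
\item Agreement of formal germs ``up to formally smooth factors'' does not by itself decide membership in a closure, because it presupposes the point already lies on both sides. The rigorous route is the one you mention only at the end:
\begin{itemize}
\item pull back along the smooth chart coming from the framed deformation space;
\item use that scheme-theoretic images of the quasi-compact map $\pi$ commute with flat base change;
\item check that the regular trianguline locus is schematically dense in $\overline{\frX}_B$, so that both sides become closures of the same locus.
\end{itemize}
\end{enumerate}
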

\begin{proof}
This is the generic crystabelline version of \cite[Theorem 5.3.19]{EmertonGeeHellmann}, as described in \cite[Remark 5.3.20(ii)]{EmertonGeeHellmann}.
\end{proof}

In general, given any smooth characters $\phi_1,\dots, \phi_n:K^\times \ra E^\times$, we may use the compact notation $\underline{\phi}=(\phi_1,\dots,\phi_n)$ for this $n$-tuple, and define a smooth character as follows
$$
\delta_{\underline{\phi}}: (K^\times)^n \ra E^\times, \quad (x_1,\dots,x_n) \mapsto \prod_{i=1}^n \phi_i(x_i).
$$

\begin{lemma}
Let $\Sigma_K$ denote the set of $\bbQ_p$-embeddings of $K$ into $E$.
\label{lem:crystabel_crit}
\begin{enumerate}[(i)]
\item 
For any tuple of integers  $\underline{\lambda} = (\lambda_{i,\sigma})_{1\leq i\leq n,\sigma:K\hookrightarrow E} \in (\bbZ^{n})^{\Sigma_K}$ with $\lambda_{1,\sigma}>\lambda_{2,\sigma}>\dots > \lambda_{n,\sigma}$, for any smooth characters $\phi_1,\dots,\phi_n:K^\times \ra E^\times$ satisfying  $\phi_i/\phi_j \notin \{1,|\cdot|_K^{\pm 1}\}$ for $i\neq j$, and for any $w=(w_\sigma)_{\sigma\in \Sigma_K}\in S_n^{\Sigma_K}$, there exists a crystabelline $(\varphi,\Gamma)$-module $D$ over $R_{K,E}$ of rank $n$ and Hodge-Tate weight $\underline{\lambda}$ and Weil-Deligne representation $\phi_1\oplus \dots\oplus \phi_n$, such that the refinement $\cF = (\phi_1,\dots,\phi_n)$ is in relative position $w_\sigma$ with respect to the $\sigma$-Hodge filtration of $D_{\rm dR}(D)$ for all $\sigma \in \Sigma_K$.
\item 
In the situation of (i), if the smooth characters $\phi_i = \unr(\varphi_i)$ are unramified with $\varphi_i\in E^\times$ satisfying $\varphi_i/\varphi_j\notin \{1,q^{\pm 1}\}$, then the crystabelline $(\varphi,\Gamma)$-module $D$ is crystalline.
\end{enumerate}
\end{lemma}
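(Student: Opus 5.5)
The existence claim will be reduced to the theory of weakly admissible filtered $(\varphi,N,\Gal(L/K))$-modules, via the Colmez--Fontaine theorem and its extension to $(\varphi,\Gamma)$-modules over the Robba ring (Berger). The first step is to build a filtered $(\varphi,N=0,\Gal(L/K))$-module $M$ over $K_0'\otimes E$ (with $L/K$ finite abelian, inside some $K(\zeta_{p^m})$, large enough to trivialize the ramification of all $\phi_i$) whose associated Weil--Deligne representation is $\phi_1\oplus\dots\oplus\phi_n$. Concretely, take $M=\bigoplus_i M_{\phi_i}$ with $M_{\phi_i}$ the rank-one object attached to $\phi_i$ by the recipe of Fontaine's WD-correspondence, and with $N=0$. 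This makes $M$ crystabelline with the prescribed Frobenius eigenlines $e_1,\dots,e_n$, and these eigenlines are pairwise distinct because $\phi_i\neq\phi_j$. The Hodge filtration on $D_{\rm dR}=(M\otimes_{K_0'}L)^{\Gal(L/K)}=\prod_\sigma D_\sigma$ is then chosen $\sigma$ by $\sigma$: each $D_\sigma$ is an $n$-dimensional $E$-space with basis $e_1,\dots,e_n$, so the refinement $\cF=(\phi_1,\dots,\phi_n)$ gives a full flag $\cF_\sigma$. For each $\sigma$ pick a full flag $\Fil_\sigma$ in relative position $w_\sigma$ to $\cF_\sigma$, i.e.\ $\Fil_\sigma = g_\sigma$ applied to the standard flag with $g_\sigma\in B\dot w_\sigma B$, and give it jumps $\lambda_{1,\sigma}>\dots>\lambda_{n,\sigma}$. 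Such flags exist by the Bruhat decomposition.

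The main point is weak admissibility. The plan is to use genericity: the $\varphi$-stable (and $\Gal(L/K)$-, $N$-stable) subobjects of $M$ are exactly the sums of eigenlines $\bigoplus_{i\in I}M_{\phi_i}$. The requirement is that $t_H(M')\le t_N(M')$ for every such $M'$, with equality for $M'=M$.

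Equality for $M$ alone cannot be achieved by choosing the filtration, since $t_H$ is determined by $\underline\lambda$. So first twist all $\phi_i$ simultaneously by an unramified character to adjust $t_N(M)$; this changes the given data. Alternatively, and more safely, read the statement as implicitly allowing this, or note that $(\varphi,\Gamma)$-modules need not be étale. The latter is the cleanest route: work with arbitrary (not necessarily étale) $(\varphi,\Gamma)$-modules over $\cR_{K,E}$, where Berger's equivalence identifies $(\varphi,\Gamma)$-modules that are de Rham and potentially crystalline with \emph{all} filtered $(\varphi,N,G_{L/K})$-modules, with no admissibility needed. In that reading, step two is simply to apply Berger's equivalence $\mathbf{D}_{\rm pst}$ (in its crystabelline form) to the filtered module constructed above. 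This produces $D$ of rank $n$, crystabelline, with Hodge--Tate weights $\underline\lambda$, Weil--Deligne representation $\bigoplus\phi_i$, and the refinement $\cF$ in relative position $w_\sigma$ with the $\sigma$-Hodge filtration, by construction. The conditions $\phi_i/\phi_j\notin\{1,|\cdot|_K^{\pm1}\}$ are only needed to guarantee that the $e_i$ are distinct eigenlines and that $N=0$ is forced; they are not needed for existence.

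For (ii), if all $\phi_i=\unr(\varphi_i)$ are unramified, take $L=K$: the Galois descent datum is trivial and $N=0$, so the filtered module is a filtered $\varphi$-module over $K_0\otimes E$, and the associated $D$ is crystalline. The hypothesis $\varphi_i/\varphi_j\notin\{1,q^{\pm1}\}$ is precisely the translation of the genericity condition in (i).

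The step I expect to be the main obstacle is confirming that the paper's notion of ``crystabelline $(\varphi,\Gamma)$-module over $\cR_{K,E}$'' is Berger's non-étale one, so that no weak admissibility has to be checked. If étaleness were required, I would instead deform via an unramified twist and check weak admissibility on the finitely many stable subobjects $\bigoplus_{i\in I}M_{\phi_i}$, using the flexibility in choosing the flags $g_\sigma$ generically within the Bruhat cell.
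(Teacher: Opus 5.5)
Your final route is essentially the paper's proof. Take $D_0=\bigoplus_i D_i$, with $D_i$ the rank-one $(\varphi,G(L/K))$-module attached to $\phi_i$. On each $D_{\dR,\sigma}$, choose a Hodge flag in relative position $w_\sigma$ to the refinement flag; the paper simply takes the coordinate flag obtained by permuting $e_{1,\sigma},\dots,e_{n,\sigma}$ by $w_\sigma$, i.e.\ your $g_\sigma=\dot w_\sigma$. Then apply the equivalence between \emph{all} filtered $(\varphi,G(L/K))$-modules and $(\varphi,\Gamma_K)$-modules over $\cR_{K,E}$ that become crystalline over $L$, so no weak admissibility check is needed. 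Your own observation that $t_H(M)=t_N(M)$ fails for general data already shows the non-\'{e}tale reading is the only possible one, so the hedging about unramified twists can be dropped.

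One small correction: $L$ should be any finite abelian extension that trivializes the $\phi_i$ on inertia, obtained via local class field theory. For general $K$ such an $L$ need not lie in any $K(\zeta_{p^m})$, for instance when $\phi_i|_{\cO_K^\times}$ does not factor through $N_{K/\Qp}$. Nothing in your argument actually uses that containment, so deleting the parenthetical fixes it.
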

\begin{proof}
Clearly, (ii) follows from (i).
For (i), given the smooth characters $\phi_1,\dots,\phi_n$, there is a finite abelian extension $L/K$ such that $\phi_i|_{I_L} = 1$ for all $i$.
By local class field theory, each $\phi_i$ gives a rank one $L_0\otimes_\Qp E$-module $D_i$ equipped with semilinear $(\varphi,G(L/K))$-actions, cf. \cite[Proposition 4.1]{BreuilSchneider}.
Set $D_0:=\bigoplus_{i=1}^n D_i$ to be the direct sum containing $n!$ $(\varphi,G(L/K))$-stable complete flags, also known as {\it refinements} in the literature, 
$$
\cF_{\tau(\bullet)} := (0\subset \cF_{\tau(1)} \subset \cF_{\tau(2)} \subset \dots \subset \cF_{\tau(n)} = D_0)
$$
one for each $\tau\in S_n$ where $\cF_{\tau(j)} := \bigoplus_{i=1}^j D_{\tau(i)}$ is a rank $j$ subobject of $D_0$.
\vskip8pt

Recall that an $E$-filtered $(\varphi,G(L/K))$-module over $K$ is the data of such a $D_0$ together with a separated exhaustive filtration on $D_L:=L\otimes_{L_0} D_0$ by $G(L/K)$-stable $L\otimes_{\Qp}E$-modules, and there is a tensor equivalence between the category of $(\varphi, \Gamma_K)$-modules over $\mathcal{R}_{K, E}$ that become crystalline over $L$ and the category of $E$-filtered $(\varphi, G(L/K))$-modules over $K$.
By Galois descent, the natural map 
$
L\otimes_K (D_L)^{G(L/K)} \cong D_L
$
is an isomorphism. 
Thus, a filtration by $G(L/K)$-stable $L\otimes_{\Qp}E$-submodules on $D_L$ is the same thing as a filtration by $K\otimes_{\Qp}E$-submodules on $(D_L)^{G(L/K)} =: D_{\rm dR}$.
Via the canonical isomorphism $K\otimes_\Qp E \cong \prod_{\sigma\in \Sigma_K} E, x\otimes y \mapsto (\sigma(x)y)_{\sigma\in \Sigma_K}$, the module $D_{\rm dR} \cong \prod_{\sigma\in\Sigma_K} D_{\dR,\sigma}$ decomposes into a direct product of $E$-vector spaces, and any filtration $\Fil^i D_\dR = \prod_{\sigma\in \Sigma_K} \Fil^i D_{\dR,\sigma}$ also decomposes into $\sigma$-components.
\vskip8pt
Via the construction $(\pr_\sigma)\circ(L\otimes_{L_0}(-))^{G(L/K)}$, any $(\varphi,G(L/K))$-stable flag $\cF_{\tau(\bullet)}$ on $D_0$ gives rise to a complete flag $\cF_{\tau(\bullet),\sigma}$ of $E$-vector spaces on the $\sigma$-component $D_{\dR,\sigma}$ with a basis $(e_{\tau(1),\sigma},\dots, e_{\tau(n),\sigma})$.
For $\tau =\id \in S_n$ and any $\sigma\in \Sigma_K$, we define a Hodge filtration
$$
\Fil^a (D_{\dR,\sigma}) := \bigoplus_{\set{i}{a\leq -\lambda_{w_{\sigma}^{-1}(i),\sigma}}} E.e_{i,\sigma}
$$
so that the $(-\lambda_{w_{\sigma}^{-1}(i),\sigma})$-th graded piece of this filtration $\Fil^\bullet (D_{\dR,\sigma})$ 
$$
\Fil^{ -\lambda_{w_{\sigma}^{-1}(i),\sigma}} (D_{\dR,\sigma}) / \Fil^{ -\lambda_{w_{\sigma}^{-1}(i),\sigma}+1} (D_{\dR,\sigma}) = E.\overline{e_{i,\sigma}} 
$$
is the 1-dimensional $E$-vector space generated by the image of $e_{i,\sigma}$ for $1\leq i \leq n$.
Then, the resulting $E$-filtered $(\varphi,G(L/K))$-module over $K$ satisfies the requirement.
\end{proof}

\begin{cor}
\label{cor:crystabel_sym}
If $f_T\in \Gamma(\cT^n)$ comes from a function $f\in \Gamma(\frX_n)$, then for any tuple of integers  $\underline{\lambda} = (\lambda_{i,\sigma})_{1\leq i\leq n,\sigma:K\hookrightarrow E} \in (\bbZ^{n})^{\Sigma_K}$ with $\lambda_{1,\sigma}>\lambda_{2,\sigma}>\dots > \lambda_{n,\sigma}$ and for any smooth characters $\phi_1,\dots,\phi_n:K^\times \ra E^\times$ satisfying  $\phi_i/\phi_j \notin \{1,|\cdot|_K^{\pm 1}\}$ for $i\neq j$, we have
\begin{enumerate}[(i)]
\item 
The value
$$
f_T(\delta_{\underline{\phi}}z^{w\underline{\lambda}})
=
f_T\left(\prod_{i=1}^n  \left( \phi_i(x_i)\prod_{\sigma\in \Sigma_K} \sigma(x_i)^{\lambda_{w_\sigma^{-1}(i),\sigma}} \right) \right) \in E
$$
is independent of the choice of $w=(w_\sigma)_\sigma \in S_n^{\Sigma_K}$.
\item 
The value 
$$
f_T(\delta_{\tau(\underline{\phi})}z^{\underline{\lambda}})
=
f_T\left(\prod_{i=1}^n  \left( \phi_{\tau(i)}(x_i)\prod_{\sigma\in \Sigma_K} \sigma(x_i)^{\lambda_{i,\sigma}} \right) \right) \in E
$$
is independent of the choice of $\tau \in S_n$.
\end{enumerate}

\end{cor}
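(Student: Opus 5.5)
The plan is to combine the compactification identity \eqref{eqn:cptif_eq} with the description of companion points in Theorem \ref{thm:companion}, using Lemma \ref{lem:crystabel_crit} to produce the crystabelline $(\varphi,\Gamma)$-modules to which Theorem \ref{thm:companion} applies. The key observation is this. If $(D,\delta)\in\frX_{n,\rm tri}(E)$, then $f(D)=f_T(\delta)$. Indeed, $\frX_{n,\rm tri}$ is the scheme-theoretic image of the proper map $\pi=(\overline{\beta}_B,\overline{\alpha}_B)$, so an $E$-point $(D,\delta)$ of it lifts to a point $y\in\overline{\frX}_B(L)$ for some finite extension $L/E$ (surjectivity of proper maps onto scheme-theoretic image on points). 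Evaluating \eqref{eqn:cptif_eq} at $y$ then gives $f(D)=\overline{\beta}_B^*(f)(y)=\overline{\alpha}_B^*(f_T)(y)=f_T(\delta)$. So any two characters $\delta,\delta'$ that occur as companions of the same $D$ satisfy $f_T(\delta)=f_T(\delta')$.

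For (i), I would fix $\underline{\lambda}$ and $\underline{\phi}$ as in the statement and let $w\in S_n^{\Sigma_K}$ be arbitrary. Lemma \ref{lem:crystabel_crit}(i), applied with the parameter $w'=(w_\sigma w_0)_\sigma$ (or whichever relative position the indexing requires), gives a crystabelline $D$ with Weil--Deligne representation $\phi_1\oplus\cdots\oplus\phi_n$ and regular Hodge--Tate weight $\underline{\lambda}$. For this $D$, the refinement $\cF=(\phi_1,\dots,\phi_n)$ is in relative position $w_{\cF,\sigma}=w_\sigma w_0$, and $\tau_{\cF}=\id$.

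Theorem \ref{thm:companion} then shows that $\delta_{\underline{\phi}}z^{v\underline{\lambda}}$ is a companion of $D$ for every $v$ with $w_\sigma w_0\preceq v_\sigma w_0$ for all $\sigma$. Two such choices of $v$ are always available:
\begin{itemize}
\item $v=w$, which gives the character $\delta_{\underline{\phi}}z^{w\underline{\lambda}}$;
\item $v$ with $v_\sigma w_0=w_0$, i.e.\ $v_\sigma=1$ for all $\sigma$, since $w_0$ is the maximal element in the Bruhat order.
\end{itemize}
By the key observation, $f_T(\delta_{\underline{\phi}}z^{w\underline{\lambda}})=f(D)=f_T(\delta_{\underline{\phi}}z^{\underline{\lambda}})$. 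The right-hand side does not depend on $w$, which proves (i).

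For (ii), I would take any $D$ from Lemma \ref{lem:crystabel_crit} with Weil--Deligne representation $\bigoplus\phi_i$, for instance one whose refinements are all in generic (noncritical) position. Since the $\phi_i$ are pairwise distinct, every $\tau\in S_n$ corresponds to a Frobenius-stable flag $\cF_\tau$ with $\tau_{\cF}=\tau$. The companion set always contains $\delta_{\cF}z^{w\underline{\lambda}}$ for $w=1$, because the condition becomes $w_{\cF,\sigma}\preceq w_0$, which holds automatically. Hence $\delta_{\tau(\underline{\phi})}z^{\underline{\lambda}}$ is a companion of $D$ for every $\tau$, and the key observation gives $f_T(\delta_{\tau(\underline{\phi})}z^{\underline{\lambda}})=f(D)$ independently of $\tau$.

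I expect the main obstacle to be bookkeeping of conventions rather than anything conceptual. Specifically, I need to match the relative-position parameter in Lemma \ref{lem:crystabel_crit} with $w_{\cF,\sigma}$ in Theorem \ref{thm:companion}, including the twist by $w_0$ and the direction of the Bruhat order, so that both $v=w$ and $v=1$ are admissible simultaneously. If the conventions turn out to be reversed, I would instead choose $D$ with $w_{\cF,\sigma}=1$ and obtain all $w$ at once from the maximality of $w_0$. The lifting of $E$-points along the proper map $\pi$ is standard: after a finite extension the fibre is a nonempty proper rigid space, so it has a rational point over a finite extension of $E$, and the evaluation identity is insensitive to such an extension.
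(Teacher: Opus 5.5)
Your proof is correct and follows the paper's argument. You construct $D$ via Lemma \ref{lem:crystabel_crit}, read off its companions from Theorem \ref{thm:companion}, lift the resulting $E$-points along the proper map $\pi$, and evaluate \eqref{eqn:cptif_eq}. In (i) the paper uses a single $D$ with $w_{\cF,\sigma}=\id$, so that every $\delta_{\underline{\phi}}z^{w\underline{\lambda}}$ is a companion of that one $D$; this works by minimality of $\id$ in the Bruhat order, not maximality of $w_0$ as your fallback remark says. Your main route instead links each $w$ to the dominant companion $v=1$ through a separate $D_w$ with $w_{\cF,\sigma}=w_\sigma w_0$, and this is equally valid, as is your observation that (ii) needs no non-criticality.
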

\begin{proof}
(i). Given any dominant regular Hodge-Tate weights $\underline{\lambda}$ and generic smooth characters $\underline{\phi}$, by Lemma \ref{lem:crystabel_crit} one can construct a crystabelline $(\varphi,\Gamma)$-module $D$ of Hodge-Tate weight $\underline{\lambda}$ with Weil-Deligne representation $\bigoplus_{i=1}^n \phi_i$ such that the refinement $\cF = \underline{\phi}$ satisfies $w_{\cF,\sigma}=\id$ for all $\sigma\in \Sigma_K$.
Thus by Theorem \ref{thm:companion}, the set 
$$
\set{(D,\delta_{\underline{\phi}}z^{w\underline{\lambda}})}{w=(w_\sigma)_\sigma \in W} \subset \frX_n \times \frX_T
$$
is contained in $\frX_{n,{\rm tri}}(E)$.
Since $\frX_{n,{\rm tri}}$ is the scheme-theoretic image of $\pi: \overline{\frX}_B\ra \frX_n\times \frX_T$, which is a proper map, we have $\pi(\overline{\frX}_B)=\frX_{n,\mathrm{tri}}$. 
Thus, there exist a finite extension $E'/E$ and points $y_w\in \overline{\frX}_B(E')$ such that $(D,\delta_{\underline{\phi}}z^{w\underline{\lambda}}) = \pi(y_w) 
$
for each $w\in W$.
By \eqref{eqn:cptif_eq}, 
$$
f(D) = f_T(\delta_{\underline{\phi}}z^{w\underline{\lambda}})\,,
$$
which shows the independence of $w$ for the expression $f_T(\delta_{\underline{\phi}}z^{w\underline{\lambda}})$.

(ii). It is easy to build a non-critical crystabelline $(\varphi,\Gamma)$-module $D$ of Hodge-Tate weights $\underline{\lambda}$ and Weil-Deligne representation $\bigoplus_{i=1}^n \phi_i$ all of whose $n!$ refinements $\cF_\tau$ (in bijection with $\tau\in S_n$) satisfy $w_{\cF_\tau,\sigma}=w_0$ for all $\sigma\in \Sigma_K$.
Then, in the statement of Theorem \ref{thm:companion}, for each $\tau\in S_n$ we choose $w=(w_{\sigma})_\sigma = \id\in W$ because then $w_{\cF_\tau,\sigma} \preceq w_\sigma w_0$ becomes $w_0\preceq \id \circ w_0$, which is true.
By the same argument as in (i), we complete the proof.
\end{proof}

\begin{lemma}
\label{lem:diagonal}
Suppose $f\in \Gamma(\frX_n)$ and write $f_T$ for its image in $\Gamma(\cT^n)$ of the form
$$
f_T = \sum_{\underline{m} \in \bbZ^n} c_{\underline{m}}(\kappa_1,\dots,\kappa_n) \underline{\alpha}^{\underline{m}}.
$$
Then, $c_{\underline{m}} = 0 \in \Gamma(\cW^n)$ unless $\underline{m}=(m,\dots,m)\in \bbZ^n$ is diagonal for some integer $m\in \bbZ$.
\end{lemma}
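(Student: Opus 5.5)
Fix $f$. The plan is to use Corollary \ref{cor:crystabel_sym}(ii) on a Zariski-dense family of points and read it as a symmetry of $f_T$ under permuting the unramified parameters. Take $\phi_i=\unr(\varphi_i)\cdot\chi_i$, with $\chi_i$ a smooth character fixed on $\cO_K^\times$ and trivial on the chosen uniformizer $\varpi_K$. The point $\delta_{\tau(\underline{\phi})}z^{\underline\lambda}$ of $\cT^n=\cW^n\times\bbG_m^{\an,n}$ then has $\cW$-coordinates
\[
\kappa_i=(\chi_{\tau(i)}z^{\lambda_i})|_{\cO_K^\times}
\]
and $\bbG_m$-coordinates
\[
\alpha_i=\varphi_{\tau(i)}\,\chi_{\tau(i)}(\varpi_K)\prod_\sigma\sigma(\varpi_K)^{\lambda_{i,\sigma}}=\varphi_{\tau(i)}\prod_\sigma\sigma(\varpi_K)^{\lambda_{i,\sigma}}.
\]

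The trick is to take all $\chi_i$ equal to one fixed $\chi$, but keep the $\varphi_i$ distinct and generic. Then the $\kappa_i$ do not depend on $\tau$. Corollary \ref{cor:crystabel_sym}(ii) says that, at fixed $\kappa=(\kappa_1,\dots,\kappa_n)$, the one-variable-family function $g_\kappa(\underline\alpha):=\sum_{\underline m}c_{\underline m}(\kappa)\underline\alpha^{\underline m}$ satisfies
\[
g_\kappa(\varphi_{\tau(1)}\pi_1,\dots,\varphi_{\tau(n)}\pi_n)=g_\kappa(\varphi_1\pi_1,\dots,\varphi_n\pi_n),\qquad \pi_i:=\prod_\sigma\sigma(\varpi_K)^{\lambda_{i,\sigma}},
\]
for all $\tau\in S_n$ and all $\underline\varphi$ in the open dense set where $\varphi_i/\varphi_j\notin\{1,q^{\pm1}\}$. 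The genericity condition on the $\phi_i$ holds because $\phi_i/\phi_j=\unr(\varphi_i/\varphi_j)$.

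Both sides are convergent Laurent series in $\underline\varphi$ on $\bbG_m^{\an,n}$, and they agree on a Zariski-dense subset, so they agree identically. Comparing coefficients of $\underline\varphi^{\underline m}$ gives
\[
c_{\underline m}(\kappa)\,\underline\pi^{\underline m}=c_{\tau^{-1}\underline m}(\kappa)\,\underline\pi^{\tau^{-1}\underline m}
\]
up to the appropriate indexing convention. The factor $\pi_i$ only rescales $\alpha_i$, so I will substitute $\alpha_i'=\alpha_i/\pi_i$ first to keep this bookkeeping clean.

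This alone gives only $S_n$-symmetry of the coefficients, not diagonality. To force diagonality I use the freedom in the individual $\phi_i$ more carefully, which is the main obstacle. I plan to use the unramified twist by $\unr(\beta)$, or better, the independent scaling of each $\alpha_i$. Diagonality says exactly that $f_T$ depends on $\underline\alpha$ only through $\prod\alpha_i$, which amounts to invariance under $\alpha\mapsto(\alpha_1 t,\alpha_2t^{-1},\dots)$.

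Invariance under these torus scalings should come from the fact that $f_T(\delta)$ equals $f(D)$, and $D$ depends on $\underline\phi$ only through the unordered multiset. Concretely, fix $\underline\lambda$ and let $\lambda_1\gg\lambda_2$ vary by the shift $\lambda_{1,\sigma}\mapsto\lambda_{1,\sigma}+k$, $\lambda_{2,\sigma}\mapsto\lambda_{2,\sigma}-k$. Combine this with Corollary \ref{cor:crystabel_sym}(i) for the transposition $w=(1\,2)$ at one embedding, and compensate the change of the $\cW$-coordinates by varying the $\chi_i$. I would first try the simpler route of applying (i): swapping the Hodge–Tate weights between positions $i$ and $j$ multiplies $\alpha_i/\alpha_j$ by a power of $\sigma(\varpi_K)$ while changing $\kappa$. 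Using Zariski density of such points in $\cW^n$ (the algebraic characters times finite-order characters are dense), and the \'etale-local surjectivity of the weight map to separate the $\cW$ and $\bbG_m$ variables, this should yield $c_{\underline m}\,\underline\pi^{\underline m}$ invariant under $\underline\pi\mapsto$ (permuted weight exponents). Hence $c_{\underline m}=0$ whenever $m_i\neq m_j$.

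Finally, the density arguments conclude: the vanishing of $c_{\underline m}(\kappa)$ on a Zariski-dense set of $\kappa\in\cW^n$ (locally algebraic characters with regular weights) gives $c_{\underline m}=0$ in $\Gamma(\cW^n)$.
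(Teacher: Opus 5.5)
The gap is at your final step, ``Hence $c_{\underline m}=0$ whenever $m_i\neq m_j$.'' Apply Corollary~\ref{cor:crystabel_sym}(i) with $w=(j\ j+1)$ and equate coefficients of $\underline\varphi^{\underline m}$ in the free unramified parameters. This coefficient comparison is the only ``separation of variables'' needed; the weight map plays no role. What you get is
\[
c_{\underline m}\bigl(z^{\underline\lambda}\bigr)=c_{\underline m}\bigl(z^{(j\ j+1)\underline\lambda}\bigr)\cdot\prod_{\sigma}\sigma(\varpi_K)^{(m_{j+1}-m_j)(\lambda_{j,\sigma}-\lambda_{j+1,\sigma})}.
\]
This relates values of $c_{\underline m}$ at two \emph{different} points of $\cW^n$, since their Sen weights differ. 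Zariski density cannot turn such a relation into vanishing.

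The missing idea, which is the one the paper uses, is that $c_{\underline m}$ is $p$-adically continuous in the weight while the factor $\varpi_K^{\lambda}$ is not. Take $\underline\nu\in\underline a+N\bbZ^{n\Sigma_K}$, a set that is Zariski dense in the component $\bbB_{\underline\eta}$. The dominant regular weights $\lambda^{(k)}_{i,\sigma}=\nu_{i,\sigma}+N(n-i)p^k$ converge $p$-adically to $\underline\nu$, so both values of $c_{\underline m}$ stay bounded. Meanwhile $\lambda^{(k)}_j-\lambda^{(k)}_{j+1}\to\infty$ drives the $\varpi_K$-factor to $0$. If $m_{j+1}>m_j$ this forces $c_{\underline m}(z^{\underline\nu})=0$; if $m_j>m_{j+1}$ it gives vanishing on the component indexed by $(j\ j+1)\underline\eta$. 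Only after this does density finish the proof.

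The other mechanisms you propose do not supply this step. Invariance under $\underline\alpha\mapsto(\alpha_1t,\alpha_2t^{-1},\dots)$ is equivalent to the lemma, not a consequence of $D$ depending only on the multiset of the $\phi_i$. That dependence yields only $S_n$-symmetry, and rescaling $\varphi_1,\varphi_2$ by $t,t^{-1}$ changes the multiset, hence changes $D$. Nor can you ``compensate the change of the $\cW$-coordinates by varying the $\chi_i$''. Swapping Hodge--Tate weights multiplies $\kappa_j$ by $z^{\lambda_{j+1}-\lambda_j}$, which has nonzero Sen weight, whereas smooth characters have weight $0$.

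Your first step was actually more useful than you thought, and the normalization $\alpha_i'=\alpha_i/\pi_i$ throws away exactly the useful part, because $\pi_i$ depends on $\underline\lambda$ and hence on $\kappa$. Corollary~\ref{cor:crystabel_sym}(ii) gives, at the \emph{same} point $\kappa=z^{\underline\lambda}$,
\[
c_{\underline m}(\kappa)=c_{\underline m'}(\kappa)\,(\pi_j/\pi_{j+1})^{m_{j+1}-m_j},
\]
where $\underline m'$ is $\underline m$ with $m_j,m_{j+1}$ swapped. The same limiting argument then kills every $c_{\underline m}$ with an ascent $m_j<m_{j+1}$. Feeding this back into the relation kills the non-increasing, non-constant $c_{\underline m}$ at all dominant regular points, hence everywhere. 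Either way, the $p$-adic limit with diverging weight gaps is the ingredient your proposal is missing.
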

\begin{proof}
We first prove the vanishing of off-diagonal coefficients of $f$ by testing on  crystalline $(\varphi,\Gamma)$-modules.
For any dominant regular weight $\underline{\lambda}$, consider generic unramified characters $\underline{\phi}=(\phi_1,\dots,\phi_n)$ with $\phi_i:=\unr(\varphi_i)$ for
generic $\underline{\varphi}=(\varphi_1,\dots,\varphi_n) \in (E^\times)^n$, in which case we use this tuple of numbers $\underline{\varphi} \in (E^\times)^n$ to represent $\underline{\phi}$.
By Corollary \ref{cor:crystabel_sym}, we have 
$$
f_T(\delta_{\underline{\varphi}}z^{\underline{\lambda}}) = f_T(\delta_{\underline{\varphi}}z^{w\underline{\lambda}})
$$
for any $w=(w_\sigma)_\sigma \in W$, which explicitly means that
$$
\sum_{\underline{m}} c_{\underline{m}} \left(\left.\prod_{\sigma}\sigma(z_1)^{\lambda_{w_{\sigma}^{-1}(1),\sigma}}\right|_{\cO_K^\times},\dots, \left.\prod_{\sigma}\sigma(z_n)^{\lambda_{w_{\sigma}^{-1}(n),\sigma}}\right|_{\cO_K^\times}\right) \cdot \prod_{i,\sigma} \sigma(\varpi_K)^{\lambda_{w_{\sigma}^{-1}(i),\sigma}m_i} \cdot \underline{\varphi}^{\underline{m}}\,,
$$
is independent of the choice of $w$.
Then, for each fixed $\underline{\lambda}$, we get rigid functions on
$$
\set{(\varphi_1,\dots,\varphi_n)\in (E^\times)^n}{\forall i\neq j, \varphi_i/\varphi_j\notin \{1,q\}}
$$
in the variables $\underline{\varphi}$, indexed by $w\in W$ but independent of the choice of $w\in W$.
Hence, for each fixed $\underline{m}\in \bbZ^n$, by equating the coefficients in front of $\underline{\varphi}^{\underline{m}}$ we obtain that
$$
c_{\underline{m}} \left(\left.\prod_{\sigma}\sigma(z_1)^{\lambda_{w_{\sigma}^{-1}(1),\sigma}}\right|_{\cO_K^\times},\dots, \left.\prod_{\sigma}\sigma(z_n)^{\lambda_{w_{\sigma}^{-1}(n),\sigma}}\right|_{\cO_K^\times}\right) \cdot \prod_{i,\sigma} \sigma(\varpi_K)^{\lambda_{w_{\sigma}^{-1}(i),\sigma}m_i} 
$$
is independent of the choice of $w=(w_\sigma)_\sigma \in W$, for all dominant regular weights $\underline{\lambda}$.
\vskip8pt

We claim that $c_{\underline{m}} = 0 \in \Gamma(\cW^n)$ whenever $\underline{m}=(m_1,\dots,m_n)\in \bbZ^n$ is not of the diagonal form $(m,\dots,m)$ for some $m\in \bbZ$.
Indeed, for such non-diagonal $\underline{m}$, choose $j\in \{1,\dots, n-1\}$ such that $m_j\neq m_{j+1}$.
Consider the element $w=(w_\sigma)_\sigma\in W$ with $w_\sigma=(j \mbox{ }j+1)$ being the transposition for all $\sigma$, and consider also the identity element $w'=1\in W$.
Comparing the resulting expressions and using independence of $w$, we obtain that
\begin{align*}
& \mbox{ }c_{\underline{m}}(\underline{\lambda}) \cdot \prod_{i=1}^n \prod_{\sigma\in \Sigma_K} \sigma(\varpi_K)^{\lambda_{i,\sigma}m_i} 
\\&=
\left( \prod_{i\notin \{j,j+1\}} \prod_{\sigma\in \Sigma_K} \sigma(\varpi_K)^{\lambda_{i,\sigma}m_i} \right) \cdot \left(\prod_{\sigma\in \Sigma_K} \sigma(\varpi_K)^{\lambda_{j,\sigma}m_j+\lambda_{j+1,\sigma}m_{j+1}}\right)
\cdot c_{\underline{m}}(\underline{\lambda})
\\&=
\left( \prod_{i\notin \{j,j+1\}} \prod_{\sigma\in \Sigma_K} \sigma(\varpi_K)^{\lambda_{i,\sigma}m_i} \right) \cdot \left(\prod_{\sigma\in \Sigma_K} \sigma(\varpi_K)^{\lambda_{j+1,\sigma}m_j+\lambda_{j,\sigma}m_{j+1}}\right)
\cdot c_{\underline{m}}((j\mbox{ }j+1)\underline{\lambda}) 
\end{align*}
which simplifies to 
\begin{numequation}
\label{eqn:j_j+1}
c_{\underline{m}}(\underline{\lambda}) = c_{\underline{m}}((j\mbox{ }j+1)\underline{\lambda})\cdot \prod_{\sigma}\sigma(\varpi_K)^{(m_{j+1}-m_j)(\lambda_{j,\sigma}-\lambda_{j+1,\sigma})} \,.
\end{numequation}
Now we use a $p$-adic approximation argument to show that \eqref{eqn:j_j+1} implies the vanishing of $c_{\underline{m}}$ on the weight space $\cW^n$ as follows.
By \cite[Proposition II.5.7(i)]{NeukirchI}, there exists a finite free $\bbZ_p$-submodule $\Gamma$  of rank $[K:\Qp]$  contained in the 1-units $U_K^{(1)}$ such that
$$
\cO_K^\times = \mu(K)\times \Gamma  \cong \mu(K) \times \bbZ_p^{[K:\Qp]}
$$
as topological abelian groups.
For any $\underline{\nu}=(\nu_{i,\sigma})_{1\leq i\leq n,\sigma\in \Sigma_K} \in (\bbZ_p^{n})^{\Sigma_K}$, the assignment 
$$
x^{\underline{\nu}}: (x_1,\dots,x_n) \mapsto \prod_{i=1}^n \prod_{\sigma\in \Sigma_K} \sigma(x_i)^{\nu_{i,\sigma}}
$$
defines a continuous character on $\Gamma^n \subset (\cO_K^\times)^n$.
The weight space $\cW^n$ decomposes
$$
\cW^n = \left( \coprod_{\eta\in \widehat{\mu(K)}} \bbB_{\eta}\right)^n = \coprod_{(\eta_1,\dots,\eta_n )\in (\widehat{\mu(K)})^n} (\bbB_{\eta_1}\times \dots \times  \bbB_{\eta_n})
$$
into a disjoint union of polydiscs indexed by $n$-tuples of characters of $\mu(K)$, where each 
$$
\bbB_{\eta}\cong \Spf(\cO_E[[\Gamma]])^\rig \cong \Spf(\cO_E[[\bbZ_p^{[K:\Qp]}]])^\rig
$$ 
is a polydisc of dimension $[K:\Qp]$ over $E$.
For $\underline{\eta}=(\eta_1,\dots,\eta_n)\in (\widehat{\mu(K)})^n$, we abbreviate $\bbB_{\underline{\eta}}:=\bbB_{\eta_1}\times \dots \times  \bbB_{\eta_n}$.
Each connected component $\bbB_{\underline{\eta}}$ of $\cW^n$ contains a Zariski-dense subset
$
\set{\underline{\eta}\cdot x^{\underline{\nu}}}{\underline{\nu}\in (\bbZ_p^{n})^{\Sigma_K}}
$
where, if via the identification $\cO_K^\times = \mu(K)\times \Gamma$ we write $z=\zeta\cdot x$ for each $z\in \cO_K^\times$, $\zeta\in \mu(K)$, and $x\in \Gamma$, then the character $\underline{\eta}\cdot x^{\underline{\nu}}$ is given by
$$
(\underline{\eta}\cdot x^{\underline{\nu}})(z_1,\dots,z_n) = \prod_{i=1}^n \left( \eta_i(\zeta_i) \prod_{\sigma\in \Sigma_K} \sigma(x_i)^{\nu_{i,\sigma}}\right)\,.
$$
Let $N$ be the order of the group $\mu(K)\cong \bbZ/N\bbZ$, and choose tuples $\underline{a}=(a_{1,\sigma},\dots, a_{n,\sigma})_\sigma \in (\{0,1,\dots, N-1\}^n)^{\Sigma_K}$ such that $\eta_i(\zeta)=\prod_{\sigma\in \Sigma_K}\sigma(\zeta)^{a_{i,\sigma}}$ for all $\zeta\in \mu(K)$.
Consider
$$
\sL_{\underline{\eta}} := \set{\underline{\eta}\cdot x^{\underline{\nu}}}{\underline{\nu}\in \underline{a}+N\bbZ^{n\Sigma_K}} = \set{z^{\underline{\nu}}}{\underline{\nu}\in \underline{a}+N\bbZ^{n\Sigma_K}}\,,
$$
which is Zariski-dense in the component $\bbB_{\underline{\eta}}$, where for $z_i=\zeta_i\cdot x_i\in \cO_K^\times$, we simply have
$$
(\underline{\eta}\cdot x^{\underline{\nu}})(z_1,\dots,z_n) = \prod_{i=1}^n \left( \prod_{\sigma\in \Sigma_K}\sigma(\zeta_i)^{a_{i,\sigma}} \prod_{\sigma\in \Sigma_K} \sigma(x_i)^{\nu_{i,\sigma}}\right) 
= z^{\underline{\nu}}\,.
$$
We now use dominant regular weights $z^{\underline{\lambda}}$ to approximate any point $z^{\underline{\nu}}\in \sL_{\underline{\eta}}$ as follows.
Let $\underline{\nu}\in \underline{a}+N\bbZ^{n\Sigma_K}$ be given.
For each $1\leq i\leq n$ and $\sigma\in \Sigma_K$, we have
$
\nu_{i,\sigma} = a_{i,\sigma} + Nv_{i,\sigma}
$
for some $v_{i,\sigma}\in \bbZ$.
For each $k\in \bbZ_{\geq 1}$, set
$
\lambda^{(k)}_{i,\sigma} := \nu_{i,\sigma}+N(n-i)p^k\,.
$
Then, $\underline{\lambda}^{(k)}=(\lambda^{(k)}_{1,\sigma},\dots,\lambda^{(k)}_{n,\sigma})_{\sigma\in \Sigma_K} \in (\bbZ^{n})^{\Sigma_K}$ becomes dominant regular for sufficiently large $k\gg 0$ because
$$
\lambda^{(k)}_{i,\sigma}-\lambda^{(k)}_{i+1,\sigma} 
= \nu_{i,\sigma}-\nu_{i+1,\sigma}+Np^k
\xrightarrow{k\ra\infty} \infty \,.
$$
Moreover, $\lim_{k\ra \infty} \underline{\lambda}^{(k)} = \underline{\nu}$ in $ \bbZ_p^{n\Sigma_K}$, and thus $\lim_{k\ra \infty}c_{\underline{m}}(\underline{\lambda}^{(k)}) = c_{\underline{m}}(\underline{\nu})$ by continuity.
We have two cases to discuss:
\begin{itemize}
\item[\underline{$m_j<m_{j+1}$}:]
applying \eqref{eqn:j_j+1} to the sequence $\underline{\lambda}^{(k)}$ yields, for sufficiently large $k$, that
\begin{align*}
|c_{\underline{m}}(\underline{\lambda}^{(k)})|_p 
&= \left|c_{\underline{m}}((j\mbox{ }j+1)\underline{\lambda}^{(k)})\cdot \prod_{\sigma}\sigma(\varpi_K)^{(m_{j+1}-m_j)(\lambda^{(k)}_{j,\sigma}-\lambda^{(k)}_{j+1,\sigma})}\right|_p
\\&\leq 
(|c_{\underline{m}}((j\mbox{ }j+1)\underline{\nu})|_p+1)\cdot \left|\prod_{\sigma}\sigma(\varpi_K)^{(m_{j+1}-m_j)(\lambda^{(k)}_{j,\sigma}-\lambda^{(k)}_{j+1,\sigma})}\right|_p \xrightarrow{k\ra \infty} 0
\end{align*}
because $\lambda^{(k)}_{j,\sigma}-\lambda^{(k)}_{j+1,\sigma}\xrightarrow{k\ra\infty} \infty$ and $m_{j+1}>m_j$.
Therefore, $c_{\underline{m}}(\underline{\nu}) = \lim_k c_{\underline{m}}(\underline{\lambda}^{(k)})=0$.
Since $c_{\underline{m}}$ vanishes on the dense subspace $\sL_{\underline{\eta}}$ of the component $\bbB_{\underline{\eta}}$, we conclude that $c_{\underline{m}}$ vanishes on the entire component.

\item[\underline{$m_j>m_{j+1}$}:] 
applying \eqref{eqn:j_j+1} to the sequence $\underline{\lambda}^{(k)}$ yields, for sufficiently large $k$, that
\begin{align*}
|c_{\underline{m}}((j\mbox{ }j+1)\underline{\lambda}^{(k)})|_p 
&= \left|c_{\underline{m}}(\underline{\lambda}^{(k)})\cdot \prod_{\sigma}\sigma(\varpi_K)^{(m_{j}-m_{j+1})(\lambda^{(k)}_{j,\sigma}-\lambda^{(k)}_{j+1,\sigma})}\right|_p
\\&\leq 
(|c_{\underline{m}}(\underline{\nu})|_p+1)\cdot \left|\prod_{\sigma}\sigma(\varpi_K)^{(m_{j}-m_{j+1})(\lambda^{(k)}_{j,\sigma}-\lambda^{(k)}_{j+1,\sigma})}\right|_p \xrightarrow{k\ra \infty} 0\,.
\end{align*}
We conclude that $c_{\underline{m}}((j \mbox{ }j+1)\underline{\nu})=0$, which shows that $c_{\underline{m}}$ vanishes on the component $\bbB_{(j\mbox{ }j+1)\underline{\eta}}$ indexed by the permuted character $(j\mbox{ }j+1)\underline{\eta}$, in which $(j\mbox{ }j+1)\sL_{\underline{\eta}}$ is a Zariski-dense subset.\footnote{If we choose the tuple for $(j \mbox{ }j+1)\underline{\eta}$ to be $(j\mbox{ }j+1)\underline{a}$, then $(j\mbox{ }j+1)\sL_{\underline{\eta}}$ is precisely $\sL_{(j\mbox{ }j+1)\underline{\eta}}$.}
\end{itemize}
As the choice of the component $\underline{\eta}$ was arbitrary, we conclude that $c_{\underline{m}} = 0 \in \Gamma(\cW^n)$ unless $\underline{m}=(m,\dots,m)\in \bbZ^n$ is diagonal for some integer $m\in \bbZ$.
\end{proof}

We can now write
$$
f_T = \sum_{m\in \bbZ} c_m(\kappa_1,\dots,\kappa_n)\left(\prod_{i=1}^n \alpha_i\right)^m
$$
with $c_m\in\Gamma(\cW^n)$, and there are two kinds of symmetries that follow from Corollary \ref{cor:crystabel_sym}.
\begin{enumerate}[(i)]
\item {\it Symmetry in Sen weights.}
We claim that for any smooth characters $\psi_i:\cO_K^\times \ra E^\times$ for $1\leq i\leq n$, we have
\begin{numequation}
\label{eqn:sym_Sen}
c_m(\delta_{\underline{\psi}}z^{\underline{\lambda}}) = c_m(\delta_{\underline{\psi}}z^{w\underline{\lambda}})\,,
\end{numequation}
for all dominant regular weights $\underline{\lambda}$ and all $w=(w_\sigma)_\sigma \in W$.
\begin{proof}
Indeed, for any $n$-tuple 
$$
(\varphi_1,\dots,\varphi_n)\in (E^\times)^n
$$ 
satisfying $\varphi_i/\varphi_j\notin \{1,q^{\pm1}\}$ for $i\neq j$, we can extend $\psi_i:\cO_K^\times \ra E^\times$ to a generic tuple of smooth characters $\phi_i:K^\times \ra E^\times$ by sending the uniformizer $\varpi_K$ to $\varphi_i$.
By $w$-independence in Corollary \ref{cor:crystabel_sym}(i) and vanishing of off-diagonal terms, we have
$$
\sum_{m\in\bbZ} c_m(\delta_{\underline{\phi}}z^{\underline{\lambda}}) \prod_\sigma \sigma(\varpi_K)^{m\sum_{i}\lambda_{i,\sigma}} (\prod_i \varphi_i)^m 
= 
\sum_{m\in\bbZ} c_m(\delta_{\underline{\phi}}z^{w\underline{\lambda}}) \prod_\sigma \sigma(\varpi_K)^{m\sum_{i}\lambda_{w_\sigma^{-1}(i),\sigma}} (\prod_i \varphi_i)^m\,,
$$
for all $w\in W$, equating the $m$-th coefficient of which proves \eqref{eqn:sym_Sen}.
\end{proof}
\item {\it Symmetry in smooth parts.}
We claim that for any smooth characters $\psi_i:\cO_K^\times \ra E^\times$ for $1\leq i\leq n$, we have
\begin{numequation}
\label{eqn:sym_smooth}
c_{\underline{m}}(\psi_1\kappa_1,\dots,\psi_n\kappa_n) = c_{\underline{m}}(\psi_{\tau(1)}\kappa_1,\dots,\psi_{\tau(n)}\kappa_n)
\end{numequation}
for all $\tau \in S_n$. 
\begin{proof}
Since the translation map
$$
(\kappa_1,\dots,\kappa_n) \mapsto (\psi_1\kappa_1,\dots,\psi_n\kappa_n)
$$ 
is an analytic automorphism of $\cW^n$ and $c_{\underline{m}}$ is rigid analytic on $\cW^n$, it suffices to show that \eqref{eqn:sym_smooth} holds as $(\kappa_1,\dots,\kappa_n)$ runs over a Zariski-dense subset of $\cW^n$, e.g., the set of dominant regular weights $\underline{\lambda}$, which is shown to be Zariski-dense in (each component of) $\cW^n$ during the proof of Lemma \ref{lem:diagonal}.
For any generic $n$-tuple 
$$
(\varphi_1,\dots,\varphi_n)\in (E^\times)^n
$$ 
satisfying $\varphi_i/\varphi_j\notin \{1,q^{\pm1}\}$ for $i\neq j$, we can extend $\psi_i:\cO_K^\times \ra E^\times$ to a generic tuple of smooth characters $\phi_i:K^\times \ra E^\times$ by sending the uniformizer $\varpi_K$ to $\varphi_i$.
Then, by Corollary \ref{cor:crystabel_sym}(ii), we have
$$
\sum_{m\in\bbZ} c_m(\delta_{\underline{\phi}}z^{\underline{\lambda}}) \prod_\sigma \sigma(\varpi_K)^{m\sum_{i}\lambda_{i,\sigma}} (\prod_i \varphi_i)^m 
= 
\sum_{m\in\bbZ} c_m(\delta_{\tau(\underline{\phi})}z^{\underline{\lambda}}) \prod_\sigma \sigma(\varpi_K)^{m\sum_{i}\lambda_{i,\sigma}} (\prod_i \varphi_{\tau(i)})^m\,.
$$
Equating the $m$-th coefficients of both sides yields
$$
c_m(\delta_{\underline{\phi}}z^{\underline{\lambda}}) = c_m(\delta_{\tau(\underline{\phi})}z^{\underline{\lambda}})
$$
for all dominant regular weights $\underline{\lambda}$ and all $\tau\in S_n$, proving \eqref{eqn:sym_smooth}.
\end{proof}
\end{enumerate}

\end{para}

\begin{para}{\it Descent along the derivative map.}
Recall the notion of Sen weights from \cite[6.1.6]{KPX_cohomology}.
For any affinoid $E$-algebra $A$, the {\it weight} of any continuous character $\delta: K^\times \ra A^\times$ is the unique element $\wt(\delta) = (\wt_\sigma(\delta))_{\sigma\in \Sigma_K}\in K\otimes_{\Qp}A\cong \prod_{\sigma\in \Sigma_K} A$ such that
$$
\lim_{\substack{x\ra 0\\ x\in \cO_K}} \frac{|\delta(1+x)-(1+\sum_{\sigma\in \Sigma_K} \wt_\sigma(\delta)\sigma(x))|_A}{|x|_p} = 0
$$
for any Banach algebra norm $|\cdot|_A$ defining the topology on $A$.
This definition also applies to continuous characters $\kappa:\cO_K^\times \ra A^\times$.
We also write the limit statement informally as 
$$
\delta(1+x) = 1 + \sum_{\sigma\in \Sigma_K} \wt_\sigma(\delta)\sigma(x) + o(x) \,.
$$
One can understand the weight of $\kappa$ as the derivative: choose $m=m(K)\in \bbN_{>0}$ such that for $U_K^{(m)}:=1+\varpi^m_K\cO_K$, the $p$-adic logarithm and exponential
$$
\log : U_K^{(m)} \longrightarrow \frm_K^m, \quad \exp: \frm_K^m \longrightarrow U_K^{(m)}
$$
converge and are inverses to each other, from which we draw a commutative diagram
$$
\begin{tikzcd}
U_K^{(m)} \arrow[rr, "\kappa"]                           &  & 1+A^{\circ\circ} \arrow[d, "\log"] \\
\mathfrak{m}_K^m \arrow[u, "\exp"] \arrow[rr, "d\kappa"] &  & A                                 
\end{tikzcd}
$$
where $A^{\circ\circ}$ denotes the ideal of topologically nilpotent elements of $A$ and $d\kappa:=\log\circ \kappa \circ \exp$ is $\bbZ_p$-linear as $\kappa$ is a continuous character.
Since $A$ is an $E$-algebra and $\frm_K^m$ is $p$-torsionfree, we can equivalently view $d\kappa$ as an element of 
$$
\Hom_{\Qp}(\frm_K^m[1/p],A) = \Hom_{\Qp}(K,A) = \Hom_E(K\otimes_\Qp E, A) = \Hom_{E}(\prod_{\sigma\in \Sigma_K} E,A) = \prod_{\sigma\in \Sigma_K} A \,,
$$
where all isomorphisms are canonical.
Concretely, to $\ua = (a_\sigma)_\sigma\in \prod_{\sigma\in \Sigma_K} A$, we associate the $\Qp$-linear functional $\ell_{\ua}:K \ra A, x\mapsto \sum_\sigma a_\sigma \sigma(x)$.

\begin{prop}
The derivative $d\kappa: K\ra A$ is given by $x\mapsto \sum_\sigma \wt_\sigma(\kappa)\sigma(x)$.    
\end{prop}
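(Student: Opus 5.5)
The plan is to compare the two $\Qp$-linear maps $K\to A$ directly. On one side we have the (extended) derivative $d\kappa$. On the other we have the functional $\ell_{\wt(\kappa)}: x\mapsto \sum_\sigma \wt_\sigma(\kappa)\sigma(x)$. Both are $\Qp$-linear, so it suffices to show they agree on the open lattice $\frm_K^m$. Scaling then gives agreement on all of $\frm_K^m[1/p]=K$.

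The first step is to record $\Zp$-linearity of $d\kappa$ on $\frm_K^m$. This holds because $\exp$, $\kappa$ and $\log$ are continuous group homomorphisms. It extends uniquely to the $\Qp$-linear map $K\to A$, which is what is meant by $d\kappa$.

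The key step is a first-order expansion. For $y\in \frm_K^m$ we have $\exp(y)=1+y+O(y^2)$, so by the defining property of $\wt(\kappa)$,
\[
\kappa(\exp(y)) = 1+\sum_\sigma \wt_\sigma(\kappa)\sigma(y)+o(y).
\]
Here we use that $\sigma(y^2)$ terms are $o(y)$ and that the error term is controlled by $|\exp(y)-1|_p=|y|_p$. Applying $\log(1+u)=u+O(u^2)$ on $1+A^{\circ\circ}$, with $u=\kappa(\exp y)-1$ of size $O(|y|_p)$ once $y$ is small, gives
\[
d\kappa(y)=\ell_{\wt(\kappa)}(y)+o(y) \quad\text{as } y\to 0.
\]

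To conclude, set $\Delta:=d\kappa-\ell_{\wt(\kappa)}$. This is a $\Zp$-linear map $\frm_K^m\to A$ with $|\Delta(y)|_A/|y|_p\to 0$. For fixed $y$ we get $\Delta(y)=p^{-k}\Delta(p^ky)$, so
\[
|\Delta(y)|_A \le C\,|p^{-k}|\cdot o(|p^k y|_p) = |y|_p\cdot o(1)\to 0
\]
as $k\to\infty$. Here $C$ accounts for the norm being only power-multiplicative up to a constant on scalars; choosing $|\cdot|_A$ to be an $E$-Banach norm with $|p^{-k}a|=p^k|a|$ avoids $C$ entirely. Hence $\Delta=0$.

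The main obstacle is the uniformity in the key step. One must check that $|\kappa(\exp y)-1|_A$ is bounded by a constant times $|y|_p$, so that the quadratic remainder of $\log$ is genuinely $o(y)$. I would get this from the expansion itself: it shows $\kappa(\exp y)-1=O(y)$. One also needs that $\log$ converges on the relevant elements of $1+A^{\circ\circ}$ with the standard estimate $|\log(1+u)-u|\le c|u|^2$ for $|u|$ small. This holds after possibly enlarging $m$, which doesn't affect the conclusion.
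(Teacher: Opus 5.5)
Your proposal is correct and follows the paper's proof essentially step for step. Like the paper, you expand $\exp$, then $\kappa$ (via the defining property of $\wt(\kappa)$), then $\log$ to first order, obtaining $d\kappa(y)=\sum_\sigma \wt_\sigma(\kappa)\sigma(y)+o(y)$, and then use $\Zp$-linearity of $d\kappa$ to kill the $o(y)$ error. Your scaling argument $\Delta(y)=p^{-k}\Delta(p^ky)$ and your check that $\kappa(\exp y)-1=O(y)$ simply spell out the paper's one-line ``by linearity of $d\kappa$'' conclusion in more detail.
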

\begin{proof}
Consider 
$
d\kappa=\log\circ \kappa\circ \exp: \frm_K^m \ra A.
$
For small $x\in \frm_K^m$, 
$$
\exp(x)-1 = x+O(x^2) = x +o(x)
$$ is also small as $x\ra 0$.
The definition of Sen weight of $\kappa$ implies that
\begin{align*}
\kappa(1+(\exp(x)-1)) 
&= 
1 +\sum_{\sigma \in \Sigma_K} \wt_\sigma(\kappa)\sigma(\exp(x)-1)+o(\exp(x)-1)
\\&=
1+\sum_{\sigma\in \Sigma_K} \wt_\sigma(\kappa)\sigma(x +o(x))+o(x)
\\&=
1+\sum_{\sigma\in \Sigma_K} \wt_\sigma(\kappa)\sigma(x)+o(x)\,.
\end{align*}
Then we have
\begin{align*}
d\kappa(x) 
&= 
\log(\kappa(\exp(x))) = \log(\kappa(1+(\exp(x)-1))) 
\\&=
\log\left(1+\sum_{\sigma\in \Sigma_K} \wt_\sigma(\kappa)\sigma(x)+o(x)\right)
\\&= \sum_{\sigma\in \Sigma_K} \wt_\sigma(\kappa)\sigma(x)+o(x)
\end{align*}
because $\log(1+z)=z-\frac{z^2}{2}+\frac{z^3}{3} -\dots =z+O(z^2)$.
By linearity of $d\kappa$, we conclude that
$$
d\kappa(x) =\sum_{\sigma\in \Sigma_K} \wt_\sigma(\kappa)\sigma(x)
$$
as desired.
\end{proof}

\begin{prop}
\label{prop:wt_surj}
The weight map 
$$
\wt: \cW \longrightarrow\bbA^{\Sigma_K,\rig}_E, \quad \kappa \mapsto \wt(\kappa) = (\wt_\sigma(\kappa))_\sigma
$$
is surjective in the étale topology on $\Rig_E$. 
\end{prop}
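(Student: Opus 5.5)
The plan is to reduce to the connected components of $\cW$ and exhibit the weight map there, after a finite étale base change, as a map that is locally an isomorphism onto open polydiscs. The target of this local identification should be a family of open polydiscs covering $\bbA^{\Sigma_K,\rig}_E$.

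\textbf{Step 1: reduce to the component of $\Gamma$.} By the decomposition $\cO_K^\times=\mu(K)\times\Gamma$ used in the proof of Lemma \ref{lem:diagonal}, we have $\cW=\coprod_\eta \bbB_\eta$. Multiplication by $\eta$ does not change the weight, so it suffices to treat the component $\bbB=\Spf(\cO_E[[\Gamma]])^\rig$ of characters of $\Gamma\cong\bbZ_p^{d}$, where $d=[K:\Qp]$. Choose $m$ as in the preceding discussion so that $\log$ and $\exp$ identify $U_K^{(m)}$ with $\frm_K^m$. By the previous proposition, $\wt(\kappa)$ is determined by $d\kappa=\log\circ\kappa\circ\exp$ on the lattice $\frm_K^m$.

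\textbf{Step 2: factor the weight map.} Restriction to $\Gamma_m:=\Gamma\cap U_K^{(m)}$ (which is $U_K^{(m)}$ once $m$ is large) gives a map $r:\bbB\to\bbB_m$ to the polydisc of characters of $\Gamma_m$. Since $\Gamma_m\subset\Gamma$ has finite index, $r$ is finite étale and surjective: it is a torsor under the finite constant group of characters of $\Gamma/\Gamma_m$, after enlarging $E$ if needed.

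Fix a $\bbZ_p$-basis $x_1,\dots,x_d$ of $\frm_K^m$. A character of $\Gamma_m$ is then the same as a tuple $(t_1,\dots,t_d)$ with $t_j=\kappa(\exp x_j)-1$ in the open unit disc, so $\bbB_m$ is the open unit polydisc. In these coordinates $d\kappa(x_j)=\log(1+t_j)$. The weight $\wt(\kappa)\in\prod_\sigma A$ is obtained from the vector $(\log(1+t_j))_j$ by the invertible $E$-linear matrix that expresses the functionals $\sigma$ on $K\otimes E$ in the basis dual to $(x_j)$. So $\wt|_{\bbB_m}$ is the composite of $\log^{\times d}$ on the open polydisc with a linear automorphism $M$ of $\bbA^{d,\rig}_E$.

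\textbf{Step 3: étale surjectivity of the logarithm.} It remains to show that $\log:\{|t|<1\}\to\bbA^{1,\rig}_E$, $t\mapsto\log(1+t)$, is étale and surjective (strongly surjective on adic points). Its derivative $1/(1+t)$ is a unit, so the map is étale.

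For surjectivity I would use the $p$-power maps. For $r<1$ close to $1$, the disc $D_r=\{|t|\le r\}$ of radius $r$ maps isomorphically onto a closed disc of radius $\rho$ via $\log$ and $\exp$. The identity $\log(1+t)=p^{-k}\log((1+t)^{p^k})$ then shows that the annuli $\{|t|\le r^{1/p^k}\}$ map onto discs of radius $p^k\rho$. Indeed, $t\mapsto(1+t)^{p^k}-1$ is a finite flat surjection of the larger disc onto the smaller one, and this map is étale in characteristic $0$. Hence $\log$ restricted to each affinoid $\{|t|\le r^{1/p^k}\}$ is finite étale surjective onto the closed disc of radius $p^k\rho$. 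These closed discs form an admissible cover of $\bbA^{1,\rig}_E$, so $\log$ is an étale covering.

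Composing with $M$ and the finite étale surjection $r$ shows that $\wt$ is étale and Tate-étale surjective.

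\textbf{Main obstacle.} The delicate step is the surjectivity of $\log$ on points with values in arbitrary (non-rigid) adic fields, not just on rigid points. The finite flat surjectivity of the $p^k$-power maps between affinoid discs should handle this, because finite flat surjective maps are surjective on adic spectra. I would also check the admissibility of the covering by the radii $p^k\rho$, so that local sections exist étale-locally on the target as the statement requires.
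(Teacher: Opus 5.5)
Your route differs from the paper's and is sound in outline, but Step 3 contains a false claim that must be repaired. The map $t\mapsto \log(1+t)$ is an isomorphism $D_r\to D_r$ only for $r<p^{-1/(p-1)}$. Once $r\geq p^{-1/(p-1)}$, the disc $D_r$ contains $\zeta_p-1$, which is a zero of $\log(1+t)$ other than $0$. So ``$r<1$ close to $1$'' is the wrong regime.

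There is a matching lower bound. Your claim that $t\mapsto (1+t)^{p^k}-1$ is finite flat of degree $p^k$ from $\{|t|\le r^{1/p^k}\}$ onto $D_r$ needs all its roots $\zeta-1$, $\zeta\in\mu_{p^k}$, to lie in the source disc. That means $r^{1/p^k}\geq |\zeta_{p^k}-1| = p^{-1/(p^{k-1}(p-1))}$, i.e.\ $r\geq p^{-p/(p-1)}$. For smaller $r$ the map has degree $<p^k$ on that disc and does not map it onto $D_r$.

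So fix $r$ in the window $p^{-p/(p-1)}\le r<p^{-1/(p-1)}$, for example $r=p^{-p/(p-1)}$. Then $\rho=r$, and the identity $\log(1+t)=p^{-k}\log((1+t)^{p^k})$ exhibits $\log$ on the \emph{disc} $\{|t|\le r^{1/p^k}\}$ as a finite \'etale surjection onto $\{|y|\le p^k r\}$.

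With this, your ``main obstacle'' disappears. The coordinates of any $\Sp(A)\to \bbA^{\Sigma_K,\rig}_E$ are bounded, so after applying $M^{-1}$ the map lands in a closed polydisc of radius $p^k r$ for some $k$. Base change the finite \'etale faithfully flat map over that polydisc, then base change the restriction map $\mathrm{res}\colon\bbB\to\bbB_m$. This gives a finite \'etale cover of $\Sp(A)$ together with a lift to $\cW$. No analysis of non-rigid adic points is needed, and you get finite \'etale covers, which is exactly what Lemma \ref{lem:descent_wt} uses.

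In Step 2, your torsor remark presupposes that $\mathrm{res}$ is surjective. Justify this by choosing adapted bases of $\Gamma_m\subset\Gamma$ (elementary divisors). In those bases $\mathrm{res}$ is a product of maps $s\mapsto (1+s)^{p^{e}}-1$ of open discs, which the same analysis handles.

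Compared with the paper: the paper never describes the geometry of $\cW$. Given $\ua\in A^{\Sigma_K}$, it chooses $m'$, depending on the size of $\ua$, so that $\kappa'=\exp\circ\ell_{\ua}\circ\log$ converges on $U_K^{(m')}$. It then extends $\kappa'$ to $\cO_K^\times$ over the Kummer-type finite \'etale algebra $A[T_1,\dots,T_s]/(T_i^{r_i}-\kappa'(x_i^{r_i}))$.

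Your $p^k$-power maps and the covering $\mathrm{res}$ are the geometric form of this root extraction: large weights force small subgroups, and your $k$ plays the role of $m'$. So the underlying mechanism is the same. Your version proves more: $\wt$ is \'etale, and on each component it equals $M\circ\log^{\times d}\circ\mathrm{res}$. The cost is coordinates, the elementary-divisor reduction, and the radius bookkeeping above. The paper's construction is shorter, coordinate-free, and handles $\mu(K)$ together with the whole finite quotient $\cO_K^\times/U_K^{(m')}$ in one step.
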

\begin{proof}
For any affinoid $E$-algebra $A$, and any morphism $\Sp(A)\ra \bbA_E^{\Sigma_K,\rig}$ of rigid analytic spaces corresponding to a $[K:\Qp]$-tuple $\ua=(a_\sigma)_\sigma \in A^{\Sigma_K}$, choose  large $m'\geq m(K)$ such that the exponential map $\exp$ on $A$ converges and is invertible on the image of 
$$
\ell_{\ua}\circ \log : U_K^{(m')} \longrightarrow A, \quad u\mapsto \sum_\sigma a_\sigma. \sigma(\log(u))\,.
$$
Then, define $\kappa':= \exp \circ \ell_{\ua} \circ \log: U_K^{(m')} \longrightarrow A^\times$.
By construction, $d\kappa' = \ell_{\ua}$.
If $\kappa' = \kappa|_{U_K^{(m')}}$ is the restriction of a continuous character $\kappa$ of $\cO_K^\times$, then we know that $\wt(\kappa)=\ua$.
Consider
$$
0 \ra U_K^{(m')} \ra \cO_K^\times \ra \cO_K^\times/U_K^{(m')} \cong \prod_{i=1}^s \Z/r_i\Z \ra 0 \,,
$$
where $\cO_K^\times/U_K^{(m')} \cong \prod_{i=1}^s \Z/r_i\Z$ is a finite abelian group in its invariant factor decomposition.
We claim that $\kappa':U_K^{(m')}\ra A^\times$ extends to a continuous character $\kappa: \cO_K^\times \ra B^\times$ for a finite étale $A$-algebra $B$.
Indeed, pick lifts $x_i\in \cO_K^\times$ of the generators $1\in \Z/r_i\Z$, and set
$$
B := A[T_1,\dots, T_s]/(T_i^{r_i}-\kappa'(x_i^{r_i}))_{1\leq i\leq s}
$$  
which is a finite free étale $A$-algebra, as $\kappa'(x_i^{r_i})\in A^\times$.
This makes sense because $x_i^{r_i}\in U_K^{(m')}$ where $\kappa'$ is defined.
We can extend\footnote{This map $\kappa$ is a group homomorphism, for the same reason why for every abelian extension $$0\ra A \ra E \ra G \ra 0$$ of a finite abelian group $G$ of exponent $n$ by an abelian group $A$, any character of $A$ extends to a character of $E$, after possibly extending the coefficients to extract certain $n$-th roots.} $\kappa'$ to $\kappa:\cO_K^\times \ra B^\times$ by setting $\kappa(x_i):= T_i\in B^\times$.
Then, $\wt(\kappa) = \wt(\kappa')=\ua$, so the weight map $\wt$ is étale locally surjective.
\end{proof}

Consider the map $q:=(\wt_1,\dots,\wt_n,\det) : \cW^n \ra (\bbA_E^n)^{\Sigma_K}\times_{(\bbA_E^1)^{\Sigma_K}} \cW$ defined by
$$
q: (\kappa_1,\dots, \kappa_n) \mapsto (\wt(\kappa_1),\dots, \wt(\kappa_n),\prod_{i=1}^n\kappa_i)
$$
of taking ordered Sen weights and the determinant, where the fiber product is defined by the trace map
$$
(\bbA_E^n)^{\Sigma_K} \ra (\bbA_E^1)^{\Sigma_K}, \quad (a_{i,\sigma})_{1\leq i\leq n,\sigma\in \Sigma_K}\mapsto (\sum_{i=1}^n a_{i,\sigma})_{\sigma\in \Sigma_K}
$$
and the weight map studied in Proposition \ref{prop:wt_surj},
$$
\wt: \cW \ra \bbA^{\Sigma_K}_E\,.
$$

Observe that we have an isomorphism
$$
\pr:
(\bbA_E^n)^{\Sigma_K}\times_{(\bbA_E^1)^{\Sigma_K}} \cW \xrightarrow{\simeq} (\bbA_E^{n-1})^{\Sigma_K} \times \cW
$$
projecting a pair $\displaystyle ((a_{i,\sigma})_{1\leq i\leq n}^{\sigma\in \Sigma_K}, \kappa)$ with $(\wt_\sigma(\kappa))_\sigma = (\sum_{i} a_{i,\sigma})_\sigma$ to the pair $((a_{i,\sigma})_{1\leq i\leq {n-1}}^{\sigma\in \Sigma_K}, \kappa)$, because $a_{n,\sigma} = \wt_\sigma(\kappa)-\sum_{i=1}^{n-1} a_{i,\sigma}$.
In particular, $(\bbA_E^n)^{\Sigma_K}\times_{(\bbA_E^1)^{\Sigma_K}} \cW$ is reduced.

\begin{lemma}
\label{lem:descent_wt}
Suppose $f\in \Gamma(\frX_n)$ and write $f_T$ for its image in $\Gamma(\cT^n)$ of the form
$$
f_T = \sum_{m\in \bbZ} c_{m}(\kappa_1,\dots,\kappa_n) \left(\prod_{i=1}^n\alpha_i\right)^m
$$
for $c_m\in \Gamma(\cW^n)$,
thanks to Lemma \ref{lem:diagonal}.
Then, $c_m = q^*(\widetilde{c}_m)$ descends to a global function $\wtc_m\in \Gamma((\bbA_E^n)^{\Sigma_K}\times_{(\bbA_E^1)^{\Sigma_K}} \cW)$ along the map
$$
q:=(\wt_1,\dots,\wt_n,\det) : \cW^n \ra (\bbA_E^n)^{\Sigma_K}\times_{(\bbA_E^1)^{\Sigma_K}} \cW\,.
$$
\end{lemma}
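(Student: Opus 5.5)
Since $\pr$ identifies the target $Y:=(\bbA_E^n)^{\Sigma_K}\times_{(\bbA_E^1)^{\Sigma_K}}\cW$ with $(\bbA_E^{n-1})^{\Sigma_K}\times\cW$, a reduced smooth rigid space, we will produce $\wtc_m$ by étale descent. The idea is to show that $c_m$ is constant along the fibres of $q$, in the strong sense that it agrees on the two pullbacks to $\cW^n\times_Y\cW^n$. Then we use that $q$ is an étale-surjective map admitting local sections, and that $\cO$ is a sheaf for the étale topology.

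\emph{Step 1: $q$ is surjective in the étale topology.} Given $\Sp(A)\ra Y$, write the point as $(\ua_1,\dots,\ua_n,\kappa)$ with $\sum_i\ua_i=\wt(\kappa)$. By Proposition \ref{prop:wt_surj}, after a finite étale extension $A\ra B$ we can find characters $\kappa_1,\dots,\kappa_{n-1}:\cO_K^\times\ra B^\times$ with $\wt(\kappa_i)=\ua_i$. We then set $\kappa_n:=\kappa\prod_{i<n}\kappa_i^{-1}$. Its weight is $\wt(\kappa)-\sum_{i<n}\ua_i=\ua_n$, since $\wt$ is additive in products of characters, as one sees from the derivative description $d\kappa$. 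This gives a section of $q$ over $\Sp(B)$.

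\emph{Step 2: fibres of $q$.} If $(\kappa_i)$ and $(\kappa_i')$ map to the same point of $Y$, then $\psi_i:=\kappa_i'/\kappa_i$ has $\wt(\psi_i)=0$, so $\psi_i$ is trivial on some $U_K^{(m')}$. Hence each $\psi_i$ is a smooth character, and moreover $\prod\psi_i=1$. We must show that
\[
c_m(\psi_1\kappa_1,\dots,\psi_n\kappa_n)=c_m(\kappa_1,\dots,\kappa_n)
\]
whenever $\psi_i$ are smooth characters of $\cO_K^\times$ with $\prod_i\psi_i=1$. We plan to derive this from \eqref{eqn:sym_smooth}, which gives invariance under permuting the smooth twists, and from the translation-invariance trick used in its proof. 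Reduce first to $\psi=(\chi,\chi^{-1},1,\dots,1)$: any tuple with trivial product is a product of such moves, so the pairwise case suffices. Next, apply \eqref{eqn:sym_smooth} to the twist $(\chi\kappa_1,\kappa_2,\ldots)$ viewed as $(\psi_1,\psi_2,\ldots)=(\chi,1,\ldots)$ at base point $(\kappa_1,\chi^{-1}\cdot\chi\kappa_2,\dots)$. Some care is needed here: the cleanest route is to set $\kappa_2=\chi^{-1}\kappa_2''$, write both sides as smooth twists of the fixed point $(\kappa_1,\kappa_2'')$ by $(\chi,\chi^{-1})$ versus $(1,1)$, and use the transposition symmetry together with density of dominant regular algebraic weights, as in the proof of \eqref{eqn:sym_smooth}. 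We expect this to be the main obstacle. If \eqref{eqn:sym_smooth} alone does not yield it, we would return to Corollary \ref{cor:crystabel_sym}(ii) with twisted smooth parameters $\underline{\phi}$ and $\underline{\phi}'$ that share the same multiset of characters after absorbing $\chi$ into the algebraic weights. That is possible because a smooth character of $\mu(K)$-type and finite order can be matched on the Zariski-dense sets $\sL_{\underline{\eta}}$ from Lemma \ref{lem:diagonal}.

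\emph{Step 3: descent.} Step 2 makes the two pullbacks of $c_m$ to $\cW^n\times_Y\cW^n$ agree on a Zariski-dense set of points. Since $q$ is flat and the fibre product is reduced (it is étale-locally $\cW^n\times$ a finite discrete group of smooth characters), the two pullbacks agree. Étale descent for functions along the cover from Step 1 then gives a unique $\wtc_m\in\Gamma(Y)$ with $q^*\wtc_m=c_m$. Uniqueness follows from surjectivity of $q$ and reducedness of $Y$.
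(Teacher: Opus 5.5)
Your proposal is essentially the paper's proof: finite \'etale local sections of $q$ from Proposition~\ref{prop:wt_surj}, points in a fibre of $q$ differing by smooth characters with trivial product, iteration of \eqref{eqn:sym_smooth}, and descent via reducedness. The paper checks $s_1^*c_m=s_2^*c_m$ at closed points of the reduced $\Sp(B\otimes_A B)$, which avoids your description of $\cW^n\times_Y\cW^n$; there, for $n\geq 2$, the group of weight-zero twists is infinite, not finite.

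The step you call the main obstacle is immediate, because \eqref{eqn:sym_smooth} holds at every base point of $\cW^n$. Apply it at $(\kappa_1,\chi^{-1}\kappa_2,\kappa_3,\dots,\kappa_n)$ with twists $(\chi,1,\dots,1)$ and $\tau=(1\,2)$ to get $c_m(\chi\kappa_1,\chi^{-1}\kappa_2,\dots)=c_m(\kappa_1,\kappa_2,\dots)$. You should discard both alternatives you sketch:
\begin{enumerate}
\item The $(\chi,\chi^{-1})$-versus-$(1,1)$ variant fails, since a transposition only relates $(\chi,\chi^{-1})$ to $(\chi^{-1},\chi)$.
\item The fallback of absorbing smooth characters into algebraic weights fails, since a nontrivial finite-order character of the pro-$p$ part $\Gamma\subset U_K^{(1)}$ has weight $0$ and so is never of the form $x^{\underline{\nu}}$.
\end{enumerate}
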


\begin{proof}
For any affinoid subdomain $\Sp(A)\subset (\bbA_E^n)^{\Sigma_K}\times_{(\bbA_E^1)^{\Sigma_K}} \cW$, viewed as a family 
$$
((a_{i,\sigma})_{i,\sigma} \in (A^{n})^{\Sigma_K}, \kappa_0:\cO_K^\times \ra A^\times) \quad \text{ such that }\quad \big(\sum_{i=1}^n a_{i,\sigma}\big)_{\sigma\in \Sigma_K} = \wt_\sigma(\kappa_0)_{\sigma\in \Sigma_K}
$$
over $\Sp(A)$, by Proposition \ref{prop:wt_surj} there exists a finite étale cover $\Sp(B)\twoheadrightarrow \Sp(A)$ together with continuous characters $\kappa_1,\dots,\kappa_{n-1}: \cO_K^\times \ra B^\times$ such that $\wt_\sigma(\kappa_i)=a_{i,\sigma}$ for each $\sigma\in \Sigma_K$.
Then, $(\sum_{i=1}^n a_{i,\sigma})_{\sigma\in \Sigma_K} = \wt_\sigma(\kappa_0)_{\sigma\in \Sigma_K}$ means that for $\kappa_n := \kappa_0(\kappa_1\cdots \kappa_{n-1})^{-1} \in \cW(\Sp(B))$, we get a local section $s: \Sp(B) \ra \cW^n$ to $q$ defined by $s = (\kappa_1,\dots,\kappa_n)$:
$$
\begin{tikzcd}
                                                                                 & \mathcal{W}^n \arrow[r, "q"]                                    & (\bbA_E^n)^{\Sigma_K}\times_{(\bbA_E^1)^{\Sigma_K}} \cW \\
\Sp(B\otimes_A B) \arrow[r, "\pr_1", shift left] \arrow[r, "\pr_2"', shift right] & \Sp(B) \arrow[r, "{\text{finite \'{e}tale}}", two heads] \arrow[u, "{s=(\kappa_1,\dots,\kappa_n)}", dashed] & \Sp(A) \arrow[u, hook, "{((a_{i,\sigma}),\kappa_0)}"]                                 
\end{tikzcd}
$$
In order to descend $c_m\in \Gamma(\cW^n)$ along $q$, we claim that $s^*(c_m)\in B$ comes from $\Sp(A)$. 
By faithfully flat descent, this is equivalent to the claim that $s_1^*(c_m) = s_2^*(c_m)$ for $s_i:=s\circ \pr_i$, where $i=1,2$.
As we have seen that $\Sp(A)$ is reduced and $\Sp(B)\ra \Sp(A)$ is finite étale, $\Sp(B)$ is reduced by \cite[\href{https://stacks.math.columbia.edu/tag/033B}{Tag 033B}]{stacks-project}.
Then, $\Sp(B\otimes_AB)$ is again reduced because it is finite étale over the reduced rigid space $\Sp(B)$ by base change.
Hence, it suffices to check $s_1^*(c_m)|_{\kappa(x)}=s_2^*(c_m)|_{\kappa(x)}$ at every closed point $x\in \Sp(B\otimes_A B)$.
\vskip8pt
At any point $x\in \Sp(B\otimes_A B)$, the residue field $\kappa(x)$ is a finite extension $L$ of $E$.
To show that $s_1^*(c_m)|_{\kappa(x)}=s_2^*(c_m)|_{\kappa(x)}$, it suffices to show that for any two $n$-tuples of characters
$
(\kappa_1',\dots, \kappa_n'), (\kappa_1'',\dots, \kappa_n'') \in \cW^n(L)
$ 
with the same weights $\wt_\sigma(\kappa_{i}')=\wt_\sigma(\kappa_{i}'')$ and product $\prod_{i=1}^n\kappa_i'=\prod_{i=1}^n\kappa_i''$, we have $c_m(\kappa_1',\dots, \kappa_n')=c_m(\kappa_1'',\dots, \kappa_n'')$.
This follows from \eqref{eqn:sym_smooth}.
Indeed, by the definition of weight, $\kappa_i' = \psi_i \kappa_i''$ for a smooth character $\psi_i:\cO_K^\times \ra L^\times$.
The equality of products $\prod_{i=1}^n\kappa_i'=\prod_{i=1}^n\kappa_i''$ implies that $\prod_{i=1}^n \psi_i = 1$.
Iterating \eqref{eqn:sym_smooth} yields
\begin{align*}
c_m(\kappa_1',\dots,\kappa_n') 
&= 
c_m(\psi_1\kappa_1'',\dots, \psi_n \kappa_n'')
\\&=
c_m((\prod_{i=1}^2\psi_i)\kappa_1'',\kappa_2'',\psi_3\kappa_3'',\psi_4\kappa_4'',\dots,\psi_n \kappa_n'')
\\&=
c_m((\prod_{i=1}^3\psi_i)\kappa_1'',\kappa_2'',\kappa_3'', \psi_4\kappa_4'' ,\dots,\psi_n \kappa_n'')
\\&=
c_m((\prod_{i=1}^n\psi_i)\kappa_1'',\kappa_2'',\dots,\kappa_n'') 
\\&= c_m(\kappa_1'',\dots,\kappa_n'')\,.
\end{align*}
For each affinoid $\Sp(A) \subset (\bbA_E^n)^{\Sigma_K}\times_{(\bbA_E^1)^{\Sigma_K}} \cW$, this gives a function $\widetilde{c}_{m,A}$; these functions glue\footnote{Before proving this, we have not even shown that $\wtc_{m,A}$ is independent of the choices of étale cover $\Sp(B)$ and the section $s:\Sp(B)\ra \cW^n$. 
We now prove them together: let $\Sp(B)\twoheadrightarrow \Sp(A)$ and $\Sp(C)\twoheadrightarrow \Sp(A)$ be any finite étale covers, and let $s_B:\Sp(B)\ra \cW^n,s_C:\Sp(C)\ra \cW^n$ be any local sections to $q$. 
We need to identify $s_B^*(c_m)$ and $s_C^*(c_m)$, both of which we know come from $\Sp(A)$, by restricting them on the overlap $\Sp(D):=\Sp(B)\times_{\Sp(A)} \Sp(C)$.
Same argument as before shows that $\Sp(D)\twoheadrightarrow\Sp(A)$ is a finite étale cover with $\Sp(D)$ reduced. 
We again use \eqref{eqn:sym_smooth} over each residue field to equate the restrictions of $s_B^*(c_m)$ and $s_C^*(c_m)$ to $\Sp(D)$ as elements in $D$. 
Since $A\ra D$ is injective by faithful flatness, we know that $s_B^*(c_m)$ and $s_C^*(c_m)$ descend to the same element of $A$, which we denote by $\wtc_{m,A}$.
} across the overlaps to $\wtc_m\in \Gamma((\bbA_E^n)^{\Sigma_K}\times_{(\bbA_E^1)^{\Sigma_K}} \cW)$ such that $q^*\wtc_m = c_m$.
\end{proof}

Consider the map
$$
g: (\bbA_E^n)^{\Sigma_K}\times_{(\bbA_E^1)^{\Sigma_K}} \cW \longrightarrow \WT_{n,E}\times_{\WT_{1,E}} \cW, \quad 
g((a_{i,\sigma})_{i,\sigma},\kappa) = \left(\bigg(\prod_{i=1}^n (T-a_{i,\sigma})\bigg)_{\sigma\in \Sigma_K},\kappa\right)\,,
$$
where $(\prod_{i=1}^n (T-a_{i,\sigma}))_{\sigma\in \Sigma_K}$ is viewed as an element in $(K\otimes_\Qp A)[T]\cong \prod_{\sigma\in \Sigma_K} A[T]$ whose coefficients are taken to be the value of $g$ in $\WT_{n,E}={\rm Res}_{\Qp}^K(\bbA^n_K)_E=\prod_{\sigma\in \Sigma_K}\bbA^n_E$.
\begin{lemma}
\label{lem:descent_GIT}
For any function $f\in \Gamma(\frX_n)$ with 
$$
f_T = \sum_{m\in \bbZ} c_{m}(\kappa_1,\dots,\kappa_n) \left(\prod_{i=1}^n\alpha_i\right)^m
$$
with $c_m=q^*(\wtc_m)$ for a unique $\wtc_m\in \Gamma((\bbA_E^n)^{\Sigma_K}\times_{(\bbA_E^1)^{\Sigma_K}} \cW)$ as in Lemma \ref{lem:descent_wt}, and for each $m\in \bbZ$, there exists a unique $\frc_m\in \Gamma(\WT_{n,E}\times_{\WT_{1,E}} \cW)$ such that $\wtc_m = g^*(\frc_m)$.
\end{lemma}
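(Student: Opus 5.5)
The descent along $g$ splits into two inputs. One is the symmetry in Sen weights \eqref{eqn:sym_Sen}, which shows that $\wtc_m$ is invariant under the Weyl group $W=S_n^{\Sigma_K}$. The other is the identification of $g$ with the categorical (GIT) quotient by $W$.

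For the second input, use the isomorphism $\pr$ to rewrite the source as $(\bbA_E^{n-1})^{\Sigma_K}\times\cW$. Here $W$ acts by permuting the $a_{i,\sigma}$ with $\sigma$ fixed; this action preserves the trace, so it descends along $\pr$. Similarly, $\WT_{n,E}\times_{\WT_{1,E}}\cW$ is identified with (coefficients of monic degree-$n$ polynomials with fixed trace)$^{\Sigma_K}\times\cW$, i.e. $\bigl((\bbA^{n-1})^{\Sigma_K}\bigr)\times\cW$ via the elementary symmetric functions $e_2,\dots,e_n$ in each $\sigma$-factor. Under these identifications $g$ is the base change to $\cW$ of the algebraic quotient map $\bbA^n\to\bbA^n/\!\!/S_n$, taken $\Sigma_K$ times and restricted to the trace-$b$ fibre.

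The key algebraic fact is $E[a_1,\dots,a_n]^{S_n}=E[e_1,\dots,e_n]$, together with the flatness and finiteness of $\bbA^n\to\bbA^n/\!\!/S_n$. After base change along an affinoid $\Sp(C)\subset\WT_{n,E}\times_{\WT_{1,E}}\cW$, the preimage is $\Sp(C')$ with $C'$ finite free over $C$ and $(C')^{W}=C$. This follows because invariants commute with flat base change in characteristic $0$, using the Reynolds operator. Hence every $W$-invariant function on an affinoid preimage comes from a unique function on $\Sp(C)$. These functions glue over an admissible affinoid cover, and uniqueness is automatic since $g$ is surjective and the target is reduced.

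It remains to show $W$-invariance of $\wtc_m$: for all $w\in W$, $\wtc_m\circ w=\wtc_m$, i.e. $c_m(\kappa)=c_m(\kappa')$ whenever $q(\kappa')=w\cdot q(\kappa)$. I test on the points $\delta_{\underline\psi}z^{\underline\lambda}$, with $\underline\psi$ smooth characters of $\cO_K^\times$ and $\underline\lambda$ dominant regular. For such a point, $q(\delta_{\underline\psi}z^{\underline\lambda})=(\underline\lambda,\prod\psi_i z^{\sum\lambda})$, and $w$ sends this to the $q$-image of $\delta_{\underline\psi}z^{w\underline\lambda}$. So \eqref{eqn:sym_Sen} gives $\wtc_m(w\cdot y)=\wtc_m(y)$ for all $y$ in the $q$-image of such points.

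The main obstacle is showing that these image points are Zariski dense in each connected component of $(\bbA^{n-1})^{\Sigma_K}\times\cW$. This should follow from the density argument in the proof of Lemma \ref{lem:diagonal}: the integer points $\underline\lambda\in\underline a+N\bbZ^{n\Sigma_K}$ are $p$-adically dense in $\bbZ_p^{n\Sigma_K}$, and regular dominant points approximate every integer point. Combining this with the \'etale-local surjectivity of Proposition \ref{prop:wt_surj}, I expect density of the weights in affine space to follow from the identity principle for polynomials in $a_{i,\sigma}$ with coefficients in $\Gamma(\cW)$. Concretely, I would expand $\wtc_m-\wtc_m\circ w$ on each component as a convergent power series in the $a_{i,\sigma}$ with coefficients analytic on $\cW$. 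Vanishing on all integer weights, together with all characters $\kappa$ of the form $\prod\psi_i z^{\sum\lambda}$, forces all coefficients to vanish. If this runs into trouble, I would instead argue pointwise: by the second half of the proof of Lemma \ref{lem:descent_wt}, every $L$-point of the source lifts to $\cW^n(L')$, and the invariance can be checked on reduced fibres. Once $W$-invariance holds, the quotient step above produces the unique $\frc_m$.
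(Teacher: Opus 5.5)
Your proposal is correct and follows the same route as the paper. You obtain $(S_n)^{\Sigma_K}$-invariance of $\wtc_m$ from \eqref{eqn:sym_Sen} on the Zariski-dense locus of points $q(\delta_{\underline\psi}z^{\underline\lambda})$, and then descend along $g$ viewed as the GIT quotient by $(S_n)^{\Sigma_K}$.

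The paper merely asserts both the quotient property and the density. Your Reynolds-operator descent over affinoids supplies the first. Your power-series density argument supplies the second: for each algebraic $\kappa=z^{\mu}$, the dominant regular $\underline\lambda$ with $\sum_i\lambda_{i,\sigma}=\mu_\sigma$ are Zariski dense in $(\bbA^{n-1}_E)^{\Sigma_K}$, and such $\kappa$ are Zariski dense in $\cW$. The appeal to congruence classes $\underline a+N\bbZ^{n\Sigma_K}$ and to Proposition \ref{prop:wt_surj} is not needed for this.
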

\begin{proof}
Observe that $\WT_{n,E}\times_{\WT_{1,E}} \cW$ is the GIT quotient of $(\bbA_E^n)^{\Sigma_K}\times_{(\bbA_E^1)^{\Sigma_K}} \cW$ for the permutation action of the finite group $(S_n)^{\Sigma_K}$, and $g$ is the quotient map.
To descend $\wtc_m$ through $g$, it suffices to show that $\wtc_m$ is $(S_n)^{\Sigma_K}$-invariant.
A Zariski-dense subset of $(\bbA_E^n)^{\Sigma_K}\times_{(\bbA_E^1)^{\Sigma_K}} \cW$ $(\cong (\bbA_E^{n-1})^{\Sigma_K} \times \cW)$ is given by
$$
(\underline{\lambda}, \det(\delta_{\underline{\phi}}z^{\underline{\lambda}}))
$$
coming from
those regular integral weights $\underline{\lambda}$ and generic locally algebraic characters $\delta_{\underline{\phi}}z^{\underline{\lambda}}$ valued in $L^\times$ for all finite extensions $L$ of $E$.
Over this locus, the $(S_n)^{\Sigma_K}$-invariance of $\wtc_m$ follows by \eqref{eqn:sym_Sen}. 
Hence, by Zariski-density $\wtc_m$ is $(S_n)^{\Sigma_K}$-invariant, and it comes from a (unique) function $\frc_m\in \Gamma(\WT_{n,E}\times_{\WT_{1,E}} \cW)$ along $g$.
\end{proof}

\end{para}

\begin{para}
We finish the proof of Theorem \ref{thm:EG_fcns} as follows.
\begin{proof}[Proof of Theorem \ref{thm:EG_fcns}]
We need to show that $(\omega_n,\det):\frX_n \ra \WT_{n,E}\times_{\WT_{1,E}} \frX_1$ induces a bijection on global functions by pullback.
We have canonical isomorphisms
$$
\Gamma(\WT_{n,E}\times_{\WT_{1,E}} \frX_1) \cong 
\Gamma(\WT_{n,E}\times_{\WT_{1,E}} (\cW\times \bbG_m)) \cong  \Gamma((\WT_{n,E}\times_{\WT_{1,E}} \cW)\times \bbG_m)\,.
$$

For the injectivity of $(\omega_n,\det)^*$, let us consider the following commutative diagram
$$
\begin{tikzcd}
\mathfrak{X}_n \arrow[rrr, "{(\omega_n,\det)}"]        &                       & & {\WT_{n,E}\times_{\WT_{1,E}} \mathfrak{X}_1}                                               \\
\mathfrak{X}_B \arrow[u, "\beta_B"]                         & \mathfrak{X}_T  \arrow[l,"\iota_B"]   \arrow[rr, "{(\wt_1,\dots,\wt_n,\det)}"] & & (\mathbb{A}^n_E)^{\Sigma_K} \times_{(\mathbb{A}^1_E)^{\Sigma_K}} \mathfrak{X}_1 \arrow[u]\,.
\end{tikzcd}
$$
By Lemmas \ref{lem:fcn-embed} and \ref{lem:descent_GIT}, it suffices to show that the map
$$
\cT^n = \cW^n \times \bbG_m^n\xrightarrow{((\wt_1,\dots,\wt_n,\det),\det)} \left((\bbA_E^n)^{\Sigma_K}\times_{(\bbA_E^1)^{\Sigma_K}} \cW\right)\times \bbG_m = (\mathbb{A}^n_E)^{\Sigma_K} \times_{(\mathbb{A}^1_E)^{\Sigma_K}} \cT\,,
$$
induces an injective map on global functions.
This is true since the map $\det:\bbG_m^n \ra \bbG_m$, $(\alpha_1,\dots,\alpha_n) \mapsto \prod_{i=1}^n \alpha_i,$ has a section $\alpha \mapsto (\alpha,1,\dots, 1)$, while
$(\wt_1,\dots,\wt_n,\det): \cW^n\ra (\bbA_E^n)^{\Sigma_K}\times_{(\bbA_E^1)^{\Sigma_K}} \cW$ admits finite \'{e}tale local sections by Lemma \ref{lem:descent_wt}.
\vskip8pt

The surjectivity of $(\omega_n,\det)^*$ follows from the conclusion of Lemma \ref{lem:descent_GIT} and the observation that for a trianguline $(\varphi,\Gamma)$-module $D$ with triangulation parameters $(\delta_1,\dots,\delta_n)$, we have $\det(D)=\cR_{K}(\delta_{\det(D)})$ for $\delta_{\det(D)}:=\prod_{i=1}^n \delta_i$ and $\prod_{i=1}^n \delta_i(\varpi_K) = \delta_{\det(D)}(\varpi_K)$.
Indeed, given $
f_T = \sum_{m\in \bbZ} c_{m}(\kappa_1,\dots,\kappa_n) \left(\prod_{i=1}^n\alpha_i\right)^m
$, consider its restriction 
$
\gamma := \sum_{m\in \bbZ} c_{m}(\underline{\kappa})t^m \in \Gamma(\cW^n\times \bbG_m^{\an})
$.
The finite \'{e}tale descent argument of Lemma \ref{lem:descent_wt} proves that $\gamma$ descends to $\widetilde{\gamma}\in \Gamma(((\bbA_E^n)^{\Sigma_K}\times_{(\bbA_E^1)^{\Sigma_K}} \cW )\times \bbG^{\an}_m)$.
We deduce that $\widetilde{\gamma} =\sum_{m\in \bbZ} \widetilde{c_m}t^m$ converges.
Since each $\widetilde{c_m}$ is $(S_n)^{\Sigma_K}$-invariant, the uniqueness of Laurent expansion implies that $\widetilde{\gamma}$ descends to $\mathfrak{r}\in \Gamma((\WT_{n,E}\times_{\WT_{1,E}} \cW)\times \bbG_m^{\an}) = \Gamma(\WT_{n,E}\times_{\WT_{1,E}} \frX_1)$, which has to be $\mathfrak{r}=\sum_{m\in \bbZ} \mathfrak{c}_m t^m$.
Then $(g\circ (\wt_1,\dots,\wt_n,\det))^*(\mathfrak{r}) = f_T$, hence  $(\omega_n,\det)^*$ is surjective by Lemma \ref{lem:fcn-embed}.
\end{proof}

\end{para}

\section{Comparing connected components}
\label{section6_comparison}

\addtocounter{subsection}{1}

\begin{para} {\it Kohlhaase's result on the center of the locally analytic distribution algebra.} 
Consider the sequence $\GL_{n,K} \supset B = TN \supset T \supset Z$ of algebraic groups over $K$, where $B$ is the Borel subgroup of upper triangular matrices, $N$ is the unipotent radical of $B$, $T$ is the subgroup of diagonal matrices, and $Z$ is the center of $\GL_{n,K}$.
Denote their Lie algebras by $\frg = \frg\frl_n(K) \supset \frb = \frt \oplus \frn \supset \frt \supset \frz$.
Write $\frn^-$ for the Lie algebra of the unipotent radical of the opposite Borel subgroup $B^-$.
By Weil's restriction of scalars, we get a sequence 
$
\bG  \supset \bB = \bT\bN \supset \bT \supset \bZ
$
of algebraic groups over $\bbQ_p$, where $\bG={\rm Res}^K_{\bbQ_p}(\GL_{n,K})$ and similarly for $\bB,\bN,\bT,$ and $\bZ$.
Their $\bbQ_p$-Lie algebras are $\frg = \frg\frl_n(K) \supset \frb = \frt \oplus \frn \supset \frt \supset \frz$, whose base changes to the coefficient field $E$ are denoted by $\frg_E:=\frg \otimes_{\bbQ_p} E \cong \prod_{\sigma\in\Sigma_K} \frg\frl_{n}(E)$, $\frb_E$, $\frn_E$, $\frt_E$, and $\frz_E$.
Let $Z(\frg_E)$ denote the center of the universal enveloping algebra of $\frg_E$.
Applying Kohlhaase's theorem \cite[2.2.1]{Kohlhaase_Invariant_dist} to the connected $\bbQ_p$-reductive group $\bG$, we have an isomorphism of $E$-algebras:
\[\kappa: Z(D^\la(\GL_n(K))) \isom Z(\frg_E) \widehat{\otimes}_{U(\frz_E)} D^\la(Z(\GL_n(K))) \;.\]
\end{para}

\begin{para}{\it The ring homomorphism $\gamma: \cC_G \ra \Gamma(\frX_{n,K})$.}
{Let us recall the $p$-adic cyclotomic character $\vep: G_K \ra \Z_p^\x$ and the character obtained from it when pre-composed with the reciprocity map ${\rm rec}_K: K^\x \ra G_K^\ab$, i.e., $(\vep \circ {\rm rec}_K)(a) = {\rm N}_{K/\Qp}(a) \cdot |{\rm N}_{K/\Qp}(a)|$. For the sake of brevity, we also denote this map $K^\x \ra \Z_p^\x$ by $\vep$.} 
We define a ring homomorphism $\gamma: \cC_G \ra \Gamma(\frX_{n,K})$ by a normalized combination of the Harish-Chandra isomorphism and the Amice transform. 
For the Harish-Chandra isomorphism, we normalize the half sum of positive roots of the split reductive $E$-group $\bG \times_{\bbQ_p} E \cong \prod_{\sigma\in \Sigma_K} \GL_{n,E}$ to be 
$$
\rho' := (0, -1, \dots, -(n-1))_{\sigma\in \Sigma_K}
$$
following \cite[Remark 6.2.8(i)]{EmertonGeeHellmann}.
Write $W\cong S_n^{\Sigma_K}$ for the Weyl group of $\bG \times_{\bbQ_p} E$.
Viewing $U(\frt_E)$ as regular functions on the $E$-linear dual $\frt_E^*$, the translation $-\rho': \frt_E^*\to \frt_E^*, \bh \mapsto \bh - \rho'$, induces an isomorphism $(-\rho')^*:U(\frt_E)\to U(\frt_E)$.
We have a commutative diagram
$$
\begin{tikzcd}
U(\mathfrak{t}_E)\oplus (\mathfrak{n}^-_EU(\mathfrak{g}_E)+U(\mathfrak{g}_E)\mathfrak{n}_E) \arrow[r, "{\rm pr}"] & U(\mathfrak{t}_E) \arrow[r, "(-\rho')^*"]                                                   & U(\mathfrak{t}_E)                 \\
{Z(\mathfrak{g}_E)} \arrow[u, hook]                                                                       & U(\mathfrak{z}_E) \arrow[u, hook] \arrow[l, hook'] \arrow[r, "(-\rho'|_{\mathfrak{z}_E})^*"] & U(\mathfrak{z}_E) ,\arrow[u, hook]
\end{tikzcd}
$$
where the restriction of $(-\rho')^*\circ \pr$ to $Z(\frg_E)$ induces an isomorphism ${\rm HC}:Z(\frg_E) \to U(\frt_E)^W$.
The commutativity implies the equality ${\rm HC}|_{U(\frz_E)}= (-\rho'|_{\frz_E})^*:U(\frz_E) \to U(\frz_E)$, where $-\rho'|_{\frz_E} = (\frac{n(n-1)}{2})_{\sigma\in \Sigma_K} \in \frz^*_E$.
Since the cyclotomic character $\vep:K^\times \to \bbZ_p^\times$ has $\sigma$-Sen weight $1$ for each $\sigma\in \Sigma_K$, we can extend $(-\rho'|_{\frz_E})^*$ to a commutative diagram of ring maps

$$
\begin{tikzcd}
U(\mathfrak{z}_E) \arrow[d, "{\rm HC}|_{U(\frz_E)}=(-\rho'|_{\mathfrak{z}_E})^*"'] \arrow[r, hook] & D^{\rm la}(Z(\GL_n(K))) \arrow[d, "\left(\vep^{\frac{n(n-1)}{2}}\right)^*"] \\
U(\mathfrak{z}_E) \arrow[r, hook]                                           & D^{\rm la}(Z(\GL_n(K))),                                        
\end{tikzcd}
$$
where $\left(\vep^{\frac{n(n-1)}{2}}\right)^*: D^{\la}(K^\times) \to D^{\la}(K^\times)$ is the linear map induced by duality from
$$
\mbox{}\cdot\vep^{\frac{n(n-1)}{2}}: C^\la(K^\times, E) \to C^\la(K^\times, E), \quad f \mapsto f\cdot\vep^{\frac{n(n-1)}{2}}\,,
$$ 
and $\left(\vep^{\frac{n(n-1)}{2}}\right)^*$ respects convolution products since $\vep^{\frac{n(n-1)}{2}}$ is a character of $K^\times$.
Let 
$$
\sA: D^\la(Z(\GL_n(K))) = D^\la(K^\times) = D^{\la}(\cO_K^\times)[T^{\pm 1}] \xrightarrow{\simeq} \Gamma(\cW)[T^{\pm 1}] \hookrightarrow \Gamma(\cW\times \bbG_m^{\an}) = \Gamma(\mathfrak{X}_{1,K})
$$
be the injective ring map induced by Amice's isomorphism $D^{\la}(\cO_K^\times) \xrightarrow{\simeq} \Gamma(\cW)$.
Then, we define the ring homomorphism $\gamma: \cC_G \ra \Gamma(\frX_{n,K})$ by the following composition
$$
\begin{tikzcd}
\cC_G \ar[dd,"\gamma"] \ar[r,"z_{(-)}"] & Z(D^\la(\GL_n(K))) \ar[r,
"\kappa"]  & Z(\frg_E) \widehat{\otimes}_{U(\frz_E)} D^\la(Z(\GL_n(K))) \ar[d,"{\rm HC} \otimes (\vep^{n(n-1)/2})^*"]\\
& & U(\frt_E)^W \widehat{\otimes}_{U(\frz_E)} D^\la(Z(\GL_n(K)))\ar[d,"\textrm{incl} \otimes \sA"] \\
\Gamma(\frX_{n,K}) &  & \Gamma(\WT_{n,E}) \widehat{\otimes}_{\Gamma(\WT_{1,E})} \Gamma(\mathfrak{X}_{1,K}), \ar[ll,"\omega_n^* \otimes \det^*"] \\ 
\end{tikzcd}
$$
where $\textrm{incl}: U(\frt_E)^W\hookrightarrow \Gamma(\WT_{n,E})$ is the natural inclusion map.
The reason for the chosen normalization by $\vep^{n(n-1)/2}$ is the expected compatibility between the determinant $\det(V)$ of a $G_K$-representation $V$ and the central character $\omega_{\Pi(V)}$ of the locally analytic $\GL_{n}(K)$-representation $\Pi(V)$ conjecturally associated to $V$, i.e., $\omega_{\Pi(V)} = \vep^{\frac{n(n-1)}{2}}\det(V)$, cf. \cite[Proposition 3.26]{Ding_crystabelline} and \cite[Remark 3.11]{Ding_wallcross}\footnote{In \cite{Ding_wallcross}, the half sum of positive roots is normalized to be $\theta=(n-1,\dots,1,0)_{\sigma\in\Sigma_K}$, in which case the compatibility is of the form $\omega_{\Pi(V)} = \vep^{-\frac{n(n-1)}{2}}\det(V)$, which agrees with \cite[Proposition 0.1(i)]{ColmezPoids} when $n=2$ and $K=\bbQ_p$.}.
\end{para}

\vskip8pt

\begin{para} 
\label{para:center_endofunctor}
To $e \in \cC(\cA)$ one associates a functor $\ove: \cA \ra \cA$ by setting $\ove(M) := \im(e_M)$, and for a morphism $f: M \ra N$ in $\cA$, setting $\ove(f) \in \Hom_\cA(\ove(M),\ove(N))$ to be the morphism induced by $f$ from $\ove(M) = \im(e_M)$ to $\ove(N) = \im(e_N)$. One has $\overline{e_1 \circ e_2} = \ove_1 \circ \ove_2$.
\end{para}

\begin{prop} Let $\cA$ be an abelian category and assume that the center $\cC = \cC(\cA)$ of $\cA$ is a ring which can be written as $\cC = \prod_{i=1}^m \cC_i$ with finitely many rings $\cC_i$ whose Zariski spectrum is connected. Let $\pr_i: \cC  \ra \cC_i$ be the projection onto the $i$-th factor, and $e_i = \pr_i(1_\cC)$. 

\vskip8pt

\begin{enumerate}[(i)]
\item $\cA_i := \ove_i \cdot \cA$ is an indecomposable subcategory (i.e., if it is written as the direct product of two abelian subcategories, then one of the factors contains only zero objects).

\vskip5pt

\item $\cA = \bigoplus_{i=1}^m \cA_i$, and for every $i \in \{1, \ldots, m\}$ the inclusion $\iota_i: \cA_i \ra \cA$ and projection functors $\pi_i: \cA \ra \cA_i$ are exact.

\vskip5pt

\item For any distinct $i, j \in \{1, \ldots,m\}$, $M$ in $\cA_i$, and $N$ in $\cA_j$ one has $\Ext^q_\cA(M,N) = 0$ for all $q \ge 0$.
\end{enumerate}
\end{prop}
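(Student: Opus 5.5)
The plan is to work entirely with the idempotents $e_i\in\cC$ and the associated functors $\ove_i$ of \ref{para:center_endofunctor}. The $e_i$ are orthogonal idempotents with $\sum_i e_i = 1_\cC$. Hence for every object $M$ the endomorphisms $(e_i)_M$ are orthogonal idempotents in $\End_\cA(M)$ summing to $\id_M$. Since $\cA$ is abelian, idempotents split, so we get a canonical decomposition $M = \bigoplus_i \ove_i(M)$ with $\ove_i(M)=\im((e_i)_M)=\ker(1-(e_i)_M)$. Naturality of $e_i$ means that every morphism $f:M\to N$ commutes with the $(e_i)$ and therefore decomposes diagonally as $f=\bigoplus_i \ove_i(f)$. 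There are no cross terms: the component $\ove_i(M)\to\ove_j(N)$ of $f$ equals $(e_j)_N f (e_i)_M = f (e_j)_M(e_i)_M = 0$ for $i\neq j$.

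For (ii), let $\cA_i$ be the full subcategory of objects $M$ with $(e_i)_M=\id_M$, equivalently $\ove_i(M)=M$. Such a subcategory is closed under kernels, cokernels and finite sums, because the $(e_i)$ act compatibly on these by naturality. The observations above then show that $M\mapsto(\ove_i(M))_i$ is an equivalence $\cA\simeq\prod_i\cA_i$. The projection $\pi_i=\ove_i$ is exact, since $\ove_i$ is a direct summand of the identity functor, and a direct summand of an exact functor is exact. The inclusion $\iota_i$ is exact because kernels and cokernels in $\cA_i$ are computed in $\cA$.

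For (iii), I would use Yoneda Ext, which avoids any need for enough injectives. First, $\Hom(M,N)=0$ for $M\in\cA_i$ and $N\in\cA_j$ with $i\neq j$, since every $f$ satisfies $f=f(e_i)_M=(e_i)_Nf=(e_i)_N(e_j)_Nf=0$. Now take an extension $0\to N\to X_q\to\dots\to X_1\to M\to 0$. Applying the exact functor $\ove_i$ produces an exact sequence $0\to 0\to \ove_i(X_q)\to\dots\to\ove_i(X_1)\to M\to 0$. The natural map $\ove_i(X_\bullet)\to X_\bullet$, which is the inclusion of a summand and is compatible with the identity on $M$ and the zero map $0\to N$, gives a morphism of extensions. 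This shows that the class is equivalent to one obtained by pushing forward along $0\to N$, and is therefore zero. Equivalently, the class $\xi$ satisfies $\xi = (e_i)_M^*\xi = (e_i)_{N*}\xi = 0$, by naturality of the center action on Ext.

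For (i), suppose $\cA_i\simeq\cB\times\cB'$ as abelian categories. Then the center of $\cA_i$ decomposes as $\cC(\cB)\times\cC(\cB')$, giving an idempotent $\epsilon$ of $\cC(\cA_i)$. We need to see that $\cC(\cA_i)=\cC_i$: by (ii), $\cC(\cA)=\prod_j\cC(\cA_j)$, and compatibility with the projections identifies $\cC(\cA_i)$ with $e_i\cC=\cC_i$. Because $\Spec\cC_i$ is connected, $\epsilon$ is $0$ or $1$. Hence $\cC(\cB)=0$ or $\cC(\cB')=0$. The center of an abelian category is zero only if $\id=0$, that is, only if every object is zero, so one factor contains only zero objects.

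The main subtle points are two. The first is the identification $\cC(\cA_i)=\cC_i$; this is routine once the product decomposition is established. The second is the Ext vanishing in degrees $q\ge 2$, where I would rely on the naturality argument rather than on derived functors.
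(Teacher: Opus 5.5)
Your proposal is correct and follows essentially the same route as the paper: split every object and morphism by the orthogonal idempotents $\ove_i$ to obtain $\cA\simeq\prod_i\cA_i$ with $\cC(\cA_i)=\cC_i$, then deduce indecomposability from the connectedness of $\Spec\cC_i$, and kill cross-block $\Ext$ groups via the action of the center. The only difference is in (iii), where you argue directly with Yoneda $\Ext$; the paper mentions this only as an alternative, and its primary argument passes to the center of the derived category $D(\cA)$.
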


\begin{proof}
We claim that for an abelian category $\cA$ that contains at least one nonzero object, $\cA$ is isomorphic to the direct product of two (necessarily full) abelian subcategories $\cA = \cA_1 \times \cA_2$ such that both $\cA_1$ and $\cA_2$ contain at least one nonzero object if and only if the center $\cC(\cA)$ has a disconnected Zariski spectrum.
Indeed, if $\cA = \cA_1 \times \cA_2$ is the product of such abelian categories, then we let $p_1:\cA\to \cA_1$ and $p_2:\cA\to \cA_2$ be the projections, and let $i_1:\cA_1 \to \cA$ and $i_2:\cA_2\to \cA$ be the inclusions.
Then, $e_1:=i_1\circ p_1$ and $e_2:=i_2\circ p_2$ are idempotents in $\cC(\cA)$ satisfying $e_1+e_2=1$ and $e_1e_2=0$.
Thus, $\cC(\cA)=e_1\cC(\cA)\times e_2\cC(\cA)$ with $e_j\cC(\cA)\cong \cC(\cA_j)$ for $j=1,2$. Our assumption on $\cA_j$ implies that $e_j\cC(\cA)$ are not the zero ring.
Hence, $\Spec(\cC(\cA))$ is disconnected.
Conversely, suppose $\Spec(\cC(\cA))$ is disconnected. 
Then, $\cC(\cA) = \cC_1 \times \cC_2$ is a direct product of two nonzero rings.
Let $e_1=(1,0)$ and $e_2=(0,1)$ denote the orthogonal idempotents in $\cC(\cA)$ such that $e_i\cC(\cA) = \cC_i$ for $i=1,2$.
Then, consider $\cA_i := \im(\ove_i) \subset \cA$ for $i=1,2$, which are subcategories of $\cA$ by the discussion in \ref{para:center_endofunctor}.
Since $e_i^2 =e_i$, $e_1+e_2 =1$, and $e_1e_2=0$, it follows that each object $X\in \cA$ decomposes $X=\ove_1(X) \oplus \ove_2(X)$ as a direct sum of objects in $\cA_1$ and $\cA_2$, cf. \cite[ch. VIII, sec. 2]{MacLane_working}.
Moreover, any morphism $f:X\to X'$ in $\cA$ decomposes as $f=\ove_1(f)\oplus \ove_2(f)$.
So, $\cC(\cA_i)=\cC_i$ for $i=1,2$, and $\cA_1$ and $\cA_2$ are abelian full subcategories, each containing a nonzero object, such that $\cA=\cA_1\times \cA_2$.

\vskip8pt

Then, (i) and (ii) follow from the claim above.
For (iii), the center $\cC(\cA)$ naturally injects into the center $\cC(D(\cA)) := \{\eta\in \End (1_{D(\cA)}) \mid \eta(X[1]) = (\eta X)[1]\}$ of the derived category $D(\cA)$, and we have $\Ext^q_{\cA}(M,N) = \Hom_{D(\cA)}(M,N[q])$.
Thus,
\begin{align*}
\Hom_{D(\cA)}(M,N[q]) 
&= 
\prod_{l=1}^m \Hom_{D(\cA)}(\ove_l \cdot M,\ove_l \cdot (N[q])) 
\\&= 
\prod_{l=1}^m \Hom_{D(\cA)}(\ove_l \cdot M,(\ove_l \cdot N)[q]) = 0,
\end{align*}
since we have $1=\sum_{l=1}^m e_l$, $M\in \cA_i, N\in \cA_j$, and $i\neq j$. Alternatively, one could work with Yoneda Ext groups\footnote{See \cite[ch. III, Prop. 3.2.2]{Verdier_Cat-derivees} for the equivalence of these with those defined via the derived category.} and the projection functors $\pi_i$.  
\end{proof}

\begin{para} The hypothetical EGH functor 

\[\frA^\rig_{\GL_n(K)}: D^b_{\rm f.p.}(\Rep^\la(\GL_n(K))) \lra D^b_{\rm coh}(\frX_{n,K})\] 

\vskip8pt

from \cite[6.2.4]{EmertonGeeHellmann} should not only be $Z(\frg\frl_{n,K})$-linear, but linear with respect to the action of the Bernstein center $\cC_G$ on the domain and the action of $\Gamma(\frX_{n,K})$ on the target via the morphism $\gamma:\cC_G \ra \Gamma(\frX_{n,K})$ from above. 

\vskip8pt 

Explicitly, for $V$ in $D^b_{\rm f.p.}(\Rep^\la(\GL_n(K)))$ and $e \in \cC_G$ we expect to have 

\begin{numequation}\label{EHG-functor-linearity}
\frA^\rig_{\GL_n(K)}(\ove \cdot V) = \gamma(e) \cdot \frA^\rig_{\GL_n(K)}(V) \;.    
\end{numequation}

\vskip8pt

Let $e$ be an idempotent in the Bernstein center and $\cC_e = \ove \cdot \Rep^\la(\GL_n(K))$ be the corresponding subcategory of $\Rep^\la(\GL_n(K))$. Then this functor maps the subcategory $D^b_{\rm f.p.}(\cC_e)$ to $D^b_{\rm coh}(\frX_{n,K}^{\gamma(e)})$, where $\frX_{n,K}^{\gamma(e)}$ is the connected component of $\frX_{n,K}$ corresponding to the idempotent $\gamma(e) \in \Gamma(\frX_{n,K})$.

\end{para}

\section{Appendix: Topology of Artin stacks on rigid spaces}
\mbox{ }
\addtocounter{subsection}{1}

\begin{para}{\it Quasi-analytic spaces and rigid analytic Artin stacks.} Let $\Rig_{E}$ be the category of rigid analytic spaces over $E$.
As in \cite[5.1.7]{EmertonGeeHellmann} we equip $\Rig_E$ with the Tate-fpqc topology \cite[2.1]{ConradTemkinAnalytification}.
Recall from \cite{EmertonGeeHellmann} the following definitions. 
\begin{enumerate}[(i)]
    \item A \textbf{quasi-analytic space} is a sheaf $X \in \Shv(\Rig_E)$ such that its diagonal $X\to X\times_E X$ is representable by rigid spaces and there exists an étale surjection $Y\twoheadrightarrow  X$ from a representable sheaf $Y\in \Rig_E$, \cite[5.1.9]{EmertonGeeHellmann}.
    \item A \textbf{rigid analytic Artin stack} is a stack $\frX$ on $\Rig_E$ such that its diagonal $\frX \xrightarrow[]{} \frX\times_E \frX$ is representable by quasi-analytic spaces and there exists a smooth surjection $Y\twoheadrightarrow  \frX$ from some $Y \in \Rig_E$,  \cite[5.1.10]{EmertonGeeHellmann}.
\end{enumerate}
\end{para}

\begin{lemma}
\label{lem:local_property_P}
For morphisms of rigid analytic spaces over $E$, the properties of being an open immersion, \'{e}tale, or smooth are Tate-fpqc local on the target.
Moreover, smoothness (resp. \'{e}taleness) is smooth-local (resp. \'{e}tale-local) on the source-and-target.
\end{lemma}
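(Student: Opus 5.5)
The plan is to reduce the Tate-fpqc statements to the affinoid case and then to commutative algebra of affinoid algebras, and to deduce the source-and-target statements formally from the target-local ones.

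\emph{Target-locality.} Let $f\colon X\to Y$ be a morphism in $\Rig_E$ and $\{Y_i\to Y\}$ a Tate-fpqc covering such that each base change $f_i\colon X\times_Y Y_i\to Y_i$ is an open immersion (resp. \'etale, resp. smooth). By Proposition \ref{prop:fppf=et+fpf} I may refine the covering to the composite of a finite faithfully flat cover and a Tate-\'etale cover. Since all three properties are stable under base change, each property can be checked separately for the two types of cover.

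For the Tate-\'etale step I pass to adic spaces via $r(-)$ and use Huber's results \cite[1.6, 1.7]{Hu96}. \'Etaleness and smoothness are \'etale-local on the target there. For open immersions, I use that $f$ is then \'etale and universally injective on points, together with the fact that an \'etale monomorphism is an open immersion.

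For a finite faithfully flat cover $\Sp(B)\to \Sp(A)$, I work Tate-locally, so that everything is affinoid or admissibly covered by affinoids. I then use the infinitesimal characterizations. \'Etale means flat and unramified, i.e. $\Omega=0$ and flat. Smooth means flat with smooth fibres, equivalently flat with $\Omega$ locally free of the fibre dimension. Each of these conditions on complete local rings or coherent modules descends along the faithfully flat map $A\to B$ by standard fpqc descent, using that completed tensor products with a finite $A$-algebra $B$ coincide with ordinary ones. For open immersions I combine the descended \'etaleness with the descent of the monomorphism property: $X\to X\times_Y X$ becomes an isomorphism after a faithfully flat base change, hence is one.

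\emph{Source-and-target locality.} This is the formal part. Suppose there are smooth (resp. \'etale) surjections $V\to Y$ and $U\to X\times_Y V$ such that $U\to V$ is smooth. Then the composite $U\to X$ is smooth and surjective, and $U\to Y$ is smooth. I first reduce to checking smoothness of $X\to Y$ after the base change $X\times_Y V\to V$, which the target-local part handles, since smooth surjections are Tate-\'etale covers by the existence of \'etale quasi-sections \cite[Remark 3.29]{Oh-Shimizu_moduli}. Smoothness of $X\times_Y V\to V$ then follows from source-locality: a morphism whose precomposition with a smooth surjection is smooth is itself smooth. This is checked via flatness descent along the faithfully flat $U\to X\times_Y V$ together with the Jacobian criterion on the fibres, which are rigid spaces over finite extensions of $E$. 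The \'etale case is identical, with the relative dimension equal to zero.

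The main obstacle is the finite faithfully flat step for smoothness. Rigid-analytic smoothness is defined by local charts rather than purely algebraically. I would first prove that for a morphism of affinoids, smoothness is equivalent to flatness together with geometric regularity of the fibres at rigid points, using completed local rings and the excellence of affinoid algebras. After that, descent along $A\to B$ reduces to the algebraic statement for local rings.
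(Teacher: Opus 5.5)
\textbf{Gap: the open-immersion case.} In both the Tate-\'etale step and the finite flat step, you conclude using the ``fact'' that an \'etale monomorphism is an open immersion. This is false in $\Rig_E$. Take $f\colon \{|z|\le|p|\}\sqcup\{|p|<|z|\le 1\}\to\{|z|\le 1\}$. It is \'etale, a monomorphism of sheaves, and universally injective on points. Yet it is not an open immersion: its image would have to be the whole closed disc, which is connected, while the source is not. Passing through $r(-)$ does not rescue the argument. Here $r(f)$ \emph{is} an open embedding of adic spaces (its image misses one rank-two point), but that image is not $r$ of any admissible open. \'Etaleness and universal injectivity do not see the $G$-topology. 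So the admissibility of the image is exactly what still has to be descended, and your argument does not address it.

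\textbf{Repair.} You need an extra step that uses the hypothesis that $f$ becomes an \emph{admissible} open immersion after base change to $Y'$. For instance, show that every affinoid $W\to Y$ landing set-theoretically in $f(X)$ has $r(W)$ contained in the image of $r(f)$. To see this, note that $W\times_Y Y'\to Y'$ factors through the admissible open $X\times_Y Y'$, and Tate-fpqc covers are surjective on adic points. Then conclude with Huber's description of admissible coverings \cite[(1.1.11)]{Hu96}. Alternatively, quote \cite[Theorem 4.2.7]{ConradA}, which covers open immersions.

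\textbf{Comparison with the paper.} The paper uses Conrad's theorem for all three properties. It applies Conrad's fpqc descent directly to the fpqc maps $W_\alpha\to V_\alpha$ in the Conrad--Temkin description of a Tate-fpqc covering. For the source-and-target part it cites \cite[Proposition 3.14]{Oh-Shimizu_moduli}. Your detour through Proposition~\ref{prop:fppf=et+fpf} is not circular, and the smooth and \'etale cases go through along your lines. The cost is that you must reprove the algebraic characterization of rigid smoothness, which Conrad's theorem already packages.
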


\begin{proof}
The first sentence follows from the faithfully flat descent of property $P$ for rigid spaces \cite[Theorem 4.2.7]{ConradA} together with the description of the Tate-fpqc pretopology in \cite[\S 2.1]{ConradTemkinAnalytification}.
\footnote{
By \cite[Theorem 4.2.7]{ConradA}, if $f:X\to Y$ is a morphism of rigid spaces and $Y'\to Y$ is faithfully flat and admits local fpqc quasi-sections, then $f$ has $P$ if and only if the base change $ f':X\times_Y Y'\to Y' $ has $P$, where $P$ can be open immersion, smooth, or \'{e}tale. 
Now let $\{Y_i\to Y\}_{i\in I}$ be a Tate-fpqc covering family in the sense of \cite[\S 2.1]{ConradTemkinAnalytification}, and suppose that each base change $ f_i:X\times_Y Y_i\to Y_i $ of $f:X\to Y$ has $P$.  
By the description given in {\it loc. cit.}, there is an admissible Tate cover $\{V_\alpha\to Y\}$ such that for each $\alpha$, there exist an fpqc map $ q_\alpha:W_\alpha\to V_\alpha $ and a $V_\alpha$-morphism $ W_\alpha\to \coprod_i (Y_i\times_Y V_\alpha)$.
Pulling $f|_{V_\alpha}$ back to $W_\alpha$, we see that Tate-locally on $W_\alpha$, the resulting morphism is the base change $(X\times_Y Y_i \times_Y V_\alpha \to Y_i \times_Y V_\alpha)$ of some $f_i$.  
Since $P$ is stable under base change and Tate-local on the target, the morphism $ X\times_Y W_\alpha\to W_\alpha $ has $P$. 
By fpqc descent for $P$ along $W_\alpha\to V_\alpha$, it follows that $f|_{V_\alpha}: X\times_Y V_\alpha\to V_\alpha $ has $P$. 
Since the $V_\alpha$'s form an admissible cover of $Y$ and $P$ is Tate-local on the target, we see that $f$ has the property $P$.
}
The second sentence is \cite[Proposition 3.14]{Oh-Shimizu_moduli}.
\end{proof}

\begin{dfn}
\label{defn:source-and-target_property}
Let $f: X\to Y$ be a map of quasi-analytic spaces $X$ and $Y$ on $\Rig_E$.
    \begin{enumerate}
    \item We say that $f$ is \textbf{smooth} if there exist \'{e}tale surjections $V\twoheadrightarrow Y$ and $U\twoheadrightarrow X$ with $V,U\in \Rig_E$ such that the induced map $U\times_Y V\to V$ is a smooth morphism of rigid spaces, \cite[Definition 3.27]{Oh-Shimizu_moduli}.
    \item We say that $f$ is \textbf{\'{e}tale} if there exist \'{e}tale surjections $V\twoheadrightarrow Y$ and $U\twoheadrightarrow X$ with $V,U\in \Rig_E$ such that the induced map $U\times_Y V\to V$ is an \'{e}tale morphism of rigid spaces, \cite[Definition 3.27]{Oh-Shimizu_moduli}.
    \item We say that $f$ is an \textbf{open immersion} if $f$ is a monomorphism of sheaves and $f$ is \'{e}tale, cf. \cite[\href{https://stacks.math.columbia.edu/tag/05W5}{Tag 05W5}]{stacks-project}.
    \end{enumerate}
\end{dfn}

\begin{rem}
By Lemma \ref{lem:local_property_P} and the argument of \cite[Lemma 3.28]{Oh-Shimizu_moduli}, all parts of Definition \ref{defn:source-and-target_property} are independent of the choices of \'{e}tale surjections $V\twoheadrightarrow Y$ and $U\twoheadrightarrow X$.
\end{rem}

\begin{dfn}
    Let $f: \frX\to \frY$ be a morphism of rigid analytic Artin stacks $\frX$ and $\frY$ that is representable in quasi-analytic spaces. 
    Let $P \in$ \{smooth, \'{e}tale, open immersion\}. 
    We say that \textbf{$f$ has property $P$}, if for every morphism $U\to \frY$ with $U$ a quasi-analytic space on $\Rig_E$, the map $\frX\times_{\frY} U \to U$ has property $P$ as a map of quasi-analytic spaces.
\end{dfn}

\begin{dfn}
\begin{enumerate}[(i)]
\item
An \textbf{open subspace} of a quasi-analytic space $X$ on $\Rig_E$ is a subsheaf $W\subset X$ such that the inclusion map $i:W\to X$ is an open immersion.
\item
An \textbf{open substack} of an Artin stack $\frX$ on $\Rig_E$ is a strictly full subcategory $\frY\subset \frX$ such that $\frY$ is also a stack and $\frY\to \frX$ is representable by quasi-analytic spaces and is an open immersion.
\end{enumerate}
\end{dfn}

In what follows we attach to each stack $\frX$ on $\Rig_E$ an {\it underlying topological space} $|\frX|$ such that if $\frX$ is a rigid analytic Artin stack, then open substacks of $\frX$ correspond bijectively to open subsets of $|\frX|$.
\vskip8pt

The category $\Rig_{E}$  admits a fully faithful embedding $r: \Rig_E \to \Ad_E$ into the category of adic spaces over $\Spa(E,\cO_E)$ by \cite[(1.1.11)]{Hu96}. 
If $A$ is an affinoid $E$-algebra, then one has $r(\Sp A) = \Spa(A,A^\circ)$, where $A^\circ$ is the subring of power-bounded elements in $A$.
As in \cite[\mbox{}5.1.7]{EmertonGeeHellmann} we equip $\Rig_E$ with the Tate-fpqc topology \cite[\mbox{}2.1]{ConradTemkinAnalytification}.
We denote by 
$$
|-|: \Rig_{E}\rightarrow \Top
$$
the functor sending a rigid analytic space $X$ to the underlying topological space $|r(X)|$ of the adic space $r(X)$, and we consider its left Kan extension $|-|:\Psh(\Rig_E)\rightarrow \Top$ along the Yoneda embedding $\Rig_E \hookrightarrow \Psh(\Rig_E)$.
The functor $r$ sends Tate-fpqc covering families $(X_i)_{i\in I}$ in \cite[\mbox{}2.1]{ConradTemkinAnalytification} of a rigid space $X$ to open covers $\bigcup_{i\in I} |r(X_i)|$ of $|r(X)|$ by \cite[(1.1.11)(c)]{Hu96} and \cite[Corollary 2.5]{Lazda_note}.
Therefore, the left Kan extension factors through the sheafification functor $L: \Psh(\Rig_E) \rightarrow \Shv(\Rig_E)$ to yield a functor 
$$
|-|:\Shv(\Rig_E) \rightarrow \Top, \quad |Y| = \varinjlim_{\substack{X\in {\Rig_E}\\ \underline{X}\ra Y}} |X|\,,
$$
where the colimit is taken over all representable sheaves $X\in \Rig_E$ with a map $X\to Y$.
We call $|Y|$ the \textit{underlying topological space} of the sheaf $Y$.
For a stack $\frX$ on $\Rig_E$, its \textit{underlying topological space} is defined to be that of the sheaf $F_\frX$ associated to $\frX$, namely $|\frX|:=|F_\frX|$.
Compare with \cite[7.3.2]{RC_SolidNotes} and \cite[4.3.1(2)]{ABLBRCS_deRhamFF}.
\vskip8pt 

\begin{lemma}
\label{lem:presentation_top}
Let $\frX$ be a stack on $\Rig_E$ for which there exists a surjection $\pi: \underline{U} \twoheadrightarrow \frX$ representable in quasi-analytic spaces from a rigid space $U$.
Then, if we write $R:=U\times_{\frX}U$, the morphism $U\to \frX$ induces an equivalence $[U/R]\xrightarrow{\sim}\frX$, and the underlying topological space $|\frX|$ is given by $|U|/|R|:=\coeq_{\Top}(|R|\rightrightarrows|U|)$.
\end{lemma}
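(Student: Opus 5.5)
The proof has two parts. First, $U\to\frX$ induces $[U/R]\simeq\frX$. Second, the colimit defining $|\frX|=|F_\frX|$ can be computed from the presentation.

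For the first part I would argue as in Lemma \ref{lem:aux_2}. Since $\pi$ is surjective, every object of $\frX(T)$ lifts to $U$ Tate-fpqc locally on $T$. Isomorphisms between two lifts are controlled by $R=U\times_\frX U$, which is a quasi-analytic space because $\pi$ is representable in quasi-analytic spaces. The natural map $[U/_pR]\to\frX$ is therefore fully faithful. It is also locally essentially surjective, so stackification gives the equivalence $[U/R]\xrightarrow{\sim}\frX$ (standard, cf. \cite[Tag 04T5]{stacks-project}). Passing to coarse sheaves, which is a left adjoint by \ref{para:coarse_sheaf}, I get $F_\frX\cong\coeq_{\Shv}(R\rightrightarrows U)$. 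Here $R$ means its associated sheaf, which is a quasi-analytic space.

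For the second part, $|-|:\Shv(\Rig_E)\to\Top$ is defined by left Kan extension followed by the factorization through sheafification. It should therefore commute with colimits of sheaves, provided I check that the Kan extension on presheaves preserves colimits (true for left Kan extensions along Yoneda) and that it sends sheafification maps to homeomorphisms. This gives $|\frX|=|F_\frX|\cong\coeq_{\Top}(|R|\rightrightarrows|U|)$.

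The main obstacle is justifying that $|-|$ inverts sheafification, i.e. that $|P|\to|LP|$ is a homeomorphism. I would attempt this as follows. A section of $LP$ over $X$ is, Tate-fpqc locally, a section of $P$ on a cover $\{X_i\to X\}$ with compatibilities. Since $|X|$ is the quotient of $\coprod|X_i|$ (covers go to open covers and surjections by \cite[(1.1.11)(c)]{Hu96} and \cite[Corollary 2.5]{Lazda_note}), the map $|X|\to|LP|$ factors through $|P|$. Injectivity and continuity then come from the colimit description. In the paper this is asserted in the paragraph defining $|-|$ on sheaves, so I may take it as given.

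If $R$ is only quasi-analytic rather than a rigid space, $|R|$ is computed from an étale chart $V\twoheadrightarrow R$. Then $\coeq(|V|\rightrightarrows|U|)$ has the same quotient as $\coeq(|R|\rightrightarrows|U|)$, because $|V|\to|R|$ is surjective. This handles the last detail.
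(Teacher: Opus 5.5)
Your proposal is correct and follows essentially the same route as the paper. Surjectivity of $\pi$ gives $[U/R]\simeq\frX$, and the adjunction $F_{(-)}\dashv i$ identifies $F_{\frX}$ with the sheaf coequalizer $U/R$. Then $|-|$ commutes with colimits of sheaves; the paper phrases this as $|-|$ being a left adjoint and relies on the same factorization through sheafification that you invoke. Your extra remark about computing $|R|$ via an \'{e}tale chart is harmless but not needed, since $|R|$ is already defined for any sheaf.
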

\begin{proof}
Since $\pi$ is an epimorphism, $\pi:U\to \frX$ induces an equivalence $[U/R]\simeq \frX$.
Thus, the associated sheaves $F_{[U/R]}\cong F_{\frX}$ are isomorphic.
By adjunctions and a Yoneda argument in $\Shv(\Rig_E)$, we see that the sheaf $F_{[U/R]}$ associated to the quotient stack $[U/R]$ is given by the quotient sheaf $U/R:=\coeq_{\Shv(\Rig_E)}(R\rightrightarrows U)$.
Since $|-|$ is a left adjoint, $|\frX| = |F_{\frX}| = |F_{[U/R]}| = |\coeq_{\Shv(\Rig_E)}(R\rightrightarrows U)| = \coeq_{\Top}(|R|\rightrightarrows |U|) = |U|/|R|$.
\end{proof}

\begin{cor}
\label{cor:conn_open}
Any rigid analytic Artin stack $\frX$ has a locally connected underlying topological space, and hence its connected components are open.
\end{cor}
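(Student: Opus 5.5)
The plan is to reduce local connectedness of $|\frX|$ to that of rigid analytic spaces via Lemma \ref{lem:presentation_top}, using that quotient maps preserve local connectedness. Since $\frX$ is a rigid analytic Artin stack, there is a smooth surjection $Y\twoheadrightarrow\frX$ from a rigid space, representable in quasi-analytic spaces. I would replace $Y$ by $U:=\coprod_\alpha U_\alpha$ for an admissible affinoid cover of $Y$; then $U\to\frX$ is still a smooth surjection representable in quasi-analytic spaces. Lemma \ref{lem:presentation_top} gives $|\frX| = \coeq_{\Top}(|R|\rightrightarrows|U|)$ with $R=U\times_\frX U$. Hence $|\frX|$ carries the quotient topology from $|U|=\coprod_\alpha |r(U_\alpha)|$ under $q:|U|\to|\frX|$. (One should check that the coequalizer in $\Top$ is the set-theoretic quotient of $|U|$ by the relation generated by the images of $|R|$, with the quotient topology. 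This is standard.)

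Next I would show that $|U|$ is locally connected. Each $|r(U_\alpha)| = |\Spa(A,A^\circ)|$ is a spectral space. Rational subsets form a basis of quasi-compact opens, and each rational subset is again an affinoid adic space $\Spa(B,B^\circ)$. Such a space decomposes into finitely many connected components, because idempotents of the noetherian ring $B$ are finite in number and clopen subsets of $\Spa(B,B^\circ)$ correspond to idempotents of $B$ via Tate acyclicity. Hence each connected component of a rational subset is open, and the connected rational opens form a basis. So $|U|$ is locally connected.

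Finally I would apply the standard topological fact that a quotient of a locally connected space is locally connected. The argument: take $V\subset|\frX|$ open and $C$ a connected component of $V$. For each $x\in q^{-1}(C)$, the component of $x$ in the open set $q^{-1}(V)$ is open by local connectedness and maps into a connected subset of $V$ containing $q(x)\in C$, so it lies in $q^{-1}(C)$. Thus $q^{-1}(C)$ is open, and $C$ is open in the quotient topology. Taking $V=|\frX|$ shows in particular that connected components are open.

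The main obstacle is the identification of the topological coequalizer with the quotient topology on $|U|$, together with the clopen–idempotent correspondence for affinoid adic spaces. Both are standard, so I do not expect any real difficulty.
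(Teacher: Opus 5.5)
Your proposal is correct and follows the same route as the paper. Both reduce, via Lemma~\ref{lem:presentation_top} and the fact that quotients of locally connected spaces are locally connected, to affinoid adic spaces $\Spa(A,A^\circ)$, and then use noetherianity of $A$ to show such a space has finitely many, hence open, connected components; you get the finiteness from the finiteness of idempotents together with the clopen--idempotent correspondence, while the paper uses a non-stationary ascending chain of vanishing ideals. Your version is in fact slightly more complete: you explicitly apply this finiteness to the basis of rational subsets to obtain a basis of connected open neighbourhoods, a step the paper's claim (stated only for $\Spa(A,A^\circ)$ itself) leaves implicit.
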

\begin{proof}
Since any quotient of a locally connected space is locally connected, by Lemma \ref{lem:presentation_top} it suffices to show that for any $A\in \Aff_E$, $|\Spa(A,A^\circ)|$ is locally connected.

\vskip8pt

{\it Claim.} The adic space $X = \Spa(A,A^\circ)$ has finitely many connected components, and these are open. 

\vskip8pt

{\it Proof of the claim.} There is nothing to show if $X$ is connected. If $X$ is not connected, we can write $X = X_1 \sqcup X_2$ as the union of two non-empty disjoint open subsets. If one of these is not connected, we can further decompose. If this process stops after finitely many steps, we are done. If not, we get an infinite sequence of open and closed subspaces $X = U_0 \supsetneq U_1 \supsetneq U_2 \supsetneq \ldots$. If we then define $I_n \sub A$ to be the ideal vanishing on $U_n$, then we have $0 = I_0 \subsetneq I_1 \subsetneq I_2 \subsetneq \ldots$, which is a non-stationary increasing chain of ideals in $A$, contradicting that $A$ is noetherian. \qedsymbol
\end{proof}

\begin{lemma}
\label{lem:fiber_product_surjective}
Let $X\xrightarrow{f} Z\xleftarrow{g} Y$ be morphisms of rigid analytic Artin stacks over $\Sp(E)$.
Then, the natural map 
$
|X\times_Z Y| \to |X|\times_{|Z|}|Y|
$
is surjective.
\end{lemma}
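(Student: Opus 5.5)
The plan is to reduce the statement to the case of rigid spaces, where it follows from the behaviour of points of adic spaces under fibre products. Throughout, $|{-}|$ is the underlying topological space just defined via the Kan extension; I only use its description on points.

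\emph{Step 1 (reduction to charts).} Choose smooth surjections $U\to X$, $V\to Y$ and $W\to Z$ from rigid spaces. By Lemma \ref{lem:presentation_top}, the maps $|U|\to|X|$, $|V|\to|Y|$ and $|W|\to|Z|$ are surjective, since each is a coequalizer map. Take a point $(x,y)\in|X|\times_{|Z|}|Y|$ with common image $z\in|Z|$.

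\begin{enumerate}[(a)]
\item Replace $U$ by $U\times_Z W$ and $V$ by $V\times_Z W$. These are quasi-analytic spaces because the diagonal of $Z$ is representable, and they are smooth over $U$ and $V$ respectively.
\item Then refine each by an étale surjection from a rigid space.
\end{enumerate}

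After this we may assume $U\to X$ and $V\to Y$ are smooth surjections from rigid spaces with maps $U\to W$ and $V\to W$ over $Z$. Surjectivity on points must be preserved by these replacements. For that I will use the following claim: for a smooth surjection $P\to Q$ of Artin stacks and any $T\to Q$, the map $|P\times_Q T|\to|T|$ is surjective. It follows from the existence of étale quasi-sections (\cite[Remark 3.29]{Oh-Shimizu_moduli}), since étale surjections of adic spaces are surjective on points.

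\emph{Step 2 (lifting the points).} Pick $u\in|U|$ over $x$ and $v\in|V|$ over $y$. Their images $w_1,w_2\in|W|$ both map to $z$. Since $|Z|=|W|/|R_W|$ with $R_W=W\times_Z W$, and the equivalence relation generated by $|R_W|$ may require chains, I first check that the image of $|R_W|\to|W|\times|W|$ is already an equivalence relation. This uses the composition law of the groupoid together with the rigid-space case of the lemma applied to $R_W\times_{W}R_W$. Choose $r\in|R_W|$ with $s(r)=w_1$ and $t(r)=w_2$. Now form $U\times_{W,s}R_W\times_{t,W}V$, which maps to $X\times_Z Y$. It suffices to find a point of this iterated fibre product over $(u,r,v)$, and this reduces to the rigid-space statement applied twice, with the quasi-analytic $R_W$ refined by an étale cover.

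\emph{Step 3 (rigid spaces).} For rigid spaces $A\to C\leftarrow B$, the fibre product $r(A)\times_{r(C)}r(B)$ exists as an adic space locally of finite type, by Huber's theory, and it equals $r(A\times_C B)$. Surjectivity of $|r(A\times_CB)|\to|r(A)|\times_{|r(C)|}|r(B)|$ is the standard fact for adic spaces. Given points $a,b$ over $c$, take a common completed valued field extension of $k(a)$ and $k(b)$ over $k(c)$. One exists because the valuation rings $k(a)^+$ and $k(b)^+$ extend $k(c)^+$ to a common extension, namely a valuation on a quotient of $k(a)\otimes_{k(c)}k(b)$. This gives a morphism $\Spa(L,L^+)\to A\times_C B$ hitting $(a,b)$. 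This is the same mechanism used in the proof of Lemma \ref{lem:Weil_repr}(ii).

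\emph{Main obstacle.} The delicate point is Step 2: the topological quotient $|U|/|R|$ a priori uses the generated equivalence relation, and the lifts must be compatible through the groupoid $R_W$. The construction of a common valued-field extension for non-rank-one points in Step 3 is routine by comparison. If the equivalence-relation claim in Step 2 becomes messy, I would instead argue directly with $\Spa(L,L^+)$-valued points of the stacks. Points of $|X|$ are represented by maps from such spectra, two representatives are equivalent when they agree after a common field extension, and a pair of maps agreeing in $Z$ after extension defines a point of $X\times_Z Y$.
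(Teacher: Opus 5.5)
Your proposal is correct and follows essentially the paper's argument: the rigid-space case comes from Huber's surjectivity for fibre products of adic spaces, the image of $|R_W|\to|W|\times|W|$ is shown to be an equivalence relation via the groupoid composition and the rigid case, and you conclude via $U\times_Z V\cong U\times_{W,s}R_W\times_{t,W}V$. The only organisational difference is that the paper does this for quasi-analytic spaces first and then says the Artin-stack case is adapted, while you treat Artin stacks directly; one detail to tidy is that $R_W$ is then only quasi-analytic, so in the transitivity check you must refine $R_W$ by an \'etale cover from a rigid space before applying the rigid case, exactly as you already do for the iterated fibre product.
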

\begin{proof}
{\it Step 1. Rigid spaces over $E$.}
Suppose $X,Y,Z\in \Rig_E$ are rigid spaces.
Recall that any map of rigid spaces $f:X\to Z$ induces a map $r(f):r(X)\to r(Z)$ locally of finite type of analytic adic spaces\footnote{Because $p\in E$ is a topologically nilpotent unit, any adic space over $\Spa(E,\cO_E)$ is analytic.} by \cite[\mbox{}4.3]{Hu94}.
Since $r(Z)$ is analytic, $r(g):r(Y)\to r(Z)$ is adic by \cite[{}3.2(ii)]{Hu94}.
Thus, we can use \cite[Corollary 1.2.3(iii.d)]{Hu96}.
\vskip8pt

{\it Step 2. Quasi-analytic spaces.}
Suppose $X,Y,Z$ are quasi-analytic spaces over $\Rig_E$.
Let $p_Z:U_Z\to Z$ be an \'{e}tale surjection with $U_Z\in \Rig_E$.
Then, with $R_Z:=U_Z\times_Z U_Z$, we get an \'{e}tale groupoid $(U_Z, R_Z, s_Z, t_Z, c_Z)$ with an isomorphism $Z\cong U_Z/R_{Z}$.
So, $|Z| =|U_Z|/|R_Z|$, and we claim that $\im(|R_Z|\xrightarrow{(|s_Z|,|t_Z|)} |U_Z|\times|U_Z|)$ is an equivalence relation on $|U_Z|$.
Indeed, for any groupoid $(U,R,s,t,c)$ in a category, such an image relation is always reflexive by the identity section $e:U\to R$ and is symmetric by the inverse map $i:R\to R$, cf. \cite[\href{https://stacks.math.columbia.edu/tag/0230}{Tag 0230}]{stacks-project}.
The relation $\im(|R_Z|\xrightarrow{(|s_Z|,|t_Z|)} |U_Z|\times|U_Z|)$ is transitive by applying Step 1 to the map $|R_Z\times_{s_Z,U_Z,t_Z}R_Z|\twoheadrightarrow |R_Z|\times_{|s_Z|,|U_Z|,|t_Z|}|R_Z|$ of rigid spaces, and hence an equivalence relation.
Now let $f:X\to Z$ and $g:Y\to Z$ be arbitrary maps of quasi-analytic spaces.
Picking \'{e}tale surjections $U_X \twoheadrightarrow X\times_{f,Z,p_Z} U_Z$ and $U_Y \twoheadrightarrow U_Z\times_{p_Z,Z,g} Y$ of the \'{e}tale base changes, we obtain two  \'{e}tale surjections $p_X:U_X\twoheadrightarrow  X$ and $p_Y:U_Y\twoheadrightarrow  Y$ as the composites, with $U_X,U_Y\in \Rig_E$.
By base change, we have that $U_X\times_{Z} U_Y\twoheadrightarrow X\times_Z Y$ is an \'{e}tale surjection, where
$$
U_X\times_{Z} U_Y = U_X \times_{U_Z} (U_Z \times_Z U_Z) \times_{U_Z} U_Y = U_X \times_{U_Z,s_Z} R_Z \times_{t_Z,U_Z} U_Y \in \Rig_E.
$$
The discussion at the beginning of Step 2, where $Z$ was an arbitrary quasi-analytic space, implies that we have a surjection
$$
|U_X\times_{Z} U_Y| \twoheadrightarrow |U_X| \times_{|U_Z|,|s_Z|} |R_Z| \times_{|t_Z|,|U_Z|} |U_Y| \twoheadrightarrow |X| \times_{|Z|} |Y|
$$
that factors through $|X\times_Z Y| \to |X|\times_{|Z|}|Y|$. 
So, the latter map is also surjective.
\vskip8pt

{\it Step 3. Rigid analytic Artin stacks.}
Let $X,Y,Z$ be rigid analytic Artin stacks on $\Rig_E$.
The results obtained in Step 2 together with Lemma \ref{lem:presentation_top} imply that the argument of Step 2 can be adapted to Step 3, which shows that $|X\times_Z Y| \to |X|\times_{|Z|}|Y|$ is surjective.
\end{proof}

\begin{rem}
\label{rem:image_equivalence}
By Lemma \ref{lem:fiber_product_surjective}, we have that if $[U/R]\simeq \frX$ is a smooth presentation of an Artin stack $\frX$ on $\Rig_E$ coming from a smooth groupoid $(U,R,s,t,c)$ in quasi-analytic spaces, then the image $\im(|R|\xrightarrow{|s|,|t|} |U|\times|U|)$ is an equivalence relation on $|U|$ such that 
$$
|\frX|\cong |U|/\im(|R|\to |U|\times|U|)
$$ 
is the quotient of $|U|$ by this equivalence relation.
\end{rem}

\begin{cor}
\label{cor:open_immer_top}
Let $f: X\to Y$ be an open immersion, i.e., an \'{e}tale monomorphism, of rigid analytic Artin stacks on $\Rig_E$.
Then, the induced map $|f|:|X|\to |Y|$ is an open embedding.
\end{cor}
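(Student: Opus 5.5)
The plan is to reduce to smooth presentations and to the rigid-space case, using the preceding lemmas on underlying topological spaces. First fix a smooth presentation $p:U\to Y$ with $U\in\Rig_E$, and set $R:=U\times_Y U$. Form the base change $U_X:=X\times_Y U$. Since $f$ is representable in quasi-analytic spaces and is an open immersion, $U_X\to U$ is an open immersion of quasi-analytic spaces: an étale monomorphism into the rigid space $U$. The first step is to show that such a $U_X$ is an admissible open subspace of $U$, hence a rigid space, and that $|U_X|\to|U|$ is an open embedding of adic spaces. For this, choose an étale surjection $V\to U_X$ from a rigid space. Then $V\to U$ is an étale map of rigid spaces, so $|V|\to|U|$ is open by Huber's openness of étale (flat, locally finite type) maps of adic spaces. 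Its image is $W:=\im(|V|\to|U|)$, which is therefore open. Because $U_X\to U$ is a monomorphism, $U_X$ should be identified with the open subspace of $U$ corresponding to $W$. I would prove this by étale descent: over $V$ the map is a section, and étale monomorphisms of rigid spaces are open immersions. In particular $|U_X|\to|U|$ is injective with open image $W$.

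Second, I pass to quotients. By Remark \ref{rem:image_equivalence}, $|Y|=|U|/\!\sim$, where $\sim$ is the image equivalence relation of $|R|$. Also $X\simeq[U_X/R_X]$ with $R_X=U_X\times_X U_X=R\times_{U\times U}(U_X\times U_X)$, since $X\to Y$ is a monomorphism, and $U_X\to X$ is a smooth surjection. So $|X|=|U_X|/\!\sim_X$. I would check that $W$ is saturated for $\sim$. Take $(r,u)$ with $s(r)\in W$. By Lemma \ref{lem:fiber_product_surjective}, a point of $|R|$ over a point of $|U_X|$ lifts to $|R\times_{s,U}U_X|$. This space equals $|R\times_{t,U}U_X|$, because both are $U\times_Y X$ computed in two ways. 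Hence $t(r)\in W$ as well.

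Third, I conclude with point-set topology. The map $|X|\to|Y|$ is induced by $W\hookrightarrow|U|$. It is injective because $\sim_X$ is the restriction of $\sim$ to $W$, again using surjectivity on fibre products from Lemma \ref{lem:fiber_product_surjective}. A saturated open set has open image under the quotient map $q:|U|\to|Y|$. Moreover, open subsets of $|X|$ pull back to saturated opens of $W$, which are saturated opens of $|U|$. Hence $|f|$ is open onto the open set $q(W)$, so $|f|$ is an open embedding.

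The main obstacle is the first step: identifying an étale monomorphism from a quasi-analytic space into a rigid space with an admissible open on the adic side. I would try to handle it via Scholze's diamond equivalence of étale sites, which was already used in Lemma \ref{lem:etale_equivalence}.
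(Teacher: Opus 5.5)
Your argument is correct, but it takes a detour that the paper avoids. The paper never proves your Step 1. In your notation, it works directly with the rigid space $V$, the \'etale map $g\colon V\to U$, and the smooth surjection $h\colon V\to U_X\to X$.

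\begin{itemize}
\item \emph{Openness.} For $O\subset|X|$ open, Lemma \ref{lem:fiber_product_surjective} (applied to $X\times_Y U=U_X$, then lifting along $|V|\twoheadrightarrow|U_X|$) gives $|p|^{-1}(|f|(O))=|g|(|h|^{-1}(O))$. This set is open because $|g|$ is open, so $|f|$ is open.
\item \emph{Injectivity.} Since $X\to Y$ is a monomorphism, $V\times_X V\simeq V\times_Y V$. As $|f|$ is open, both $|X|$ and $\im(|f|\circ|h|)$ are then the quotient of $|V|$ by the image of $|V\times_Y V|$, which gives the homeomorphism onto the image.
\end{itemize}

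So your saturation arguments in Steps 2--3 go through with $V$ in place of $U_X$. You never need $X\times_Y U$ to be representable, nor $|U_X|\to|U|$ to be injective.

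What your route buys is a sharper structural statement: $X\times_Y U$ is an honest open subspace, and $|X|$ is the quotient of a saturated open $W\subset|U|$. This is essentially Lemma \ref{lem:rigid_open_top}. You are right not to cite that lemma, since the paper proves it using this very corollary.

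If you keep Step 1, you do not need diamonds:
\begin{itemize}
\item Replace $U$ by a disjoint union of affinoids, so that it is quasi-separated.
\item Let $U_W$ be the rigid space with $r(U_W)=W$ given by Huber's equivalence.
\item Note that $V\to U_W$ is \'etale and surjective on adic points, hence a Tate-\'etale cover.
\item Then $U_X$ and $U_W$ are both the sheaf image of $V\to U$, so they coincide.
\end{itemize}
Your ``section over $V$'' remark alone does not suffice, because $V\to U$ is not a cover of $U$. Also, ``open subspace'' must be read on the adic side, as you say at the end. An \'etale monomorphism of rigid spaces need not be an admissible open in Tate's classical sense: removing a closed type-5 point from the closed disc changes the space but not its classical points.

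Finally, a small correction to Step 2. The isomorphism you need, over $R$, is $R\times_{s,U}U_X\cong R\times_{p\circ s,Y}X\cong R\times_{p\circ t,Y}X\cong R\times_{t,U}U_X$, not a comparison with $U\times_Y X$.
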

\begin{proof}
Let $q: V \twoheadrightarrow Y$ be a smooth surjection with $V\in \Rig_E$.
For the fiber product $X\times_Y V$, which is a quasi-analytic space \'{e}tale over $V$, we choose an \'{e}tale surjection $U\twoheadrightarrow X\times_Y V$ with $U\in \Rig_E$. 
Then the composite $g:U\to X\times_Y V\to  V$ is an \'{e}tale map of rigid spaces, and the composite $p:U \to X\times_Y V\to X$ is a smooth surjection.
That is, 
$$
\begin{tikzcd}
U \arrow[r] \arrow[d, "p"] & X \times_Y V \arrow[r] \arrow[ld] & V \arrow[d, "q"] \\
X \arrow[rr, "f"]                  &                                   & Y               
\end{tikzcd}
$$
is a commutative diagram.
Taking the underlying topological space, we have that 
$$
\begin{tikzcd}
{|U|} \arrow[r, two heads] \arrow[d, "{|p|}", two heads] & {|X \times_Y V|} \arrow[r] \arrow[ld, two heads] & {|V|} \arrow[d, "{|q|}", two heads] \\
{|X|} \arrow[rr, "{|f|}"]                          &                                                      & {|Y|}                            
\end{tikzcd}
$$
is a commutative diagram in $\Top$.
Since $U\to V$ is \'{e}tale in $\Rig_E$, $|U|\to |V|$ is an open map.
The vertical maps $|p|$ and $|q|$ are surjective quotient maps.
We claim that $|f|$ is an open map. 
Let $O\subset |X|$ be an open subset.
Then $|f|(O)$ is open in $|Y|$ if and only if $|q|^{-1}(|f|(O))$ is open in $|V|$.
Observe that $|q|^{-1}(|f|(O))=|g|(|p|^{-1}(O))$ by Lemma \ref{lem:fiber_product_surjective}, which is open since $|g|$ is an open map.
So, $|f|:|X|\to |Y|$ is an open map, and hence a quotient map onto its image.
So is the map $|f| \circ |p|: |U|\to |Y|$.
Since $X\hookrightarrow Y$ is a monomorphism, we have $U\times_X U \simeq U\times_Y U$, and hence by Lemma \ref{lem:fiber_product_surjective}
$$
\im(|f|)=\im(|f|\circ |p|)\cong |U|/|U\times_{Y}U| = |U|/|U\times_{X}U| \cong |X|,
$$
which shows that $|f|:|X|\to |Y|$ is an open embedding.
\end{proof}

\begin{lemma}
\label{lem:rigid_open_top}
Let $T\in \Rig_E$.
For any open subset $O\subset |r(T)|$, consider the subfunctor $T_O\subset T$ defined by the formula
$$
S\in \Rig_E \quad \longmapsto \quad T_O(S)= \left\{ f:S\to T \mid |r(f)|(|r(S)|)\subset O \right\} .
$$
Then, $T_O$ is a quasi-analytic space and $T_O\to T$ is an open immersion. 
Moreover, every open quasi-analytic subspace of $T$ is of this form for a unique open subset $O\subset |r(T)|$.
\end{lemma}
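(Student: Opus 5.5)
The strategy is to reduce to the adic space $r(S)$ and exploit the fact that open subsets of $|r(T)|$ are unions of rational subsets. These correspond to admissible opens of $T$ under Huber's equivalence \cite[(1.1.11)]{Hu96}.

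\emph{Step 1: $T_O$ is a sheaf.} First I check that $T_O$ is a Tate-fpqc sheaf. It is a subfunctor of the sheaf $T$, so it suffices that membership is local. Given a Tate-fpqc cover $\{S_i\to S\}$ and $f:S\to T$ with each $f|_{S_i}$ landing in $O$, the covering family maps onto $|r(S)|$ by \cite[Corollary 2.5]{Lazda_note}, which was used above. Hence $|r(f)|(|r(S)|)\subset O$.

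\emph{Step 2: $T_O$ is an open quasi-analytic subspace.} Next I construct the chart. Write $O=\bigcup_\alpha O_\alpha$ with each $O_\alpha$ a rational (hence quasi-compact) open of an affinoid $r(T_j)$, where $T_j\subset T$ is an affinoid admissible open. Each $O_\alpha$ is $|r(V_\alpha)|$ for an affinoid subdomain $V_\alpha\subset T$. Set $Y:=\coprod_\alpha V_\alpha\to T_O$; this map is well defined because $|r(V_\alpha)|\subset O$.
\begin{itemize}
\item[(a)] For surjectivity, take $f\in T_O(S)$. The preimages $|r(f)|^{-1}(|r(V_\alpha)|)$ form an open cover of $|r(S)|$ by subsets of the form $|r(f^{-1}V_\alpha)|$, where $f^{-1}V_\alpha$ is an admissible open of $S$. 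An open cover of $|r(S)|$ by such sets corresponds to an admissible covering \cite[(1.1.11)(c)]{Hu96}. So $f$ lifts Tate-locally, and $Y\to T_O$ is a surjection of sheaves.
\item[(b)] For the fiber products, $Y\times_{T_O}S=\coprod_\alpha f^{-1}(V_\alpha)$ is a rigid space open in $S$. Hence $Y\to T_O$ is representable and étale.
\item[(c)] The diagonal of $T_O$ is a monomorphism into $T_O\times T_O$, and it is the restriction of the diagonal of the rigid space $T$. It is therefore representable.
\end{itemize}
By Lemma \ref{lem:aux_1}, $T_O$ is a quasi-analytic space. Finally, $T_O\to T$ is a monomorphism, and its pullback along any $S\to T$ is the admissible open $S_{|r(f)|^{-1}O}$, which is étale over $S$. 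So $T_O\to T$ is an open immersion in the sense of Definition \ref{defn:source-and-target_property}.

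\emph{Step 3: every open subspace arises this way, uniquely.} Let $W\subset T$ be an open quasi-analytic subspace, and put $O:=\im(|W|\to|r(T)|)$. This set is open by Corollary \ref{cor:open_immer_top}. Choose an étale chart $U\to W$; its image in $|r(T)|$ is also $O$, since $|U|\to|W|$ is surjective. I then show $W=T_O$:
\begin{itemize}
\item[(a)] $W\subset T_O$ is immediate from functoriality of $|\cdot|$.
\item[(b)] For the reverse inclusion, take $f\in T_O(S)$. The base change $W\times_T S\to S$ is an étale monomorphism whose image contains all of $|r(S)|$, by Lemma \ref{lem:fiber_product_surjective}. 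An étale monomorphism of rigid spaces that is surjective on adic points is an isomorphism, since it is étale, universally injective and surjective. Hence $f$ factors through $W$.
\end{itemize}
Uniqueness of $O$ follows because $|T_O|\to|r(T)|$ has image exactly $O$: every point of $O$ lies in some $|r(V_\alpha)|$.

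I expect the main obstacle to be Step 3(b). After base change, $W\times_T S$ is a priori only a quasi-analytic space, so I first cover it by rigid spaces étale over $S$, which I can do because it is quasi-analytic. I then need that an étale monomorphism which is surjective on adic spaces is an open immersion that is bijective, hence an isomorphism. For this I would apply the adic-space fact that étale monomorphisms are open embeddings \cite[1.6--1.7]{Hu96}, and then descend along the étale cover.
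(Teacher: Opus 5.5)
Your argument is correct, but it is organized differently from the paper's, most notably in the converse direction.

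\textbf{Forward direction.} You and the paper both cover $O$ by quasi-compact (rational) opens of affinoid pieces of $T$. The difference is how surjectivity of the chart onto $T_O$ is shown.
\begin{itemize}
\item You use that the $V_\alpha$ are admissible opens. Then $\{f^{-1}(V_\alpha)\}$ is an admissible covering of $S$ by \cite[(1.1.11)(c)]{Hu96}.
\item The paper instead observes that $U\times_T S\to S$ is \'etale and surjective on adic points. It then invokes \cite[Lemma 2.1.2]{Hu96} to conclude that this map is an \'etale covering.
\end{itemize}
Your version is slightly more elementary. The paper's version, however, works verbatim for \emph{any} \'etale map $U\to T$ from a rigid space with image $O$, and this is what it reuses for the converse.

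\textbf{Converse direction.} Given an open $W\subset T$ with \'etale chart $U\to W$, the paper notes that $W$ is the sheaf image of $U\to T$, since this map is an epimorphism followed by a monomorphism. That sheaf image is $T_O$ by the same argument as before.

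This makes your ``main obstacle'' disappear. You need neither the fact that \'etale monomorphisms are open embeddings nor representability of $W\times_T S$. It suffices to argue as follows:
\begin{itemize}
\item The composite $U\times_T S\to W\times_T S\to S$ is an \'etale map of rigid spaces.
\item It is surjective on adic points by Lemma \ref{lem:fiber_product_surjective}, so by \cite[Lemma 2.1.2]{Hu96} it is a Tate-\'etale covering.
\item Hence $f$ factors through $W$ locally on this covering, and these local factorizations glue because $W\to T$ is a monomorphism.
\end{itemize}

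Your route via ``\'etale $+$ universally injective $+$ surjective $\Rightarrow$ isomorphism'' can be completed. For instance, $W\times_T S$ is a quasi-analytic subsheaf of $S$, hence representable over affinoids of $S$ by the separatedness argument in Lemma \ref{lem:aux_1}. But this imports an extra adic-space fact and a representability step that the paper avoids.

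Your explicit check that $T_O$ is a Tate-fpqc sheaf is a reasonable addition. In the paper it is automatic, because $T_O$ is identified with a sheaf image.
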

\begin{proof}
Let $O\subset |r(T)|$ be an open subset.
We give another characterization of $T_O$.
Choose an admissible affinoid open cover $T=\bigcup_\alpha T_\alpha,$ with $T_\alpha=\Sp(A_\alpha)$.
Let $O_\alpha:=O\cap |r(T_\alpha)|\subset |r(T_\alpha)|$.
Since $r(T_\alpha)$ is locally spectral, we can write $O_\alpha=\bigcup_\lambda O_{\alpha,\lambda}$ with $O_{\alpha,\lambda}\subset |r(T_\alpha)|$ quasi-compact open.
As $T_\alpha$ is affinoid, each $O_{\alpha,\lambda}$ is of the form $O_{\alpha,\lambda}=|r(U_{\alpha,\lambda})|$ for a unique quasi-compact admissible open $U_{\alpha,\lambda}\subset T_\alpha$ by \cite[(1.1.11)]{Hu96}.
Let $U:=\coprod_{\alpha,\lambda} U_{\alpha,\lambda}.$
The natural map $U\to T$ is \'etale whose sheaf image $\im(U\to T)$ we now claim to be $T_O$. 
Indeed, if $f:S\to T$ Tate-fpqc locally factors through $U$, then  $|r(f)|(|r(S)|)\subset O.$
Conversely, suppose $|r(f)|(|r(S)|)\subset O.$
Then, $U\times_T S \to S$ is \'etale, and the map $|r(U\times_T S)|\to |r(S)|$ is surjective by \cite[Corollary 1.2.3(iii.d)]{Hu96}.
By \cite[Lemma 2.1.2]{Hu96}, $U\times_T S\to S$ is an \'etale cover.
On this cover $f$ factors as $U\times_T S\to U \to T$, so $f\in \im(U\to T)(S)$ and hence $T_O=\im(U\to T)$.
Since $T_O$ is a subfunctor of $T$, the diagonal of $T_O$ is representable.
It follows that $U\to T_O$ is representable, \'etale, and surjective.
Hence, $T_O$ is a quasi-analytic space, and $T_O\to T$ is an open immersion.
By Corollary \ref{cor:open_immer_top}, $|T_O|\to |T|$ is an open embedding whose image is $O$ by construction.


\vskip8pt

Conversely, let $i:W\to T$ be an open immersion from a quasi-analytic space $W$. 
Pick an \'etale surjection $U\to W$ with $U\in \Rig_E$.
Then, the composite $U\to W\to T$ is \'etale, hence induces an open map $|r(U)|\to |r(T)|$.  
Let $O:=\im(|r(U)|\to |r(T)|) = \im(|W|\to |r(T)|)$.
Then $O$ is open and homeomorphic to $|W|$ by Corollary \ref{cor:open_immer_top}. 
Since $U\to W$ is an epimorphism and $W\to T$ is a monomorphism, $W$ is the sheaf image of $U\to T$. 
By the argument above, this sheaf image is $T_O$, hence $W\simeq T_O = T_{|i|(|W|)}$.
Uniqueness follows from $|T_O|=O.$
\end{proof}

\begin{lemma}
\label{lem:qan_open_top}
Let $X$ be a quasi-analytic space on $\Rig_E$. 
Then, open subspaces of $X$ correspond bijectively to open subsets of $|X|$ by the map $W\mapsto |W|$.
\end{lemma}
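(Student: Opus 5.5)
We reduce to Lemma \ref{lem:rigid_open_top} through an \'etale chart. Choose an \'etale surjection $p:U\twoheadrightarrow X$ with $U\in\Rig_E$, and set $R:=U\times_X U$. Since the diagonal of $X$ is representable, $R$ is a rigid space, the projections $s,t:R\rightrightarrows U$ are \'etale, and $X\cong U/R$. By Lemma \ref{lem:presentation_top} and Remark \ref{rem:image_equivalence}, $|X|=|U|/\!\sim$, where $\sim$ is the image of $|R|\to|U|\times|U|$, so $|p|$ is a quotient map.

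For well-definedness and injectivity, let $W\subset X$ be an open subspace. Then $|W|\to|X|$ is an open embedding by Corollary \ref{cor:open_immer_top}, so $W\mapsto|W|$ lands in the open subsets of $|X|$. Moreover $W_U:=W\times_X U\to U$ is an open immersion of quasi-analytic spaces into a rigid space. By Lemma \ref{lem:rigid_open_top}, $W_U=U_{O_U}$ for the unique open $O_U\subset|U|$ with $O_U=|W_U|$, and Lemma \ref{lem:fiber_product_surjective} gives $O_U=|p|^{-1}(|W|)$. Since $W$ is a subsheaf of $X$ and $U\to X$ is an epimorphism, $W$ is determined by $W\times_X U$: a section $T\to X$ lies in $W$ iff its pullback to the \'etale cover $T\times_X U$ factors through $W_U$. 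Hence $W$ is determined by $|W|$.

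For surjectivity, let $O\subset|X|$ be open and put $O_U:=|p|^{-1}(O)$, which is open and saturated for $\sim$. Lemma \ref{lem:rigid_open_top} gives an open immersion $U_{O_U}\subset U$. Saturation, together with the surjectivity in Lemma \ref{lem:fiber_product_surjective} applied to $R\to U\times U$, shows $s^{-1}(U_{O_U})=t^{-1}(U_{O_U})$ as subfunctors of $R$. Concretely, both are $R_{|s|^{-1}(O_U)}=R_{|t|^{-1}(O_U)}$, using the description of $T_O$ via image points and the equality $|s|^{-1}(O_U)=|t|^{-1}(O_U)$. So $U_{O_U}$ is $R$-stable, and we define $W\subset X$ as the image sheaf of $U_{O_U}\to X$, which is the quotient $U_{O_U}/R|_{U_{O_U}}$. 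The key check is $W\times_X U=U_{O_U}$. A map $T\to U$ whose image lies in $W$ lifts, locally on $T$, to $U_{O_U}$ via $R$. Then $R$-stability and the pointwise description of $T_O$, where membership is tested on points of $|r(T)|$, force the original map to land in $U_{O_U}$.

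Given $W\times_X U=U_{O_U}$, the map $W\times_X U\to U$ is an open immersion, so $W\to X$ is a monomorphism that is \'etale after the \'etale base change $U$. Hence $W$ is an open subspace (Definition \ref{defn:source-and-target_property}, Lemma \ref{lem:local_property_P}), and $|W|=|p|(O_U)=O$ by Lemma \ref{lem:presentation_top}. I expect this descent step to be the main obstacle: the $R$-stability and the identification of $W\times_X U$ with $U_{O_U}$. The rest is formal.
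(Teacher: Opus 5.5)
Your proof is correct, but it is organized differently from the paper's. The paper never uses the groupoid $R=U\times_X U$. It works with the pointwise subfunctor $X_O(S)=\{f:S\to X \mid |f|(|S|)\subset O\}$ on $X$ itself.

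\begin{itemize}
\item For injectivity, it shows directly that every open subspace satisfies $W=X_{|W|}$. It does this by testing a map $f:S\to X$ with $|f|(|S|)\subset |W|$ against the open immersion $S\times_X W\hookrightarrow S$, using Lemmas \ref{lem:rigid_open_top} and \ref{lem:fiber_product_surjective}.
\item For surjectivity, it notes that $X_O\times_X U=U_{|p|^{-1}(O)}$ holds tautologically, since $|p\circ g|=|p|\circ|g|$. From this it reads off that $X_O$ is quasi-analytic and that $X_O\hookrightarrow X$ is an open immersion.
\end{itemize}

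You instead descend along the \'{e}tale chart, in the style of Theorem \ref{thm:stack_opens}. Injectivity comes from ``$W$ is determined by $W\times_X U=U_{|p|^{-1}(|W|)}$'', and surjectivity from the image sheaf of $U_{O_U}$ plus an $R$-invariance check. All of your steps go through.

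The step you flag as the main obstacle is lighter than you expect.
\begin{itemize}
\item $R$-invariance follows from functoriality alone: $|s|^{-1}(O_U)=|p\circ s|^{-1}(O)=|p\circ t|^{-1}(O)=|t|^{-1}(O_U)$. Neither saturation nor Lemma \ref{lem:fiber_product_surjective} is needed.
\item Your image sheaf $W$ is exactly $X_O$. If you define $W:=X_O$ from the start, the identity $W\times_X U=U_{O_U}$ becomes a tautology.
\end{itemize}
The paper's route also yields the intrinsic description $W=X_{|W|}$ of every open subspace, which it reuses in Remark \ref{rem:fiber_top}. Your route yields this only indirectly.

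You leave one small point implicit: before calling $W$ an open subspace, you must check that it is quasi-analytic. Its diagonal is representable because it is a subsheaf of $X$. For an \'{e}tale chart, take $U_{O_U}\to W$, which is a base change of $p$, and compose it with an \'{e}tale cover of $U_{O_U}$ by a rigid space. The paper spells this out.
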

\begin{proof}
Let $i:W\hookrightarrow X$ be an open quasi-analytic subspace. 
By Corollary \ref{cor:open_immer_top}, $|i|:|W|\hookrightarrow |X|$ is an open subset.
We claim that $W$ is the subfunctor $X_{|W|}$ of $X$ defined by 
$$
S\in \Rig_E \quad \longmapsto \quad X_{|W|}(S):= \left\{ f:S\to X \mid |f|(|S|)\subset |W| \right\}.
$$
The inclusion $W\subset X_{|W|}$ is clear.
Conversely, let $f: S\to X$ be a map satisfying the condition that $|f|(|S|)\subset |W|$ for some $S\in \Rig_E$.
Consider the fiber product diagram
$$
\begin{tikzcd}
S\times_X W \arrow[d, hook] \arrow[r] & W \arrow[d, "i", hook] \\
S \arrow[r, "f"]                      & X                     
\end{tikzcd}
$$
where $S\times_X W\hookrightarrow S$ is an open immersion into the rigid space $S$.
By Lemma \ref{lem:rigid_open_top}, $|S\times_X W|\to |S|$ is an open embedding.
Since $|f|$ factors through $|W|$, it follows from Lemma \ref{lem:fiber_product_surjective} that $|S\times_X W|\to |S|$ is also surjective.
So, we have $S\times_X W = S$ by Lemma \ref{lem:rigid_open_top}, which implies that $f:S\to X$ factors through $i:W\hookrightarrow X$.
Thus, $W=X_{|W|}$.
\vskip8pt

Conversely, given an open subset $O\subset |X|$, consider the subfunctor $X_{O}$ of $X$ defined by 
$$
S\in \Rig_E \quad \longmapsto \quad X_{O}(S):= \left\{ f:S\to X \mid |f|(|S|)\subset O \right\},
$$
which is a Tate-fpqc sheaf.
Since $X_O\subset X$, the diagonal map of $X_O$ is representable. 
Pick an \'{e}tale surjection $p:U\to X$ with $U\in \Rig_E$, and let $\wtO:=|p|^{-1}(O)\subset |U|$.
Consider the associated open subspace $U_{\wtO}\subset U$ in Lemma \ref{lem:rigid_open_top}.
We note that $U\times_{X} X_O \cong U_{\wtO}$ over $U$ by construction. 
So, $U\times_{X} X_O$ is a quasi-analytic space.
The pullback of $p:U\to X$ along the inclusion $i:X_O\hookrightarrow X$ is a map $p_i:U\times_{X} X_O\to X_O$ that is representable in rigid spaces, \'{e}tale, and surjective, because by the transitivity of fiber product, the base change of $p_i$ along any map $f:S\to X_O$ is the base change $S\times_X U \to S$ of $p$ along the map $i\circ f: S\to X_O \to X$, and $p$ is an \'{e}tale surjection.
Choosing any \'{e}tale surjection $V\to U\times_X X_O$, we obtain an \'{e}tale surjection $V\to X_O$.
Hence, $X_O$ is a quasi-analytic space.
Moreover, $j:X_O\hookrightarrow X$ is an open immersion, because its base change to the \'{e}tale cover $p:U\to X$ is the open immersion $U_{\wtO}\hookrightarrow U$.
Hence, $X_O\subset X$ is an open quasi-analytic subspace.
By Corollary \ref{cor:open_immer_top}, $|X_O|\subset |X|$ is an open subset, which, by the description $U_{\wtO} = X_O \times_X U$ of the fiber product and Lemma \ref{lem:fiber_product_surjective}, is given by $|p|(\wtO) = O$.
\end{proof}

\begin{rem}
\label{rem:fiber_top}
Let $f:Y\to X$ be a morphism of quasi-analytic spaces, and let $W\subset X$ be an open quasi-analytic subspace. Then, we have $|Y\times_X W|=|f|^{-1}(|W|)$, or simply, 
$$
|f^{-1}(W)|=|f|^{-1}(|W|).
$$
Indeed, for any morphism $g:S\to Y$ with $S\in \Rig_E$, it factors through $f^{-1}(W)\subset Y$ if and only if $f\circ g:S\to X$ factors through $W$, which is the case if and only if $|g|(|S|)\subset |f|^{-1}(|W|)$ by Lemma \ref{lem:qan_open_top}.
By Lemma \ref{lem:qan_open_top}, this shows that $f^{-1}(W)=Y_{|f|^{-1}(|W|)}$, as desired.
\end{rem}

\begin{dfn}
Let $(U,R,s,t,c)$ be a smooth groupoid in quasi-analytic spaces. 
\begin{enumerate}[(i)]
\item
An open quasi-analytic subspace $W\subset U$ is called $R$\textbf{-invariant} if $s^{-1}(W)=t^{-1}(W)$ as open quasi-analytic subspaces of $R$. 
\item
An open subset $O\subset |U|$ is called \textbf{$|R|$-invariant} if $|s|^{-1}(O)=|t|^{-1}(O)$ as open subsets of $|R|$.
\end{enumerate}
\end{dfn}

\begin{lemma}
\label{lem:R-invariant_top}
If $(U,R,s,t,c)$ is a smooth groupoid in quasi-analytic spaces, then under the bijection $W\mapsto |W|$ of Lemma \ref{lem:qan_open_top}, R-invariant open subspaces of U correspond bijectively to $|R|$-invariant open subsets of $|U|$.
\end{lemma}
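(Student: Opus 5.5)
The bijection of Lemma \ref{lem:qan_open_top} sends $W\mapsto |W|$. I will check that it sends $R$-invariant open subspaces to $|R|$-invariant open subsets, and that its inverse $O\mapsto U_O$ sends $|R|$-invariant subsets to $R$-invariant subspaces. The tool for both directions is Remark \ref{rem:fiber_top}: for a morphism $f:Y\to X$ of quasi-analytic spaces and an open subspace $W\subset X$, it gives $|f^{-1}(W)|=|f|^{-1}(|W|)$. Here I apply it with $f=s$ and $f=t$, both morphisms of quasi-analytic spaces $R\to U$.

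Let $W\subset U$ be an open quasi-analytic subspace. By Remark \ref{rem:fiber_top},
\[
|s^{-1}(W)|=|s|^{-1}(|W|),\qquad |t^{-1}(W)|=|t|^{-1}(|W|),
\]
and both are open subsets of $|R|$. Since $s^{-1}(W)$ and $t^{-1}(W)$ are open subspaces of $R$ (base changes of an open immersion), Lemma \ref{lem:qan_open_top} applies to the quasi-analytic space $R$. Its injectivity, $W'\mapsto |W'|$ on open subspaces of $R$, gives
\[
s^{-1}(W)=t^{-1}(W)\iff |s|^{-1}(|W|)=|t|^{-1}(|W|).
\]
So $W$ is $R$-invariant exactly when $|W|$ is $|R|$-invariant.

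Conversely, let $O\subset|U|$ be an $|R|$-invariant open subset. Lemma \ref{lem:qan_open_top} supplies an open subspace $W=U_O$ with $|W|=O$, and the equivalence above shows that $W$ is $R$-invariant. Hence the bijection of Lemma \ref{lem:qan_open_top} restricts to a bijection between the two classes.

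The only point needing care is that $s^{-1}(W)\to R$ is an open immersion of quasi-analytic spaces, so that Remark \ref{rem:fiber_top} and Lemma \ref{lem:qan_open_top} apply on $R$. Monomorphisms and étaleness are stable under base change in the sense of Definition \ref{defn:source-and-target_property}, and $R$ is a quasi-analytic space by hypothesis, so this holds. Smoothness of the groupoid is not used beyond $R$ being quasi-analytic; it matters only for later descent to $[U/R]$.
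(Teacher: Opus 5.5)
Your proposal is correct and takes the same route as the paper. The paper also combines Remark \ref{rem:fiber_top}, which gives $|s^{-1}(W)|=|s|^{-1}(|W|)$ and likewise for $t$, with the injectivity of $W'\mapsto |W'|$ on open subspaces of $R$ from Lemma \ref{lem:qan_open_top}; you additionally spell out the surjectivity direction and the base-change check, which the paper leaves implicit.
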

\begin{proof}
Let $W\subset U$ be an open quasi-analytic subspace.
By Lemma \ref{lem:qan_open_top} and Remark \ref{rem:fiber_top}, we have $s^{-1}(W)=t^{-1}(W)$ if and only if $|s|^{-1}(|W|)=|t|^{-1}(|W|)$, which completes the proof.
\end{proof}

Next, we show that there is a bijective correspondence between open substacks of $\frX$ and open subsets of $|\frX|$.

\begin{thm}
\label{thm:stack_opens}
Let $\frX$ be a rigid analytic Artin stack on $\Rig_E$, and let $ p:U\twoheadrightarrow \frX $ be a smooth surjection from a rigid space. 
Put $R:=U\times_{\frX}U.$
Then the map
$$
\cW\subset \frX
\quad \rightsquigarrow \quad 
\widetilde{\cW}:= \cW\times_{\frX}U\subset U
$$
induces an inclusion-preserving bijection between open substacks of $\frX$ and $R$-invariant open quasi-analytic subspaces of $U$. 
By Lemma \ref{lem:R-invariant_top}, open substacks of $\frX$ correspond bijectively to $|R|$-invariant open subsets of $|U|$, and hence to open subsets of $|\frX|$.
\end{thm}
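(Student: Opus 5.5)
We must show that $\cW\mapsto \widetilde{\cW}=\cW\times_{\frX}U$ is an inclusion-preserving bijection between open substacks of $\frX$ and $R$-invariant open quasi-analytic subspaces of $U$. The plan is to construct both directions explicitly and reduce everything to descent of monomorphisms along the smooth surjection $p$.

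\emph{The forward map.} Let $\cW\subset\frX$ be an open substack. Then $\widetilde{\cW}\to U$ is the base change of a representable open immersion, hence an étale monomorphism of quasi-analytic spaces, i.e. an open subspace of $U$. For $R$-invariance, write $p\circ s\cong p\circ t$ as maps $R\to\frX$. Then
$$
s^{-1}(\widetilde{\cW})=\cW\times_{\frX,p\circ s}R\cong \cW\times_{\frX,p\circ t}R=t^{-1}(\widetilde{\cW}).
$$
Both sides are subsheaves of $R$, so the isomorphism over $R$ is an equality. The map is inclusion-preserving by functoriality of fiber products.

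\emph{Injectivity.} Since $\cW$ is strictly full, an object $x\in\frX(T)$ lies in $\cW$ if and only if $x\times_{\frX}T\to T$ is an isomorphism. Choose a Tate-étale cover $T'\to T$ on which $x$ lifts to $u\colon T'\to U$; such covers exist by étale quasi-sections of smooth surjections, as used in Lemma \ref{lem:aux_1}. Then $x|_{T'}\in\cW$ if and only if $u$ factors through $\widetilde{\cW}$. Membership in $\cW$ descends along covers because $\cW$ is a substack: a monomorphism that is an isomorphism locally is an isomorphism. Hence $\cW$ is recovered from $\widetilde{\cW}$, and inclusions are reflected as well.

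\emph{Surjectivity.} This is the main step. Given an $R$-invariant open $W\subset U$, define $\cW\subset\frX$ to be the strictly full subcategory of those $x\in\frX(T)$ for which the open subspace
$$
W_x:=(U\times_{\frX,x}T)\times_U W\subset U\times_{\frX}T
$$
is everything. Equivalently, the open subspace of $T$ obtained by descending $W_x$ along the smooth surjection $U\times_{\frX}T\to T$ is all of $T$. The work is in producing this descended open subspace and checking its properties:
\begin{enumerate}[(i)]
\item $R$-invariance says that the two pullbacks of $W_x$ to $(U\times_{\frX}T)\times_T(U\times_{\frX}T)\cong R\times_{U\times U}(\ldots)$ agree.
\item I would descend at the level of topological spaces: by Lemma \ref{lem:qan_open_top} and Remark \ref{rem:fiber_top}, $W_x$ corresponds to an open subset of $|U\times_{\frX}T|$ saturated for the equivalence relation of Remark \ref{rem:image_equivalence}.
\item Smooth maps of rigid spaces induce open maps on adic spectra, so this saturated open subset maps to an open subset $O_x\subset|T|$ whose preimage is the original open subset. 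This uses the surjectivity in Lemma \ref{lem:fiber_product_surjective}.
\item Set $T_{O_x}\subset T$ as in Lemma \ref{lem:rigid_open_top}. Then $\cW\times_{\frX,x}T=T_{O_x}$, which is an open immersion.
\item Compatibility of $O_x$ with pullback along $T'\to T$ comes from Remark \ref{rem:fiber_top}. This shows $\cW$ is a stack, namely a substack determined Tate-fpqc locally, and that $\cW\to\frX$ is representable by open immersions.
\item Finally, $\widetilde{\cW}=W$ holds because for $x=p$ the open subset $O_p$ is $|W|$ itself. This is where the $|R|$-invariance of $|W|$ (Lemma \ref{lem:R-invariant_top}) is used again.
\end{enumerate}

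The last sentence of the statement then follows by combining this bijection with Lemma \ref{lem:R-invariant_top} and with the quotient description $|\frX|\cong|U|/|R|$ (Remark \ref{rem:image_equivalence}). Under that description, $|R|$-invariant open subsets of $|U|$ correspond bijectively to open subsets of the quotient $|\frX|$.
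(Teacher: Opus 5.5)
Your proof is correct, but it takes a different route from the paper's.

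\textbf{The paper's route.} The paper never descends open subsets topologically. Starting from an $R$-invariant open $\widetilde{\cW}\subset U$, it restricts the groupoid to $(\widetilde{\cW},R_{\widetilde{\cW}})$ with $R_{\widetilde{\cW}}=s^{-1}(\widetilde{\cW})=t^{-1}(\widetilde{\cW})$. It then uses a 2-cartesian square comparing $\widetilde{\cW}\to[\widetilde{\cW}/R_{\widetilde{\cW}}]$ with $U\to[U/R]$. From smooth-locality of \'{e}taleness on the target (Lemma \ref{lem:local_property_P}) it deduces that $[\widetilde{\cW}/R_{\widetilde{\cW}}]\to[U/R]\simeq\frX$ is an \'{e}tale monomorphism. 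Its strict essential image is the required open substack. Injectivity comes from $\cW\simeq[\widetilde{\cW}/(\widetilde{\cW}\times_{\cW}\widetilde{\cW})]=[\widetilde{\cW}/R_{\widetilde{\cW}}]$.

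\textbf{Your route.} You define $\cW$ objectwise and descend $|W_x|$ along $|U\times_{\frX}T|\to|T|$. You then check representability by hand via $\cW\times_{\frX,x}T=T_{O_x}$ (Lemma \ref{lem:rigid_open_top}).

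\textbf{Comparison.} The paper's argument is purely stack-theoretic and gives a smooth presentation of $\cW$ for free. It does, however, leave the 2-cartesian square and the monomorphism property as routine checks. Your argument verifies each clause of the paper's definition of open substack (strictly full, a stack, representable open immersion) explicitly. It also essentially builds in the correspondence with open subsets of $|\frX|$. The cost is heavier reliance on the appendix's topological dictionary (Lemmas \ref{lem:qan_open_top}, \ref{lem:fiber_product_surjective} and Remark \ref{rem:fiber_top}).

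\textbf{Two minor points.}
\begin{enumerate}[(i)]
\item Since $U\times_{\frX}T$ is only quasi-analytic, you cannot invoke openness of smooth maps of rigid spaces directly. Either pass to a rigid \'{e}tale cover first, or bypass openness altogether: $|T|$ carries the quotient topology from $|U\times_{\frX}T|$, because $|\cdot|$ is a left adjoint, as in the proof of Lemma \ref{lem:presentation_top}. So the image of a saturated open set is already open.
\item The paper's definition of an open immersion of stacks tests against quasi-analytic $T$, not just rigid ones. Your argument runs verbatim there with Lemma \ref{lem:qan_open_top} in place of Lemma \ref{lem:rigid_open_top}.
\end{enumerate}
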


\begin{proof}
    Let $(U,R,s,t,c)$ be the smooth groupoid in quasi-analytic spaces given by the smooth surjection $p:U\twoheadrightarrow \frX$, which induces $[U/R]\cong \frX$. 
    For any open quasi-analytic subspace $i:\widetilde \cW\hookrightarrow U$, we write $U_{\widetilde\cW} := \widetilde{\cW}$ and consider the restriction of $R$ to $\widetilde \cW$, 
    which is 
    $$
    R_{\widetilde \cW} = ({\widetilde \cW}\times_{U,t}R)\times_R(R\times_{s,U} {\widetilde \cW}) \cong ({\widetilde \cW}\times {\widetilde \cW})\times_{U\times U} R.
    $$
    Then, $(U_{\widetilde\cW},R_{\widetilde\cW},s|_{R_{\widetilde\cW}}, t|_{R_{\widetilde\cW}},c|_{R_{\widetilde\cW}\times_{s,\widetilde{\cW},t}R_{\widetilde\cW}})$ is a groupoid.
    Suppose $\widetilde \cW$ is an $R$-invariant open subspace of $U$.
    Then, $R_{\widetilde\cW} = s^{-1}(\widetilde \cW) = t^{-1}(\widetilde \cW)$ as subspaces of $R$, the quintuple above is a smooth groupoid in quasi-analytic spaces, and one checks that there is a 2-fiber square
    $$
    \begin{tikzcd}
    {U_{\widetilde\cW}} \arrow[r, "p_{\widetilde\cW}"] \arrow[d,"i"] & {[ U_{\widetilde\cW} / R_{\widetilde\cW} ]} \arrow[d, "{[i]}"] \\
        U \arrow[r, "p"] & {[U/R]}.
    \end{tikzcd}
    $$
    Since \'{e}taleness is smooth-local on the target (Lemma \ref{lem:local_property_P}), the smooth surjection $p:U\to [U/R]$ and the open immersion $i:U_{\widetilde\cW} \to U$ together imply that $[i]$ is an \'{e}tale monomorphism, i.e., an open immersion.
    Therefore, the strict essential image of $[ U_{\widetilde\cW} / R_{\widetilde\cW} ]$ under $[i]$ is an open substack $\cW$ of $[U/R]\simeq \frX$ whose pullback along $p:U\to \frX$ is, by the 2-fiber square above, the $R$-invariant open subspace $U_{\widetilde\cW} = \widetilde{\cW}$ of $U$.
    \vskip8pt
    
    Conversely, any open substack $\cW \hookrightarrow \frX\simeq [U/R]$ gives an open quasi-analytic subspace $\widetilde{\cW}:=\cW\times_{\frX} U\hookrightarrow U$ that is $R$-invariant:
    $$
    s^{-1}(\widetilde\cW) = R \times_{s,U}(U\times_\frX \cW) = R\times_{p\circ s,\frX} \cW = R\times_{p\circ t,\frX} \cW = R \times_{t,U}(U\times_\frX \cW) = t^{-1}(\widetilde\cW)\,.
    $$
    Thus, the restriction of $R$ to ${\widetilde\cW}$ is given by $R_{\widetilde\cW} = s^{-1}({\widetilde\cW})=t^{-1}({\widetilde\cW})$.
    Since ${\widetilde\cW}\to \cW$ is a smooth surjection, we have an equivalence $[{\widetilde\cW}/({\widetilde\cW}\times_\cW {\widetilde\cW})]\simeq {\cW}$.
    Since 
    $$
    \begin{aligned} 
    \widetilde{\mathcal{W}} \times_{\mathcal{W}} \widetilde{\mathcal{W}} &=\left(U \times_\frX \mathcal{W}\right) \times_{\mathcal{W}}\left(U \times_\frX \mathcal{W}\right) 
    \simeq
    \left(U \times_\frX U\right) \times_\frX \mathcal{W} 
    =
    R \times_\frX \mathcal{W} 
    =
    R_{\widetilde{\mathcal{W}}} \,,
    \end{aligned}
    $$
    we have $[\widetilde\cW/R_{\widetilde\cW}]\simeq \cW$.
    This establishes the desired bijective correspondence.
\end{proof}

\begin{cor}
Let $\frX$ be a rigid analytic Artin stack on $\Rig_E$.
There is a bijective correspondence between connected components of $\frX$ and connected components of $|\frX|$.
\end{cor}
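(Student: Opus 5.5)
The plan is to combine Theorem \ref{thm:stack_opens} with the fact that connected components of $|\frX|$ are open (Corollary \ref{cor:conn_open}). I first fix what a connected component of $\frX$ means. A \emph{clopen substack} is an open substack $\cW\subset\frX$ whose complement is also represented by an open substack. A \emph{connected component} is a nonempty clopen substack that admits no decomposition into two nonempty disjoint clopen substacks. The statement then reduces to two claims: clopen substacks correspond bijectively to clopen subsets of $|\frX|$, and this correspondence is compatible with disjoint unions and with nonemptiness.

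For the first claim, I use the bijection $\cW\mapsto|\cW|$ of Theorem \ref{thm:stack_opens} between open substacks and open subsets, which is inclusion preserving in both directions. First I check that it takes intersections to intersections and unions to unions. For intersections, this follows by pulling back to $U$ and using Remark \ref{rem:fiber_top}, together with the fact that fibre products of open immersions are intersections of subfunctors. For unions, I pass through the $|R|$-invariant open subsets of $|U|$, where the statement is clear. Second, I check that $\cW$ is empty exactly when $|\cW|$ is empty; this holds because $|\cW|$ is a quotient of $|\widetilde\cW|$, and a rigid space with empty adic space is empty. With these, a decomposition $\frX=\cW_1\sqcup\cW_2$ into disjoint open substacks with union $\frX$ corresponds exactly to a decomposition $|\frX|=O_1\sqcup O_2$ into disjoint open subsets. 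Hence clopen substacks correspond to clopen subsets.

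For the second claim, the indecomposable clopen substacks correspond to the clopen subsets $O$ that have no nontrivial clopen decomposition. Any such $O$ is connected as a topological space: a separation of $O$ into open pieces gives pieces that are open in $|\frX|$, and each is closed in $|\frX|$ because its complement is the union of the other piece with $|\frX|\setminus O$, which is open. Conversely, each connected component of $|\frX|$ is open by Corollary \ref{cor:conn_open}, and it is always closed, so it is clopen and connected. A connected clopen set is a maximal connected set, because any strictly larger connected set $C$ would be split into the two open pieces $C\cap O$ and $C\setminus O$. Putting these together, the indecomposable clopen subsets are exactly the connected components of $|\frX|$, which gives the bijection.

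I expect the main obstacle to be checking that the correspondence of Theorem \ref{thm:stack_opens} respects unions. Concretely, one must verify that the open substack attached to $O_1\cup O_2$ is the stack-theoretic union, meaning that every $T\to\frX$ with image in $|\cW_1|\cup|\cW_2|$ factors locally through $\cW_1$ or $\cW_2$. To handle this, I pull back to the rigid space $T$ and apply Lemma \ref{lem:rigid_open_top} to the open cover of $|r(T)|$.
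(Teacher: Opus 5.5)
Your proposal is correct and follows the same route as the paper, which also combines the openness of the connected components of $|\frX|$ (Corollary \ref{cor:conn_open}) with the bijection between open substacks and open subsets from Theorem \ref{thm:stack_opens}. What you add are the checks the paper's two-line proof leaves implicit: that this bijection respects intersections, emptiness, and stack-theoretic unions (the last via Lemma \ref{lem:rigid_open_top} on a test space $T$), so that clopen decompositions of $\frX$ and of $|\frX|$ match.
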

\begin{proof}
By Corollary \ref{cor:conn_open}, the connected components of $|\frX|$ are open, which by Theorem \ref{thm:stack_opens} allows us to write $\frX$ as a disjoint union of connected open substacks.
These are the connected components of $\frX$.
\end{proof}

\bibliographystyle{alpha}
\bibliography{mybib}

\end{document}